\documentclass[11pt, reqno]{amsart}
\usepackage{extarrows}
\usepackage{newtxtext}
\usepackage[scale=1]{inconsolata}
\usepackage{bm}

\renewcommand{\leq}{\leqslant}
\renewcommand{\geq}{\geqslant}









\renewcommand{\c}{\mathsf{c}}

\newcommand{\AS}{\mathsf{ApproxSampler}}
\newcommand{\US}{\mathsf{UniformSampler}}

\synctex=1
\usepackage[letterpaper, hmargin=1.0in, vmargin=1.0in]{geometry}
\usepackage{import}
\usepackage{xifthen}
\usepackage{pdfpages}
\usepackage{transparent}
\usepackage{mleftright}
\def\left{\mleft}
\def\right{\mright}
\usepackage{cancel}
\usepackage[normalem]{ulem}

\newcommand{%
    
    \import{./fig/}{.pdf_tex}
}[1]{%
    
    \import{./fig/}{#1.pdf_tex}
}
\usepackage{tikz}
\usetikzlibrary{positioning,shapes.geometric,fit,decorations.pathreplacing}
\pgfdeclarelayer{background}
\pgfsetlayers{background,main}
\tikzset{
  binnode/.style={
    rectangle, rounded corners=6pt,
    draw=black, thick,
    inner sep=5pt,
    outer sep=-2pt,
    minimum width=8mm,
    font=\ttfamily\small,
    align=center
  },
  bitedge/.style={- , thick}
}
\usepackage{float}
\usepackage{mathtools, amsmath, amsfonts, amssymb, amsthm, mathrsfs}
\usepackage{pgffor}
\usepackage{etoolbox}
\usepackage[shortlabels]{enumitem}
\usepackage{xcolor}
\definecolor{CustomBlue}{RGB}{23, 86, 118}
\usepackage[unicode=true, colorlinks=true, linkcolor=CustomBlue, citecolor=CustomBlue]{hyperref}
\hypersetup{pdfauthor={Name}}
\usepackage{setspace}

\usepackage{algorithm, algpseudocodex}

\usepackage[T1]{fontenc}
\newcommand{\sse}{\subseteq}

\newcommand{\R}{\mathbb{R}}

\newcommand{\N}{\mathbb{N}}

\newcommand{\eps}{\varepsilon}

\newcommand{\Ber}{\mathrm{Ber}}

\newcommand{\weakto}{\xLongrightarrow{w}}

\newcommand{\ts}{\widetilde{\theta}}

\newcommand{\Nor}[2]{\mathcal{N}\left(#1, #2\right)}
\newcommand{\Exp}[1]{\exp\left(#1\right)}

\renewcommand{\Box}[1]{\left[#1\right]}
\newcommand{\Rnd}[1]{\left(#1\right)}

\renewcommand{\bar}[1]{\overline{#1}}

\makeatletter
\newcommand{\E}[1][\@nil]{%
	\def\tmp{#1}%
	\ifx\tmp\@nnil
		\mathbb{E}
	\else
		\mathbb{E}\left[#1\right]
\fi}

\renewcommand{\P}[1][\@nil]{%
	\def\tmp{#1}%
	\ifx\tmp\@nnil
		\mathbb{P}
	\else
		\mathbb{P}\Rnd{#1}
\fi}
\makeatother

\newcommand{\Cov}{\mathbf{Cov}}
\newcommand{\Var}{\mathbf{Var}}
\renewcommand{\hat}[1]{\widehat{#1}}

\newcommand{\mnorm}[1]{{\left\vert\kern-0.25ex\left\vert\kern-0.25ex\left\vert #1 
\right\vert\kern-0.25ex\right\vert\kern-0.25ex\right\vert}}

\newcommand{\1}{\mathbf{1}}

\newcommand{\M}{\mathcal{M}}
\renewcommand{\S}{\mathcal{S}}
\newcommand{\Bin}{\mathrm{Bin}}

\newcommand{\cE}{\mathcal{E}}

\newcommand{\f}[2]{\frac{#1}{#2}}
\newcommand{\F}[2]{{\left(\frac{#1}{#2}\right)}}

\newcommand{\df}{\coloneq}

\newcommand{\poly}{\mathrm{poly}}

\renewcommand{\c}{\mathsf{c}}

\newcommand{\A}{\mathbb{A}}

\newcommand{\x}{\bar{x}}

\usepackage{mathtools}
\mathtoolsset{showonlyrefs}

\newtheorem{theorem}{Theorem}[section]
\newtheorem*{theorem*}{Theorem}
\newtheorem{lemma}[theorem]{Lemma}
\newtheorem{proposition}[theorem]{Proposition}
\newtheorem{corollary}[theorem]{Corollary}
\newtheorem{definition}[theorem]{Definition}
\theoremstyle{definition}
\newtheorem{remark}[theorem]{Remark}

\numberwithin{theorem}{section}

\numberwithin{equation}{section}
\numberwithin{figure}{section}

\newcommand{\cC}{\ensuremath{\mathcal C}} 
 
\newcommand{\cH}{\ensuremath{\mathcal H}}

\newcommand{\cO}{\ensuremath{\mathcal O}}

\newcommand{\cX}{\ensuremath{\mathcal X}} 
 
\newcommand{\cZ}{\ensuremath{\mathcal Z}}

\newcommand{\frL}{\mathfrak{L}}

\newcommand{\scrC}{\mathscr{C}}

\newcommand{\scrN}{\mathscr{N}}

\newcommand{\sfN}{\mathsf{N}}
\newcommand{\sfO}{\mathsf{O}}

\def\({\left(}
\def\){\right)}

\newcommand{\e}{\varepsilon}
 
\newcommand{\ind}[1]{\mathbf{1}_{\{#1\}}}

\newcommand{\Cnt}{i(Q_{d,p})}
\newcommand{\Cntd}{i(Q_d)}
\newcommand{\Cntcrit}{i(Q_{d,2/3})}
\newcommand{\fr}{\frac}
\newcommand{\Fr}[2]{\Rnd{\frac{#1}{#2}}}
\renewcommand{\P}{\mathbb{P}}
\newcommand{\Cpt}{\mathsf{Cpt}}
\newcommand{\pto}{\xlongrightarrow{\P}}
\newcommand{\dto}{\xLongrightarrow{w}}
\renewcommand{\epsilon}{\varepsilon}
\newcommand{\TooBig}{\mathsf{TooBig}}

\newcommand{\TVbin}[2]{\mathrm{TV}\left( #1, #2\right)}

\let\temp\phi
\let\phi\varphi
\let\varphi\temp

\renewcommand{\emptyset}{\varnothing}

\newcommand{\Pois}{\mathrm{Poisson}}

\newcommand{\Even}{\mathsf{Even}}
\newcommand{\Odd}{\mathsf{Odd}}

\renewcommand{\N}{N_p}
\newcommand{\Q}{Q_d}
\newcommand{\Qp}{Q_{d,p}}

\def\Cpt{\mathsf{Compat}}

\def\Adj{\mathsf{Adj}}
\def\Dim{\mathsf{Dimers}}

\def\cE{\mathsf{Even}}
\def\cO{\mathsf{Odd}}

\newcommand{\psim}{\stackrel{\P}{\sim}}

\newcommand{\polymers}{\mathsf{Polymers}}
\newcommand{\fakepolymers}{\widetilde{\polymers}}

\newcommand{\Sep}[1][]{%
    \def\side{#1}%
    \ifx\side\empty
        \mathsf{PD}%
    \else
        \mathsf{PD}^{\side}%
    \fi
}

\newcommand{\fp}{\mathbf{p}}
\newcommand{\fs}{v}
\newcommand{\fd}{\mathbf{d}}

\usepackage{mleftright}
\renewcommand{\left}{\mleft}
\renewcommand{\right}{\mright}

\newcommand{\fphi}{\tilde{\phi}}

\renewcommand{\tilde}{\widetilde}

\def\tAdj{\widetilde{\Adj}}

\newcommand{\polymerpf}{\mathcal{Z}}
\newcommand{\fakepolymerpf}{\tilde{\mathcal{Z}}}

\onehalfspacing

\begin{document}
\title{Decoupling of clusters in independent sets in a percolated hypercube}

\author{Mriganka Basu Roy Chowdhury, Shirshendu Ganguly and Vilas Winstein}
\address{Mriganka Basu Roy Chowdhury\\ University of California, Berkeley}
\email{mriganka\_brc@berkeley.edu}
\address{Shirshendu Ganguly\\ University of California, Berkeley}
\email{sganguly@berkeley.edu}
\address{Vilas Winstein\\ University of California, Berkeley}
\email{vilas@berkeley.edu}

\begin{abstract}
Independent sets are sets of vertices in a graph which contain no neighbors, and are well-studied in statistical mechanics as well as computer science, owing to their
antiferromagnetic nature as well as their ability to encode problems involving hard constraints.
Of particular interest is the setting of the nearest-neighbor hypercube $\Q = \{0,1\}^d$, where understanding the number of independent sets was the original motivator 
for Sapozhenko's famous \emph{graph container method} \cite{container}.
A modern perspective on constraint satisfaction problems and Gibbs measures, both of which feature independent sets as a prime example, is to consider the effect of disorder.
This was first done by Kronenberg and Spinka \cite{ks} who initiated the study of independent sets in a \emph{percolated hypercube} $\Qp$ obtained by removing
each edge from $\Q$ with probability $1-p$ independently.

Recent work by the authors \cite{brcgw} resolved multiple open questions posed by \cite{ks} regarding the distribution
of the number of independent sets in $\Qp$ for $p \geq \f23$, and also elucidated the structure of a typical independent set via a sampling algorithm.
These results relied heavily on the relatively simple behavior of the model for large enough $p$, but as $p$ decreases, typical independent sets become larger
and feature more intricate clustering behavior.
In the present article we overcome many of the challenges presented by this phenomenon and analyze the model for a wider range of $p$.
In particular, for all $p> 0.465$ we obtain a sharp in-probability approximation for the number of independent sets in $\Qp$ in terms of explicit random variables, as well as provide a sampling algorithm. Note that this shows, curiously, that $p = \f12$ does not pose as a natural barrier for this problem unlike in many other problems such as the existence of isolated vertices,
existence of a perfect matching, and Hamiltonicity where it appears as a point of a phase transition. 

A key contribution of this work is the introduction of a new probabilistic framework to handle the clustering behavior for these low values of $p$.
Although our analysis is restricted to $p > 0.465$, our arguments are expected to be helpful for studying this model at even lower values of $p$, and possibly for other related problems.
In addition, although we only consider the total number of independent sets (corresponding to sampling independent sets uniformly)
our techniques should readily generalize to the \emph{partition function} of the hard-core model.
\end{abstract}
\maketitle{}

\setcounter{tocdepth}{1}
\tableofcontents

\parindent=0pt
\parskip=5pt


\def\AS{\mathsf{ApproxSampler}}
\def\US{\mathsf{UniformSampler}}
\def\A{\mathsf{A}}
\def\AE{\mathsf{A}^\Even}
\def\AO{\mathsf{A}^\Odd}
\def\M{\mathsf{M}}
\def\ME{\mathsf{M}^\Even}
\def\MO{\mathsf{M}^\Odd}
\def\sbnd{2^d / d^2}
\def\Proper{\mathsf{Proper}}
\def\PolyDec{\mathsf{PD}}

\def\tD{\tilde{\Delta}}
\def\SS{\mathsf{SingletonSampler}}

\section{Introduction}
\label{sec:intro}
This article is our second work investigating independent sets in a disordered hypercube.
Referring the reader to the first paper \cite{brcgw} for a more detailed account of the relevant background,
in this introduction we will be brief, simply introducing the pertinent objects and giving a short record of immediately adjacent results.

We begin with the notion of independent sets in a graph $G=(V,E)$ which are sets of vertices containing no neighbors.
Independent sets are of interest in statistical mechanics where they arise in models of hard-core lattice gases, and the constraint that no two
vertices can be neighbors can be viewed as \emph{antiferromagnetic}, leading to connections with other models of interest such as the antiferromagnetic Ising model.
As is common in statistical mechanics, it is useful to consider a suitable \emph{Gibbs measure} on the collection of independent sets in a graph.
This is known as the \emph{hard-core model}, with a parameter $\lambda > 0$ called the \emph{fugacity}.
Given $G$, the hard-core model assigns to any independent set $I\sse V$ the probability
\begin{equation}\label{hardcoredef342}
	\frac{\lambda^{|I|}}{\cZ},
\end{equation}
where $|I|$ denotes the size of $I$ and $\cZ$, the normalizing constant, is called the partition function.

While the problem of independent sets can be studied for any graph, a particularly interesting case is when the base graph is the
high dimensional lattice given by the hypercube $\Q\df\{0,1\}^d$, endowed with the nearest-neighbor graph structure,
where two vertices $u$ and $v$ are neighbors if and only if they differ at exactly one bit.
Independent sets in $\Q$ correspond to error-detecting binary codes of distance $2$, and this was the initial motivation for studying
such sets in the classic work of Korshunov and Sapozhenko \cite{ksbinary}.
The largest independent sets are $\cE$ and $\cO$, the sets of vertices with an even or odd number of $1$s in their binary-string representation.
Indeed, $\Q$ is bipartite, and $\cE$ and $\cO$ form a bipartition.
These maximal independent sets correspond to the \emph{parity bit} code used at the hardware level in most modern computers.

More recently, the study of independent sets in bipartite graphs like $\Q$ or subgraphs thereof has become especially popular in theoretical computer science,
due to the the well-known \#BIS problem of counting independent sets in bipartite graphs.
This problem is known to be as difficult to solve exactly as the corresponding problem for general graphs (both are \#P-complete \cite{x}),
but whereas the general problem is known also to be intractible to approximation via randomized algorithms \cite{sly}, it is still unknown whether or
not an efficient randomized approximation scheme exists for \#BIS \cite{count}.

Our goal in the present work is to estimate the partition function $\cZ$ of the hard-core model on \emph{random subgraphs} of $\Q$,
obtained via percolation.
As is often the case in such settings, our results concerning the approximation of the partition function will
also allow us to elucidate the structure of a typical independent set in the hard-core model on a percolated hypercube.

To keep the presentation focused, we only consider the case $\lambda = 1$ in this article, although we expect our results to generalize readily
to other values of $\lambda$, as was done in \cite{brcgw}.
When $\lambda=1$, the hard-core model on a graph $G$ is the uniform distribution on all independent sets in $G$, and thus the 
partition function is the total number of such independent sets; this number is denoted by $i(G)$.
We will denote a hypercube that has been percolated with parameter $p$ (meaning that each edge is deleted independently with probability $1-p$)
by $\Qp$.
So our main focus will be to approximate $\Cnt$.

\subsection{Background}

Broadly speaking, as $p$ decreases and more edges are removed, there are fewer constraints for a set to be independent and thus typical independent
sets become larger.
When $p=1$ so that $\Qp =\Q$, much is known, going back to the work of Korshunov and Sapozhenko \cite{ksbinary} who discovered the leading-order behavior
of the number of independent sets:
\begin{equation}
\label{eq:ksleading}
    \Cntd \sim 2^{2^{d-1}} \cdot 2 \cdot \sqrt{e}.
\end{equation}
Intuitively, this approximation holds because most independent sets in $\Q$ are very close to simply being a subset of either the even or odd side
(leading to the factor of $2^{2^{d-1}} \cdot 2$),
with a small number of \emph{defect vertices} in the opposite side.
Each included defect vertex $v$ precludes $d$ non-defect vertices and thus eliminates $d$ binary choices (whether or not to include each neighbor of $v$ in the independent set),
effectively reducing the number of possibile independent sets by a factor of roughly $2^d$.
This means the probability that a uniformly random independent set includes a \emph{particular} defect vertex $v$ is roughly $2^{-d}$.
Using the powerful \emph{graph container} method, it can be shown that the $2^{d-1}$ possible defect vertices do not interact and behave roughly independently,
and thus that the number of defect vertices in a
uniformly random independent set is approximately $\Bin(2^{d-1},2^{-d}) \approx \Pois(\f12)$, leading to the factor of $\sqrt{e}$.

The approximation \eqref{eq:ksleading} was sharpened by Jenssen and Perkins \cite{jp} who provided an algorithm which computes arbitrary lower-order
terms in the approximation.
This is based on a more careful analysis of the defects, which can be formalized via the language of \emph{abstract polymer models} and which can
subsequently be examined via the \emph{cluster expansion}.
A similar technique was used by Kronenberg and Spinka \cite{ks} who initiated the study of $\Cnt$ for $p < 1$.
They adapted the cluster expansion technique to an \emph{annealed} version of the hard-core model on $\Qp$, and were able to exhibit arbitrarily sharp
approximations of the moments $\E[\Cnt^k]$ via an algorithm.
These moment calculations revealed a possible phase transition at $p = \f23$, and they conjectured a central limit theorem
for $\Cnt$ when $p > \f23$ (with a small variance leading to concentration),
as well as a lognormal limit theorem for $p = \f23$, with anticoncentration for $p < \f23$.
Further, their moment calculations allowed them to prove a central limit theorem for $\Cnt$ as long as $p=p_d$ tended to $1$ with $d$ at a certain rate. The behavior for a fixed $p < 1$ remained completely open.

In \cite[Theorem 1.1]{brcgw}, we addressed the conjectures of \cite{ks},
exhibiting explicit deterministic quantities $M_{d,p}$ and $S_{d,p}$ such that, for fixed $p \in (\f23, 1)$,
\begin{equation}
\label{eq:clt23}
    \fr{\Cnt - M_{d,p}}{S_{d,p}} \weakto \Nor{0}{1},
\end{equation}
and for $p = \f23$,
\begin{equation}
\label{eq:lognormal23}
    \fr{\Cnt}{M_{d,\f23}} \weakto e^{W^\cE / \sqrt{2}} + e^{W^\cO / \sqrt{2}},
\end{equation}
where $W^\cE, W^\cO \stackrel{i.i.d.}{\sim} \Nor{0}{1}$.
Here and in the remainder of this article, $\weakto$ denotes convergence in distribution as $d \to \infty$.

Both \eqref{eq:clt23} and \eqref{eq:lognormal23} derive from a more fundamental result of \cite{brcgw} which gives an
in-probability approximation for $\Cnt$ as follows, for $p \geq \f23$:
\begin{equation}
\label{eq:approxinprob}
    \fr{\Cnt}{2^{2^{d-1}}} \psim e^{\Psi^\cE} + e^{\Psi^\cO},
\end{equation}
where $\Psi^\cE$ and $\Psi^\cO$ are random variables which can be written explicitly in terms of the degrees of vertices in $\Qp$,
with each vertex contributing once in a sum of independent terms.
Here and in the rest of the article, $A \psim B$ means that $\fr{A}{B} \pto 1$, where $\pto$ denotes convergence in probability
with respect to the randomness of the percolation configuration, as $d \to \infty$.
Moreover, $\Psi^\cE$ and $\Psi^\cO$ satisfy a joint central limit theorem, converging after proper rescaling to a pair of
independent normal random variables.
The phase transition at $p = \f23$ is explained by the variance of $\Psi^\cE$ and $\Psi^\cO$ (which can easily be explicitly computed)
decreasing to $0$ for $p > \f23$, staying constant for $p = \f23$, and increasing to $\infty$ for $p < \f23$.

The approximation \eqref{eq:approxinprob} should be seen as an extension of \eqref{eq:ksleading} to $p < 1$, and at its core is the same fundamental
mechanism, where independent sets are close to the \emph{ground states} (uniformly random subsets of the even or odd side) with a small number
of defects.
The regime $p > \f23$ is exactly where the defects are still sparse enough that they can be
effectively approximated by sampling them \emph{with replacement}, as no repeat samples will arise
(via an analysis akin to the classical \emph{birthday problem}).
When $p \leq \f23$, this sampling scheme fails due to the presence of defect vertices with outsized probability of occurring, which
also leads to the failure of concentration and the central limit theorem \eqref{eq:clt23}.

In addition to results about the \emph{number} of independent sets, the approximation \eqref{eq:approxinprob} allowed 
to obtain various structural results about typical independent sets in $\Qp$, sampled uniformly \cite[Theorem 1.2 and Proposition 1.4]{brcgw}.
For $p \geq \f23$ this approximation scheme allowed us to conclude that the number of defect vertices in a uniformly random
independent set in $\Qp$ is approximately $\Pois(\f12(2-p)^d)$.

\subsection{Results}
\label{sec:intro_results}

In our previous work \cite{brcgw}, we focused on the case $p \geq \f23$ due to various technicalities that arise as $p$ decreases and the size of a typical
independent set increases.
However, one very interesting question asked by \cite{ks} was about the behavior of this model near $p = \f12$ in light of the natural emergence of the latter as an important point of phase transition for many other features of the percolated hypercube $\Qp$ such as the existence of isolated vertices,
existence of a perfect matching, and Hamiltonicity \cite{be, bo, bp, ch, ee}.

Motivated by the above, in the present article we introduce many new techniques which are organized via a convenient probabilistic framework for studying this problem.
We expect that this will be useful for continued study of this model for lower values of $p$, and that it may also provide insights into new ways to study other related problems as well.

Notably, as a consequence of our results, \textit{a posteriori}, it turns out that $p = \f12$ does not appear to be a point of phase transition for any qualitative behavior
about independent sets in $\Qp$, and we obtain essentially the same result just above $p = \f12$ as just below.
Instead, $p = \f12$ only corresponds to a change in the way we may present the formulas involved in our result (see Remark \ref{rmk:onehalf}).
Of more interest and relevance for the qualitative behavior of this model, as evidenced by our results to be presented below,
is the phase transition at $p = 2 - \sqrt{2} \approx 0.586$, below which the defects start exhibiting
nontrivial clustering behavior.
This leads to a loss of independence between defects, and as such, our analysis in this article depends crucially on the interactions between
nearby vertices in $\Qp$.

We are now in a position to state our first main result.
We remind the reader that here and throughout the rest of the article,
$A \psim B$ means that $\fr{A}{B} \pto 1$, where $\pto$ denotes convergence in probability (with respect to randomness of the percolation $\Qp$ of the hypercube)
as $d \to \infty$.
In addition, we will occasionally use phrases like ``$o(1)$-in-probability'' to describe a random variable $A$ with $A \pto 0$.
Further, when we say that a collection of random variables $\{ A_S \}_{S \in \mathcal{S}}$ indexed by some collection $\mathcal{S}$
is ``\emph{uniformly} $o(1)$-in-probability'' if there is some random variable $M \pto 0$ such that $|A_S| \leq M$ for all $S \in \mathcal{S}$.
Similarly, we will say that $A_S \psim B_S$ uniformly if $\fr{A_S}{B_S} - 1$ is uniformly $o(1)$-in-probability.

\begin{theorem}
\label{thm:main} 
There is some $\gamma > 0$ such that for $p > \fr{1}{2} - \gamma$ the following holds.
There are random variables $\Psi^\cE, \Psi^\cO$, which can be expressed explicitly as functions of
the percolation configuration, such that
\begin{equation}\label{eq:main_approx}
    \fr{\Cnt}{2^{2^{d-1}}} \psim e^{\Psi^\cE} + e^{\Psi^\cO}.
\end{equation}
Moreover, there are explicit deterministic quantities $\mu = \mu_{d,p}$ and $\sigma = \sigma_{d,p}$ (defined below)
such that
\begin{equation}\label{eq:main_clt}
    \Rnd{ \fr{\Psi^\cE - \mu}{\sigma}, \fr{\Psi^\cO - \mu}{\sigma} }
    \dto \Nor{0}{1} \otimes \Nor{0}{1}.
\end{equation}
\end{theorem}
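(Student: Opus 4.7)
My plan is to extend the polymer/cluster expansion framework of \cite{brcgw} to handle the nontrivial clustering of defects that arises when $p$ is small, using a new probabilistic analysis of the random polymer weights to provide the key convergence estimates.

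First, I decompose $\Cnt$ based on which parity side hosts the majority of vertices in each independent set. Writing $I^\cE$ for the number of independent sets in which odd vertices are the minority (below some threshold $t \approx 2^{d-1}/d$, say), and similarly $I^\cO$, I show via disordered container-type arguments that $\Cnt \psim I^\cE + I^\cO$, the count of independent sets with both sides ``large'' being exponentially smaller. By symmetry it suffices to analyze $I^\cE$. I encode each configuration contributing to $I^\cE$ as a collection of \emph{polymers} on the odd side: a polymer $\fp$ is a connected cluster of odd-side defect vertices together with its neighborhood on the even side, with weight $w(\fp) = 2^{-|N_\cE(\fp)|}$. This gives
$$\fr{I^\cE}{2^{|\cE|}} = \sum_{\text{compatible } \mathbf{P}} \prod_{\fp \in \mathbf{P}} w(\fp).$$

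Applying the abstract cluster expansion, $\log(I^\cE / 2^{|\cE|})$ is then a formal sum over clusters of pairwise-incompatible polymers, and I define $\Psi^\cE$ to be the truncation of this sum to clusters of bounded diameter (and similarly $\Psi^\cO$). The approximation \eqref{eq:main_approx} reduces to showing that (a) the full cluster sum converges in probability and (b) its tail beyond the truncation is $o_\P(1)$. This is where the main technical obstacle lies: verifying a Koteck\'y--Preiss-type condition in probability requires sharp tail estimates on $|N_\cE(\fp)|$ over the random graph $\Qp$, and below $p = 2 - \sqrt{2}$ one can no longer treat defects as essentially independent, because polymers of size $2$ or more occur with nonnegligible probability and must be analyzed at the level of the cluster structure of defects rather than via the single-defect approximation of \cite{brcgw}. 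The constraint $p > 1/2 - \gamma$ enters precisely at this stage: below this threshold, the random weights $w(\fp)$ no longer decay quickly enough for the expansion to converge via the techniques developed here.

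Finally, once $\Psi^\cE$ is recognized as a sum $\sum_{v \in \cO} X_v^\cE$ of local, essentially bounded-range contributions indexed by odd vertices, the CLT \eqref{eq:main_clt} follows by a Lindeberg--Feller / Stein's method argument for sums of weakly dependent random variables, with $\mu$ and $\sigma^2$ given by the explicit first and second moments of $X_v^\cE$. Joint convergence to a product of independent normals rests on the asymptotic decorrelation of $\Psi^\cE$ and $\Psi^\cO$: the variables $X_v^\cE$ for $v \in \cO$ and $X_u^\cO$ for $u \in \cE$ depend on essentially disjoint sets of percolation edges unless $v$ and $u$ are close in $\Q$, and the contribution of close pairs to the total covariance vanishes in the limit.
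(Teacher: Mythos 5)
Your plan diverges from the paper's proof at the crucial step, and the divergence is not merely stylistic. You propose to apply the abstract cluster expansion directly to the quenched random polymer partition function, define $\Psi^\cE$ as a truncation of that cluster sum, and verify a Koteck\'y--Preiss-type condition \emph{in probability}. The paper deliberately avoids this route: it explicitly states that it does not use the cluster expansion beyond the annealed input from \cite{ks} (which only feeds into the reduction $\Cnt/2^{2^{d-1}} \psim \polymerpf^\cE + \polymerpf^\cO$ via Markov's inequality). Instead, the paper manipulates the quenched partition functions $\polymerpf^\cH$ directly as random variables, splitting off the singleton part $\log\fakepolymerpf^\cH = \sum_v \log(1+\phi_v)$ and showing the dimer part contributes a multiplicative correction $e^{\Delta - \tilde{\Delta}}$, with no appeal to absolute convergence of a formal power series.

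The reason this matters is concrete. A Koteck\'y--Preiss condition is a \emph{worst-case} bound: one needs, uniformly over all polymers $\fp$, a suitable control on the total weighted mass of polymers incompatible with $\fp$. But the paper shows (Remark \ref{rmk:worstvsavg} and Section \ref{sec:dimers_firststrategy}) that precisely such worst-case bounds \emph{fail} once $p$ drops below roughly $0.548$: there are admissible singleton sets $S$ for which $\Adj(S)$ does not vanish. The paper gets around this via an average-case argument (Lemmas \ref{lem:seplowerbound} and \ref{lem:adjupperbound}), showing that such $S$ are exponentially unlikely under the sampling measure $\pi$ conditioned on $S \in \Sep_1$. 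A truncated-cluster-expansion plan has no natural place for this averaging step, so you would likely be stuck on the interval $p \in (\tfrac12 - \gamma, 0.548)$, which is exactly the regime your claimed theorem is supposed to cover. You have correctly identified that "verifying a Koteck\'y--Preiss-type condition in probability" is where the difficulty lives, but the resolution is not sharper tail estimates on $|N_\cE(\fp)|$; it is to abandon the uniform-over-polymers formulation and average over the random singleton set.

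A secondary issue: your $\Psi^\cE$, defined as a cluster sum truncated to bounded-diameter clusters, is structurally a sum over \emph{clusters}, not over vertices, while your CLT argument presumes a decomposition $\Psi^\cE = \sum_{v} X_v^\cE$ of essentially bounded-range vertex contributions. The paper's $\Psi^\cE = \log\fakepolymerpf^\cE + \Delta^\cE - \tilde{\Delta}^\cE$ has a clean such split: the first piece is literally a sum over vertices of independent terms (handled by the dependency-graph version of Stein's method), and the remaining dimer sums $\Delta^\cE, \tilde{\Delta}^\cE$ concentrate at scale $o(1)$ around deterministic constants that are absorbed into $\mu$. Your cluster-sum $\Psi^\cE$ would need to be rewritten to expose this same structure, which amounts to redoing the paper's algebra. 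Your heuristic for asymptotic decorrelation of $\Psi^\cE$ and $\Psi^\cO$ (close pairs contribute negligible covariance) is, however, correct and matches Lemma \ref{lem:cov}.
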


Let us quickly see some consequences of this theorem, for which it is helpful to know that the value of $\sigma$ in the statement is given by
\begin{equation}
\label{eq:sigmadefintro}
    \sigma^2 \coloneqq \fr{1}{2} \Rnd{2 - \fr{3}{2} p}^d.
\end{equation}
Notice that for $p > \f23$ we have $\sigma \to 0$, while if $p < \f23$ we have $\sigma \to \infty$.
The above suggests that for $p < \f23,$ as is the case for lognormal variables
with large fluctuations in the exponent, {the typical value of $\Cnt$ will be much smaller than its mean}. This can indeed be made precise using sharp estimates about the mean from \cite{ks}.
In addition, for much the same reason, there is no way to shift and scale $\Cnt$ by deterministic quantities to obtain a scaling limit for $p < \f23$.
However, we do obtain a scaling limit for the \emph{free energy}, which is defined as $\log \Cnt$:
from Theorem \ref{thm:main} and \eqref{eq:sigmadefintro} it is easy to see that
\begin{equation}
    \fr{\log \Cnt - \log 2^{2^{d-1}} - \mu}{\sigma} \dto \max\left\{ W^\cE , W^\cO \right\}
\end{equation}
for $\f12 - \gamma < p < \f23$,
where $W^\cE, W^\cO \stackrel{i.i.d.}{\sim} \Nor{0}{1}$.

We now briefly describe the variables $\Psi^\cE$ and $\Psi^\cO$ appearing in the statement of
Theorem \ref{thm:main}, as well as the deterministic quantity $\mu$ (the quantity $\sigma$ having just been defined in \eqref{eq:sigmadefintro} above).
{Further elaborations on the form of these quantities arising from relatively simple moment calculations are provided later in Section \ref{sec:clt}.}

Recall from the previous subsection that $\N(v)$ denotes the (random) number of neighbors of a single vertex $v$ in $\Qp$.
We will need to consider
sets containing two distinct vertices which are ``adjacent'' in the sense that they share a neighbor in the opposite side of the (unpercolated) hypercube. We will call a set of vertices 
\emph{2-linked} if they are connected through the above adjacency structure. 

Note that if two vertices share a neighbor in $\Q$, they must share exactly two neighbors as demonstrated by the following diagram.
\begin{equation}
\begin{tikzpicture}[node distance=8mm and 8mm, every edge/.style={bitedge, shorten >=0pt, shorten <=0pt}, baseline=(current bounding box.center)]
  \node[binnode, fill=gray!50] (00) {00};
  \node[binnode, right=of 00, fill=gray!50] (11) {11};
  \node[binnode, below=of 00, xshift=2mm, fill=white] (01) {01};
  \node[binnode, below=of 11, xshift=-2mm, fill=white] (10) {10};

  \begin{pgfonlayer}{background}
    \draw (00) -- (10);
    \draw (00) -- (01);
    \draw (10) -- (11);
    \draw (01) -- (11);
  \end{pgfonlayer}
\end{tikzpicture}
\label{dimerpicture}
\end{equation}
We call such subsets (e.g. the pair of shaded nodes in the top row above)
\emph{dimers}, and denote by $\Dim^\cE$ the collection of such subsets of the even side, and similarly for $\Dim^\cO$.
For a dimer $\{u,v\}$, we write $\N(\{u,v\})$ for the (random) number of vertices which neighbor either $u$ or $v$ in $\Qp$; note that $\N(\{u,v\}) \neq \N(u) + \N(v)$ in general.
With these definitions, we can write down the formula for $\Psi^\cE$, the formula for $\Psi^\cO$
being identical but with $\cE$ replaced by $\cO$ throughout.
\begin{equation}
\label{eq:psidefintro}
    \Psi^\cE \coloneqq \sum_{v \in \cE} \log\Rnd{1 + 2^{-\N(v)}}
    + \sum_{\{u,v\} \in \Dim^\cE} \Rnd{2^{-\N(\{u,v\})} - 2^{-\N(u)-\N(v)}}.
\end{equation}

Next, we turn to the deterministic quantity $\mu$.
Specifically, we first define
\begin{equation}
\label{eq:mu1defintro}
    \mu_1 \coloneqq \fr{1}{2} \Rnd{2 - p}^d - \fr{1}{4} \Rnd{2 - \fr{3}{2}p}^d + \fr{1}{6} \Rnd{2 - \fr{7}{4}p}^d.
\end{equation}
This is a proxy for the mean of the first sum in \eqref{eq:psidefintro}.
Additionally define
\begin{align}
\label{eq:mu2defintro}
    \mu_2 &\coloneqq \fr{d(d-1)}{4} \Fr{(2-p)^2}{2}^d \Fr{1+(1-p)^2}{2-p}^2, \\
\label{eq:mu2tildedefintro}
    \tilde{\mu}_2 &\coloneqq \fr{d(d-1)}{4} \Fr{(2-p)^2}{2}^d.
\end{align}
Then $\mu_2 - \tilde{\mu}_2$ is the mean of the second sum in \eqref{eq:psidefintro}.
So, we set
\begin{equation}
\label{eq:mudefintro}
    \mu \coloneqq \mu_1 + \mu_2 - \tilde{\mu}_2.
\end{equation}
A more complete understanding of these quantities is presented later in Section \ref{sec:clt}.

As mentioned already, our arguments also lead to a sampling scheme which is presented next.
For convenience, we allow the algorithm to fail at various points, but we will show that it fails with very low probability.
Additionally, here and throughout the paper we blur the lines between a collection of singletons and dimers considered as separate polymers,
and the union of such a collection (which is just a set of vertices), as this should not present a scope for confusion.

\begin{definition}[$\AS$]
\label{def:approxsampler_intro}
	Define the random independent set $\hat{I}$ in $\Qp$ via the following procedure.
	\begin{enumerate}
		\item Let $\cH$ be $\cE$ or $\cO$, with probabilities proportional to $e^{\Psi^\cE}$ and $e^{\Psi^\cO}$ respectively.
		\item Let $\tilde{S}$ denote a subset of $\cH$ which includes each $v \in \cH$ independently with probability
		$\fr{2^{-\N(v)}}{1 + 2^{-\N(v)}}$.
		\item If $\tilde{S}$ contains any triple of vertices which is \emph{2-linked},
        terminate the algorithm with a result of failure.
		Otherwise, remove all dimers from $\tilde{S}$ and let $S_1$ denote the resulting set of well-separated singletons (which share no neighbors).
		\item Let $S_2$ denote a collection of dimers constructed by including each $\{u,v\} \in \Dim^\cH$ independently with probability
		$\fr{2^{-\N(\{u,v\})}}{1 + 2^{-\N(\{u,v\})}}$.
		\item If any neighbor of a dimer in $S_2$ also neighbors a vertex in $S_1$, {or if any pair of dimers in $S_2$ overlap or share a neighbor,}
        terminate the algorithm with a result of failure.
		Otherwise, let $\hat{S} = S_1 \cup S_2$.
		\item Let $\hat{I}$ denote the independent set with $\hat{I} \cap \cH = \hat{S}$ and with every vertex in $\Q \setminus \cH$
		which does not neighbor $\hat{S}$ included independently with probability $\fr{1}{2}$ each.
	\end{enumerate}
\end{definition}

Note that there are two levels of randomness here as $\AS$ is a \emph{random algorithm} even when the disorder of the percolation configuration $\Qp$
is fixed, which is often termed as quenching the disorder.
For simplicity of notation, we will refer to both levels of randomness using $\P$, but which measure is being referred to will be clear from context. 
This convention is in full force in the following statement of our second main result, which states that $\AS$ gives a good approximation for a uniformly random
independent set in $\Qp$.

\begin{theorem}
\label{thm:sampler}
Let $p > \f12 - \gamma$, where $\gamma > 0$ is as in Theorem \ref{thm:main}.
Then
\begin{equation}
    \P[\AS \text{ fails}] \pto 0.
\end{equation}
Additionally, if $\hat{I}$ denotes the output of $\AS$ conditioned on not failing and $I$ denotes a uniformly random independent set in $\Qp$,
then $I$ and $\hat{I}$ may be coupled so that
\begin{equation}
    \P[I \neq \hat{I}] \pto 0.
\end{equation}
\end{theorem}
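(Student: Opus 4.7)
The plan is to give both the uniformly random independent set $I$ and the sampler output $\hat I$ a common polymer description, and to show that the resulting measures on polymer configurations match up to $o(1)$-in-probability total variation error, at which point the coupling is automatic.

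First, a uniform $I$ in $\Qp$ admits a standard polymer sampling: choose a ``defect side'' $\cH \in \{\cE, \cO\}$ with probability proportional to its partial partition function, then choose defects $D \subseteq \cH$ with weight equal to the number of independent-set completions on $\cH^c$, and finally fill in the non-neighbors of $D$ in $\cH^c$ by independent Bernoulli$(\tfrac{1}{2})$. Theorem \ref{thm:main} identifies the overall normalization as $2^{2^{d-1}}(e^{\Psi^\cE} + e^{\Psi^\cO})$, so the probability of selecting side $\cH$ matches step 1 of $\AS$ up to $o(1)$ in probability. Decomposing $D$ into its 2-linked clusters (polymers), the central technical input (developed earlier in the paper) is that for $p > \tfrac{1}{2} - \gamma$, the polymer partition function is dominated by configurations consisting only of singletons and dimers: polymers of size $\geq 3$ and overlaps within $D$ (two dimers sharing a vertex or a neighbor, etc.) together contribute cumulative weight that is $o(1)$-in-probability. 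Hence conditioning on $D$ being a disjoint union of singletons $S_1$ and dimers $S_2$, with no overlaps, introduces only $o(1)$ total-variation error, and the conditional density is
\[
    \pi(S_1, S_2) \propto \prod_{v \in S_1} 2^{-\N(v)} \cdot \prod_{\{u,v\} \in S_2} 2^{-\N(\{u,v\})}.
\]

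Second, I would compute the distribution of $(S_1, S_2)$ produced by $\AS$. Tracing through the steps and integrating out any ``phantom'' 2-linked pairs in $\tilde S$ that get discarded in step 3, the joint density is proportional to $\prod_{v \in S_1} 2^{-\N(v)} \cdot \prod_{\{u,v\} \in S_2} 2^{-\N(\{u,v\})}$, with normalization
\[
    \prod_{v \in \cH} (1+2^{-\N(v)})^{-1} \cdot \prod_{\{u,v\} \in \Dim^\cH} (1+2^{-\N(\{u,v\})})^{-1} \cdot \exp\Rnd{\sum_{\{u,v\} \in \Dim^\cH} 2^{-\N(u) - \N(v)}} \approx e^{-\Psi^\cH},
\]
where the exponential factor arises precisely from the phantom-dimer integration, and the subtraction $-2^{-\N(u) - \N(v)}$ appearing in the definition of $\Psi^\cH$ is exactly what compensates for this overcounting. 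Thus, on the event of no failure, $\AS$ samples $(S_1, S_2)$ from the same $\pi$ up to an $o(1)$ relative error.

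Third, the failure modes of $\AS$ (2-linked triples in $\tilde S$, overlapping dimers in $S_2$, and singleton-dimer collisions) each correspond to polymer-compatibility violations whose probabilities were already controlled in the first step, so $\P[\AS \text{ fails}] \pto 0$. Since both $I$ and $\hat I$ fill the bulk side via an independent uniform subset of the non-neighbors of $\hat S$, any coupling of the defect triples $(\cH, S_1, S_2)$ extends to a coupling of the full independent sets; applying the coupling inequality to the total-variation bounds above yields $\P[I \neq \hat I] \pto 0$. The main obstacle is the large-polymer suppression in the first step: for $p$ close to $\tfrac{1}{2}$, dimers proliferate and the weight of size-$\geq 3$ polymers becomes only barely subcritical, so sharply controlling their cumulative contribution is the heart of the matter and is the role of the new probabilistic framework developed in this paper; the rest of the argument is a careful bookkeeping exercise around the matching of $\pi$ with the $\AS$ density.
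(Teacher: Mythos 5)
Your overall architecture --- select the defect side, show the defect distributions match, and extend the coupling to the full independent set by filling in the bulk side --- follows the paper. However, the middle step, which you present as a bookkeeping exercise, is where the actual difficulty lies, and your density calculation has a gap that corresponds exactly to the hardest part of the paper's argument.

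Tracing the sampler precisely: the probability that step 3 yields a particular $S_1$ (given it does not fail) involves summing the weights of all ``phantom'' dimer configurations in $\tilde S$, but these phantoms are constrained to lie in $\Cpt_2(S_1)$ (else $\tilde S$ would contain a $2$-linked triple). This sum is not $\approx e^{\tilde\Delta}$ as you claim; by Lemma \ref{lem:dimer_sep} it is $\psim e^{\tilde\Delta - \tAdj(S_1)}$, uniformly in $S_1$. Similarly, the normalization for $S_2$ in step 5 is $\psim e^{\Delta - \Adj(S_1)}$, not $e^\Delta$. The net effect is that the density ratio between $\P[\hat S = S_1 \cup S_2]$ and $\P[S = S_1 \cup S_2]$ is $\psim e^{\Adj(S_1) - \tAdj(S_1)}$. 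This is \emph{not} uniformly $(1+o(1))$; for worst-case $S_1 \in \Sep_1$ with large $\Adj(S_1)$ the ratio can be bounded away from $1$, and indeed the paper's discussion in Section \ref{sec:dimers_firststrategy} shows that a worst-case bound on $\Adj(S_1)$ only yields $o(1)$ down to $p \approx 0.548$, well above $\tfrac12 - \gamma$. Your claim of ``$o(1)$ relative error'' is therefore unjustified in the regime the theorem covers, and the constant normalization $e^{-\Psi^\cH}$ you write down does not reproduce the sampler's density pointwise.

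What rescues the argument is that the total variation distance averages over $S_1$: summing $S_2$ out of the discrepancy yields $\E_\pi\bigl[\lvert e^{-\Adj(S_1)} - e^{-\tAdj(S_1)}\rvert \mid S_1 \in \Sep_1\bigr] \leq \E_\pi\bigl[\lvert\Adj(S_1) - \tAdj(S_1)\rvert \mid S_1 \in \Sep_1\bigr]$, and this vanishes because under the $\pi$-conditional law, $\Adj(S_1)$ (hence also $\tAdj(S_1) \leq \Adj(S_1)$) is small for \emph{typical} $S_1$ --- precisely the content of Lemmas \ref{lem:seplowerbound} and \ref{lem:adjupperbound} and the average-case analysis of Section \ref{sec:dimers_adj}. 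The same average-case bound is also what controls the step-5 failure probability (singleton--dimer collisions); contrary to your sketch, it is not covered by the large-polymer suppression of Propositions \ref{prop:smallpolymers} and \ref{prop:smallpolymers_full}, which are comparatively routine. In short, the heart of the matter is the decoupling of singletons and dimers via the average-case control of $\Adj$, and your proposal asserts the conclusion of that step without supplying the argument.
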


\subsection{Future directions}

As evidenced from the statement of our results in the previous subsection, the relevant ``defects'' of independent sets
in the parameter regime we consider are all either single vertices or dimers, consisting of two nearby vertices (which
share a neighbor in the non-defect side).
However, as $p$ decreases even further and typical independent sets get larger, the defect side will contain
more and more \emph{polymers} of higher order, {which are clusters of defect vertices that are \emph{2-linked}}.
Specifically, for $p \leq 2 - 2^{2/3} \approx 0.413$, size-three polymers (trimers) begin to appear, and in general order-$k$
polymers begin to appear for $p \leq 2 - 2^{(k-1)/k}$.

Broadly speaking, we introduce a framework for separating out the effect of singletons and dimers, and for analyzing their
interactions. This strategy of separating polymers of different sizes and estimating their interactions ought to yield dividends even while studying lower values of $p$ where higher-order polymers are relevant.
However, as the order of polymers increases, so does the number of possible shapes that the polymers form.
For instance, while every dimer looks essentially the same, geometrically speaking, as the pair of shaded vertices in top row of diagram \eqref{dimerpicture},
there are already two different nonisomorphic types of trimers, which look like the shaded vertices in the top row of either
\begin{center}
\begin{tikzpicture}[node distance=8mm and 8mm, every edge/.style={bitedge, shorten >=0pt, shorten <=0pt}, baseline=(current bounding box.center)]
  \node[binnode, fill=gray!50] (101) {101};
  \node[binnode, left=of 101, fill=gray!50] (110) {110};
  \node[binnode, right=of 101, fill=gray!50] (011) {011};
  \node[binnode, below=of 110, xshift=-5mm, fill=white] (100) {100};
  \node[binnode, below=of 101, xshift=-9mm, fill=white] (010) {010};
  \node[binnode, below=of 011, xshift=-13mm, fill=white] (001) {001};
  \node[binnode, below=of 011, xshift=5mm, fill=white] (111) {111};

  \begin{pgfonlayer}{background}
    \draw (101) -- (100);
    \draw (101) -- (001);
    \draw (110) -- (100);
    \draw (110) -- (010);
    \draw (011) -- (010);
    \draw (011) -- (001);
    \draw (110) -- (111);
    \draw (101) -- (111);
    \draw (011) -- (111);
  \end{pgfonlayer}
\end{tikzpicture}
\qquad
or
\qquad
\begin{tikzpicture}[node distance=8mm and 8mm, every edge/.style={bitedge, shorten >=0pt, shorten <=0pt}, baseline=(current bounding box.center)]
  \node[binnode, fill=gray!50] (0000) {0000};
  \node[binnode, left=of 0000, fill=gray!50] (1100) {1100};
  \node[binnode, right=of 0000, fill=gray!50] (0011) {0011};
  \node[binnode, below=of 1100, xshift=-2mm, fill=white] (1000) {1000};
  \node[binnode, below=of 0000, xshift=-7mm, fill=white] (0100) {0100};
  \node[binnode, below=of 0000, xshift=7mm, fill=white] (0010) {0010};
  \node[binnode, below=of 0011, xshift=2mm, fill=white] (0001) {0001};

  \begin{pgfonlayer}{background}
    \draw (0000) -- (1000);
    \draw (0000) -- (0100);
    \draw (0000) -- (0010);
    \draw (0000) -- (0001);
    \draw (1100) -- (1000);
    \draw (1100) -- (0100);
    \draw (0011) -- (0010);
    \draw (0011) -- (0001);
  \end{pgfonlayer}
\end{tikzpicture};
\end{center}
i.e.\ the induced adjacency structure of the ``sharing a neighbor'' relation is either a triangle or a path.
For larger polymers there are even more different nonisomorphic types with increasingly intricate adjacency structures between the vertices.
As may be predicted, then, as $p$ decreases to $0$ the number of different relevant terms in sums of the form \eqref{eq:psidefintro} will increase
rapidly.

In addition, even before trimers appear (which occurs for $p \leq 2 - 2^{2/3}$), the interactions between singletons and dimers become relevant.
Indeed, our proof in the present work aims to cover the complete range of $p$ for which the singletons and dimers behave roughly independently,
but not the full range of $p > 2 - 2^{2/3}$.
To be more specific, although the threshold in our theorem is only written as $\f12 - \gamma$ for some $\gamma > 0$,
one may check, numerically or otherwise, that $\f12 - \gamma \approx 0.465$,
which is strictly greater than $2 - 2^{2/3} \approx 0.413$.
{Although we do not prove it, we do expect that this range of $p$ is optimal, in the sense that for $p \in (2 - 2^{2/3}, \f12 - \gamma]$,  the singletons and dimers will fail to be independent. It remains an interesting problem to capture the precise nature of the interaction. }

\subsection{Acknowledgements}
SG was partially supported by NSF Career grant-1945172. 
VW was partially supported by the NSF Graduate Research Fellowship grant DGE 2146752.  SG
learnt about this problem from Gal Kronenberg’s talk on \cite{ks} while attending the workshop titled
‘Bootstrap Percolation and its Applications’ at Banff in April 2024. He thanks the speaker as well
as the organizers.

\section{Idea and proof of main result}
\label{sec:iop}
In this section we provide a brief overview of the main ideas.

As has already been emphasized a few times, a key observation going back to \cite{ksbinary}
is that each independent set can be separated into its intersections with the even side
and odd side of the hypercube, yielding a convenient representation for $\Cnt$.

Since there are no edges between even vertices, every subset $S \sse \cE$ is a valid choice for the even side of
an independent set.
For any $S \sse \cE$, the number of independent sets $I \sse \Qp$ with $I \cap \cE = S$ can be written explicitly as
\begin{equation}
\label{eq:numindS}
    2^{2^{d-1}-\N(S)},
\end{equation}
where $\N(S)$ denotes the number of vertices in $\Odd$ which are neighbors of $S$ in $\Qp$.
Indeed, any non-neighbor of $S$ can be included in such an independent set, and since there are no edges between odd
vertices, any subset $T$ of the odd non-neighbors of $S$ yields a valid independent set $I = S \cup T$.
There are $2^{d-1} - \N(S)$ such odd non-neighbors of $S$, yielding the formula \eqref{eq:numindS}.
This yields the exact representation
\begin{equation}
\label{eq:exactrep}
    \fr{\Cnt}{2^{2^{d-1}}} = \sum_{S \sse \cE} 2^{-\N(S)}
\end{equation}
for the total number of independent sets in $\Qp$.

Now, for well-separated sets $S \sse \cE$, where no vertices of $S$ share a neighbor in $\cO$ (even in the unpercolated hypercube $\Q$),
the expression $2^{-\N(S)}$ factorizes as a product of contributions from each individual vertex:
\begin{equation}
\label{eq:factorization}
    2^{-\N(S)} = \prod_{v \in S} 2^{-\N(v)},
\end{equation}
since the neighborhoods of $v \in S$ do not overlap.
Moreover, the variables $2^{-\N(v)}$ depend on disjoint subsets of the edges in the percolation configuration,
meaning that the factors of \eqref{eq:factorization} are independent (in fact, the entire family of random variables $\{ \N(v) : v \in \cE \}$ is mutually independent,
and similarly for $\cO$).

Of course, there are many sets $S \sse \cE$ which are \emph{not well-separated}, meaning that vertices of $S$
share neighbors in $\cO$ (before percolating).
For general sets $S$, one could instead form a factorization like \eqref{eq:factorization} but where the factors correspond
to \emph{subsets} of $S$ which are well-separated from each other.
However, the complexity of such factorizations quickly spirals out of control when $S$ is large, since the form
of each factor depends on the internal structure of the corresponding subset.

A key feature of this model which allows us to overcome this issue is that when $p$ is bounded away from $0$,
the model is effectively in a \emph{very low-temperature regime}.
More precisely, most independent sets $I$ are very close to one of the two ``ground states'' meaning that
\emph{exactly one of $I \cap \cE$ and $I \cap \cO$ is large}, and the other is very small in comparsion.

This allows us to work separately with two classes of independent sets, those with small even side and those with
small odd side.
If $S \sse \cE$ is small, we can decompose it as a union of \emph{small} subsets which are well-separated from
each other, which are easier to categorize.
Our approach then relies on analyzing the contributions of these small separated subsets.

In the next few sections, we briefly describe the ingredients and then combine them to prove Theorem \ref{thm:main}.

\subsection{Polymer models}
\label{sec:iop_polymer}

A convenient framework for reasoning about a set $S$ as a union of small subsets which are well-separated from each other
is provided by the language of \emph{polymer models}.
Broadly speaking, a polymer model is a probability distribution on \emph{polymer configurations}, which are collections of
small units called \emph{polymers} satisfying a compatibility condition.
The defining feature is that the probability of a polymer configuration is proportional to a product of given \emph{weights},
one for each polymer in the configuration.

In our case, polymers will be subsets $\fp \sse \cE$ (or $\cO$) {which are \emph{2-linked}, recalling that it means that they are
connected under the relation of sharing a common neighbor in $\Q$.}
We also require that polymers are \emph{small} in a sense to be defined later, in Section \ref{sec:polymerdecomp}.
Two polymers are said to be \emph{compatible} if their neighborhoods in $\Q$ do not intersect.
We denote by $\Sep^\cE$ the set of configurations of pairwise-compatible polymers $\fp \sse \cE$, and similarly for $\cO$
($\Sep$ stands for polymer-decomposable).
The weight of each polymer $\fp$ is given by 
\begin{equation}
\label{eq:phidef}
    \phi_\fp \coloneqq 2^{-\N(\fp)},
\end{equation}
{so that the weight of a configuration of compatible polymers $S \in \Sep^\cE$ is}
\begin{equation}
    \prod_{\fp \in S} \phi_\fp = \prod_{\fp \in S} 2^{-\N(\fp)},
\end{equation}
which is the same as the weight of $S$ in the sum \eqref{eq:exactrep}, when it is considered as a \emph{union} of its
polymers (the subtle distinction between a \emph{configuration (set) of polymers} and a set of vertices as a \emph{union of polymers}
will be ignored as it causes no confusion).
The partition function (normalization constant) for this polymer model is
\begin{equation}
\label{eq:polymerpfdef}
    \polymerpf^\cE \coloneqq \sum_{S \in \Sep^\cE} \prod_{\fp \in S} \phi_\fp,
\end{equation}
and similarly there is a partition function $\polymerpf^\cO$ on the odd side.
The first step of our proof, as in various previous works on independent sets \cite{brcgw,jp,ks},
is to reduce the study of $\Cnt$ to the study of these polymer model partition functions.

\begin{proposition}
\label{prop:polymerdecomp_iop}
For $p \in (0,1)$,
\begin{equation}
    \fr{\Cnt}{2^{2^{d-1}}} \psim \polymerpf^\cE + \polymerpf^\cO.
\end{equation}
\end{proposition}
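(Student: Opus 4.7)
The plan is to begin from the exact identity
\[
\frac{\Cnt}{2^{2^{d-1}}} = \sum_{E \sse \cE} 2^{-\N(E)},
\]
which was derived as \eqref{eq:exactrep} in the paper, and to note that by the mirror parameterization of each independent set by its odd half the same sum equals $\sum_{O \sse \cO} 2^{-\N(O)}$. Writing $E = I \cap \cE$ and $O = I \cap \cO$ for a generic independent set $I$, I would partition the independent sets of $\Qp$ into two ``ground-state'' classes: those with $E \in \Sep^\cE$ (odd is the ground state, small defects on the even side) and those with $O \in \Sep^\cO$ (even is the ground state). In the low-temperature regime considered here, essentially every independent set belongs to exactly one of these two classes, with only a negligible number of sets being ``mixed'' (belonging to neither) or ``doubly-small'' (belonging to both).

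For the odd-ground-state contribution, the key observation is that distinct 2-connected components of $E \in \Sep^\cE$ must have pairwise-disjoint $\Q$-neighborhoods: if two components shared a $\Q$-neighbor, they would be 2-linked and hence merge. Consequently $\N(E) = \sum_{\fp \in S} \N(\fp)$ factorizes for $E = \bigcup_{\fp \in S} \fp$, and summing the contribution $2^{2^{d-1} - \N(E)}$ over $E \in \Sep^\cE$ produces exactly $2^{2^{d-1}} \polymerpf^\cE$. The even-ground-state contribution yields $2^{2^{d-1}} \polymerpf^\cO$ by the symmetric parameterization by $O$. The proposition then reduces to controlling the mixed and doubly-small contributions.

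The doubly-small piece is easy: both $E \in \Sep^\cE$ and $O \in \Sep^\cO$ force $|I| \leq K$ for some polymer-size-related threshold $K \ll 2^{d-1}$, so the number of such $I$ is at most $2^{O(dK)}$, doubly-exponentially smaller than $2^{2^{d-1}}$. The principal obstacle, however, is the mixed contribution, where neither side decomposes into small polymers. Here one must invoke a Sapozhenko-type graph-container argument: if $E$ contains a 2-connected component above the polymer cutoff, isoperimetric estimates on $\Q$ force $\N(E)$ to be large enough that the suppression $2^{-\N(E)}$ dominates the enumeration of such ``bad'' sets, and the analogous conclusion holds after the dual parameterization by $O$. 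Variants of precisely this estimate already appear for this model in \cite{ks, jp, brcgw} (and for $p=1$ in the classical work of Sapozhenko), and suitably adapted versions covering the lower values of $p$ treated in the present paper show that the mixed contribution to $\Cnt/2^{2^{d-1}}$ is $o(\polymerpf^\cE + \polymerpf^\cO)$ in probability. Combining with the two ground-state identities then delivers the asymptotic decomposition $\Cnt/2^{2^{d-1}} \psim \polymerpf^\cE + \polymerpf^\cO$.
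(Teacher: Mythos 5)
Your overall decomposition into ``exactly one side decomposable,'' ``both sides decomposable,'' and ``neither side decomposable'' correctly identifies the bookkeeping one must do, and your invocation of graph containers to handle the last class is the right tool in spirit (it is precisely what underlies Lemma \ref{lem:toobig}, quoted from \cite{ks}). However, there are two genuine gaps.

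First, the claim that $I \cap \cE \in \Sep^\cE$ \emph{and} $I \cap \cO \in \Sep^\cO$ forces $|I| \leq K$ for some $K \ll 2^{d-1}$ is false, and the ``$2^{O(dK)}$'' count that follows is a non sequitur. Being polymer-decomposable in the sense of Definition \ref{def:polymers} only requires each $2$-linked component $\fp$ of the set to satisfy $|[\fp]| \leq \tfrac{3}{4} 2^{d-1}$ --- a single polymer may have size comparable to $2^{d-1}$, and a configuration may consist of many compatible polymers, so the union is not uniformly small. The paper does \emph{not} bound the doubly-decomposable class directly. Instead it observes the exact identity
\begin{equation}
\TooBig - 2^{2^{d-1}} \cX = \#\{\text{independent sets decomposable on \emph{both} sides}\} \geq 0,
\end{equation}
and uses this nonnegativity together with the annealed bounds on $\E[\cX]$ (Corollary \ref{cor:polymermodel}) and $\E[\TooBig]$ (Corollary \ref{cor:toobig}) to control everything at once, without ever producing a separate bound on the double count.

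Second, even granting container estimates for the mixed class, you only obtain an \emph{annealed} bound (on the expectation), whereas the proposition asserts an in-probability statement in which both the error $\cX$ and the normalizing quantity $\Cnt$ are random, and $\cX$ is not sign-definite. The paper's proof spends most of its effort on exactly this passage: it uses the deterministic a priori bound $\Cnt \geq 2 \cdot 2^{2^{d-1}}$, combined with the asymptotic $\E[\Cnt] = 2^{2^{d-1}} \cdot 2 \cdot \Exp{\tfrac12(2-p)^d(1+o(1))}$, to obtain a pointwise lower bound $\Cnt \geq \Exp{-\tfrac12(2-p)^d(1+o(1))} \E[\Cnt]$, and then applies Markov's inequality to the \emph{nonnegative} quantity $\TooBig - 2^{2^{d-1}}\cX$. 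Without this step, a bound on $\E|\cX|$ or $\E[\TooBig]$ does not translate into the asserted $\cX / (\Cnt/2^{2^{d-1}}) \pto 0$. Your proposal does not address this annealed-to-quenched conversion.
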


This proposition is a straightforward consequence of one the main conclusions of \cite{ks} and was already presented in our previous work \cite[Lemma 4.8]{brcgw}, but in the present article, specifically as
Proposition \ref{prop:polymerdecomp} in Section \ref{sec:polymerdecomp} below, we present a somewhat more streamlined proof
than has previously appeared.
The key input is an \emph{annealed version} of the statement from \cite{ks}, where it (along with various extensions) was used to derive
arbitrarily accurate estimates of the \emph{moments} of $\Cnt$ via the \emph{cluster expansion} of polymer models,
which will be discussed briefly in Section \ref{sec:polymerdecomp}.
However, it is worth emphasizing that the present work does not make use of the cluster expansion beyond this annealed input;
instead we analyze the quenched polymer model partition functions directly.

Due to the structural similarity between Proposition \ref{prop:polymerdecomp_iop} and our goal, which is Theorem \ref{thm:main},
it may be tempting to simply define $\Psi^\cE = \log \polymerpf^\cE$ and similarly for $\cO$.
However, these quantities are not easy to understand a priori, letting alone their joint distribution, and we will instead obtain a good approximation
of $\log \polymerpf^\cE$ by a simpler and more explicit quantity (already spelled out in \eqref{eq:psidefintro})
which is more amenable to analysis.

\subsection{Main step: reducing to a simpler polymer model}

In this subsection we do not discuss the joint behavior of $\polymerpf^\cE$ and $\polymerpf^\cO$, deciding to
focus only on $\polymerpf^\cE$.
Everything in this section also goes through with $\cE$ replaced by $\cO$.

In order to analyze $\polymerpf^\cE$, a central observable will be  the partition function of
a simpler polymer model under which \emph{each vertex behaves independently}.
It is worth pointing out that this simpler partition function already serves as a strong proxy for  $\polymerpf^\cE$ if $p > 2-\sqrt 2 \approx 0.586$ but not beyond. Thus the simpler object does not by itself suffice to approximate $\polymerpf^\cE$. Rather, it serves as an important base model to compare to. This quantitative comparison for $p \leq 2 - \sqrt{2}$ is conceptually one of the major contributions of the present work.

To define the simpler model, we replace $\phi_\fp = 2^{-\N(\fp)}$ by
\begin{equation}
\label{eq:fakephidef}
    \fphi_\fp \coloneqq \prod_{v \in \fp} \phi_v,
\end{equation}
where again $\phi_v = 2^{-\N(v)}$.
This means the weight of a polymer configuration $S$ is simply
\begin{equation}
    \prod_{\fp \in S} \fphi_\fp = \prod_{\fp \in S} \prod_{v \in \fp} \phi_v = \prod_{v \in S} \phi_v
\end{equation}
(recall from the previous subsection our identification of a \emph{set} of compatible polymers and the \emph{union} of those same polymers).
We also remove the restriction that polymers be small in this polymer model, so that \emph{any} set $S \sse \cE$
is given the above weight.
The partition function of this model is thus
\begin{equation}
\label{eq:fakepolymerpfdef}
    \fakepolymerpf^\cE \coloneqq \sum_{S \sse \cE} \prod_{v \in S} \phi_v = \prod_{v \in \cE} (1 + \phi_v).
\end{equation}
In other words, to sample a set $S \sse \cE$ from this polymer model, one includes each vertex $v \in \cE$ independently
with probability $\fr{\phi_v}{1 + \phi_v}$.
The simplicity of the formula for $\fakepolymerpf^\cE$ makes it amenable to analysis. 

In light of the above, the key innovation in this paper is to develop a characterization of the ratio $\polymerpf^\cE / \fakepolymerpf^\cE$.

The first step in this analysis is to reduce both $\polymerpf^\cE$ and $\fakepolymerpf^\cE$ by eliminating terms 
with polymers which are too large.
To this end, for each $k \in \mathbb{N}$ define $\Sep^\cE_{\leq k}$ to be the set of configurations of pairwise-compatible
polymers $\fp \sse \cE$ with at most $k$ vertices, and define
\begin{equation}
\label{eq:smallpolymerpfdef}
    \polymerpf^\cE_{\leq k} \coloneqq \sum_{S \in \Sep^\cE_{\leq k}} \prod_{\fp \in S} \phi_\fp,
    \qquad \text{and} \qquad
    \fakepolymerpf^\cE_{\leq k} \coloneqq \sum_{S \in \Sep^\cE_{\leq k}} \prod_{\fp \in S} \fphi_\fp.
\end{equation}

As has already been indicated, in the regime of $p$ addressed in this article, polymers of sizes more than $2$ do not appear,
which is formalized by the following proposition.

\begin{proposition}
\label{prop:smallpolymers_iop}
For $p > 2 - 2^{2/3} \approx 0.413$,
\begin{equation}
    \polymerpf^\cE \psim \polymerpf^\cE_{\leq 2}
    \qquad \text{and} \qquad
    \fakepolymerpf^\cE \psim \fakepolymerpf^\cE_{\leq 2}.
\end{equation}
\end{proposition}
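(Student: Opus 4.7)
The plan is to show that in the regime $p > 2 - 2^{2/3}$, polymers of size at least three contribute negligibly to both partition functions. More concretely, I will bound each of the ratios $(\polymerpf^\cE - \polymerpf^\cE_{\leq 2})/\polymerpf^\cE$ and $(\fakepolymerpf^\cE - \fakepolymerpf^\cE_{\leq 2})/\fakepolymerpf^\cE$ by a first-moment quantity in the percolation that decays to zero, and then invoke Markov's inequality to pass to convergence in probability.

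For the first reduction, the Gibbs measure associated to $\polymerpf^\cE$ assigns any fixed polymer $\fp$ an inclusion probability at most $\phi_\fp$ (by the standard self-normalizing bound coming from the definition of the partition function), so a union bound yields
\[
\frac{\polymerpf^\cE - \polymerpf^\cE_{\leq 2}}{\polymerpf^\cE} \;\leq\; \sum_{\fp \,:\, |\fp| \geq 3} \phi_\fp .
\]
For the fake partition function, since $\fakepolymerpf^\cE = \prod_{v \in \cE}(1+\phi_v)$ corresponds to the product measure that includes each $v$ independently with probability $\phi_v/(1+\phi_v)$, the deficit $\fakepolymerpf^\cE - \fakepolymerpf^\cE_{\leq 2}$ is the weight of those subsets whose 2-linked decomposition contains some component of size $\geq 3$; a union bound over 2-linked triples $T \subseteq \cE$ then gives
\[
\frac{\fakepolymerpf^\cE - \fakepolymerpf^\cE_{\leq 2}}{\fakepolymerpf^\cE} \;\leq\; \sum_{T} \prod_{v \in T} \phi_v .
\]

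To estimate these random sums in expectation over the percolation, I would exploit that for distinct $u, v \in \cE$ the variables $\N(u), \N(v)$ depend on disjoint edge sets, so $\{\phi_v : v \in \cE\}$ is an independent family with $\E[\phi_v] = ((2-p)/2)^d$; in particular $\E\bigl[\prod_{v \in T}\phi_v\bigr] = ((2-p)/2)^{3d}$ for each 2-linked triple $T$. For a general polymer $\fp$ of size $k$, decomposing $\N(\fp) = \sum_{z \in N_\Q(\fp)} X_z$ with independent $X_z \sim \Ber(1 - (1-p)^{k_z})$ (where $k_z$ counts the vertices of $\fp$ adjacent to $z$ in $\Q$) gives
\[
\E[\phi_\fp] = \prod_{z \in N_\Q(\fp)} \frac{1 + (1-p)^{k_z}}{2} \;\leq\; \left(\frac{2-p}{2}\right)^{|N_\Q(\fp)|} \;\leq\; \left(\frac{2}{2-p}\right)^{k(k-1)} \left(\frac{2-p}{2}\right)^{kd},
\]
using the hypercube fact that any two distinct vertices share at most two common neighbors, so $|N_\Q(\fp)| \geq kd - k(k-1)$. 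Combined with the elementary bound that the number of 2-linked subsets of $\cE$ of size $k$ is at most $2^{d-1} \cdot d^{O(k)}$, the expected value of the $k$th summand is at most $d^{O(k)} \cdot ((2-p)^k / 2^{k-1})^d$, up to a constant depending only on $p$ and $k$.

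For $k = 3$ this specializes to $d^{O(1)} \cdot ((2-p)^3/4)^d$, which decays to zero iff $(2-p)^3 < 4$, i.e.\ $p > 2 - 2^{2/3}$. Since the base $(2-p)^k/2^{k-1} = 2\bigl((2-p)/2\bigr)^k$ is strictly decreasing in $k$, the $k = 3$ term is the most restrictive, and summing over the (bounded) range of $k$ allowed by the ``small'' polymer restriction preserves the decay. Markov's inequality then yields that both random sums $\pto 0$, from which the proposition follows. The main technical nuisance is that $\E[\phi_\fp]$ depends on the precise incidence structure of $\fp$ with its neighbors (i.e.\ on the multiset $\{k_z\}$), but this is dispatched uniformly by the crude bound $|N_\Q(\fp)| \geq kd - k(k-1)$; since only the $k = 3$ term dictates the threshold and both trimer shapes shown in the introduction are covered by this bound, no deeper case analysis is required.
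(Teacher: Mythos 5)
Your treatment of the fake partition function $\fakepolymerpf^\cE$ is correct and is essentially the paper's argument in disguise: under the product measure $\pi$, the deficit $\fakepolymerpf^\cE - \fakepolymerpf^\cE_{\leq 2}$ is controlled by a union bound over 2-linked triples, and $\E\bigl[\prod_{v\in T}\phi_v\bigr] = ((2-p)/2)^{3d}$ combined with the $\poly(d)\cdot 2^{d}$ count of such triples gives decay exactly when $(2-p)^3 < 4$. The paper reaches the same conclusion by exponentiating the tail sum and Taylor-expanding (Lemma \ref{lem:weightbound_full} plus the proof of Proposition \ref{prop:smallpolymers_full}); the two routes are interchangeable here because the underlying input is the same first-moment bound.

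The treatment of the true partition function $\polymerpf^\cE$, however, has a genuine gap, and it is precisely the gap the paper calls out at the start of Section \ref{sec:smallpolymers_true}. Your union bound reduces the problem to showing $\sum_{|\fp|\geq 3}\phi_\fp \pto 0$, and that sum ranges over \emph{all} polymers of size $\geq 3$. The ``small'' polymer restriction in Definition \ref{def:polymers} is $|[\fp]| \leq \tfrac34 2^{d-1}$, which is a constraint on the closure and allows $|\fp|$ as large as $\Theta(2^d)$; it is not a bounded range of $k$. Your bound
\[
\E[\phi_\fp] \;\leq\; \left(\tfrac{2}{2-p}\right)^{k(k-1)}\left(\tfrac{2-p}{2}\right)^{kd}
\]
is correct, but the correction factor $(2/(2-p))^{k(k-1)}$ grows superexponentially in $k$ and exactly cancels the good factor once $k$ is of order $d$: the exponent $kd - k(k-1)$ vanishes at $k \approx d$ and becomes negative beyond. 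Combined with the $2^d(ed^2)^k$ count of 2-linked $k$-sets, the $k$th term of your bound is
\[
2^d\left[\,ed^2\,\left(\tfrac{2}{2-p}\right)^{k-1}\left(\tfrac{2-p}{2}\right)^{d}\,\right]^{k},
\]
and the bracket exceeds $1$ once $k \gtrsim d$, so the series diverges rather than being dominated by its $k=3$ term. The phrases ``constant depending only on $p$ and $k$'' and ``bounded range of $k$'' conceal this divergence; in fact there is no finite constant uniform over $k$, and there is no bounded range.

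This is exactly why the independence-based bound that works for $\fphi_\fp$ cannot be reused for $\phi_\fp$: a large $2$-linked set can have a much smaller neighborhood than $kd - k(k-1)$ suggests, so $\phi_\fp$ can be far larger than $\fphi_\fp$, and the naive count $2^d(ed^2)^k$ of such sets is far too generous. The paper's proof imports the graph container estimate from \cite{ks} (stated as Lemma \ref{lem:weightbound}), which controls the number of $2$-linked sets by the size of their neighborhood rather than their cardinality, and this is the essential nontrivial input. To repair your argument you would need to invoke that container bound for the sum over $|\fp|\geq 3$; a direct combinatorial count of polymers of each size, as you attempt, is not strong enough.
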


This will be proved in two parts, as Propositions \ref{prop:smallpolymers} and \ref{prop:smallpolymers_full}
in Section \ref{sec:smallpolymers} below.
With Proposition \ref{prop:smallpolymers_iop} in place, we can think of both polymer models as having two parts:
first, a collection $S_1$ of well-separated singletons, and second, a collection $S_2$ of well-separated dimers
\emph{which are also compatible with $S_1$}.

While in principle, the law of $S_2$ given $S_1$ depends on the instantiation of $S_1$, one of the key observations in this paper is that there is some threshold $\f12 - \gamma$ strictly less than $\f12$
such that
\begin{equation}
\textbf{when $p > \tfrac{1}{2} - \gamma$, the distributions of $S_1$ and $S_2$ are in fact \emph{approximately independent}}
\end{equation}
under both polymer models.
Numerically, this threshold is around $0.465$.

In order to state this approximate independence result, we define for each $k \in \mathbb{N}$
\begin{equation}
\label{eq:polymerpfsubkdef}
    \polymerpf^\cE_k \coloneqq \sum_{S \in \Sep^\cE_k} \prod_{\fp \in S} \phi_\fp
    \qquad \text{and} \qquad
    \fakepolymerpf^\cE_k \coloneqq \sum_{S \in \Sep^\cE_k} \prod_{\fp \in S} \fphi_\fp,
\end{equation}
where $\Sep^\cE_k$ denotes the set of configurations of pairwise-compatible $k$-polymers.
Note that we have $\fakepolymerpf^\cE_1 = \polymerpf^\cE_1$ since $\phi_v = \fphi_v$ for singletons, but
this does not extend to $k > 1$.
The following proposition exhibits the aforementioned asymptotic independence of the singletons and dimers.

\begin{proposition}
\label{prop:independence_iop}
There is some $\gamma > 0$ such that for all $p > \fr{1}{2} - \gamma$, we have
\begin{equation}
    \polymerpf^\cE_{\leq 2} \psim \polymerpf^\cE_1 \cdot \polymerpf^\cE_2
    \qquad \text{and} \qquad
    \fakepolymerpf^\cE_{\leq 2} \psim \polymerpf^\cE_1 \cdot \fakepolymerpf^\cE_2.
\end{equation}
\end{proposition}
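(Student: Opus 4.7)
The plan is to re-express the ratio $\polymerpf^\cE_{\leq 2}/(\polymerpf^\cE_1 \cdot \polymerpf^\cE_2)$ as a probability under a product of Gibbs measures, and then control the complementary event by a first-moment calculation. Let $\nu_1$ and $\nu_2$ denote the Gibbs measures on $\Sep^\cE_1$ and $\Sep^\cE_2$ respectively, each assigning to a configuration $S$ probability proportional to $\prod_{\fp \in S} \phi_\fp$. Since every element of $\Sep^\cE_{\leq 2}$ splits uniquely as a pair $(S_1, S_2) \in \Sep^\cE_1 \times \Sep^\cE_2$ whose union is still pairwise compatible, one immediately has
\begin{equation}
    \frac{\polymerpf^\cE_{\leq 2}}{\polymerpf^\cE_1 \cdot \polymerpf^\cE_2}
    = \mathbb{P}_{\nu_1 \otimes \nu_2}\bigl[S_1 \cup S_2 \text{ is pairwise compatible}\bigr] \in [0,1],
\end{equation}
so it suffices to show that the complementary probability of \emph{some cross-conflict} (i.e.\ some singleton in $S_1$ incompatible with some dimer in $S_2$) is $o(1)$ in probability with respect to $\Qp$. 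By a union bound together with the elementary estimates $\nu_1(v \in S_1) \leq \phi_v$ and $\nu_2(D \in S_2) \leq \phi_D$ (obtained by factoring $\phi_\fp$ out of the numerator partition function: appending a polymer strictly restricts the remaining index set, so the restricted-to-full partition function ratio is at most $1$), the cross-conflict probability is bounded by
\begin{equation}
    \mathbb{E}_{\nu_1 \otimes \nu_2}\bigl[\#\text{cross-conflicts}\bigr] \leq \sum_{(v, D) \text{ incompatible}} \phi_v \cdot \phi_D,
\end{equation}
and by Markov it suffices to show that the $\Qp$-expectation of this random sum tends to $0$.

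This deterministic estimate splits naturally into a \emph{non-overlap} case ($v \notin D$) and an \emph{overlap} case ($v \in D$). In the non-overlap case, $\phi_v$ and $\phi_D$ depend on disjoint sets of edges of $\Qp$, so $\mathbb{E}[\phi_v \phi_D] = \mathbb{E}[\phi_v] \cdot \mathbb{E}[\phi_D]$; combined with the standard count of $O(2^d d^4)$ non-overlap conflicting pairs, this contributes a term of order $d^4 \Fr{(2-p)^3}{4}^d$, which decays exponentially for any $p > 2 - 2^{2/3} \approx 0.413$. The main obstacle is the overlap case, which is tighter and determines the final threshold: for $D = \{v, w\}$ the edges from $v$ genuinely coincide with a portion of the edges from $D$, so independence fails and a careful per-edge computation (distinguishing the $2$ common neighbors of $v$ and $w$ from the $2(d-2)$ unshared ones) gives
\begin{equation}
    \mathbb{E}\bigl[\phi_v \phi_{\{v,w\}}\bigr] = \Rnd{1 - \tfrac{5}{4}p + \tfrac{1}{2}p^2}^2 \cdot \Rnd{\Rnd{1 - \tfrac{3}{4}p}\Rnd{1 - \tfrac{1}{2}p}}^{d-2}.
\end{equation}
Multiplying by the $O(2^d d^2)$ count of overlap conflicting pairs yields a total overlap contribution of order $d^2 \Rnd{2\Rnd{1 - \tfrac{3}{4}p}\Rnd{1 - \tfrac{1}{2}p}}^d$, which decays exponentially iff $3p^2 - 10p + 4 < 0$, equivalently iff $p > \tfrac{5 - \sqrt{13}}{3} \approx 0.4648$. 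Setting $\gamma \coloneqq \tfrac{1}{2} - \tfrac{5 - \sqrt{13}}{3} > 0$ then yields the desired threshold for the first asymptotic equivalence.

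The second identity $\fakepolymerpf^\cE_{\leq 2} \psim \polymerpf^\cE_1 \cdot \fakepolymerpf^\cE_2$ is proved in exactly the same way, with $\nu_2$ replaced by the Gibbs measure $\tilde{\nu}_2$ on $\Sep^\cE_2$ with fake weights $\fphi_D = \phi_u \phi_v$ and the corresponding bound $\tilde{\nu}_2(D) \leq \fphi_D$. The overlap computation becomes even simpler: $\mathbb{E}[\phi_v^2 \phi_w] = (1 - \tfrac{3}{4}p)^d (1 - \tfrac{1}{2}p)^d$ by independence across the disjoint edge sets of $v$ and $w$, and combining with the same overlap pair count gives the same threshold. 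By symmetry, the analogous statements on the odd side follow.
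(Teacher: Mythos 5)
Your proof is correct and takes a genuinely different, more elementary route than the paper. The paper does not prove Proposition \ref{prop:independence_iop} directly; instead it proves the stronger Proposition \ref{prop:dimers} ($\polymerpf_{\leq 2} \psim \polymerpf_1 \cdot e^\Delta$, etc.) and derives Proposition \ref{prop:independence_iop} as a consequence via Lemma \ref{lem:dimer_sep}. The proof of Proposition \ref{prop:dimers} is the technical heart of the paper (Sections \ref{sec:dimers_secondstrategy}--\ref{sec:dimers_adj}): after reducing to the task of showing $\E_\pi\left[ e^{-\Adj(S)} \,\middle|\, S \in \Sep_1 \right] \pto 1$, the paper works with the \emph{unconditional} product measure $\pi$ on all subsets of $\cH$ and controls the conditioning by separately lower bounding $\pi[S \in \Sep_1]$ (Lemma \ref{lem:seplowerbound}) and upper bounding $\pi[\Adj(S) > \eps]$ (Lemmas \ref{lem:adjupperbound} and \ref{lem:separated_adj}). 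Since $\pi[S \in \Sep_1]$ is roughly $\exp\bigl(-\poly(d) \Fr{(2-p)^2}{2}^d\bigr)$ — doubly-exponentially small when $\f{(2-p)^2}{2}>1$, e.g.\ for $p\le 2-\sqrt2$ — the tail bound on $\pi[\Adj(S)>\eps]$ has to beat this, which forces the delicate layer/type/status decomposition and entropy estimates for binomial tails.

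Your key insight avoids this entirely. By working \emph{directly} with the restricted Gibbs measures $\nu_1 \otimes \nu_2$ (i.e.\ the already-conditioned measures), and observing the elementary marginal bounds $\nu_1(v \in S_1) \leq \phi_v$ and $\nu_2(D \in S_2) \leq \phi_D$, you replace the conditional tail bound with a one-line first moment, and the deterministic sum $\sum_{(v,D) \text{ incompat.}} \phi_v\phi_D$ becomes a simple $\Qp$-moment computation. The per-edge calculation in the overlap case is correct, as is the resulting constraint $2\bigl(1-\tfrac34 p\bigr)\bigl(1-\tfrac12 p\bigr) < 1$, i.e.\ $3p^2 - 10p + 4 < 0$, i.e.\ $p > \tfrac{5-\sqrt{13}}{3}$. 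It is worth noting that this is \emph{provably the same threshold} as the paper's: using the closed form $f_m(p) \coloneqq \inf_s (ms + H_p(s)) = m - \log_2\bigl(2^m(1-p) + p\bigr)$ (a strengthening of Lemma \ref{lem:minentropy}), the paper's binding entropy constraint $f_1(p) + f_2(p) > 1$ reduces to $(2-p)(4-3p) < 4$, which is the same quadratic. In short, your argument and the paper's both reach $\gamma = \tfrac12 - \tfrac{5-\sqrt{13}}{3}$, but yours does so with substantially less machinery by sidestepping the comparison between the two doubly-exponentially small tail probabilities. The one thing the paper's more quantitative form (exponential tail bounds on $\Adj(S)$ conditioned on $\Sep_1$) additionally delivers is the control used later in the proof of Theorem \ref{thm:sampler} (Section \ref{sec:sampling}); your first-moment bound does in fact imply a sufficient tail bound via Markov's inequality, but that implication would need to be stated explicitly if you wished to replace the paper's machinery globally.
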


To show this, we must prove that the restriction that the dimer part $S_2$ be well-separated from the singleton part $S_1$
is superfluous.
In other words, when one first samples the singleton set $S_1$ from the polymer model corresponding to $\polymerpf^\cE_1$,
then by simply sampling the collection of dimers $S_2$ from the polymer model with partition function
$\polymerpf^\cE_2$, one obtains $S_2$ which is well-separated from $S_1$ with high probability.
This follows by developing an understanding of the probabilistic behavior of the \emph{set of dimers adjacent to $S_1$} reliant on certain large deviation estimates. 

Finally, the dimer part $S_2$ can itself be approximated by independently including every possible dimer, with the
appropriate probability.
In other words, the requirement that $S_2$ be well-separated is also superfluous; in formulas, this means
\begin{equation}
\label{eq:dimerssepiop}
    \polymerpf^\cE_2 \psim \sum_{S_2 \sse \Dim^\cE} \prod_{\fd \in S_2} \phi_\fp = \prod_{\fd \in \Dim^\cE} (1 + \phi_\fd).
\end{equation}
Here we recall from just above \eqref{eq:psidefintro} the notation $\Dim^\cE$ for the collection of dimers (i.e.\ size-$2$ polymers)
in the even side (note that we typically use $\fd$ to denote a dimer, instead of $\fp$ which denotes an arbitrary polymer).
Now, we can go one step further than \eqref{eq:dimerssepiop} and approximate each factor of $(1 + \phi_\fd)$
by $e^{\phi_\fd}$.
For the parameter regime we consider, the error in this approximation is small enough that it does not accumulate
across the product over dimers.
Moreover, the same analysis goes through for $\fakepolymerpf^\cE_2$.
Thus, defining
\begin{equation}
\label{eq:deltadef}
    \Delta^\cE = \sum_{\fd \in \Dim^\cE} \phi_\fd
    \qquad \text{and} \qquad
    \tilde{\Delta}^\cE = \sum_{\fd \in \Dim^\cE} \fphi_\fd,
\end{equation}
we arrive at the following lemma.

\begin{lemma}
\label{lem:dimersdontcollide_iop}
For $p > 0.391$, 
\begin{equation}
    \polymerpf^\cE_2 \psim e^{\Delta^\cE}
    \qquad \text{and} \qquad
    \fakepolymerpf^\cE_2 \psim e^{\tilde{\Delta}^\cE}.
\end{equation}
\end{lemma}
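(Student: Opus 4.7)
The plan is to establish $\polymerpf^\cE_2 \psim e^{\Delta^\cE}$ in two steps---first relax the compatibility constraint to obtain $\polymerpf^\cE_2 \psim \prod_{\fd \in \Dim^\cE}(1+\phi_\fd)$, and then use a Taylor expansion to obtain $\prod_{\fd}(1+\phi_\fd) \psim e^{\Delta^\cE}$---after which the parallel claim for $\fakepolymerpf^\cE_2$ will follow by the same argument upon substituting $\fphi$ for $\phi$ throughout.

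For the first step, expanding the product gives $\prod_{\fd}(1+\phi_\fd) = \sum_{S \subseteq \Dim^\cE}\prod_{\fd \in S}\phi_\fd$, so the difference from $\polymerpf^\cE_2 = \sum_{S \in \Sep^\cE_2}\prod_{\fd \in S}\phi_\fd$ is $\sum_{S \notin \Sep^\cE_2}\prod_{\fd \in S}\phi_\fd$. Since any $S \notin \Sep^\cE_2$ contains at least one incompatible pair of dimers, a union bound combined with the identity $\sum_{S \supseteq \{\fd_1,\fd_2\}}\prod_{\fd \in S}\phi_\fd = \phi_{\fd_1}\phi_{\fd_2}\prod_{\fd \neq \fd_1,\fd_2}(1+\phi_\fd)$ yields
\[
    1 - \frac{\polymerpf^\cE_2}{\prod_{\fd}(1+\phi_\fd)} \;\leq\; \sum_{\substack{\{\fd_1,\fd_2\} \subseteq \Dim^\cE \\ \text{incompatible}}} \phi_{\fd_1} \phi_{\fd_2}.
\]
Markov's inequality reduces this to showing the right-hand side has vanishing expectation, which I would handle by splitting incompatible pairs into vertex-disjoint and vertex-sharing classes. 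Vertex-disjoint pairs have $\phi_{\fd_1}, \phi_{\fd_2}$ independent (their underlying edges are disjoint), so each contributes $\E[\phi_\fd]^2 = \Theta((1-p/2)^{4d})$; summing over $O(2^d d^6)$ such pairs yields $\Theta(d^6((2-p)^4/8)^d)$, vanishing for $p > 2-2^{3/4} \approx 0.318$. Vertex-sharing pairs number only $O(2^d d^4)$, and a case analysis on potential neighbors of $\{u,v_1,v_2\}$ (where $\fd_1 = \{u,v_1\}$, $\fd_2 = \{u,v_2\}$) gives $\E[\phi_{\fd_1}\phi_{\fd_2}] = \Theta((1-p/2)^{2d}(1-3p/4)^d)$, with total contribution $\Theta(d^4((2-p)^2(4-3p)/8)^d)$, vanishing for $p$ above roughly $0.35$. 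Both thresholds lie comfortably below $0.391$.

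For the second step, the alternating-series bound gives $|\log(1+\phi_\fd) - \phi_\fd| \leq \phi_\fd^2/2$ (valid since $\phi_\fd \in [0,1]$), so
\[
    \left| \log \prod_{\fd} (1 + \phi_\fd) - \Delta^\cE \right| \;\leq\; \frac{1}{2} \sum_{\fd \in \Dim^\cE} \phi_\fd^2.
\]
Markov then reduces the task to showing $\E[\sum_\fd \phi_\fd^2] \to 0$. Factoring $\E[\phi_\fd^2] = \E[4^{-\N(\fd)}]$ over the independent Bernoulli contributions from the $2d-4$ unique neighbors (each contributing a factor $1-3p/4$) and the $2$ common neighbors (each contributing $1 - 3p/2 + 3p^2/4$) gives $\E[\phi_\fd^2] = \Theta((1-3p/4)^{2d})$, so summing over $|\Dim^\cE| = \Theta(2^d d^2)$ produces $\E[\sum_\fd \phi_\fd^2] = \Theta(d^2 ((4-3p)^2/8)^d)$. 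This vanishes precisely when $(4-3p)^2 < 8$, equivalently when $p > (4-2\sqrt{2})/3 \approx 0.391$, matching the threshold stated in the lemma.

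This second-order Taylor term is the main obstacle and is what pins down the threshold; every other error term vanishes for more generous ranges of $p$. The parallel claim for $\fakepolymerpf^\cE_2 \psim e^{\tilde\Delta^\cE}$ follows from an identical argument, using that $\phi_u$ and $\phi_v$ are independent (their underlying edges being disjoint) to obtain $\E[\fphi_\fd^2] = \E[\phi_u^2] \E[\phi_v^2] = (1-3p/4)^{2d}$, asymptotically equal to $\E[\phi_\fd^2]$ and producing the same threshold.
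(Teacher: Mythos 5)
Your proposal is correct, and it takes essentially the same route as the paper. The paper never proves Lemma \ref{lem:dimersdontcollide_iop} as a standalone statement (as it notes in the remarks after Corollary \ref{cor:approxinprob_iop}); instead, the claim is precisely the $S_1 = \emptyset$ specialization of Lemma \ref{lem:dimer_sep}, whose proof follows the same two-step decomposition you use: relax the separation constraint on the dimer set, then replace $1+\phi_\fd$ by $e^{\phi_\fd}$ via the elementary inequality $e^{x-x^2/2} \leq 1+x \leq e^x$. The paper controls the resulting error terms by observing both are dominated by $\sum_{\fd_1 \sim \fd_2} \phi_{\fd_1}\phi_{\fd_2}$ (including the diagonal $\fd_1 = \fd_2$, which gives $\sum_\fd \phi_\fd^2$), and then applies the moment bound of Lemma \ref{lem:dimermoments} (b), which uses Cauchy--Schwarz across all incompatible pairs and yields the threshold $p > \tfrac{2}{3}(2-\sqrt{2}) \approx 0.391$. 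Your argument is slightly more refined in that you exploit true independence for vertex-disjoint incompatible pairs rather than just Cauchy--Schwarz; this gives a better threshold ($\approx 0.318$) for that piece, but since the Taylor term $\sum_\fd \phi_\fd^2$ is the binding constraint at $(4-3p)^2 < 8$, the final threshold is identical. One small note: the uniform (in $S_1$) version of this estimate is what makes Lemma \ref{lem:dimer_sep} genuinely stronger and necessary for Proposition \ref{prop:dimers}; your proof, being specialized to $S_1 = \emptyset$, doesn't require that uniformity, which is why it comes out a bit cleaner.
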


To summarize at a very high level, Lemma \ref{lem:dimersdontcollide_iop} and Proposition \ref{prop:independence_iop}
capture the phenomenon that if there are not many dimers present, then with high probability none of them will ``collide'' with each
other or with any of the singletons present in the defect side of a uniformly-sampled independent set, allowing for a
sampling strategy which first produces only the singletons, then afterwards produces the dimers.
We expect this broad strategy of hierarchically sampling larger and larger polymers to be useful to analyze the model even for values of $p$
below those addressed in this paper.

\begin{remark}
\label{rmk:worstvsavg}
It is worth emphasizing that the strategy of heirarchically sampling the dimers after the singletons does not work for the \emph{worst-case}
singleton sets that could appear in the first round of sampling, at least not for all $p$ under consideration in the present article.
Indeed, as will be discussed further in Section \ref{sec:dimers_firststrategy}, the probability that the sampled dimer set avoids
the worst-case singleton set converges to $0$ only when $p$ is bigger than approximately $0.549$.
To cover the full regime of $p > \f12 - \gamma$, we instead present an \emph{average-case}
analysis in Sections \ref{sec:dimers_secondstrategy} and \ref{sec:dimers_adj}, averaging over the sampled singleton set.
\end{remark}

In any case, combining Propositions \ref{prop:smallpolymers_iop} and \ref{prop:independence_iop} as well as Lemma
\ref{lem:dimersdontcollide_iop}, we obtain our final in-probability approximation of each polymer partition function.

\begin{corollary}
\label{cor:approxinprob_iop} 
There is some $\gamma > 0$ such that for all $p > \fr{1}{2} - \gamma$, we have
\begin{equation}
    \polymerpf^\cE \psim \fakepolymerpf^\cE \cdot e^{\Delta^\cE - \tilde{\Delta}^\cE}.
\end{equation}
\end{corollary}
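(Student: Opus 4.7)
The plan is simply to chain together the three results stated just above and then take a ratio, so that the common singleton factor cancels out. The full proof amounts to verifying that the relation $\psim$ is preserved under products and quotients of strictly positive random quantities, which is immediate from its definition via convergence in probability: if $A/B \pto 1$ and $C/D \pto 1$ with $B, D > 0$, then both $AC/(BD) \pto 1$ and $(A/C)/(B/D) \pto 1$. With that preservation in hand, the derivation is mechanical.

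First, I would apply Proposition \ref{prop:smallpolymers_iop} (whose hypothesis $p > 2 - 2^{2/3}$ is implied by $p > \f12 - \gamma \approx 0.465$) to replace each of the two full polymer partition functions by its restriction to polymers of size at most $2$:
$$\polymerpf^\cE \psim \polymerpf^\cE_{\leq 2} \qquad \text{and} \qquad \fakepolymerpf^\cE \psim \fakepolymerpf^\cE_{\leq 2}.$$
Next, I would apply Proposition \ref{prop:independence_iop} (which requires precisely $p > \f12 - \gamma$, the binding constraint) to factor each truncated partition function into singleton and dimer contributions:
$$\polymerpf^\cE_{\leq 2} \psim \polymerpf^\cE_1 \cdot \polymerpf^\cE_2 \qquad \text{and} \qquad \fakepolymerpf^\cE_{\leq 2} \psim \polymerpf^\cE_1 \cdot \fakepolymerpf^\cE_2,$$
using here that $\fakepolymerpf^\cE_1 = \polymerpf^\cE_1$ since $\phi_v = \fphi_v$ on singletons. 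Finally, I would invoke Lemma \ref{lem:dimersdontcollide_iop} (which needs only $p > 0.391$) to rewrite the dimer partition functions as explicit exponentials, $\polymerpf^\cE_2 \psim e^{\Delta^\cE}$ and $\fakepolymerpf^\cE_2 \psim e^{\tilde{\Delta}^\cE}$.

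Combining the three steps yields $\polymerpf^\cE \psim \polymerpf^\cE_1 \cdot e^{\Delta^\cE}$ and $\fakepolymerpf^\cE \psim \polymerpf^\cE_1 \cdot e^{\tilde{\Delta}^\cE}$. Dividing, which is legitimate because $\fakepolymerpf^\cE = \prod_{v \in \cE}(1 + \phi_v) \geq 1$ deterministically, the common singleton factor $\polymerpf^\cE_1$ cancels and produces the claimed $\polymerpf^\cE / \fakepolymerpf^\cE \psim e^{\Delta^\cE - \tilde{\Delta}^\cE}$. There is no real obstacle at this stage: all of the substantive probabilistic work, in particular the asymptotic independence of singletons and dimers captured by Proposition \ref{prop:independence_iop}, has been absorbed into the cited inputs, and the corollary is their straightforward algebraic assembly.
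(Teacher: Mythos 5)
Your proof is correct and follows essentially the same chain of reasoning as the paper: truncate to polymers of size $\leq 2$ via Proposition \ref{prop:smallpolymers_iop}, then factor off and exponentiate the dimer contribution, and divide so the singleton factor $\polymerpf^\cE_1$ cancels. The only cosmetic difference is that the paper bundles the last two steps into the single Proposition \ref{prop:dimers} (proved directly in Section \ref{sec:dimers}) rather than routing through Proposition \ref{prop:independence_iop} and Lemma \ref{lem:dimersdontcollide_iop} separately, as the paper itself notes in the remark immediately following the corollary.
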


Of course, the same holds with $\cE$ replaced by $\cO$ throughout.

Before proceeding, we remark that our actual proof strategy slightly reorganizes Proposition \ref{prop:independence_iop}
and Lemma \ref{lem:dimersdontcollide_iop} and these particular statements do not appear in the body of the work.
However, the main important result is actually Corollary \ref{cor:approxinprob_iop} itself, which will be proved directly
via Proposition \ref{prop:dimers} below.
If the reader wishes to use the statements of Proposition \ref{prop:independence_iop} or Lemma \ref{lem:dimersdontcollide_iop}
directly, these statements may be proven by combining the statement of Proposition \ref{prop:dimers} with that of Lemma \ref{lem:dimer_sep}.

\subsection{Distributional limits}
With Corollary \ref{cor:approxinprob_iop} in hand, we define
\begin{equation}
\label{eq:psidefiop}
    \Psi^\cE = \log \fakepolymerpf^\cE + \Delta^\cE - \tilde{\Delta}^\cE,
\end{equation}
and similarly for $\Psi^\cO$; note that this definition agrees with \eqref{eq:psidefintro}.
Also, let $\mu$ and $\sigma$ be as defined previously in \eqref{eq:mudefintro} and \eqref{eq:sigmadefintro};
we remind the reader that the justification for these definitions can be found in Section \ref{sec:clt}.
With these definitions, we have a joint central limit theorem for $\Psi^\cE$ and $\Psi^\cO$, with the limit being
a pair of independent normal random variables.

\begin{proposition}
\label{prop:clt_iop}
For $p > 0.455$,
\begin{equation}
    \Rnd{\fr{\Psi^\cE - \mu}{\sigma}, \fr{\Psi^\cO - \mu}{\sigma}} \dto \Nor{0}{1} \otimes \Nor{0}{1}.
\end{equation}
\end{proposition}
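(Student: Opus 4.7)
Decompose $\Psi^\cE = A^\cE + B^\cE$ where $A^\cE \coloneqq \log \fakepolymerpf^\cE = \sum_{v \in \cE} \log(1 + 2^{-\N(v)})$ and $B^\cE \coloneqq \Delta^\cE - \tilde{\Delta}^\cE$, and likewise for $\cO$. Because the $d$ edges incident to any $v \in \cE$ are disjoint from those incident to any other $v' \in \cE$, the family $\{\N(v)\}_{v \in \cE}$ is i.i.d.\ $\Bin(d,p)$, and hence $A^\cE$ is a sum of i.i.d.\ summands each bounded in absolute value by $\log 2$. The plan is: (i) prove a Lindeberg CLT for $A^\cE$ with asymptotic variance $\sigma^2$; (ii) show $\Var(B^\cE) = o(\sigma^2)$, so the dimer correction only contributes to the mean; (iii) bound all cross-covariances $\Cov(A^\cE, A^\cO)$, $\Cov(A^\cE, B^\cO)$, $\Cov(B^\cE, B^\cO)$ by $o(\sigma^2)$; and (iv) promote the marginal CLTs with vanishing correlation to joint convergence via Cram\'er--Wold.

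\textbf{Marginal CLT and dimer negligibility.} For (i), expanding $\log(1+x) = x - x^2/2 + O(x^3)$ and using $\E[2^{-k \N(v)}] = (1 - p + p \cdot 2^{-k})^d$ yields $\Var(A^\cE) \sim 2^{d-1}\bigl[(1-3p/4)^d - (1-p/2)^{2d}\bigr] \sim \tfrac{1}{2}(2 - 3p/2)^d = \sigma^2$, consistent with \eqref{eq:sigmadefintro}, and $\E A^\cE$ matches $\mu_1$ of \eqref{eq:mu1defintro} up to corrections that are $o(\sigma)$ for $p > \tfrac12 - \gamma$ (the next term in the Taylor series scales like $(2 - 15p/8)^d$, which is $o((2-3p/2)^{d/2})$ precisely in this range). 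The Lindeberg condition is immediate when $\sigma$ stays bounded away from $0$ (i.e.\ $p \leq 2/3$), since each summand is bounded by $\log 2 < \epsilon\sigma$ once $d$ is large. When $p > 2/3$ so that $\sigma \to 0$, the event $\{|X_v - \E X_v| > \epsilon\sigma\}$ for $X_v \coloneqq \log(1+2^{-\N(v)})$ is contained in $\{\N(v) \leq c_{p,\epsilon} d\}$ for some $c_{p,\epsilon} < p$, whose probability decays exponentially in $d$ by Chernoff; a direct check verifies this decay beats the $1/\sigma^2$ penalty uniformly throughout the stated range of $p$. For (ii), expand $\Var(B^\cE) = \sum_{\fd_1, \fd_2 \in \Dim^\cE} \Cov(D_{\fd_1}, D_{\fd_2})$ with $D_\fd \coloneqq 2^{-\N(\fd)} - 2^{-\N(u) - \N(v)}$; each $D_\fd$ has mean and typical scale of order $p^2 (1-p/2)^{2d-4}$, and $\Cov(D_{\fd_1}, D_{\fd_2})$ vanishes unless $\fd_1$ and $\fd_2$ share a vertex or an $\cO$-neighbor. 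Counting $O(|\Dim^\cE|\cdot d^2)$ such overlapping pairs yields $\Var(B^\cE) = O(d^4 (2 - 2p + p^2/2)^d)$, which is $o(\sigma^2)$ since $2 - 2p + p^2/2 < 2 - 3p/2$ whenever $p < 1$.

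\textbf{Cross-covariance and joint limit.} For (iii), $\Cov(\log(1+2^{-\N(v)}),\log(1+2^{-\N(u)})) = 0$ whenever $v \in \cE$ and $u \in \cO$ are not neighbors in $\Q$, since the two variables depend on disjoint edge sets. For $v,u$ neighbors (sharing exactly one edge $e = (v,u)$), writing $\N(v) = Y_e + M_v$ and $\N(u) = Y_e + M_u$ with $Y_e, M_v, M_u$ mutually independent and applying the tower property gives $|\Cov(X_v,X_u)| \sim p(1-p)/4 \cdot (1-p/2)^{2d-2}$. Summing over the $d \cdot 2^{d-1}$ edges of $\Q$ gives $\Cov(A^\cE, A^\cO) = O(d (2-2p+p^2/2)^d) = o(\sigma^2)$ by the same inequality. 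Analogous and strictly smaller bounds hold for $\Cov(A^\cE, B^\cO)$ and $\Cov(B^\cE, B^\cO)$, since dimer terms involve even more restrictive edge-sharing patterns with the opposite side. Finally, for (iv), any linear combination $a(A^\cE - \E A^\cE) + b(A^\cO - \E A^\cO)$ can again be organized as a sum of local contributions indexed by vertices and edges of $\Q$ (each edge appearing in at most one $\cE$-summand and one $\cO$-summand), and the same Lindeberg argument gives asymptotic normality with variance $(a^2 + b^2)\sigma^2(1+o(1))$, identifying the joint limit as $\Nor{0}{1} \otimes \Nor{0}{1}$.

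\textbf{Main obstacle.} The most delicate aspect is the bookkeeping in steps (ii)--(iii): one must control several exponential-in-$d$ quantities with very similar magnitudes (notably $(2-3p/2)^d$ versus $(2-2p+p^2/2)^d$) together with polynomial-in-$d$ prefactors arising from Taylor remainders of $\log(1+x)$ and from counting overlapping dimer pairs, and verify that the ``noise-to-signal'' ratio stays $o(1)$ uniformly over $p > \tfrac12 - \gamma$. This is further complicated by the transition of $\sigma^2$ itself from exponentially growing (for $p<2/3$) to exponentially decaying (for $p>2/3$) in $d$: the Lindeberg and Chernoff estimates must work across both regimes simultaneously, and the threshold $p \approx 0.455$ for negligibility of higher-order Taylor terms in the mean expansion is precisely what constrains the stated range.
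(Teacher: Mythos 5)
Your decomposition $\Psi^\cH = \Phi_{\log}^\cH + (\Delta^\cH - \tilde{\Delta}^\cH)$ is exactly the paper's, and the overall blueprint — prove a joint CLT for $(\Phi_{\log}^\cE, \Phi_{\log}^\cO)$, show the dimer corrections only shift the mean, match $\mu$ and $\sigma$ to the truth up to $o(\sigma)$ / $(1+o(1))$, conclude via Slutsky — also matches. Your cross-covariance calculation $\Cov(X_v,X_u) = \tfrac{p(1-p)}{4}(1-p/2)^{2d-2}$ for $v\sim u$ is correct and in fact sharper than the paper's upper bound. Your decision to show $\Var(\Delta^\cH - \tilde\Delta^\cH) = o(\sigma^2)$ rather than the paper's $\Delta^\cH - \mu_2 \pto 0$, $\tilde\Delta^\cH - \tilde\mu_2 \pto 0$ (Lemma \ref{lem:dimerconcentration}) is a legitimate alternative and suffices.

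However, there is a genuine gap in step (iv). The linear combination $a(A^\cE - \E A^\cE) + b(A^\cO - \E A^\cO) = \sum_{w\in\Q} c_w(X_w - \E X_w)$ is a sum over all $2^d$ vertices, and these summands are \emph{not} independent: $X_v$ and $X_u$ are dependent whenever $v\sim u$ in $\Q$ (each such pair shares one edge in the percolation configuration). The Lindeberg--Feller theorem requires independent summands. Your marginal argument in step (i) is fine precisely because $\{\N(v)\}_{v\in\cE}$ \emph{are} independent (same-side vertices share no edges), but this independence breaks for the joint linear combination. Showing $\Cov(A^\cE,A^\cO)/\sigma^2 \to 0$ in step (iii) does not rescue this — vanishing correlation plus marginal Gaussian limits does not imply joint Gaussian convergence. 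What is needed is a CLT for sums with a sparse dependency graph. The paper uses exactly this: Stein's method with dependency neighborhoods of size $D = d$ (Theorem \ref{thm:dependentstein}, adapted from \cite[Theorem 3.6]{ross}), which directly controls the Wasserstein distance of the linear combination in terms of third and fourth moments. You could close your gap by invoking that theorem, or a CLT for $m$-dependent arrays (here the dependency graph is the hypercube itself, which is $d$-regular, not $m$-dependent for fixed $m$, so the Stein route is cleaner), but "the same Lindeberg argument" as written does not apply. Note also that even your marginal step (i) would be simplified by Stein's method, since the Lindeberg verification across the two regimes $\sigma\to 0$ and $\sigma\to\infty$ plus the Chernoff step is more delicate than the paper's unified Stein bound.
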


This will be proved as Proposition \ref{prop:clt_withdimers} in Section \ref{sec:clt} below.
Note that the threshold $0.455$ appearing in this result is unrelated to the threshold $\f12 - \gamma$ of Corollary \ref{cor:approxinprob_iop}.
Instead, it is related to the specific form of the constant $\mu$ defined in \eqref{eq:mudefintro},
which is a good approximation for the true mean of $\Psi^\cE$ if $p > 0.455$.

We also remark that the fluctuations in this limit theorem are solely due to the $\log \fakepolymerpf^\cE$ and
$\log \fakepolymerpf^\cO$ terms in \eqref{eq:psidefiop}.
These two variables themselves exhibit a joint CLT, and the variables
$\Delta^\cE, \Delta^\cO, \tilde{\Delta}^\cE$, and $\tilde{\Delta}^\cO$ each concentrate around deterministic quantities.

The variables contributing to the fluctuations can be written as sums of independent variables, namely
\begin{equation}
    \log \fakepolymerpf^\cE = \sum_{v \in \cE} \log(1 + \phi_v)
    \qquad \text{and} \qquad
    \log \fakepolymerpf^\cO = \sum_{v \in \cO} \log(1 + \phi_v).
\end{equation}
Note however that \emph{the two sums are not independent} since each edge's state in the percolation configuration
contributes to one summand in each sum. However this dependence is rather weak and as in \cite[Lemma 3.1]{brcgw},
a dependency-neighborhood version of Stein's method suffices to conclude the joint CLT to a pair of normal random 
variables which are independent in the limit.

\subsection{Proof of first main theorem}

With all of the above statements in place, we can easily conclude the proof of our first main theorem, yielding
an in-probability approximation of $\Cnt$ as a simple function of two explicit random variables which exhibit a joint
central limit to a pair of independent standard normals. 

\begin{proof}[Proof of Theorem \ref{thm:main}]
By Proposition \ref{prop:polymerdecomp_iop} and Corollary \ref{cor:approxinprob_iop}, we have
\begin{align}
    \fr{\Cnt}{2^{2^{d-1}}} &\psim \fakepolymerpf^\cE \cdot e^{\Delta^\cE - \tilde{\Delta}^\cE}
    + \fakepolymerpf^\cO \cdot e^{\Delta^\cO - \tilde{\Delta}^\cO} \\
    &= e^{\Psi^\cE} + e^{\Psi^\cO}.
\end{align}
So Proposition \ref{prop:clt_iop} finishes the proof.
\end{proof}

\subsection{Sampling independent sets}

As is amply clear from the above few sections, there is a deep connection between approximating a partition function
and understanding the structure of samples from the relevant distribution.
Indeed, the latter can be leveraged to prove our second main theorem, Theorem \ref{thm:sampler},
which states that the approximate sampling algorithm $\AS$ given in Definition \ref{def:approxsampler_intro} gives a good approximation to the uniform
distribution on all independent sets. Recalling the definition of $\AS$, we give some brief intuition regarding this result.

First of all, the relative sizes of $\polymerpf^\cE$ and $\polymerpf^\cO$ are the right quantities to use in deciding whether the constructed independent
set will be mostly contained in the $\cO$ side or the $\cE$ side, respectively.
Next, supposing we've chosen $\cE$ to be the defect side, the statement of Corollary \ref{cor:approxinprob_iop} can be viewed as a mnemonic for the sampling algorithm $\AS$ as follows:
To sample from the distribution with partition function $\polymerpf^\cE$, first sample vertices independently (which has partition function $\fakepolymerpf^\cE$).
This gives the wrong weight to the dimers which will inevitably appear when $p \leq 2 - \sqrt{2}$, so to correct this we remove all such dimers (which corresponds
to dividing the partition function by $e^{\tilde{\Delta}}$) and then resample the dimers according to the correct distribution
(which corresponds to multiplying the partition function by $e^{\Delta}$).

Of course, to make this rigorous we must carefully extract the relevant parts of the various propositions we prove, and so we cannot yet provide a proof of
Theorem \ref{thm:sampler} as we did with Theorem \ref{thm:main} above.
Instead, the proof of Theorem \ref{thm:sampler} will be presented in Section \ref{sec:sampling}.

\subsection{Organization of the article}

We begin in Section \ref{sec:polymerdecomp} with a brief review of polymer models and a proof
of Proposition \ref{prop:polymerdecomp_iop}, which is restated as Proposition \ref{prop:polymerdecomp}.

Next, in Section \ref{sec:smallpolymers}, we prove that only singletons and dimers are relevant 
for the regime we are considering, i.e.\ we prove Proposition \ref{prop:smallpolymers_iop},
which is split into Propositions \ref{prop:smallpolymers} and \ref{prop:smallpolymers_full}.

In Section \ref{sec:nodimers}, we survey the case $p > 2 - \sqrt{2}$, where in fact the dimers are also
irrelevant and an analysis of the singletons suffices.
Much of this case was already covered in our previous work \cite{brcgw}, so this section is mostly an informal overview
which gives some context for the next section.

Section \ref{sec:dimers} contains our analysis of the joint behavior of the singletons and dimers,
and contains the bulk of the technical work done in this article, as well as the main novel conceptual contributions.
We prove Corollary \ref{cor:approxinprob_iop} via Proposition \ref{prop:dimers}, showing that,
for $p > \fr{1}{2} - \gamma$, the dimers do not interact with the singletons or each other when they are sampled independently.

Next, in Section \ref{sec:clt}, we prove the distributional limit statement, Proposition \ref{prop:clt_iop},
restated as Proposition \ref{prop:clt_withdimers} below, showing that the two quantities $\Psi^\cE$ and $\Psi^\cO$ in the exponents of
our representation for $\Cnt$ have independent Gaussian limits after proper rescaling.

Finally, in Section \ref{sec:sampling}, we prove Theorem \ref{thm:sampler}, showing that the algorithm $\AS$ of Definition \ref{def:approxsampler_intro}
yields a good approximation to a uniformly random independent set in $\Qp$.

This article contains two appendices: Appendix \ref{sec:moments} contains various moment calculations which are used
throughout, and Appendix \ref{sec:entropy} contains mostly standard lemmas relating binomial probabilities and
entropy functions, which are used primarily in the proof of Proposition \ref{prop:dimers}.
\section{Polymer decomposition}
\label{sec:polymerdecomp}
In this section, we set up the polymer framework which makes precise the intuition that a typical independent set in $\Qp$ is only a small perturbation
away from one of the two \emph{ground states}, which consist of independent sets entirely contained within one
of the two halves of the hypercube.
Moreover, the perturbation can be represented by a configuration of \emph{polymers}, which are defects in the
opposite side of the hypercube from the ground state.
In general, this strategy is made rigorous via \emph{polymer models}, which can translate a variety of low-temperature
statistical mechanical models into an associated high-temperature hard-core model on a different graph.

One key appeal of polymer models is the \emph{cluster expansion} which represents their partition functions in a
convenient form which can often yield insights into the model in question.
However, as previously mentioned in Section \ref{sec:iop_polymer},
it should be noted that the present work \emph{does not} use the cluster expansion beyond one key annealed input to be discussed below.
Instead, we work with the partition functions of (random) polymer models directly as random variables.

We remark that many of the results of this section already appeared, implicitly or otherwise, in our previous work \cite{brcgw}:
see Section 4 of that article, and in particular Lemma 4.8 therein.
Nevertheless, we provide a complete exposition in the present article for the reader's convenience, and
the proofs we present here have been reworked for improved clarity.

\subsection{Review of polymer models and the cluster expansion}

To discuss general polymer models briefly, let us consider a general set $\polymers$ of possible polymers $\fp$
(while the precise definition will appear shortly, the reader can think of them as small connected sets for now).
Suppose that there is a pairwise compatibility relation on $\polymers$, and say that a collection $S \sse \polymers$
is compatible if every pair of distinct $\fp_1, \fp_2 \in S$ are compatible.
Additionally suppose that there is a weight $\omega(\fp)$ for each polymer $\fp$.
This data defines a polymer model with a partition function given by
\begin{equation}
    \sum_{\substack{S \sse \polymers \\ \mathrm{compatible}}} \prod_{\fp \in S} \omega(\fp).
\end{equation}

The first step in using polymer models is to show that the partition function of a model of interest is close to
the partition function of a particular polymer model, or perhaps a sum of multiple such partition functions.
In the present article, for instance, this is the content of Proposition \ref{prop:polymerdecomp_iop},
which will be proven below as Proposition \ref{prop:polymerdecomp} after we introduce the details of our polymer model.

Often, to extract useful information or efficiently compute approximations to these polymer model partition functions,
one can use the cluster expansion.
Briefly, a \emph{cluster} of polymers is a multiset $\Gamma$ of polymers which are mutually \emph{incompatible},
in the sense that the \emph{incompatibility graph} $H_\Gamma$, which has vertex set $\Gamma$ and an edge between
incompatible polymers, is connected.
The weight of a cluster $\Gamma$ is given by
\begin{equation}
    \omega(\Gamma) \coloneqq \varphi(H_\Gamma) \prod_{\fp \in \Gamma} \omega(\fp),
\end{equation}
where $\varphi(H)$ is the \emph{Ursell function} of a graph $H$, defined by
\begin{equation}
    \varphi(H) \coloneqq \fr{1}{|V(H)|!} \sum_{\substack{A \sse E(H) \\ \mathrm{spanning} \\ \mathrm{connected}}} (-1)^{|A|}.
\end{equation}
The precise details here are not so important for our purposes, but the key point is the following formal equality,
which is known as the \emph{cluster expansion}:
\begin{align}
\label{eq:clusterexpansion}
    \sum_{\substack{S \sse \polymers \\ \mathrm{compatible}}} \prod_{\fp \in S} \omega(\fp)
    = \Exp{\sum_{\Gamma \text{ cluster}} \omega(\Gamma)}.
\end{align}
Note that the above is a priori only a formal equality; since a cluster is a multiset, even if there are only finitely
many possible polymers, there are infinitely many possible clusters.
So an important step in using the cluster expansion is to prove that the sum inside the exponential actually converges.

\subsection{The cluster expansion of an annealed model}

In the present article, we consider a \emph{random} polymer model, i.e.\ one where the weights are all
random variables, determined by the percolation configuration.
This complicates matters and it is not a priori clear how to prove directly
that this polymer model gives a good approximation for $\Cnt$, as stated in Proposition \ref{prop:polymerdecomp_iop}.
However, \cite{ks} applied the technique outlined above to an \emph{annealed} version of the model, where a set of vertices in $\Q$
(not necessarily an independent set) is given a weight equal to the probability that it is independent in $\Qp$.
The partition function of this model is then exactly equal to $\E[\Cnt]$.
We will use this input and a careful application of Markov's inequality to obtain Proposition \ref{prop:polymerdecomp_iop}.

We remark that \cite{ks} actually used a similar method to compute \emph{all} moments of $\Cnt$,
by constructing a deterministic model for each $k \in \mathbb{N}$ with partition functions $\E[\Cnt^k]$.
The main contribution of \cite{ks} is to use the cluster expansion of associated polymer models in order to get
accurate estimates of the moments of $\Cnt$, which can be computed via an algorithm.
Unfortunately, the moments of $\Cnt$ themselves are not descriptive of the typical behavior of $\Cnt$ unless 
$p$ is very close to $1$, but it turns out that some of the results of \cite{ks} will be useful for our purposes.

We will henceforth only focus on the $k=1$ polymer model with partition function $\E[\Cnt]$,
and we now briefly describe this polymer model, introduced by \cite{ks}.

\begin{definition}[Language relating to polymer models from \cite{ks}]
\label{def:polymers}
In these definitions, we use the notation $N(A)$ to denote the \emph{set of neighbors} of $A$ in $\Q$, rather than
the number of neighbors.
Additionally, $\cH$ denotes either $\cE$ or $\cO$.
\begin{itemize}
    \item
    A subset $A \sse \cH$ is said to be \emph{2-linked} if there is no decomposition $A = A_1 \cup A_2$ into
    nonempty sets $A_1$ and $A_2$ which satisfy $N(A_1) \cap N(A_2) = \emptyset$.

    \item 
    For a subset $A \sse \cH$, we denote by $[A]$ the \emph{closure} of $A$,
    defined to be the largest subset of $\cH$ with the same neighborhood set as $A$.
    In other words, $[A] = \{ v \in \cH : N(v) \sse N(A) \}$.

    \item
    A 2-linked set $\fp \sse \cH$ with $|[\fp]| \leq \fr{3}{4} 2^{d-1}$ is called a \emph{polymer}.

    \item
    Polymers $\fp_1$ and $\fp_2$ are said to be \emph{compatible} if $\fp_1 \cup \fp_2$ is not 2-linked.

    \item
		We denote by $\Sep[\cH]$ the set of pairwise compatible collections of polymers.
    
    \item
    The \emph{weight} of a polymer $\fp$ is $\omega(\fp) = \E[\phi_\fp]$, where $\phi_\fp = 2^{-\N(\fp)}$.
    We remind the reader that $\N(\fp)$ denotes the (random) number of vertices neighboring $\fp$
    in the percolated graph $\Qp$.

    \item
    We define \emph{clusters} and their weights as in the previous section, which is a general construction that
    works for all polymer models.
    The set of clusters will be denoted by $\cC^\cH$.
\end{itemize}
\end{definition}

A few remarks are in order.
First, the number $\fr{3}{4}$ which appears in the definition of a polymer is a somewhat arbitrary choice, and,
as remarked in \cite{ks}, any constant between $\fr{1}{2}$ and $1$ can be used, for reasons which we will not
explore in this article.
Second, the definition given in \cite[Equation 14]{ks} for the weight $\omega(\fp)$ of a polymer differs from the expression
presented here, but it is easy to see that the two expressions are equal.
Finally, note that the two polymer models, one for each half $\cH \in \{\cE,\cO\}$, actually have identical
partition functions, but we will nonetheless keep these separate.

With these definitions in hand, we can state one of the main results of \cite{ks}, which expands $\E[\Cnt]$ in a
sum of two convergent cluster expansions for these two polymer models.

\begin{theorem}[Theorem 3.1 of \cite{ks}, with $k=1$]
\label{thm:clusterexpansion}
For $p \in (0,1)$, 
\begin{equation}
    \E[\Cnt] = 2^{2^{d-1}} \left(
        \Exp{\sum_{\Gamma \in \cC^\cE} \omega(\Gamma)} 
     + \Exp{\sum_{\Gamma \in \cC^\cO} \omega(\Gamma)} 
    \right) \cdot \Rnd{1 + \Exp{-\Omega(2^d/d^4)}},
\end{equation}
where the cluster expansion series inside the exponentials converge absolutely.
\end{theorem}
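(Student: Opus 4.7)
The plan is to follow the polymer-model strategy of Kronenberg-Spinka \cite{ks}. The proof factors into three main steps.

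First, starting from the exact identity $\fr{\Cnt}{2^{2^{d-1}}} = \sum_{S \sse \cE} 2^{-\N(S)}$ of \eqref{eq:exactrep} and taking expectations, I would decompose each $S \sse \cE$ into its maximal $2$-linked components $\fp_1, \ldots, \fp_k$. These components have pairwise disjoint $\Q$-neighborhoods (otherwise two of them would merge into a single $2$-linked set, violating maximality), so $\N(S) = \sum_i \N(\fp_i)$ holds on the nose, and moreover the $\N(\fp_i)$ are independent because they depend on disjoint subsets of the percolation edges. Hence $\E[2^{-\N(S)}] = \prod_i \omega(\fp_i)$, and summing only over $S$ whose components are all polymers (closure at most $\fr{3}{4} 2^{d-1}$) yields exactly $\polymerpf^\cE \coloneqq \sum_{S \in \Sep[\cE]} \prod_{\fp \in S} \omega(\fp)$.

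Second, one must account for the ``bad'' sets $S \sse \cE$ containing a component of closure $> \fr{3}{4} 2^{d-1}$. Using the dual representation $\fr{\Cnt}{2^{2^{d-1}}} = \sum_{T \sse \cO} 2^{-\N(T)}$ and the fact that $\polymerpf^\cE = \polymerpf^\cO$ by $\cE \leftrightarrow \cO$ symmetry, the idea is to identify bad-$\cE$ configurations with good-$\cO$ configurations: if $S \sse \cE$ contains a $2$-linked component with large closure, then any independent set $I \supseteq S$ forces $T \coloneqq I \cap \cO$ to be small, hence polymer-decomposable on the $\cO$ side. Inclusion-exclusion over ``defect sides'' then produces $\fr{\E[\Cnt]}{2^{2^{d-1}}} = \polymerpf^\cE + \polymerpf^\cO$ up to a multiplicative factor of $1 + e^{-\Omega(2^d/d^4)}$. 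Quantifying the ``both-defect'' overlap is the main technical obstacle, and is precisely where Sapozhenko's graph container method enters crucially: $2$-linked subsets of $\Q$ with large closure have $\Q$-neighborhood of comparable size, so $\omega(\fp)$ is exponentially small in the closure size, while the number of $2$-linked sets of size $k$ through a fixed vertex grows only like $O(d^{2k})$, giving the stated error factor.

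Finally, the cluster expansion identity \eqref{eq:clusterexpansion} converts each $\polymerpf^\cH$ into $\exp(\sum_{\Gamma \in \cC^\cH} \omega(\Gamma))$ provided the series converges absolutely, which is verified through the standard Kotecky-Preiss criterion. The necessary decay of $\omega(\fp)$ in $|\fp|$ follows from the annealed bound $\omega(\fp) \leq \Rnd{1 - \fr{p}{2}}^{|N(\fp)|}$ (obtained by direct computation from the percolation) together with the isoperimetric fact $|N(\fp)| \geq d|\fp|/2$ for small $2$-linked $\fp$; this comfortably dominates the combinatorial count $O(d^{2k})$ of $2$-linked sets of size $k$ through a fixed vertex. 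Combining the three steps yields the stated identity.
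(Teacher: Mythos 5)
The statement you are attempting to prove is imported as a black box from Kronenberg--Spinka: the paper labels it ``Theorem 3.1 of \cite{ks}, with $k=1$'' and immediately uses it (via the cluster-expansion identity \eqref{eq:clusterexpansion}) to derive Corollary~\ref{cor:polymermodel}, without giving any proof of its own. So there is no internal proof in the paper to compare your argument against.

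That said, your outline does trace the broad strategy of \cite{ks} --- decompose a subset of $\cE$ into maximal $2$-linked components, use disjointness of $\Q$-neighborhoods to factorize expectations into a product of independent weights, control large components via Sapozhenko-style graph containers, and sum via the Kotecky--Preiss criterion --- and your step one (factorization and independence across components) is correct. However, step two is far too compressed to function as a proof: the quantity $\polymerpf^\cE + \polymerpf^\cO$ simultaneously over-counts independent sets whose two sides are \emph{both} polymer-decomposable and fails to count those where \emph{neither} side is, so ``inclusion--exclusion over defect sides'' must be made precise, and the specific exponent $d^4$ in the error factor is a substantive output of the container-theoretic counting (it is not a corollary of the crude $O(d^{2k})$ count of $2$-linked sets through a vertex, which you use separately in step three). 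If you want to see how the present paper handles the over/under-count in its own (quenched) argument, see Lemma~\ref{lem:toobig} (cited from \cite{ks} as Lemma 3.3) and the proof of Proposition~\ref{prop:polymerdecomp}, which is the place where this paper actually does a careful bookkeeping of this type.
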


We remark here that the series in the above exponential expressions
may be evaluated via bounds (arising from the graph container method)
on the expectations of terms corresponding to higher order clusters of polymers,
and one can obtain
\begin{equation}
\label{expectation12}
    \E[\Cnt] = 2^{2^{d - 1}} \cdot 2 \cdot \Exp{\f{1}{2} (2 - p)^d \cdot (1 + o(1))},
\end{equation}
which is a simplified version of \cite[Theorem 1.1]{ks}.

Now, by the formal cluster expansion \eqref{eq:clusterexpansion}, which converges absolutely as stated in
Theorem \ref{thm:clusterexpansion}, for each $\cH \in \{\cE,\cO\}$, we have
\begin{align}
    \Exp{\sum_{\Gamma \in \cC^\cH} \omega(\Gamma)} 
	&= \sum_{S \in \Sep[\cH] } \prod_{\fp \in S} \omega(\fp) \\
	&= \sum_{S \in \Sep[\cH] } \prod_{\fp \in S} \E[\phi_\fp] \\
    &= \E \left[
		\sum_{S \in \Sep[\cH] } \prod_{\fp \in S} \phi_\fp
    \right] = \E[\polymerpf^\cH],
\end{align}
using at the second-to-last equality the fact that the $\phi_\fp$ are independent for $\fp \in S$ when
$S \in \Sep[\cH]$, as there are no shared vertices between any $\fp \in S$.
The last equality is simply the definition of $\polymerpf^\cH$ from \eqref{eq:polymerpfdef}.
So, as an immediate corollary of Theorem \ref{thm:clusterexpansion}, we obtain the following annealed
version of Proposition \ref{prop:polymerdecomp_iop} providing an approximation
of the expected number of independent sets in $\Qp$.

\begin{corollary}
\label{cor:polymermodel}
For $p \in (0,1)$,
\begin{equation}
    \E\left[ \fr{\Cnt}{2^{2^{d-1}}} \right]
	= \E[\polymerpf^\cE + \polymerpf^\cO] \cdot \Rnd{1 + \Exp{-\Omega(2^d/d^4)}}.
\end{equation}
\end{corollary}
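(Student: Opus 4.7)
The plan is to derive this as a direct consequence of Theorem \ref{thm:clusterexpansion}, essentially by rewriting the annealed cluster-expansion sums inside the exponentials as expectations of the quenched polymer partition functions $\polymerpf^\cE$ and $\polymerpf^\cO$. No new estimates are required; the work is all in carefully tracking the three identities: formal cluster expansion, independence of polymer weights, and Fubini.

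First I would fix a side $\cH \in \{\cE,\cO\}$ and apply the general cluster expansion identity \eqref{eq:clusterexpansion} with polymer set $\polymers^\cH$ and weights $\omega(\fp) = \E[\phi_\fp]$. Since Theorem \ref{thm:clusterexpansion} asserts that the series $\sum_{\Gamma \in \cC^\cH} \omega(\Gamma)$ converges absolutely, this identity is valid as an equality of numbers (not just formal power series), giving
\begin{equation}
\Exp{\sum_{\Gamma \in \cC^\cH} \omega(\Gamma)} = \sum_{S \in \Sep[\cH]} \prod_{\fp \in S} \E[\phi_\fp].
\end{equation}

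Next, I would observe that for any compatible configuration $S \in \Sep[\cH]$ the random variables $\{\phi_\fp\}_{\fp \in S}$ are jointly independent. This is where the choice of compatibility relation pays off: by Definition \ref{def:polymers}, $\fp_1, \fp_2 \in S$ being compatible means $\fp_1 \cup \fp_2$ admits the decomposition into $\fp_1$ and $\fp_2$ with $N(\fp_1) \cap N(\fp_2) = \emptyset$. Hence the edges of $\Q$ which determine $\phi_{\fp_1} = 2^{-\N(\fp_1)}$ and $\phi_{\fp_2} = 2^{-\N(\fp_2)}$ are disjoint, so these two random variables depend on disjoint subsets of the percolation configuration and are independent. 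Applying this pairwise across $S$ lets me convert the product of expectations into an expectation of a product, and then swap the outer sum with the expectation (both are over nonnegative quantities, so Tonelli applies with no integrability assumption) to obtain
\begin{equation}
\sum_{S \in \Sep[\cH]} \prod_{\fp \in S} \E[\phi_\fp] = \E\Bra{\sum_{S \in \Sep[\cH]} \prod_{\fp \in S} \phi_\fp} = \E[\polymerpf^\cH],
\end{equation}
by the definition \eqref{eq:polymerpfdef} of $\polymerpf^\cH$.

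Finally, substituting this identity for both $\cH = \cE$ and $\cH = \cO$ into the formula provided by Theorem \ref{thm:clusterexpansion} gives
\begin{equation}
\E[\Cnt] = 2^{2^{d-1}} \Rnd{\E[\polymerpf^\cE] + \E[\polymerpf^\cO]} \cdot \Rnd{1 + \Exp{-\Omega(2^d/d^4)}},
\end{equation}
and dividing by $2^{2^{d-1}}$ yields the stated corollary. There is no real obstacle here; the only substantive points to verify carefully are (i) absolute convergence of the cluster series, which is already quoted from \cite{ks} via Theorem \ref{thm:clusterexpansion}, and (ii) the independence of $\{\phi_\fp\}_{\fp \in S}$ for compatible $S$, which is immediate from how compatibility was defined.
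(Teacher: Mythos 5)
Your proof is correct and follows essentially the same route as the paper: invoke the absolutely convergent cluster expansion from Theorem~\ref{thm:clusterexpansion}, use independence of $\{\phi_\fp\}_{\fp\in S}$ for compatible $S$ to pull the expectation outside the product, apply Tonelli to swap the sum and expectation, and substitute into the theorem. The only cosmetic difference is the justification for independence: you argue via disjointness of $N(\fp_1)$ and $N(\fp_2)$ (which compatibility gives), while the paper observes that disjointness of the vertex sets $\fp_1, \fp_2$ already suffices, since each relevant edge has one endpoint in the polymer itself; both observations are valid and lead to the same conclusion.
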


\subsection{Polymer decomposition of the quenched partition function}
\label{sec:polymerdecomp_quenched}

Using Corollary \ref{cor:polymermodel} as well as another input from \cite{ks} to be mentioned below,
we can now obtain a quenched approximation for the number of independent sets in $\Qp$.
This takes the form of a sum of two polymer model partition functions, which we will analyze directly in the sequel.
The reader familiar with polymer models should note that we will \emph{not} apply the cluster expansion to these partition functions at this point.
The following proposition was previously stated as Proposition \ref{prop:polymerdecomp_iop}, and we
remind the reader that $A \psim B$ means that $\fr{A}{B} \pto 1$ as $d \to \infty$.

\begin{proposition}
\label{prop:polymerdecomp}
For $p \in (0,1)$,
\begin{equation}
    \fr{\Cnt}{2^{2^{d-1}}} \psim \polymerpf^\cE + \polymerpf^\cO.
\end{equation}
where the terms on the RHS were defined in \eqref{eq:polymerpfdef}.
\end{proposition}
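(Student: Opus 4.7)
The plan is to pass from Corollary \ref{cor:polymermodel}, which is the annealed (expectation-level) version of the statement, to the quenched (in-probability) conclusion via an exact algebraic identity combined with a Markov-type estimate.

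Starting from the exact representation $\Cnt/2^{2^{d-1}} = \sum_{S \subseteq \cE} 2^{-N(S)}$ obtained by categorizing independent sets by their even part, I would split the sum based on whether $S$ is polymer-decomposable, yielding $\Cnt/2^{2^{d-1}} = \polymerpf^\cE + R^\cE$ with $R^\cE \geq 0$ collecting the non-decomposable part, and by symmetry $\Cnt/2^{2^{d-1}} = \polymerpf^\cO + R^\cO$. Classifying each independent set by the decomposability of each of $I \cap \cE$ and $I \cap \cO$ and applying inclusion–exclusion yields the identity
\[
\frac{\Cnt}{2^{2^{d-1}}} - (\polymerpf^\cE + \polymerpf^\cO) = \frac{B - A}{2^{2^{d-1}}},
\]
where $A$ counts the independent sets in $\Qp$ with both halves polymer-decomposable and $B$ counts those with neither. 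Combining this with the deterministic lower bound $\Cnt/2^{2^{d-1}} \geq \max(\polymerpf^\cE, \polymerpf^\cO) \geq \tfrac{1}{2}(\polymerpf^\cE + \polymerpf^\cO)$ and the trivial bound $\polymerpf^\cE + \polymerpf^\cO \geq 2$ (each summand is at least the empty-configuration contribution of $1$), the proposition reduces to the expectation estimate $\E[A] + \E[B] = o(2^{2^{d-1}})$, after which Markov converts it to convergence in probability.

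For these expectation bounds, one has $\E[A] = \sum_{(S,T) \text{ both decomp}} (1 - p)^{e(S,T)}$ where $e(S,T)$ denotes the number of $\Q$-edges between $S$ and $T$, and similarly for $\E[B]$. Here I would invoke Sapozhenko-style graph container estimates to bound the number of polymer-decomposable subsets of $\cH$ by $2^{(1-c)2^{d-1}}$ for a universal $c > 0$, together with an isoperimetric estimate showing that if $S$ has a 2-linked component of closure exceeding $\tfrac{3}{4}2^{d-1}$ then $|N(S)|$ must be nearly $2^{d-1}$; the factor $(1 - p)^{e(S,T)}$ then provides further suppression. The main obstacle is precisely this separate control of $\E[A]$ and $\E[B]$: Corollary \ref{cor:polymermodel} only constrains $\E[B - A]$, and since $B - A$ is not sign-definite, a direct Markov estimate on the difference is insufficient. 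The required combinatorial inputs are essentially the same ingredients that underlie the proof of the cluster expansion convergence in Theorem \ref{thm:clusterexpansion}, but they must be invoked here in the streamlined form suitable for the quenched statement.
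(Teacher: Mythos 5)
Your algebraic skeleton matches the paper's proof: the identity $\fr{\Cnt}{2^{2^{d-1}}} - (\polymerpf^\cE + \polymerpf^\cO) = \fr{B-A}{2^{2^{d-1}}}$, the nonnegativity of $A$ and $B$, the deterministic lower bound on $\Cnt$, and Markov's inequality are all the right moves and are exactly what the paper uses (with $B = \TooBig$ and $A = \TooBig - 2^{2^{d-1}} \cX$). The gap is in how you close the argument: you correctly observe that Corollary \ref{cor:polymermodel} alone gives only $\E[B - A]$ and that $B-A$ is not sign-definite, but you then propose to bound $\E[A]$ and $\E[B]$ separately via new Sapozhenko-style container and isoperimetry arguments, which you do not carry out. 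This is both incomplete and unnecessary. The paper imports a bound on $\E[B] = \E[\TooBig]$ from \cite{ks} (Lemma \ref{lem:toobig}, packaged as Corollary \ref{cor:toobig}), and then the bound on $\E[A]$ comes \emph{for free} from the identity $\E[A] = \E[B] - 2^{2^{d-1}} \E[\cX]$ together with Corollary \ref{cor:polymermodel}: no additional combinatorics are required beyond these two already-available annealed inputs. Your framing that both $\E[A]$ and $\E[B]$ must be separately attacked by graph containers misses this shortcut.

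Moreover, the route you sketch for $\E[A]$ would not directly work as stated. Bounding the number of polymer-decomposable $S \subseteq \cH$ by $2^{(1-c)2^{d-1}}$ gives at most $2^{(2-2c)2^{d-1}}$ pairs $(S,T)$, which exceeds $2^{2^{d-1}}$ whenever $c < \fr{1}{2}$, and the additional factor $(1-p)^{e(S,T)}$ is close to $1$ precisely for the pairs of very sparse decomposable sets that dominate the count, so it cannot rescue the naive estimate without a substantially more careful layered argument. You would in effect be re-proving Lemma 3.3 of \cite{ks}. The fix is simply to cite that lemma for $\E[B]$ and let the identity carry $\E[A]$ automatically.
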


The proof is an application of Markov's inequality; however, since Corollary \ref{cor:polymermodel} only says
that the expectations of the two sides in Proposition \ref{prop:polymerdecomp} are close, we will also need some
form of monotonicity as well as an a priori bound on $\Cnt$ to make the argument work.

As previously mentioned in Section \ref{sec:iop}, specifically in \eqref{eq:exactrep}, we have
\begin{equation}
    \fr{\Cnt}{2^{2^{d-1}}} = \sum_{T \sse \cE} 2^{-\N(T)}.
\end{equation}
Note that, if $T$ can be represented as a union of well-separated polymers $T = \bigcup_{\fp \in S} \fp$
for $S \in \Sep[\cE]$, then
\begin{equation}
    2^{-\N(T)} = \prod_{\fp \in S} \phi_\fp,
\end{equation}
and so the quantity $2^{-\N(T)}$ appears as a term inside the expression $\polymerpf^\cE$
on the right-hand side of Proposition \ref{prop:polymerdecomp}.
In other words, every independent set $I$ with an even side which is representable via a polymer configuration
is accounted for in $\polymerpf^\cE$, and the number of such sets is exactly $2^{2^{d-1}} \polymerpf^\cE$.
Similarly, every independent set $I$ with an odd side which is representable via a polymer configuration is accounted
for in $\polymerpf^\cO$ and the number of such sets is exactly $2^{2^{d-1}} \polymerpf^\cO$.

However, there may be independent sets for which neither side is representable via a polymer configuration, which
are not counted in the right-hand side of Proposition \ref{prop:polymerdecomp}.
Moreover, there are also independent sets which are counted in both $\polymerpf^\cE$ and $\polymerpf^\cO$,
since both sides can be decomposed into well-separated polymers.
This means that the two sides of Proposition \ref{prop:polymerdecomp} are not immediately comparable via an inequality.
To remedy this, we use the following lemma from \cite{ks} which gives another annealed bound on the total number
of independent sets for which both sides are not representable by a polymer configuration.

\begin{lemma}[Lemma 3.3 of \cite{ks}, with $k=1$]
\label{lem:toobig}
For $p \in (0,1)$,
\begin{equation}
    \sum_{\substack{I \sse \Q \\ |[I \cap \cE]|, |[I \cap \cO]| > \fr{3}{4} 2^{d-1}}}
    \P[I \text{ is independent in } \Qp]
    \leq \E[\Cnt] \cdot \Exp{-\Omega(2^d/d)}.
\end{equation}
\end{lemma}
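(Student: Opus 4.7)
The plan is to show that the combinatorial enumeration of the atypical configurations (those with large closures on both sides) is overwhelmed by the small probability that they are independent in $\Qp$, using Sapozhenko's graph container method \cite{container} in an adapted form. Writing $A = I \cap \cE$ and $B = I \cap \cO$, the starting point is the annealed identity
\begin{equation}
\P[I \text{ is independent in } \Qp] = (1-p)^{e_\Q(A,B)},
\end{equation}
where $e_\Q(A,B)$ counts edges of $\Q$ between $A$ and $B$. The task reduces to bounding the weighted sum over pairs $(A,B)$ with $|[A]|, |[B]| > \fr{3}{4} 2^{d-1}$ by $\E[\Cnt] \cdot \Exp{-\Omega(2^d/d)}$.

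First, I would extract a neighborhood lower bound from the closure condition. The identity $N(A) = N([A])$ (immediate from $A \sse [A]$ and the definition of $[A]$) combined with the double count $\sum_{v \in [A]} |N(v)| = d\,|[A]|$ on the $d$-regular bipartite hypercube, where each $w \in N([A])$ is counted at most $d$ times, yields
\begin{equation}
|N(A)| \geq |[A]| > \tfrac{3}{4} 2^{d-1},
\end{equation}
and analogously $|N(B)| > \fr{3}{4} 2^{d-1}$.

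Second, I would integrate out the $B$-variable for each fixed $A$, dropping the closure constraint on $B$ as a first-moment upper bound:
\begin{equation}
\sum_{B \sse \cO} (1-p)^{e_\Q(A,B)} = \prod_{w \in \cO} \bigl(1 + (1-p)^{|N(w) \cap A|}\bigr) \leq 2^{2^{d-1} - |N(A)|}(2-p)^{|N(A)|} = 2^{2^{d-1}} \bigl(1 - \tfrac{p}{2}\bigr)^{|N(A)|},
\end{equation}
which, using step one, is at most $2^{2^{d-1}} \bigl(1 - \fr{p}{2}\bigr)^{\frac{3}{4}2^{d-1}} = 2^{2^{d-1}} \Exp{-\Omega(2^d)}$ uniformly in $A$, with the implicit constant in $\Omega$ depending on $p$.

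Finally, I would enumerate the admissible $A$'s using Sapozhenko's container lemma. The trivial bound $2^{2^{d-1}}$ is too weak: multiplied by step two and divided by $\E[\Cnt] \geq 2^{2^{d-1}}$, it leaves a residual of size $2^{2^{d-1}} \Exp{-\Omega(2^d)}$, which requires the $\Omega$ constant to exceed $\frac{\log 2}{2}$ to be useful---not true for all $p \in (0,1)$. The graph container method of \cite{container}, as deployed in \cite{ks}, instead produces a compressed encoding of each $A$ with $|[A]| > \fr{3}{4} 2^{d-1}$ by a container of size at most $(1-\eta)2^{d-1}$ for some $\eta > 0$, with only $\Exp{O(2^d/d)}$ distinct containers in total. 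Combining this enumeration with step two and dividing by $\E[\Cnt] \geq 2^{2^{d-1}}$ yields the claimed $\E[\Cnt] \cdot \Exp{-\Omega(2^d/d)}$. The main obstacle is precisely this final step: producing a container count tight enough to beat the $2^{2^{d-1}}$-enumeration against the probability saving uniformly over $p \in (0,1)$. The threshold $\fr{3}{4}$ in the definition of a polymer is chosen precisely so that this balance works; any constant strictly less than $1$ would suffice, but one must stay bounded away from $1$ for the container enumeration to absorb the exponential $2^{2^{d-1}}$ factor favorably.
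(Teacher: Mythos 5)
This statement is cited verbatim as Lemma 3.3 of \cite{ks} (with $k=1$); the present paper offers no proof of its own, so there is no in-paper argument to compare against, and what is being evaluated is your from-scratch reconstruction.

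Your steps 1--3 are correct: the factorization $\P[I \text{ is independent in } \Qp] = (1-p)^{e(A,B)}$, where $e(A,B)$ counts $\Q$-edges between $A = I\cap\cE$ and $B = I\cap\cO$; the chain $|N(A)| = |N([A])| \geq |[A]| > \tfrac{3}{4}2^{d-1}$; and the bound $\sum_{B \sse \cO}(1-p)^{e(A,B)} \leq 2^{2^{d-1}}\bigl(1-\tfrac{p}{2}\bigr)^{|N(A)|}$. The final step, however, contains two genuine gaps.

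First, the container lemma you invoke is false as stated. You assert that every $A$ with $|[A]| > \tfrac{3}{4}2^{d-1}$ lies inside one of at most $\Exp{O(2^d/d)}$ containers, each of size at most $(1-\eta)2^{d-1}$. But $A = \cE$ satisfies $|[\cE]| = 2^{d-1} > \tfrac{3}{4}2^{d-1}$ and has $2^{d-1}$ vertices, so it cannot fit inside any set of size $(1-\eta)2^{d-1} < 2^{d-1}$. The Sapozhenko--Galvin container lemma used in \cite{ks} approximates the \emph{neighborhood} $N(G)$ of a $2$-linked set $G$ by a pair $(S,F)$ with $F \sse N(G) \sse S$ and $|S\setminus F|$ small; it is a structurally different statement, and it does not directly yield a small enclosing set for $A$ itself.

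Second, and more fundamentally, discarding the $|[B]|$-constraint destroys the estimate for small $p$. Step 3 extracts the probability saving $\bigl(1-\tfrac{p}{2}\bigr)^{\frac{3}{4}2^{d-1}} = \Exp{-\Theta(p\,2^d)}$, which degrades to nothing as $p\to 0$, whereas any container-based cap on the number of admissible $A$'s is a deterministic, $p$-independent quantity of the form $\Exp{O(2^d/d)}\cdot 2^{(1-\eta)2^{d-1}}$ with $\eta$ fixed. Multiplying, dividing by $\E[\Cnt] \geq 2^{2^{d-1}}$, and demanding the result be $\Exp{-\Omega(2^d/d)}$ forces $2^{1-\eta}\bigl(1-\tfrac{p}{2}\bigr)^{3/4}<1$, i.e.\ $p > 2\bigl(1-2^{-4(1-\eta)/3}\bigr)$; for every fixed $\eta<1$ this excludes a nondegenerate interval of small $p$, yet the lemma is asserted for all fixed $p \in (0,1)$. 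The joint constraint that $|[B]| > \tfrac{3}{4}2^{d-1}$ forces $|B| \geq |[B]|/d$ to be large as well, which in combination with $|N(A)| > \tfrac{3}{4}2^{d-1}$ must be used to extract a lower bound on $e(A,B)$ directly rather than summing over all $B$; that symmetric use of both closures is precisely the content of the argument in \cite{ks} that your proposal defers to rather than supplies.
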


Recall from Definition \ref{def:polymers} that $[A]$ is the closure of a set, and that $|[\fp]| \leq \fr{3}{4} 2^{d-1}$
is the only requirement, beyond being $2$-linked, for $\fp$ to be a polymer under Definition \ref{def:polymers}.
So if $|[I \cap \cE]| \leq \fr{3}{4} 2^{d-1}$, then $I \cap \cE$ can be represented by a polymer configuration, and
similarly for $I \cap \cO$.
Thus we immediately obtain the following corollary.

\begin{corollary}
\label{cor:toobig} 
Let $\TooBig$ denote the number of independent sets $I \sse \Q$ for which neither $I \cap \cE$ nor $I \cap \cO$
are representable via a polymer configuration.
Then, for all $p \in (0,1)$,
\begin{equation}
    \E[\TooBig] \leq \E[\Cnt] \cdot \Exp{-\Omega(2^d/d)}.
\end{equation}
\end{corollary}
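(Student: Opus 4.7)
The plan is to reduce the statement to Lemma \ref{lem:toobig} via the set-theoretic observation that membership in $\TooBig$ forces both $|[I \cap \cE]|$ and $|[I \cap \cO]|$ to exceed $\fr{3}{4} 2^{d-1}$. Once that implication is in place, the bound on $\E[\TooBig] = \sum_{I \in \TooBig} \P[I \text{ indep in } \Qp]$ follows by comparing the index set of the sum to the one appearing in Lemma \ref{lem:toobig}.

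The key intermediate claim to establish is the following: for any $A \sse \cH$ that is \emph{not} representable via a polymer configuration, we have $|[A]| > \fr{3}{4} 2^{d-1}$. To see this, decompose $A$ into its 2-linked components $A = \bigsqcup_i \fp_i$, which are automatically pairwise compatible by virtue of having disjoint $\Q$-neighborhoods. The condition that $A$ fails to be representable means that at least one $\fp_{i_0}$ fails the polymer size requirement of Definition \ref{def:polymers}, i.e.\ $|[\fp_{i_0}]| > \fr{3}{4} 2^{d-1}$. Since $N(\fp_{i_0}) \sse N(A)$, the defining inclusion of the closure gives $[\fp_{i_0}] \sse [A]$, and hence $|[A]| \geq |[\fp_{i_0}]| > \fr{3}{4} 2^{d-1}$, as claimed.

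Applying this claim on both sides of the hypercube, any $I \in \TooBig$ satisfies $|[I \cap \cE]|, |[I \cap \cO]| > \fr{3}{4} 2^{d-1}$. Therefore
\begin{equation}
    \E[\TooBig] = \sum_{I \in \TooBig} \P[I \text{ indep in } \Qp]
    \leq \sum_{\substack{I \sse \Q \\ |[I \cap \cE]|,\, |[I \cap \cO]| > \fr{3}{4} 2^{d-1}}} \P[I \text{ indep in } \Qp],
\end{equation}
and Lemma \ref{lem:toobig} bounds the right-hand side by $\E[\Cnt] \cdot \Exp{-\Omega(2^d/d)}$, finishing the proof. There is no real obstacle here; the only minor subtlety is verifying the closure inclusion $[\fp_{i_0}] \sse [A]$, which is a direct consequence of monotonicity of $N(\cdot)$ combined with the definition $[B] = \{v \in \cH : N(v) \sse N(B)\}$.
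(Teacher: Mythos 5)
Your proof is correct and follows essentially the same route as the paper: the paper observes that $|[I\cap\cH]|\leq\frac34 2^{d-1}$ implies representability and applies Lemma \ref{lem:toobig}, while you work through the contrapositive and make the monotonicity of the closure operator $[\cdot]$ explicit — a detail the paper treats as immediate.
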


With this in hand, we can now prove Proposition \ref{prop:polymerdecomp}.

\begin{proof}[Proof of Proposition \ref{prop:polymerdecomp}]
Define
\begin{equation}
    \cX \coloneqq
    \fr{\Cnt}{2^{2^{d-1}}} - \polymerpf^\cE - \polymerpf^\cO,
\end{equation}
so that our goal is to show that
\begin{align}
\label{eq:polydecompgoal}
    \fr{\cX}{\Cnt / 2^{2^{d-1}}} \pto 0.
\end{align}
An initial idea might be to use Markov's inequality, since the expectation of $\cX$ is small; indeed
Corollary \ref{cor:polymermodel} says that
\begin{align}
\label{eq:xsmall}
    \left| \E[\cX] \right| \leq \E \left[ \fr{\Cnt}{2^{2^{d-1}}} \right] \cdot \Exp{-\Omega(2^d/d^4)}.
\end{align}
However, $\cX$ is not positive (note that the absolute value is on the outside of the expectation above),
and moreover the denominator of \eqref{eq:polydecompgoal} is random.
Thus we need to be a bit more careful.

First we will show that
\begin{align}
\label{eq:toobigsmall}
    \fr{\TooBig}{\Cnt} \pto 0,
\end{align}
where $\TooBig$ is defined as in Corollary \ref{cor:toobig}.
In order to apply Markov's inequality with that corollary, we will need an a priori lower bound on $\Cnt$.
Since $\Cnt \geq 2 \cdot 2^{2^{d-1}}$ deterministically, by \eqref{expectation12} we have
\begin{align}
\label{eq:aprioriLB}
    \Cnt \geq \Exp{- \fr{1}{2}(2-p)^d (1 + o(1))} \cdot \E[\Cnt].
\end{align}
So for any $\eps > 0$, we have
\begin{align}
    \P\left[\TooBig \geq \eps \cdot \Cnt\right] &\leq
    \P\left[\TooBig \geq \eps \cdot \Exp{-\fr{1}{2}(2-p)^d (1 + o(1))} \E[\Cnt]\right] \\
    &\leq \fr{\Exp{-\Omega(2^d/d)}}{\eps \cdot \Exp{-\fr{1}{2}(2-p)^d (1 + o(1))}},
\end{align}
which tends to zero as $d \to \infty$ since $p > 0$, proving \eqref{eq:toobigsmall}.

Next we turn to \eqref{eq:polydecompgoal}.
By the definitions of $\cX$ and $\TooBig$, as well as the discussion (between the statements of Proposition \ref{prop:polymerdecomp} and Lemma \ref{lem:toobig} above)
of how the polymer partition functions relate to the number of polymer-decomposable independent sets,
\begin{equation}
    \TooBig - 2^{2^{d-1}} \cX =
    \underbrace{2^{2^{d-1}} \cdot \polymerpf^\cE}_{\substack{\text{number of independent sets} \\ \text{polymer-decomposable} \\ \text{on even side}}} +
    \underbrace{2^{2^{d-1}} \cdot \polymerpf^\cO}_{\substack{\text{number of independent sets} \\ \text{polymer-decomposable} \\ \text{on odd side}}} - \quad
    \underbrace{(\Cnt - \TooBig)}_{\substack{\text{number of independent sets} \\ \text{polymer-decomposable} \\ \text{on either side}}} \geq 0,
\end{equation}
since the above quantity is exactly the number of independent sets for which \emph{both sides} have representations
as polymer configurations.
So, using this nonnegativity, we now have
\begin{align}
    \P&\left[ \TooBig - 2^{2^{d-1}} \cX \geq \eps \cdot \Cnt \right] \\
    &\leq \P\left[ \TooBig - 2^{2^{d-1}} \cX \geq \eps \cdot \Exp{- \f12 (2-p)^d (1 + o(1))} \cdot \E[\Cnt] \right] \tag*{(by \eqref{eq:aprioriLB})} \\
    &\leq \fr{\E[\TooBig - 2^{2^{d-1}} \cX]}{\eps \cdot \Exp{- \f12 (2-p)^d (1+o(1))} \cdot \E[\Cnt]} \tag*{(by Markov's inequality)} \\
    &\leq \fr{\E[\TooBig] + 2^{2^{d-1}} |\E[\cX]|}{\eps \cdot \Exp{-\f12 (2-p)^d (1+o(1))} \cdot \E[\Cnt]} \\
    &\leq \fr{\Exp{-\Omega(2^d/d)} + \Exp{-\Omega(2^d/d^4)}}{\eps \cdot \Exp{-\fr{1}{2}(2-p)^d (1 + o(1))}}. \tag*{(by \eqref{eq:xsmall} and Corollary \ref{cor:toobig})}
\end{align}
The above tends to zero since $p > 0$, proving that
\begin{equation}
    \fr{\TooBig}{\Cnt} - \fr{\cX}{\Cnt / 2^{2^{d-1}}} \pto 0.
\end{equation}
Combining this with \eqref{eq:toobigsmall} completes the proof.
\end{proof}

\section{Bounding the size of polymers}
\label{sec:smallpolymers}

In principle, a polymer configuration $S$ can contain polymers of any size, and there are
increasingly many possibilities for a polymer of size $k$ as $k$ grows, which would be somewhat cumbersome
to deal with.
However, it turns out that for $p$ bounded away from zero, only finitely many types of polymers are relevant
in the decomposition.
This is formalized in Proposition \ref{prop:smallpolymers} below, which is a more general version of
Proposition \ref{prop:smallpolymers_iop} above.

In the sequel, we only consider $p$ large enough that in fact only polymers of size $\leq 2$ (i.e.\ singletons
and dimers) are relevant.
Nevertheless, we state the results in this section in greater generality in the hopes that they may be useful
for future work investigating smaller values of $p$.
In addition, we remark that a version of these results (covering the case where only singletons are relevant) already appeared in
our previous work \cite[Lemma 4.9]{brcgw}.
Thus, as with Section \ref{sec:polymerdecomp} above, this section is provided more for completeness, in addition to providing streamlined proofs,
rather than presenting new ideas.

Note that in this section we drop the superscript $\cH$ denoting the side of the hypercube;
for instance, $\polymerpf^\cH$ will simply become $\polymerpf$.
This is because all results in this section focus on only one side at a time, and hold equally for the
even and the odd sides.

Recall from \eqref{eq:polymerpfdef} and \eqref{eq:smallpolymerpfdef} the notation
\begin{equation}
    \polymerpf = \sum_{S \in \Sep} \prod_{\fp \in S} \phi_\fp
    \qquad \text{and} \qquad
    \polymerpf_{\leq k} = \sum_{S \in \Sep_{\leq k}} \prod_{\fp \in S} \phi_\fp.
\end{equation}
Here, and in the sequel, $\Sep = \Sep[\cH]$ denotes the collection of separated polymer configurations
on side $\cH$ of the hypercube, and $\Sep_{\leq k}$ denotes the subcollection where all the polymers
are constrained to have size at most $k$.

The following proposition gives the threshold in $p$ for the irrelevance of polymers of size larger than $k$,
for each $k \in \mathbb{N}$.
This proposition will be proved in Section \ref{sec:smallpolymers_true} below.

\begin{proposition}
\label{prop:smallpolymers}
For each $k \geq 1$, if $p > 2 - 2^{k/(k+1)}$, then we have $\polymerpf \psim \polymerpf_{\leq k}$.
\end{proposition}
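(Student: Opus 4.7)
The plan is to dominate $\polymerpf - \polymerpf_{\leq k}$ by $\polymerpf$ multiplied by a random factor that is small in probability, and then apply Markov's inequality to that factor.

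First I would establish the pointwise inequality
\begin{equation}
    \polymerpf - \polymerpf_{\leq k} \leq Y_k \cdot \polymerpf,
    \qquad Y_k \coloneqq \sum_{\fp \,:\, |\fp| > k} \phi_\fp.
\end{equation}
For any configuration $S \in \Sep \setminus \Sep_{\leq k}$, I would single out (lexicographically, say) a distinguished polymer $\fp^* = \fp^*(S)$ with $|\fp^*| > k$, and factor $\prod_{\fp \in S} \phi_\fp = \phi_{\fp^*} \cdot \prod_{\fp \in S \setminus \{\fp^*\}} \phi_\fp$. Summing over $S$ and relaxing the restriction that the remaining polymers be compatible with $\fp^*$ only increases the sum, and the resulting inner sum is bounded by $\polymerpf$. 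Combined with the trivial $\polymerpf_{\leq k} \leq \polymerpf$, this yields $1 - Y_k \leq \polymerpf_{\leq k}/\polymerpf \leq 1$, so it suffices to prove $Y_k \pto 0$.

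To bound $\E[Y_k]$, I would use that $\phi_\fp = 2^{-\N(\fp)}$ together with the independence of edges in $\Qp$ to compute
\begin{equation}
    \E[\phi_\fp] = \prod_{u \in N(\fp)} \f{1 + (1-p)^{e_\fp(u)}}{2} \leq \Fr{2-p}{2}^{|N(\fp)|},
\end{equation}
where $e_\fp(u) \geq 1$ denotes the number of edges in $\Q$ from $u$ to $\fp$, using that each factor is a decreasing function of $e_\fp(u)$. For a 2-linked $\fp$ of size $s$, Bonferroni's inequality applied to the pairs in $\fp$ sharing neighbors (each such pair contributing an overlap of size exactly $2$, since two even vertices sharing a neighbor automatically share exactly two) gives $|N(\fp)| \geq sd - s(s-1)$. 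On the combinatorial side, a BFS-style enumeration bounds the number of 2-linked subsets of $\cE$ of size $s$ by $2^{d-1} \cdot \O{d^{2(s-1)}}$: we choose one of $2^{d-1}$ root vertices and, at each of $s-1$ subsequent steps, at most $s\binom{d}{2}$ choices for a new vertex at Hamming distance $2$ from the current set, dividing by $s!$ for orderings. Combining,
\begin{equation}
    \sum_{\fp \,:\, |\fp| = s} \E[\phi_\fp] \leq \O{d^{2(s-1)}} \cdot \Fr{(2-p)^s}{2^{s-1}}^d
\end{equation}
up to multiplicative constants in $d$. The base $(2-p)^s / 2^{s-1}$ is strictly less than $1$ exactly when $p > 2 - 2^{(s-1)/s}$, which holds for all $s \geq k+1$ under our hypothesis $p > 2 - 2^{k/(k+1)}$. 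The ratio of consecutive $s$-terms is of order $d^2 \cdot ((2-p)/2)^d \to 0$ for any fixed $p > 0$, so the series in $s$ is dominated by the $s = k+1$ term and tends to zero. Thus $\E[Y_k] \to 0$, and Markov's inequality gives $Y_k \pto 0$.

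The main obstacle I anticipate is ensuring that these bounds are uniform across the full range of admissible polymer sizes, including $s$ comparable to the cap $|[\fp]| \leq \tfrac{3}{4} 2^{d-1}$ built into Definition \ref{def:polymers}. For polymers whose size is not bounded by a constant, the BFS counting bound is loose and the Bonferroni estimate on $|N(\fp)|$ degrades (the $s(s-1)$ correction is no longer negligible compared to $sd$). However, hypercube isoperimetry and the graph container framework of \cite{container} yield much stronger lower bounds on $|N(\fp)|$ for large 2-linked sets, which more than compensates, so the tail of the series in $s$ can be controlled separately and is negligible in our parameter regime.
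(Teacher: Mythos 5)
Your proposal is correct, and it ends up in the same place the paper does: reduce the discrepancy $\polymerpf - \polymerpf_{\leq k}$ to the random variable $Y_k = \sum_{|\fp|>k}\phi_\fp$ and kill it with Markov. The two proofs differ in two respects worth noting.

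First, the reduction step. You establish $\polymerpf - \polymerpf_{\leq k} \leq Y_k \cdot \polymerpf$ by singling out a distinguished polymer $\fp^*(S)$ of size $>k$ in each configuration, factoring it out, and observing the resulting map $S \mapsto (\fp^*(S), S\setminus\{\fp^*(S)\})$ is injective; this gives $1 - Y_k \leq \polymerpf_{\leq k}/\polymerpf \leq 1$. The paper instead uses an exponentiation trick, showing that $\Exp{Y_k}\cdot \polymerpf_{\leq k} \geq \polymerpf$ by comparing the Taylor expansion of the exponential to the partition-function expansion, which rearranges to $1 \leq \polymerpf/\polymerpf_{\leq k} \leq e^{Y_k}$. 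Both are valid and lead to the same Markov step; yours is arguably a touch more direct.

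Second, the estimate on $\E[Y_k]$. The paper treats this entirely as a black-box input, citing Lemma 4.1 of \cite{ks} (restated in the paper as Lemma \ref{lem:weightbound}), which uses the graph container method to bound $\sum_{j>k}\sum_{\fp\in\polymers_j}\E[\phi_\fp]$ uniformly in the polymer size. You instead give a self-contained elementary derivation for polymers of bounded size: the factorization $\E[\phi_\fp] = \prod_{u\in N(\fp)} \tfrac{1+(1-p)^{e_\fp(u)}}{2} \leq \left(\tfrac{2-p}{2}\right)^{|N(\fp)|}$ by edge-independence, the Bonferroni bound $|N(\fp)| \geq sd - s(s-1)$, and the BFS enumeration of 2-linked sets. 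This is in fact the proof that the paper gives for the analogous Lemma \ref{lem:weightbound_full} for the simpler weights $\fphi_\fp$ — your contribution is the Bonferroni step that adapts it to the true $\phi_\fp$. As you correctly flag, this elementary bound degrades once $s$ is comparable to $d$ (both because $s(s-1)$ is no longer negligible against $sd$ and because the BFS count $2^d(ed^2)^s$ no longer beats the weight decay), and the tail over large $s$ really does need the container/isoperimetry machinery of \cite{container, sapozhenko}. So both proofs ultimately lean on the same non-elementary input; the paper just cites it wholesale, while you isolate exactly where it is needed (the large-$s$ tail) and handle the bounded-$s$ regime by hand. Your route is thus slightly more transparent about what is elementary and what is not, at the price of leaving the graph-container step as a sketch rather than a clean citation.
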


\subsection{The independent singletons model}

In this section, as a warm-up which will also be useful in the overall proof of our main theorem, we prove that a random set drawn according
to a simpler product measure (which was alluded to in the proof ideas presented in Section \ref{sec:iop}, and which will appear repeatedly in our arguments)
also satisfies a polymer decomposition with small polymers when $p$ is bounded away from zero.
Specifically, we consider the model which samples $S \sse \cH$ by choosing each vertex $v$ to be included in $S$ independently
with probability $\fr{\phi_v}{1 + \phi_v}$, where we recall from \eqref{eq:phidef} that $\phi_v = 2^{-\N(v)}$.
If the weight of a set $S$ is written simply as $\prod_{v \in S} \phi_v$, then the partition function of this model is
\begin{equation}
	\prod_{v \in \cH} \Rnd{1 + \phi_v}.
\end{equation}
Recall from \eqref{eq:fakephidef} the notation $\fphi_\fp = \prod_{v \in \fp} \phi_v$.
Then in expanding the above product, each term can be written as a product of $\fphi_\fp$ for some
polymers $\fp$ in a valid polymer configuration, as follows
\begin{equation}
	\prod_{v \in \cH} \Rnd{1 + \phi_v} = \sum_{S \in \widetilde{\Sep}} \prod_{\fp \in S} \fphi_\fp
    = \fakepolymerpf,
\end{equation}
recalling the definition of $\fakepolymerpf$ from \eqref{eq:fakepolymerpfdef}.
Note that in defining $\fakepolymerpf$ we use a slightly relaxed notion of polymers than that which appears in \cite{ks} and 
in Definition \ref{def:polymers};
specifically, we do not impose the restriction that $|[\fp]| \leq \fr{3}{4} 2^{d-1}$.
We denote the collection of these polymers by $\fakepolymers$, and define $\widetilde{\Sep}$ and
$\widetilde{\Sep}_{\leq k}$ analogously to $\Sep$ and $\Sep_{\leq k}$.
However we remark that for large enough $d$ and for any fixed $k$, we have
$\widetilde{\Sep}_{\leq k} = \Sep_{\leq k}$, and so we will simply use the latter notation in the
sequel, where we will only work with polymers of bounded size.

The following proposition is the analogue of Proposition \ref{prop:smallpolymers} for this model.
It says that for large enough $p$, only small enough polymers are relevant.
Recall from \eqref{eq:smallpolymerpfdef} the notation
\begin{equation}
    \fakepolymerpf_{\leq k} = \sum_{S \in \Sep_{\leq k}} \prod_{\fp \in S} \fphi_\fp.
\end{equation}

\begin{proposition}
\label{prop:smallpolymers_full}
For each $k \geq 1$, if $p > 2 - 2^{k/(k+1)}$, then we have $\fakepolymerpf \psim \fakepolymerpf_{\leq k}$.
\end{proposition}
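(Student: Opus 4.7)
The plan is to exploit the product structure of $\fakepolymerpf$. Let $\tilde{\mu}$ denote the probability measure on subsets $S\sse\cH$ in which each vertex $v$ is included independently with probability $\phi_v/(1+\phi_v)$, so that $\tilde{\mu}(S) = \prod_{v\in S}\phi_v\,/\,\fakepolymerpf$. Since a polymer configuration $S\in\widetilde{\Sep}$ is determined by its union of vertices (the polymers are just the $2$-linked components), the event $S\in\widetilde{\Sep}_{\leq k}$ is exactly the event that every $2$-linked component of $S$ has size $\leq k$. Consequently,
\begin{equation}
\fr{\fakepolymerpf - \fakepolymerpf_{\leq k}}{\fakepolymerpf}
\;=\; \tilde{\mu}\!\left(S \text{ has a $2$-linked subset of size } >k\right).
\end{equation}
By a standard tree-growing argument, any $2$-linked set of size strictly greater than $k$ contains a $2$-linked subset of size exactly $k+1$, so this probability is bounded by a union bound over such subsets.

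Next I would estimate each term of that union bound in expectation over the percolation. For any $2$-linked $T\sse\cH$ with $|T|=k+1$, since $T\sse\cH$ consists of vertices on one side of the hypercube, the variables $\{\phi_v\}_{v\in T}$ depend on disjoint sets of edges and are therefore independent; combined with the bound $\phi_v/(1+\phi_v)\leq \phi_v$, this yields
\begin{equation}
\E\!\left[\prod_{v\in T}\fr{\phi_v}{1+\phi_v}\right]
\;\leq\; \E[\fphi_T]
\;=\; \prod_{v\in T}\E[2^{-\N(v)}]
\;=\; \Fr{2-p}{2}^{d(k+1)},
\end{equation}
using $\N(v)\sim\Bin(d,p)$. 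The final ingredient is a count of the number of $2$-linked subsets $T\sse\cH$ with $|T|=k+1$. Each vertex of $\cH$ has $\binom{d}{2}$ neighbors in the $2$-linked adjacency graph (pick $2$ coordinates to flip), so a standard bound on the number of connected subsets of a bounded-degree graph containing a given vertex (e.g.\ via spanning subtrees) gives at most $(Cd^2)^k$ for some universal constant $C$. Since there are $2^{d-1}$ choices for an ``anchor'' vertex, the total number of such $T$ is at most $2^{d-1}(Cd^2)^k$.

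Putting these together and taking expectation over the percolation randomness,
\begin{equation}
\E\!\left[\fr{\fakepolymerpf - \fakepolymerpf_{\leq k}}{\fakepolymerpf}\right]
\;\leq\; 2^{d-1}(Cd^2)^k\Fr{2-p}{2}^{d(k+1)}
\;=\; \fr{(Cd^2)^k}{2}\cdot\Fr{(2-p)^{k+1}}{2^k}^{\!d}.
\end{equation}
The bracketed base is strictly less than $1$ precisely when $(2-p)^{k+1}<2^k$, i.e.\ $p>2-2^{k/(k+1)}$, and in this regime the right-hand side tends to $0$ as $d\to\infty$. Markov's inequality applied to the (nonnegative) random variable $(\fakepolymerpf-\fakepolymerpf_{\leq k})/\fakepolymerpf$ then yields the in-probability statement $\fakepolymerpf\psim\fakepolymerpf_{\leq k}$.

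The main obstacle is really just the combinatorial counting in the third step: one needs a clean bound on the number of $2$-linked subsets of a given size in $\cH$, which amounts to controlling the number of connected subgraphs of a fixed size in the auxiliary ``share-a-neighbor'' graph of degree $\binom{d}{2}$. Everything else is a direct manipulation of the product measure $\tilde{\mu}$ together with the independence of $\{\phi_v\}_{v\in\cH}$, which makes the expected-value bound collapse into an exponent comparison that gives exactly the threshold $p>2-2^{k/(k+1)}$ stated in the proposition.
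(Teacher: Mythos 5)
Your proof is correct, and it takes a genuinely different (and arguably cleaner) route than the paper's. The paper's argument opens with the inequality
\begin{equation}
\fakepolymerpf \;\leq\; \Exp{\sum_{j>k}\sum_{\fp\in\fakepolymers_j}\fphi_\fp}\cdot\fakepolymerpf_{\leq k},
\end{equation}
obtained by Taylor-expanding the exponential and observing that every term of $\fakepolymerpf$ appears on the left with coefficient at least $1$; rearranging reduces the problem to showing that $\sum_{j>k}\sum_{\fp\in\fakepolymers_j}\fphi_\fp\pto 0$, which is handled via a geometric-series estimate over all $j>k$ (Lemma~\ref{lem:weightbound_full}) and Markov's inequality. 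You instead interpret $(\fakepolymerpf - \fakepolymerpf_{\leq k})/\fakepolymerpf$ directly as the $\tilde\mu$-probability that the sampled set has a $2$-linked component of size $>k$, which is automatically in $[0,1]$, and reduce that event to containing a $2$-linked subset of size exactly $k+1$ before union-bounding. Both arguments rest on the same two inputs --- the count of $2$-linked sets of size $k+1$ (of order $2^d\poly(d)^k$) and the product-moment $\E[\fphi_T]=((2-p)/2)^{d(k+1)}$ --- and both land on the identical threshold; the difference is purely in how the partition-function ratio is converted into a single sum. What your version buys is a small economy: you dispense with the exponential inequality and you only ever need to count $2$-linked sets of a \emph{single} size $k+1$ rather than all sizes $>k$. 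The paper's version buys a little generality: the exponential inequality and the bound on the tail sum $\sum_{j>k}$ reappear almost verbatim in its proof of Lemma~\ref{lem:dimer_sep} and its treatment of the ``true'' polymer weights $\phi_\fp$ (Proposition~\ref{prop:smallpolymers} via Lemma~\ref{lem:weightbound}), so the paper's phrasing is designed for reuse in contexts where your clean probabilistic identity does not hold verbatim.
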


The key lemma is the following, which bounds the expected total weight of polymers of size at least $k$.
We denote the collection of relaxed polymers of size exactly $j$ by $\fakepolymers_j$.

\begin{lemma}
\label{lem:weightbound_full}
For every $p \in (0,1)$ and every fixed $k \geq 0$,
\begin{equation}
    \sum_{j > k} \sum_{\fp \in \fakepolymers_j} \E[\fphi_\fp]    
    \leq \poly(d) \cdot \Fr{(2-p)^{k+1}}{2^{k}}^d.
\end{equation}
\end{lemma}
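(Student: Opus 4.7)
The plan is to exploit independence of the percolation edges to compute $\E[\fphi_\fp]$ exactly, then combine this with a classical bound on the number of connected subsets of a bounded-degree graph in order to count polymers of each size, and finally sum over $j > k$ via a geometric series. The only substantive ingredient is the combinatorial count of 2-linked sets; the rest is routine.

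For the first step, I will write
\[
\fphi_\fp = \prod_{v \in \fp} 2^{-\N(v)} = \prod_{v \in \fp}\prod_{\substack{w \in \Q \\ w \sim v}} 2^{-\1[vw \in \Qp]}.
\]
Since $\fp \sse \cH$ lies entirely on one side of the bipartite graph $\Q$, the $jd$ edges appearing in this double product are pairwise distinct in $\Q$ --- even when two vertices of $\fp$ share a common neighbor $w$ in the opposite side, they contribute \emph{different} edges $(u,w)$ and $(v,w)$. Independence of the percolation across distinct edges together with $\E[2^{-\1[e \in \Qp]}] = p \cdot \fr{1}{2} + (1-p) = 1 - \fr{p}{2}$ then gives $\E[\fphi_\fp] = (1 - p/2)^{jd}$ for \emph{every} polymer $\fp$ of size $j$.

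For the counting step, note that a relaxed polymer of size $j$ is just a 2-linked subset of $\cH$ of size $j$, i.e., a subset connected with respect to the ``share a common neighbor in $\Q$'' adjacency. On $\cH$ this adjacency graph has maximum degree $\binom{d}{2}$ (the number of vertices at Hamming distance exactly two in the hypercube). By the classical tree-counting bound --- that the number of connected subsets of size $j$ containing a fixed vertex in a graph of maximum degree $\Delta$ is at most $(e\Delta)^{j-1}$ --- we get $|\fakepolymers_j| \leq 2^{d-1} \cdot (e \tbinom{d}{2})^{j-1}$.

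Combining the two estimates gives
\[
\sum_{\fp \in \fakepolymers_j} \E[\fphi_\fp] \leq c_j\, d^{2(j-1)} \cdot \Fr{(2-p)^j}{2^{j-1}}^d
\]
for some constant $c_j$. The ratio between consecutive terms in $j$ is of order $d^2 \cdot \Fr{2-p}{2}^d$, which decays super-polynomially in $d$ for any fixed $p \in (0,1)$. Hence for $d$ large enough the tail $\sum_{j > k}$ is geometrically dominated by its leading term at $j = k+1$, yielding the claimed bound $\poly(d) \cdot \Fr{(2-p)^{k+1}}{2^k}^d$. The main (quite mild) obstacle is simply identifying the correct tree-counting estimate for 2-linked sets and verifying that the geometric tail converges uniformly in $j$; both are standard tools already used in container-method analyses of this model, so no real difficulty arises.
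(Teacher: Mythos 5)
Your proof is correct and follows essentially the same three-step structure as the paper's: compute $\E[\fphi_\fp] = ((2-p)/2)^{jd}$ exactly, bound $|\fakepolymers_j|$ by $2^{d-1}\poly(d)^{j-1}$, and observe the resulting tail in $j$ is geometrically dominated by the $j=k+1$ term. The only cosmetic difference is in the counting step: you invoke the classical bound that the number of connected $j$-subsets containing a fixed vertex of a graph with maximum degree $\Delta$ is at most $(e\Delta)^{j-1}$, whereas the paper reproves essentially that same bound inline via Cayley's formula and a BFS-attachment argument. Both yield the same $\poly(d)^{j}$ factor, so the proofs are interchangeable.
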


\begin{proof}[Proof of Lemma \ref{lem:weightbound_full}]
The number of polymers of size $j$ is at most $2^d (e d^2)^j$; to see why, consider the following method
of constructing a polymer. First, choose one of the $j^{j-2}$ labeled trees of size $j$.
Next, let choose any of the $2^{d-1}$ vertices of $\cH$ to be the first vertex, with label $1$.
Perform breadth-first or depth-first search in the tree, always choosing the vertex with minimum label
among possible choices; when a vertex $i$ is selected, attach it to its parent $k$ (with the root having label $1$)
by selecting one of the $\leq d^2$ vertices sharing a neighbor with the vertex labeled $k$, and giving it the
label $i$.

Every polymer of size $j$ is formed by an instance of this construction, since the $2$-linked graph structure of any
polymer has at least one spanning tree.
Moreover, each polymer is counted at least $j!$ times since we can permute the labels of the tree arbitrarily,
and in any permutation of the labels it is still possible to generate the same polymer.
So the number of polymers of size $j$ is at most
\begin{equation}
    j^{j-2} \cdot 2^{d-1} (d^2)^{j-1} \cdot \fr{1}{j!} \leq 2^d (d^2)^j \fr{j^j}{j!} \leq 2^d (ed^2)^j
\end{equation}
as required.

Now for each polymer $\fp$ with size $j$, we have
$\fphi_\fp \stackrel{d}{=} 2^{-\Bin(j d, p)}$.
So by a simple calculation like the one in presented in Lemma \ref{lem:moments} (a), we have
\begin{equation}
    \E[\fphi_\fp] = \Fr{2-p}{2}^{jd},
\end{equation}
and so
\begin{align}
    \sum_{j > k} \sum_{\fp \in \fakepolymers_j} \E[\fphi_\fp]
    &\leq 2^d \sum_{j > k} \Rnd{ e d^2 \Fr{2-p}{2}^d }^j.
\end{align}
For large enough $d$, we have $e d^2 \Fr{2-p}{2}^d < \fr{1}{2}$, and so the geometric sum is dominated
by $2$ times its first term, which is the term corresponding to $j=k+1$, i.e.\ we have
\begin{align}
    \sum_{j > k} \sum_{\fp \in \fakepolymers_j} \E[\fphi_\fp]
    &\leq 2^d \cdot 2 \cdot \Rnd{ e d^2 \Fr{2-p}{2}^d }^{k+1},
\end{align}
which is the desired bound.
\end{proof}

With this lemma in hand, we can now prove Proposition \ref{prop:smallpolymers_full}.

\begin{proof}[Proof of Proposition \ref{prop:smallpolymers_full}]
Consider the quantity
\begin{align}
\label{eq:bad}
    \Exp{\sum_{j > k} \sum_{\fp \in \fakepolymers_j} \fphi_\fp} \cdot
    \sum_{S \in \Sep_{\leq k}} \prod_{\fp \in S} \fphi_\fp.
\end{align}
By Taylor expanding the exponential of the (finite) sum over polymers of size larger than $k$, one obtains various products
of terms in the sum, and those corresponding to collections of $m$ \emph{distinct} polymers appear with a coefficient of $1$,
since they appear $m!$ times, each with a coefficient of $\fr{1}{m!}$ from the Taylor series for the exponential function.
Moreover, since all terms in the sum are positive, we arrive at
\begin{align}
    \Exp{\sum_{j > k} \sum_{\fp \in \fakepolymers_j} \fphi_\fp} \cdot
    \sum_{S \in \Sep_{\leq k}} \prod_{\fp \in S} \fphi_\fp
    \geq \sum_{T \in \tilde{\Sep}} \prod_{\fp \in T} \fphi_\fp
    =  \fakepolymerpf,
\end{align}
as soon as $d$ is large enough so that $\tilde{\Sep}_{\leq k} = \Sep_{\leq k}$.
On the other hand, by definition we have
\begin{equation}
    \sum_{S \in \Sep_{\leq k}} \prod_{\fp \in S} \fphi_\fp = \fakepolymerpf_{\leq k},
\end{equation}
and we thus obtain
\begin{equation}
    \left( \Exp{\sum_{j > k} \sum_{\fp \in \fakepolymers_j} \fphi_\fp} - 1 \right)
    \cdot \sum_{S \in \Sep_{\leq k}} \prod_{\fp \in S} \fphi_\fp
    \geq \fakepolymerpf - \fakepolymerpf_{\leq k} \geq 0,
\end{equation}
which, upon rearranging, yields
\begin{equation}
    \Exp{\sum_{j > k} \sum_{\fp \in \fakepolymers_j} \fphi_\fp}
    \geq \fr{\fakepolymerpf}{\fakepolymerpf_{\leq k}} \geq 1.
\end{equation}
So the proof will be finished if we can show that
\begin{equation}
    \sum_{j > k} \sum_{\fp \in \fakepolymers_j} \fphi_\fp
    \pto 0.
\end{equation}
But this follow from Lemma \ref{lem:weightbound_full} and Markov's inequality, since when
$p > 2 - 2^{k/(k+1)}$ the upper bound in that lemma tends to $0$ as $d \to \infty$.
\end{proof}

\subsection{The true independent set model}
\label{sec:smallpolymers_true}

We now turn to the proof of Proposition \ref{prop:smallpolymers}, which is the same statement as Proposition \ref{prop:smallpolymers_full}
which was just proved, but with
\begin{equation}
    \fphi_\fp = \prod_{v \in \fp} 2^{-\N(v)} \qquad \text{replaced by} \qquad
    \phi_\fp = 2^{-\N(\fp)}.
\end{equation}
The key lemma in the previous proof, Lemma \ref{lem:weightbound_full}, crucially leveraged the independence of $\phi_v = 2^{-\N(v)}$ for distinct vertices
to give an upper bound on the sum of $\E[\fphi_\fp]$ over \emph{all} large polymers $\fp$.
While it may be expected that a similar bound holds for $\E[\phi_\fp]$ for \emph{small enough} polymers, it is far from obvious that the sum 
over all large polymers may be bounded in the same way, since $\phi_\fp$ may be much larger than $\fphi_\fp$ for large polymers owing to the
large number of overlapping neighborhoods of vertices in $\fp$.

The \emph{graph container method} is a powerful tool that allows one to bound the number of such large sets based on their neighborhood sizes,
and it yields the estimates we need.
This calculation was carried out in \cite{ks}, using graph container results presented in \cite{galvinthresh} that follow the work of \cite{sapozhenko} closely,
and the result is the following lemma, which will replace Lemma \ref{lem:weightbound_full} in the proof of Proposition \ref{prop:smallpolymers}.
Note that we are now restricting to polymers as defined by \cite{ks}, which are the same polymers
as those appearing in the decomposition given by Proposition \ref{prop:polymerdecomp}.
Specifically, these include the restriction that $|[\fp]| \leq \fr{3}{4} 2^{d-1}$, and we denote the collection
of such polymers with size exactly $j$ by $\polymers_j$.

\begin{lemma}[Adapted from Lemma 4.1 of \cite{ks}]
\label{lem:weightbound}
    For every $p \in (0,1)$ and every fixed $k \geq 1$,
    \begin{equation}
        \sum_{j > k}
        \sum_{\fp \in \polymers_j} \E[\phi_\fp]
        \leq \poly(d) \cdot \Fr{(2-p)^{k+1}}{2^k}^d
    \end{equation}
\end{lemma}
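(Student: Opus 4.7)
The plan is to adapt [ks, Lemma 4.1] closely. The argument factors into a weight bound for individual polymers and an enumeration split into small and large polymer sizes.

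For the weight bound, fix a polymer $\fp$ and write $\phi_\fp = 2^{-\N(\fp)} = \prod_{v \in N(\fp)} 2^{-Y_v}$, where $Y_v$ is the indicator that $v$ is a neighbor of $\fp$ in $\Qp$. A vertex $v \in N(\fp)$ with exactly $k_v \geq 1$ edges to $\fp$ in $\Q$ has $\P(Y_v = 0) = (1-p)^{k_v}$, so
\begin{equation*}
\E[2^{-Y_v}] = (1-p)^{k_v} + \tfrac{1}{2}\bigl(1 - (1-p)^{k_v}\bigr) = \fr{1+(1-p)^{k_v}}{2} \leq \fr{2-p}{2}.
\end{equation*}
Since the edges from $\fp$ to distinct $v \in N(\fp)$ are disjoint, independence gives $\E[\phi_\fp] \leq \Fr{2-p}{2}^{|N(\fp)|}$.

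For small $j$ (say $j \leq d/\log d$), a spanning tree of the $2$-link graph of $\fp$ yields $|N(\fp)| \geq jd - 2(j-1)$, since each $2$-link accounts for at most two shared $\Q$-neighbors. Combined with the Prüfer-like enumeration $|\polymers_j| \leq 2^d(ed^2)^j$ used in the proof of Lemma \ref{lem:weightbound_full}, this gives
\begin{equation*}
\sum_{\fp \in \polymers_j} \E[\phi_\fp] \leq 2^d(ed^2)^j \Fr{2-p}{2}^{jd-2(j-1)} = \poly(d) \cdot \Fr{(2-p)^j}{2^{j-1}}^d,
\end{equation*}
which is geometric in $j$ with ratio $\Fr{2-p}{2}^d = o(1)$, so summing over $k < j \leq d/\log d$ gives the claimed bound, dominated by the $j = k+1$ term.

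For large $j$, the elementary argument breaks down, since compact polymers can have $|N(\fp)|$ much smaller than $jd$ and the Prüfer-like count eventually exceeds even $2^{2^{d-1}}$. Here I would invoke the graph container method of Sapozhenko as implemented in [ks, Proposition 4.3] (building on [galvinthresh, sapozhenko]). The constraint $|[\fp]| \leq \tfrac{3}{4} 2^{d-1}$ built into the definition of $\polymers$ is precisely what makes these containers effective: polymers are bucketed by their approximate neighborhood size $\psi$, and the container count within each bucket is tight enough that, combined with the weight bound $\Fr{2-p}{2}^\psi$, the contribution of large-$j$ polymers is negligible compared to the $j = k+1$ term above. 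The main obstacle is this container-based bookkeeping for large polymers, but since it appears essentially verbatim in [ks, proof of Lemma 4.1], the present statement follows by combining that input with the small-polymer estimate.
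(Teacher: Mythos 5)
Your overall plan---an elementary count for moderate $j$ plus the container input from \cite{ks} for large $j$---is workable in spirit, but the small-$j$ weight bound is wrong. You claim $|N(\fp)| \geq jd - 2(j-1)$ for every polymer of size $j$. This already fails for the triangle-shaped trimer drawn in the introduction (the triple $\{101\cdots,\,110\cdots,\,011\cdots\}$ padded with zeros): each of the three pairs shares exactly two $\Q$-neighbors and the triple intersection has exactly one element, so by inclusion-exclusion $|N(\fp)| = 3d - 3\cdot 2 + 1 = 3d - 5 < 3d-4$. The spanning-tree heuristic (``each $2$-link removes at most two neighbors'') breaks down because a new vertex in a BFS/DFS of the $2$-link graph can be $2$-linked to several already-placed vertices, not only its tree parent, so its $d$ neighbors can coincide with the running union in more than two places. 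The correct elementary bound is Bonferroni combined with the fact that each $2$-linked pair shares exactly two neighbors: $|N(\fp)| \geq jd - 2\binom{j}{2} = jd - j(j-1)$, and a ``star'' of $j$ vertices all flipping one common bit realizes $|N(\fp)| \approx jd - j^2/2$, so the $\Theta(j^2)$ loss is genuine. With the corrected bound your per-$j$ ratio becomes $ed^2\Fr{2-p}{2}^{d-2(j-1)}$, which is bounded below $1$ only for $j \lesssim d/2 - O(\log d)$; inside that range the $j=k+1$ term still dominates and yields the claimed estimate, but the large-$j$ regime genuinely requires the container machinery. (As a minor point, your intermediate inequality $\sum_{\fp \in \polymers_j} \E[\phi_\fp] \leq \poly(d) \cdot \Fr{(2-p)^j}{2^{j-1}}^d$ is not literally a $\poly(d)$ prefactor once $j$ grows; one should instead keep the prefactor as is and use the geometric ratio.)

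The paper does none of this. Lemma \ref{lem:weightbound} is imported wholesale from \cite[Lemma 4.1]{ks}: one restricts that lemma's sum over clusters to clusters of size one (i.e.\ individual polymers) and drops a multiplicative factor which is $\geq 1$, and the bound follows with no fresh computation. Your hybrid elementary/container decomposition is a reasonable and more self-contained route, but as written it needs the corrections above, and the purely elementary part can only cover $j$ up to $\Theta(d)$, not all $j$.
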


We remark that this statement is weaker than the original version of Lemma 4.1 of \cite{ks}, but will suffice for
our purposes.
For the reader who wishes to compare the original statement to the above statement, we remark that the original
statement is about the sum over all \emph{clusters} of polymers, and each term in the sum includes an extra exponential
factor not written above.
The above statement follows from the original statement by restricting the sum to only clusters of size $1$ (which are the
same as individual polymers), using the fact that every term in the sum is nonnegative.
Moreover, the aforementioned exponential factors are all $\geq 1$, so we can simply erase them and still retain the
same bound.
With Lemma \ref{lem:weightbound} in hand, Proposition \ref{prop:smallpolymers} follows by the same proof as for
Proposition \ref{prop:smallpolymers_full}, but with $\fphi_\fp$ replaced by $\phi_\fp$.


\section{The regime with no dimers}
\label{sec:nodimers}

The main new technical and conceptual contribution of the present article is a sharp understanding of $\Cnt$ in the regime of parameters where
dimers are relevant.
But, before pursuing that in Section \ref{sec:dimers} below, in the present section we highlight how our preparation up to this point yields a quick analysis of the regime
$p > 2-\sqrt 2 \approx 0.586$, where no dimers appear in the defect side of typical independent sets.
In particular, with a bit of extra care in tracking error terms through our proofs of Propositions \ref{prop:polymerdecomp},
\ref{prop:smallpolymers}, and \ref{prop:smallpolymers_full},
one can recover the results of our previous work \cite{brcgw}, which were proven therein for $p \ge \f23$.
We do not spell out all of the details presently as these results have already appeared in our previous article. 

\subsection{In-probability approximation}
\label{sec:nodimers_approx}

By Propositions \ref{prop:polymerdecomp} and \ref{prop:smallpolymers}, when $p > 2 - \sqrt{2}$, we have
\begin{equation}
    \fr{\Cnt}{2^{2^{d-1}}} \psim \polymerpf^\cE_1 + \polymerpf^\cO_1.
\end{equation}
We remind the reader that
\begin{equation}
    \polymerpf^\cH_1 = \sum_{S \in \Sep^\cH_1} \prod_{v \in S} \phi_v,
\end{equation}
where $\Sep^\cH_1$ denotes the collection of well-separated subsets of $\cH$.
Now, by Proposition \ref{prop:smallpolymers_full}, since $p > 2 - \sqrt{2}$, we have
\begin{equation}
    \polymerpf^\cH_1 \psim \fakepolymerpf^\cH = \prod_{\fs \in \cH} (1 + \phi_\fs)
\end{equation}
for each $\cH$, using the fact that $\polymerpf^\cH_1 = \fakepolymerpf^\cH_1$ since $\phi_\fs = \fphi_\fs$ for singletons.
Thus we have
\begin{equation}
\label{eq:papprox2rt2}
    \fr{\Cnt}{2^{2^{d-1}}} \psim \Exp{\Phi_{\log}^\cE} + \Exp{\Phi_{\log}^\cO},
\end{equation}
where
\begin{equation}
\label{eq:philogdef}
    \Phi_{\log}^\cH \coloneqq \log \fakepolymerpf^\cH = \sum_{\fs \in \cH} \log(1 + \phi_\fs).
\end{equation}
This completes the first part of our program, namely exhibiting an explicit in-probability approximation
for $\Cnt$.
In the regime $p > 2 - \sqrt{2}$, note that the quantities $\Delta^\cH$ and $\tilde{\Delta}^\cH$ for $\cH \in \{\cE,\cO\}$,
as defined in \eqref{eq:deltadef} via sums of contributions coming from dimers, are not relevant for this approximation.

\subsection{Distributional limits}

Next, as will be proved in Section \ref{sec:clt} below (specifically as Corollary \ref{cor:clt_withapprox}),
$\Phi_{\log}^\cE$ and $\Phi_{\log}^\cO$ satisfy a joint central limit theorem with explicit recentering and
rescaling, i.e.\ we have
\begin{equation}
    \Rnd{
        \fr{\Phi_{\log}^\cE - \mu_1}{\sigma}, \fr{\Phi_{\log}^\cO - \mu_1}{\sigma}
    } \dto \Nor{0}{1} \otimes \Nor{0}{1},
\end{equation}
where, as previously defined in \eqref{eq:mu1defintro} and \eqref{eq:sigmadefintro},
\begin{align}
    \mu_1 &= \fr{1}{2} (2-p)^d - \fr{1}{4} \Rnd{2 - \fr{3}{2} p}^d + \fr{1}{6} \Rnd{2 - \fr{7}{4} p}^d, \\
    \sigma^2 &= \fr{1}{2} \Rnd{2 - \fr{3}{2}p}^d. \label{eq:sigmarecall}
\end{align}
In fact, we may simplify the centering quantity in the regime $p > 2 - \sqrt{2}$.
Specifically, in this regime the third term of $\mu_1$ is $\ll \sigma$, which follows from the fact that
\begin{equation}
    \Rnd{2 - \fr{7}{4} p}^2 < 2 - \fr{3}{2} p
\end{equation}
for such $p$.
To verify the above inequality, note that the difference between the left and right-hand sides is
\begin{equation}
    \fr{49}{16} p^2 - \fr{11}{2} p + 2,
\end{equation}
which has roots
\begin{equation}
    \fr{11 \pm \sqrt{11^2 - 2 \cdot 49}}{49/8} \approx 0.506, 1.289.
\end{equation}
Therefore, since $2 - \sqrt{2} \approx 0.586$, the difference is negative on the interval $(2 - \sqrt{2}, 1)$.
All this is to say that if we define
\begin{equation}
\label{eq:muprimedef}
    \mu' = \fr{1}{2} (2-p)^d - \fr{\sigma^2}{2},
\end{equation}
then we in fact have
\begin{equation}
\label{eq:clt2rt2}
    \Rnd{
        \fr{\Phi_{\log}^\cE - \mu'}{\sigma},
        \fr{\Phi_{\log}^\cO - \mu'}{\sigma}
    } \dto \Nor{0}{1} \otimes \Nor{0}{1}
\end{equation}
when $p > 2 - \sqrt{2}$.

\subsection{Recovery of previous results}

With a bit of extra care in tracking the errors through our proofs, our present results may be used to
witness a previously-discovered phase transition at $p = \fr{2}{3}$, whereby the quantity $\Cnt$ itself
exhibits a Gaussian scaling limit for $p > \fr{2}{3}$, a lognormal limit for $p = \fr{2}{3}$,
and no scaling limit for $p < \fr{2}{3}$.
As these distributional limit results were already proved in \cite{brcgw}, we do not provide full details presently,
instead being satisfied with an informal discussion.

In the statements of Propositions \ref{prop:polymerdecomp}, \ref{prop:smallpolymers}, and \ref{prop:smallpolymers_full},
various quantities are related via $\psim$, meaning there is a multiplicative $(1+o(1))$-in-probability error.
By carefully following the proofs this may be improved to a multiplicative error of $(1+o(\sigma))$-in-probability
when $p > \fr{2}{3}$, which is the regime for which $\sigma \to 0$ (recall the formula for $\sigma^2$ in \eqref{eq:sigmarecall}).
Additionally, $\sigma^2 \ll \sigma$ in this regime, so if we define
\begin{equation}
    \hat{\mu} = \fr{1}{2}(2-p)^d,
\end{equation}
then the improved version of \eqref{eq:papprox2rt2} combined with \eqref{eq:clt2rt2} yields
\begin{align}
    \fr{\Cnt}{2^{2^{d-1}} e^{\hat{\mu}}} &= \Rnd{
        \Exp{\Phi_{\log}^\cE - \hat{\mu}} + 
        \Exp{\Phi_{\log}^\cO - \hat{\mu}}
    } (1 + o(\sigma)) \\
    &= \Rnd{
        2 + \Phi_{\log}^\cE + \Phi_{\log}^\cO - 2 \hat{\mu}
    } (1 + o(\sigma)).
\end{align}
Here the $o(\sigma)$ error term satisfies $\fr{o(\sigma)}{\sigma} \pto 0$ and we have used the fact that the
\eqref{eq:clt2rt2} implies that the error in approximating the exponentials by linear functions is $o(\sigma)$,
since $(\Phi_{\log}^\cE - \hat{\mu})^2 = O(\sigma^2) = o(\sigma)$ because $\sigma \to 0$
(here again the $O(\sigma^2)$ means in-probability).
The central limit theorem \eqref{eq:clt2rt2} also shows that
\begin{equation}
    \fr{\Phi_{\log}^\cE + \Phi_{\log}^\cO - 2 \hat{\mu}}{\sigma} \dto \Nor{0}{2},
\end{equation}
and so, since the approximation is $(1 + o(\sigma))$-in-probability, we obtain a central limit theorem
for $\Cnt$.
Spelling it out with all quantities written explicitly in terms of $d$ and $p$, we get
\begin{equation}
    \fr{\Cnt - 2 \cdot 2^{2^{d-1}} \Exp{\fr{1}{2} (2-p)^d}}
    {2^{2^{d-1}} \Exp{\fr{1}{2}(2-p)^d} \Rnd{2 - \fr{3}{2} p}^{d/2} } \dto \Nor{0}{1}.
\end{equation}
This recovers the first part of \cite[Theorem 1.1]{brcgw}, which extends \cite[Theorem 1.3]{ks}
from a regime of nonconstant $p = p_n \to 1$ as $d \to \infty$ to all constant $p \in (\fr{2}{3},1)$.

We also recover the second part of \cite[Theorem 1.1]{brcgw}, which covers the case $p = \fr{2}{3}$.
In this case $\sigma^2 = \fr{1}{2}$, and \eqref{eq:papprox2rt2} thus gives
\begin{equation}
    \fr{\Cntcrit}{2^{2^{d-1}} e^{\mu'}} \psim
    \Rnd{\Exp{\fr{1}{\sqrt{2}} \fr{\Phi_{\log}^\cE - \mu'}{\sigma}}
    +\Exp{\fr{1}{\sqrt{2}} \fr{\Phi_{\log}^\cE - \mu'}{\sigma}}}.
\end{equation}
So, after plugging in the value for $\mu'$ in terms of $d$ with $p = \fr{2}{3}$, \eqref{eq:clt2rt2} gives
\begin{equation}
    \fr{\Cntcrit}{2^{2^{d-1}} \Exp{\fr{1}{2}(2-p)^d - \fr{1}{4}}}
    \dto e^{W^\cE/\sqrt{2}} + e^{W^\cO/\sqrt{2}},
\end{equation}
where $W^\cE, W^\cO \sim \Nor{0}{1}$ are independent.

Finally, for $2 - \sqrt{2} < p < \fr{2}{3}$, \eqref{eq:papprox2rt2} yields
\begin{equation}
    \fr{\Cnt}{2^{2^{d-1}} e^{\mu'}} \psim
    \Exp{\sigma \cdot \fr{\Phi_{\log}^\cE - \mu'}{\sigma}} + \Exp{\sigma \cdot \fr{\Phi_{\log}^\cO - \mu'}{\sigma}}.
\end{equation}
Since, by \eqref{eq:clt2rt2} the quantities inside the exponentials fluctuate at scale $\sigma \to \infty$,
it may easily be checked that this has no scaling limit under any deterministic recentering and rescaling.

\subsection{Sampling with replacement}
\label{sec:nodimers_replacement}

We would like to briefly remark that the phase transition at $p = \fr{2}{3}$ corresponds to the following qualitative behavior
of typical independent sets in $\Qp$, which was already mentioned in \cite{brcgw} as well as in Sections \ref{sec:intro} and \ref{sec:iop} above.

When $p > \fr{2}{3}$, one way to sample the defect side of an independent set is to first determine the (random) number of
singletons one must include, and then sample them \emph{with replacement} from a particular probability distribution which
depends on the percolation configuration and weights each singleton $v$ proportionally to $\phi_v$.
This works because when $p > \fr{2}{3}$, the typical number of singletons is small enough that there will be no repeats in the
sample with high probability, and in effect we get a sample of a \emph{subset} with the correct distribution.
For reasons which we do not delve into presently but which were thoroughly explored in \cite{brcgw},
the exponential form of the solution arises from this scheme of sampling with replacement.

However, when $p \leq \fr{2}{3}$, the number of singletons that must be sampled is large enough that when sampling with replacement
one begins to see repeated singletons.
This situation was analyzed in \cite{brcgw} as a non-uniform random birthday problem, and the outcome of this analysis is that
a correction of $- \fr{\sigma^2}{2}$ in $\mu'$ is necessary when using the exponential formula arising from sampling with replacement.

In sampling with replacement, there are still no dimers for $p \in (2 - \sqrt{2}, \fr{2}{3})$, meaning that it is very unlikely
that two \emph{neighboring vertices} will both be sampled.
This leads to the interpretation that the transition happens when certain \emph{individual singletons} gain an outsized probability
in the aforementioned random probability distribution.
In effect, even when there are no dimers, this transition marks the point where the influence of singletons becomes lopsided and
a small number of high-relevance singletons begin to make a meaningful contribution beyond the aggregate behavior.
\section{When dimers begin to appear}
\label{sec:dimers}

We now arrive at our main contribution in this paper, which is an analysis of model in the regime of parameters
where typical independent sets will contain dimers in their defect sides.
This begins to happen when $p \leq 2 - \sqrt{2} \approx 0.586$, and our goal in this section is to prove that the set of singletons and the set of dimers
in the defect side of a uniformly random independent set behave roughly independently from each other for a large range of $p$ below this value,
extending all the way below $p = \f12$.
This will be formalized in Proposition \ref{prop:dimers} below.
As discussed in Section \ref{sec:intro_results}, this allows us to resolve the question posed by \cite{ks} about the existence of a phase
transition at $p=\f12$, which is a natural point of phase transition for many other features of the percolated hypercube.
We reiterate that, interestingly, our results show that there is actually \emph{no} phase transition at $p=\f12$ for the behavior of independent sets in $\Qp$.

However, our analysis does uncover a \emph{different} point of potential phase transition below $p=\f12$.
To explain this, first note that in this section we always restrict to the regime of $p > 2 - 2^{2/3} \approx 0.413$, so that there are no trimers in the defect side
(by Proposition \ref{prop:smallpolymers}).
But even for certain $p$ above this value where more complex clustering behavior arises as \emph{adjacent} singletons and dimers merge to form trimers,
it seems that the singletons and dimers will start to interact in a more subtle way.
Specifically, if one tried to sample the singleton set and the dimer set independently from one another, we expect that there will be some singleton and some dimer
which contain \emph{the same} vertex.
This is analogous to the phase transition at $p=\f23$ discussed in Section \ref{sec:nodimers_replacement} above, where in sampling singletons with replacement,
one vertex will be sampled multiple times if $p \leq \f23$, even though there are no \emph{adjacent} vertices sampled until $p \leq 2 - \sqrt{2} \approx 0.586$.

Our proof aims to cover the full regime of parameters $p$ for which the above ``overlapping samples'' behavior does not occur, and it is valid for all $p$
above a certain threshold which we denote by $\f12 - \gamma$ for some $\gamma > 0$.
Although we do not prove this, one might guess after observing the structure of our argument that the threshold we obtain is indeed sharp,
in the sense that the singleton set and dimer set will not be independent from one another below it.
One should also note that while we will simply be satisfied herein with proving that this threshold is strictly less than $\f12$, one may easily numerically approximate
the threshold as about $0.465$; in particular, it is strictly larger than $2 - 2^{2/3} \approx 0.413$.

Another interesting point that was mentioned briefly in Remark \ref{rmk:worstvsavg} is that a \emph{worst-case} analysis does not suffice for the full regime of independence.
To be specific, if we sampled the singleton set $S_1$ and then the dimer set $S_2$ independently, then there are certain worst-case choices of $S_1$ for which the
independently sampled $S_2$ will actually overlap with $S_1$, if $p$ is low enough (below roughly $0.548$).
To overcome this, we must perform an \emph{average-case} analysis (in Sections \ref{sec:dimers_secondstrategy} and \ref{sec:dimers_adj}),
which will allow us to prove that the singleton set and dimer set do not interact for $p > \f12 - \gamma$ by averaging over the singleton set $S_1$.
However, we do also provide a short non-rigorous overview of the worst-case analysis in Section \ref{sec:dimers_firststrategy} as a warm-up to the more
technical later sections, as many of the relevant tools can be introduced in this easier setting.

Before stating the main result of this section, Proposition \ref{prop:dimers} below, we summarize the above few paragraphs with the following diagram.
\begin{equation}
\begin{tikzpicture}[scale=60]
  \draw[<->, ultra thick] (0.365,0) -- (0.625,0);

  \foreach \x/\lab/\h in {0.413/{$2-2^{\f23}$}/0.006, 0.465/{$0.464...$}/0.004, 0.5/{$\f12$}/0.002, 0.548/{$0.548...$}/0.004, 0.586/{$2-2^{\f12}$}/0.006}
    {
      \draw (\x,\h) -- (\x,-\h);
      \node[above] at (\x,\h+0.001) {\lab};
    }

  \node[below] at (0.39,0) {trimers appear};
  \node[below] at (0.604,0) {no dimers};

  \draw[decorate,decoration={brace,mirror,amplitude=6pt}]
        (0.465,-0.005) -- (0.548,-0.005)
        node[midway,below=6pt, xshift=-5] {$\substack{\text{only average-case analysis can} \\ \text{show that singletons and dimers are} \\ \text{independent (Sections \ref{sec:dimers_secondstrategy} and \ref{sec:dimers_adj})}}$};

  \draw[decorate,decoration={brace,mirror,amplitude=6pt}]
        (0.548,-0.007) -- (0.586,-0.007)
        node[midway,below=6pt, xshift=5] {$\substack{\text{worst-case analysis shows} \\ \text{singletons and dimers are} \\ \text{independent (Section \ref{sec:dimers_firststrategy})}}$};

  \draw[decorate,decoration={brace,mirror,amplitude=6pt}]
        (0.413,-0.007) -- (0.465,-0.007)
        node[midway,below=6pt, xshift=-5] {$\substack{\text{singletons and dimers are not} \\ \text{independent (conjecturally)}}$};
\end{tikzpicture}
\end{equation}
We also emphasize that, although we do not pursue this presently, we expect that the techniques developed in this section will
prove fruitful for analyzing the regime of non-independence with values of $p$ down to $2 - 2^{2/3}$ and even beyond, when
trimers and other higher-order polymers begin to appear.

\subsection{Statement of main proposition}
\label{sec:dimers_statement}

In order to state our main proposition, we remind the reader of our running notation, introduced first in
Section \ref{sec:iop}.
As in Section \ref{sec:smallpolymers} above, to avoid notational overload
we drop the superscript $\cH$ as every statement in this section relates
two quantities which correspond to the \emph{same side}.
So, for instance, $\Sep_{\leq 2}$ means $\Sep_{\leq 2}^\cH$ here.
Recall that
\begin{align}\label{keynot234}
    \polymerpf &= \sum_{S \in \Sep} \prod_{\fp \in S} \phi_\fp, \\
    \label{keynot235}
    \polymerpf_{\leq 2} &= \sum_{S \in \Sep_{\leq 2}} \prod_{\fp \in S} \phi_\fp, \\
    \label{keynot236}
    \polymerpf_1 &= \sum_{S \in \Sep_1} \prod_{v \in S} \phi_v, \\
    \label{keynot237}
    \Delta &= \sum_{\fd \in \Dim} \phi_\fd,
\end{align}
as well as the corresponding notions of $\fakepolymerpf, \fakepolymerpf_{\leq 2}, \fakepolymerpf_1$, and $\tilde{\Delta}$
which are obtained by replacing each $\phi_\fp$ by $\fphi_\fp$, et cetera.
We remind the reader that $\fakepolymerpf_1 = \polymerpf_1$ since $\phi_v = \fphi_v$ for singletons $v$,
but that $\fakepolymerpf_{\leq 2} \neq \polymerpf_{\leq 2}$ and $\Delta \neq \tilde{\Delta}$ since $\fphi_\fd \neq \phi_\fd$
in general for dimers $\fd$.

The following proposition is the main result of this section, and holds for both $\cH = \cE$ and $\cH = \cO$
(although the $\cH$ superscript is only present implicitly in the statement).

\begin{proposition}
\label{prop:dimers}
    There exists a universal constant $\gamma > 0$ such that for $p > \fr{1}{2} - \gamma$,
    \begin{equation}
        \polymerpf_{\leq 2} \psim \polymerpf_1 \cdot e^\Delta
        \qquad \text{and} \qquad
        \fakepolymerpf_{\leq 2} \psim \polymerpf_1 \cdot e^{\tilde{\Delta}}.
    \end{equation}
\end{proposition}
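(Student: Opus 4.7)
The plan is to decompose each $S \in \Sep_{\leq 2}$ uniquely as $S = S_1 \cup S_2$ with $S_1 \in \Sep_1$ the singleton part and $S_2 \in \Sep_2$ the dimer part, and then to argue that the cross-compatibility constraint between $S_1$ and $S_2$ is asymptotically irrelevant. Writing out this decomposition and comparing with the unconstrained product $\polymerpf_1 \cdot \polymerpf_2$ (which sums over the same pairs $(S_1, S_2)$ but drops exactly the cross-compatibility constraint), we obtain
\begin{equation}
\fr{\polymerpf_{\leq 2}}{\polymerpf_1 \cdot \polymerpf_2} = \P_{\nu_1 \otimes \nu_2}\!\left[S_1 \cup S_2 \in \Sep_{\leq 2}\right],
\end{equation}
where $\nu_i$ denotes the polymer Gibbs measure on $\Sep_i$ with partition function $\polymerpf_i$. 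Since Lemma~\ref{lem:dimersdontcollide_iop} already provides $\polymerpf_2 \psim e^{\Delta}$ for $p > 0.391$, the first claim reduces to
\begin{equation}\label{eq:planclaim}
\P_{\nu_1 \otimes \nu_2}\!\left[\exists\, v \in S_1,\, \fd \in S_2 \text{ incompatible}\right] \pto 0.
\end{equation}

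To prove \eqref{eq:planclaim}, I plan a first moment/union bound. The standard estimate $\P_{\nu_i}(\fp \in S_i) \leq \phi_\fp$ (obtained by deleting $\fp$ from any configuration containing it), combined with the independence of the two samples under $\nu_1 \otimes \nu_2$, bounds the probability in \eqref{eq:planclaim} by $\sum_{(v,\fd) \text{ incompat.}} \phi_v \phi_\fd$. By Markov's inequality it suffices to show this random variable has vanishing expectation. I split the sum according to whether $v \in \fd$. When $v \notin \fd$ but $v$ and $\fd$ share a common $\Q$-neighbor, $\phi_v$ and $\phi_\fd$ are functions of disjoint edges of the percolation (edges emanating from $v$ versus from the two vertices of $\fd$) and are hence independent, giving $\E[\phi_v] \E[\phi_\fd] = \Theta(((2-p)^3/4)^d)$. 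Combined with the $O(d^4 \cdot 2^d)$ count of such pairs, the total contribution is $O(d^4 \cdot ((2-p)^3/4)^d)$, which decays whenever $p > 2 - 2^{2/3} \approx 0.413$, comfortably below the regime under consideration.

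The binding case, and the technical crux of the proof, is $v \in \fd$. Here $\phi_v$ and $\phi_\fd$ genuinely share percolation edges---namely those incident to $v$---and the joint expectation must be computed directly. Writing $\fd = \{v, u'\}$ and partitioning the joint neighbor set into the $d - 2$ neighbors exclusive to $v$, the $2$ shared neighbors, and the $d - 2$ neighbors exclusive to $u'$, a direct computation over the independent $\mathrm{Bernoulli}(p)$ edges yields
\begin{equation}
\E[\phi_v \phi_\fd] = \Theta\!\left( \Fr{(4 - 3p)(2 - p)}{8}^d \right).
\end{equation}
Since there are $\Theta(d^2 \cdot 2^d)$ such pairs, the total contribution is $\Theta(d^2 \cdot ((4 - 3p)(2 - p)/4)^d)$, which decays precisely when $(4 - 3p)(2 - p) < 4$, i.e.\ when $p > (5 - \sqrt{13})/3 \approx 0.465$. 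Setting $\gamma \coloneqq \f12 - (5 - \sqrt{13})/3 > 0$ therefore establishes \eqref{eq:planclaim} and completes the first claim. The second claim follows from the identical argument after replacing $\phi_\fd$ by $\fphi_\fd \leq \phi_\fd$ (which only tightens every moment estimate) and invoking the companion bound $\fakepolymerpf_2 \psim e^{\tilde{\Delta}}$ from Lemma~\ref{lem:dimersdontcollide_iop}.
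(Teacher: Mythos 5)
Your argument is correct, and it follows a genuinely different and substantially shorter route than the paper's. The paper first reduces Proposition~\ref{prop:dimers} (via Lemma~\ref{lem:dimer_sep}) to showing $\E_\pi\bigl[e^{-\Adj(S)}\mid S\in\Sep_1\bigr]\pto 1$, and then establishes this through Lemmas~\ref{lem:seplowerbound}--\ref{lem:separated_adj}: the sum $\Adj(S)$ is decomposed over $\poly(d)$ layers of mutually far dimers, further split by dimer type and status, and each piece is controlled by binomial Chernoff estimates phrased through the entropy function $H_p$, with Lemma~\ref{lem:minentropy} handling the resulting optimization. You instead exploit the exact identity $\polymerpf_{\leq 2}/(\polymerpf_1\polymerpf_2)$ equals the probability, under the product of the two independent polymer measures, that the sampled singleton and dimer sets are mutually compatible; you bound its complement by a union bound over incompatible pairs $(v,\fd)$ using the deletion estimate on polymer-model marginals, and close with a first-moment/Markov argument. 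Remarkably the two arguments bottom out on the same threshold: since $f_m(p)=m-\log_2\bigl(2^m-(2^m-1)p\bigr)=-\log_2\E\bigl[2^{-m\,\Ber(p)}\bigr]$, the paper's entropy criterion $f_1(p)+f_2(p)>1$ is algebraically identical to the inequality $(4-3p)(2-p)<4$ produced by your first moment for the binding $v\in\fd$ terms, i.e.\ $p>(5-\sqrt{13})/3\approx 0.4648$, so the Chernoff/entropy bounds are essentially a reformulation of the moment estimates you compute directly, and nothing is lost in your simplification. Two small repairs are warranted: cite Lemma~\ref{lem:dimer_sep} with $S_1=\emptyset$ to obtain $\polymerpf_2\psim e^\Delta$ rather than Lemma~\ref{lem:dimersdontcollide_iop}, since the paper explicitly derives the latter \emph{from} Proposition~\ref{prop:dimers} and citing it here would be circular; and the per-pair moment in the $v\notin\fd$ case should read $\E[\phi_v]\,\E[\phi_\fd]=\Theta\bigl(((2-p)^3/8)^d\bigr)$ rather than with denominator $4$, although your stated total $O\bigl(d^4((2-p)^3/4)^d\bigr)$ is already the correct product with the $\Theta(2^d)$ vertex count.
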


Together with Propositions \ref{prop:smallpolymers} and \ref{prop:smallpolymers_full} which state that
for $p > 2 - 2^{2/3}$ we have
\begin{equation}
    \polymerpf \psim \polymerpf_{\leq 2}
    \qquad \text{and} \qquad 
    \fakepolymerpf \psim \fakepolymerpf_{\leq 2},
\end{equation}
Proposition \ref{prop:dimers} implies Corollary \ref{cor:approxinprob_iop}, which states that
\begin{equation}
    \polymerpf \psim \fakepolymerpf \cdot e^{\Delta - \tilde{\Delta}}
\end{equation}
for all $p$ larger than some threshold strictly below $\fr{1}{2}$.
Combining this with Proposition \ref{prop:polymerdecomp}, namely that
\begin{equation}
    \Cnt \psim \polymerpf^\cE + \polymerpf^\cO,
\end{equation}
completes our in-probability approximation of $\Cnt$, as discussed in Section \ref{sec:iop}.

Note that we will focus on proving the first of the two statements in Proposition \ref{prop:dimers}, with no tildes.
As will be expanded upon after the statement of Lemma \ref{lem:adjupperbound} below,
our proof of the tilde-free statement will also easily yield a proof of the tilde statement,
simply because $\fphi_\fp \leq \phi_\fp$.

\subsection{Proof overview and section outline}
\label{sec:dimers_outline}

We will reinterpret the statement of Proposition \ref{prop:dimers} probabilistically as follows.
We would like to understand the distribution of $S_{\leq 2} \in \Sep_{\leq 2}$ sampled from the model with partition
function $\polymerpf_{\leq 2}$, which contains both singletons and dimers, and which is sampled with probability 
proportional to
\begin{equation}
    \prod_{\text{singleton } v \in S_{\leq 2}} \phi_v \cdot \prod_{\text{dimer } \fd \in S_{\leq 2}} \phi_\fd.
\end{equation}
We will compare this to the alternate sampling scheme whereby we sample the singleton part and the dimer part independently.
First sample a well-separated singleton set $S_1 \in \Sep_1$ from the model with partition function $\polymerpf_1$,
i.e. with probability proportional to $\prod_{v \in S_1} \phi_v$ (but note that we are restricting to only $S_1 \in \Sep_1$).
Then we sample a dimer set $S_2$ by simply including each dimer independently with probability $\fr{\phi_\fd}{1+\phi_\fd}$,
so that the probability of a particular dimer set is proportional to $\prod_{\fd \in S_2} \phi_\fd$.
For the dimer set $S_2$, unlike the singleton set $S_1$, we have no separation
constraint, but the set $S_2$ we sample will turn out to naturally be well-separated with high probability.
Proposition \ref{prop:dimers} amounts to showing that the distribution of $S_{\leq 2}$ is close to that of $S_1 \cup S_2$.

The key difference, a priori, between $S_{\leq 2}$ and $S_1 \cup S_2$ is that we force the singleton and dimer parts of $S_{\leq 2}$
to not interact, meaning that no singletons are adjacent to any dimers in $S_{\leq 2}$.
On the other hand, there is no such restriction in $S_1 \cup S_2$ as the two parts are sampled independently.
In order to prove Proposition \ref{prop:dimers}, we will introduce and analyze an auxiliary random variable associated to
the singleton set $S_1$, denoted by $\Adj(S_1)$, which controls the probability that the independently sampled dimer set $S_2$
will interact nontrivially with $S_1$.

\begin{definition}
\label{def:adj}
For a set $S \sse \cH$ and a dimer $\fd$, we write $\fd \sim S$ if $\fd$ shares a neighbor with a vertex in $S$.
Note that this includes the case where $S$ contains a vertex of $\fd$ itself.
Now for $S \sse \cH$, define
\begin{equation}
    \Adj(S) \coloneqq \sum_{\fd \sim S} \phi_\fd.
\end{equation}
For future reference, we also define $\tAdj(S)$ in the same way but with $\phi_\fd$ replaced by $\fphi_\fd$.
\end{definition}

In Section \ref{sec:dimers_sep} below, we will prove a convenient representation for the dimer part
of the partition function $\polymerpf_{\leq 2}$.
This will motivate the definition of $\Adj(S)$ and it also shows that the dimers do not interact with themselves.
As will be explained further in that section, showing that singletons and dimers behave independently
amounts to showing that $\Adj(S_1)$ is small, for typical samples of the well-separated singleton set $S_1$.

An initial idea for proving this is to show that in fact, for \emph{any} well-separated singleton set $S_1$ that could be sampled,
$\Adj(S_1)$ is uniformly small.
This argument is outlined briefly in Section \ref{sec:dimers_firststrategy} below.
However, since this \emph{worst-case} strategy does not work for $p < 0.548$, that section is presented somewhat non-rigorously
and should be considered as a warm-up for later sections which cover the full range of $p$ in Proposition \ref{prop:dimers}.

In Section \ref{sec:dimers_secondstrategy}, we present the setup for the \emph{average-case} argument, where we prove that
$\Adj(S_1)$ is small for \emph{most} well-separated singleton sets $S_1 \in \Sep_1$ that are sampled from the model with partition
function $\polymerpf_1$.
For this, we will introduce the auxiliary (random) measure $\pi$ on subsets of $\cH$, which samples $S$ with probability
\begin{equation}
\label{eq:pidef}
    \pi[S] = \fr{1}{\fakepolymerpf} \prod_{v \in S} \phi_v,
\end{equation}
or in other words includes each vertex $v \in \cH$ independently with probability $\fr{\phi_v}{1 + \phi_v}$.
With this definition in hand, our result will turn out to reduce to the following statement:
\begin{equation}
\label{eq:adjsmall_initial}
    \pi[\Adj(S) > \eps | S \in \Sep_1] \pto 0
\end{equation}
for some choice of $\eps = \eps_d \to 0$.

To prove \eqref{eq:adjsmall_initial}, in Section \ref{sec:dimers_adj}
we will decompose the sum defining $\Adj(S)$ into polynomially many pieces, such that each piece
consists of of variables which are independent under $\pi$.
This will allow us to apply various standard estimates of binomial probabilities via the relative entropy function,
although much care will be needed as all of the binomial random variables in question have
\emph{random} specifications in terms of the percolation configuration.
As such, Section \ref{sec:dimers_adj} is the most technical part of this entire paper.

\subsection{A convenient representation of the partition function}
\label{sec:dimers_sep}

In this section we show that the dimer part of the polymer configuration $S_{\leq 2}$ is quite well-behaved for the regime
of parameters we consider.
Specifically, it behaves similarly to sampling dimers independently, and
we may rewrite the dimer part of the partition function $\polymerpf_{\leq 2}$ in a convenient exponential form
involving the quantity $\Adj(S)$ introduced in Definition \ref{def:adj}.

This should be compared to the exponential form obtained in our previous work \cite{brcgw} for the singleton partition function
when $p > \f23$ (which was re-proven in Section \ref{sec:nodimers_approx} above).
Our result here for dimers has an interpretation analogous to the one discussed in Section \ref{sec:nodimers_replacement},
where one can consider instead sampling the dimers \emph{with replacement} after choosing the correct number of dimers to sample,
and no individual dimer will have a weight large enough to be sampled twice.

Let us begin by recording a definition.

\begin{definition}\label{def:compat}
For any $S_1 \in \Sep_1$, we denote by $\Cpt_2(S_1)$ the collection of dimers $\fd$ with $\fd \not \sim S_1$, meaning that
$S_1 \cup \{\fd\}$ remains a valid polymer configuration.
\end{definition}

With this notation, we have the following representation for the singleton-and-dimer partition function.
\begin{align}
    \polymerpf_{\leq 2} &= \sum_{S_{\leq 2} \in \Sep_{\leq 2}} \prod_{\fp \in S_{\leq 2}} \phi_\fp \\
    &= \sum_{S_1 \in \Sep_1} \left( \prod_{v \in S_1} \phi_v \cdot
    \sum_{\substack{S_2 \in \Sep_2 \\ S_2 \sse \Cpt_2(S_1)}} \prod_{\fd \in S_2} \phi_\fd \right).
\label{eq:zleq2decomp}
\end{align}
We will first show that the constraint $S_2 \in \Sep_2$ is superfluous, i.e.\ that
\begin{equation}
\label{eq:dimersepconstraint}
    \sum_{\substack{S_2 \in \Sep_2 \\ S_2 \sse \Cpt_2(S_1)}} \prod_{\fd \in S_2} \phi_\fd
    \psim 
    \sum_{S_2 \sse \Cpt_2(S_1)} \prod_{\fd \in S_2} \phi_\fd,
\end{equation}
with the approximation being uniform in $S_1 \in \Sep_1$.
Since
\begin{equation}
\label{eq:dimerbinomial}
    \sum_{S_2 \sse \Cpt_2(S_1)} \prod_{\fd \in S_2} \phi_\fd = \prod_{\fd \not \sim S_1} (1 + \phi_\fd),
\end{equation}
we can interpret the approximation \eqref{eq:dimersepconstraint} as saying that, in order to sample a
collection $S_2$ of dimers compatible with any singleton set $S_1 \in \Sep_1$, with the correct distribution as given
by \eqref{eq:zleq2decomp}, we may independently include each dimer $\fd \in \Cpt_2(S_1)$ in the set $S_2$ with probability $\fr{\phi_\fd}{1 + \phi_\fd}$.
Thus, this is a statement about non-collision (with one another) of independently drawn dimers.

We will also apply the approximation $1 + \phi_\fd \approx e^{\phi_\fd}$ to the right-hand side of \eqref{eq:dimerbinomial},
and the smallness of $\phi_\fd$ for a certain range of $p$ which suffices for our purposes will ensure that the error does not accumulate as the product over all dimers $\fd \not \sim S_1$ is taken.
We combine this approximation with \eqref{eq:dimersepconstraint} in the following lemma.

\begin{lemma}
\label{lem:dimer_sep}
For $p > 0.391$, we have
\begin{equation}
    \sum_{\substack{S_2 \in \Sep_2 \\ S_2 \sse \Cpt_2(S_1)}} \prod_{\fd \in S_2} \phi_\fd
    \psim \Exp{\sum_{\fd \not \sim S_1} \phi_\fd} = \Exp{\Delta - \Adj(S_1)},
\end{equation}
where the approximation is uniform among all $S_1 \sse \cH$ (recall what this means from above the statement of
Theorem \ref{thm:main}).
The same statement also holds with $\phi_\fd$ replaced by $\fphi_\fd$, $\Delta$ replaced by $\tilde{\Delta}$,
and $\Adj$ replaced by $\tAdj$.
\end{lemma}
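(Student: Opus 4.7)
The plan is to split the desired $\psim$ approximation into two conceptually separate steps, each handled by Markov's inequality applied to an appropriate moment. Specifically, writing $\text{LHS}$ and $\text{RHS}$ for the two sides of the lemma, I will pass through the intermediate quantity $\prod_{\fd \in \Cpt_2(S_1)}(1 + \phi_\fd)$ and prove
\[
\text{LHS} \psim \prod_{\fd \in \Cpt_2(S_1)}(1 + \phi_\fd) \psim \text{RHS}
\]
uniformly in $S_1 \sse \cH$. The upper bound $\text{LHS} \leq \prod(1+\phi_\fd) \leq \text{RHS}$ is immediate from $1 + x \leq e^x$, so the work lies in establishing matching lower bounds.

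For the first approximation, expanding $\prod_{\fd \in \Cpt_2(S_1)}(1+\phi_\fd) = \sum_{S_2 \sse \Cpt_2(S_1)} \prod_{\fd \in S_2} \phi_\fd$ and subtracting $\text{LHS}$ gives the sum over subsets $S_2 \sse \Cpt_2(S_1)$ containing at least one incompatible pair of dimers. A union bound over such pairs bounds this difference by $X_1 \cdot \prod_{\fd \in \Cpt_2(S_1)}(1+\phi_\fd)$, where
\[
X_1 \coloneqq \sum_{\substack{\{\fd_1, \fd_2\} \sse \Dim \\ \fd_1 \cup \fd_2 \text{ is } 2\text{-linked}}} \phi_{\fd_1} \phi_{\fd_2}
\]
is independent of $S_1$. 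For the second approximation, the elementary inequality $\log(1+x) \geq x - x^2/2$ for $x \geq 0$ gives $\prod_{\fd \in \Cpt_2(S_1)}(1+\phi_\fd) \geq \text{RHS} \cdot \Exp{-\tfrac{1}{2} X_2}$, where $X_2 \coloneqq \sum_{\fd \in \Dim} \phi_\fd^2$ is again uniform in $S_1$. Hence the lemma reduces to $X_1 \pto 0$ and $X_2 \pto 0$.

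Both reductions are handled via Markov's inequality and moment calculations of the sort catalogued in Appendix \ref{sec:moments}. For $X_2$, since $\N(\fd) \sim \Bin(2d-2, p)$, one has $\E[\phi_\fd^2] = ((4-3p)/4)^{2d-2}$, and summing over the $\Theta(d^2 2^d)$ dimers gives $\E X_2 = O(d^2 \cdot ((4-3p)^2/8)^d)$, which decays exponentially precisely when $(4-3p)^2 < 8$, i.e.\ $p > (4 - 2\sqrt{2})/3 \approx 0.391$, matching the threshold of the lemma. For $X_1$, the incompatible pairs split into the ``path trimer'' case $|\fd_1 \cap \fd_2| = 1$ and the disjoint case in which $\fd_1 \cup \fd_2$ is a $4$-vertex $2$-linked set. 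The per-configuration expectation is computed as a product of single-neighbor contributions (a factor of $(2-p)/2$, $(4-3p)/4$, or $(1+(1-p)^2)/2$ for each potential neighbor in $\Q$ depending on whether it is adjacent to only $\fd_1$, to both $\fd_1$ and $\fd_2$ as a shared neighbor of their vertices, or otherwise). The dominant path-trimer case yields $\E X_1 = O(d^4 \cdot ((2-p)^2(4-3p)/8)^d)$, and one checks numerically that $(2-p)^2(4-3p) < 8$ throughout the region $(4-3p)^2 < 8$, so $X_1 \pto 0$ on the entire regime $p > 0.391$.

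The main technical obstacle is the case analysis for $\E X_1$: one must enumerate the two types of $2$-linked dimer unions and carefully bookkeep which potential neighbors in $\Q$ are adjacent to $\fd_1$, to $\fd_2$, or to both, in order to assemble the correct per-configuration factor. Once this is in place the two threshold verifications are straightforward. Finally, the tilde version of the lemma follows by the identical argument with $\phi_\fd$ replaced by $\fphi_\fd$ throughout, since the pointwise bound $\fphi_\fd \leq \phi_\fd$ (arising from $\N(\fd) \leq \N(u) + \N(v)$ for $\fd = \{u,v\}$) ensures that the analogues $\tilde{X}_1, \tilde{X}_2$ are dominated by $X_1$ and $X_2$.
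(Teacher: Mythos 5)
Your proposal is correct and follows essentially the same route as the paper: bound the left-hand side by $\prod_{\fd\in\Cpt_2(S_1)}(1+\phi_\fd)$ from below via a union bound over incompatible pairs, then compare $\prod(1+\phi_\fd)$ to $\Exp{\sum\phi_\fd}$ using $e^{x-x^2/2}\leq 1+x\leq e^x$, reducing everything to $X_1, X_2 \pto 0$ uniformly in $S_1$. The paper makes exactly this reduction, and then folds both quantities into the single sum $\sum_{\fd_1\sim\fd_2}\phi_{\fd_1}\phi_{\fd_2}$ (allowing $\fd_1=\fd_2$), whose expectation it bounds in Lemma \ref{lem:dimermoments}(b) via Cauchy--Schwarz, which uniformly gives the exponential rate $((4-3p)^2/8)^d$; you instead compute $\E[\phi_{\fd_1}\phi_{\fd_2}]$ directly with a case analysis on $|\fd_1\cap\fd_2|$ and obtain the (slightly sharper) rate $((2-p)^2(4-3p)/8)^d$ for the dominant path-trimer case. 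Both recover the threshold $p>(4-2\sqrt{2})/3\approx 0.391$; your observation that $(2-p)^2\leq 4-3p$ for $p\in[0,1]$ shows your rate is indeed dominated by the Cauchy--Schwarz one, so no extra range is gained. One small imprecision: you write $\N(\fd)\sim\Bin(2d-2,p)$, but in fact $\N(\fd)$ is an independent sum of a $\Bin(2d-2,p)$ (the $2d-2$ vertices adjacent to exactly one endpoint) and a $\Bin(2,1-(1-p)^2)$ (the two shared neighbors of the dimer's endpoints); this discrepancy only changes a $p$-dependent constant prefactor, not the exponential order, so your threshold computation is unaffected.
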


We remark that the threshold $p > 0.391$ arises from a simple moment calculation and is not related to the threshold
$\fr{1}{2} - \gamma$ of our main result which arises from other considerations.
Additionally, the equality in the statement follows directly from the definition of $\Delta$ in \eqref{eq:deltadef}
and of $\Adj$ in Definition \ref{def:adj}, so it only remains to prove the uniform in-probability approximation.

\begin{proof}[Proof of Lemma \ref{lem:dimer_sep}]
We first derive an upper bound on the difference between the two sides of \eqref{eq:dimersepconstraint}.
Note that if $S_2 \sse \Cpt_2(S_1)$ but $S_2 \notin \Sep_2$, then there are two dimers in $S_2$ which are adjacent or overlapping,
i.e.\ there are distinct $\fd_1, \fd_2 \in S_2$ with $\fd_1 \sim \fd_2$.
This implies that
\begin{equation}
    0 \leq \sum_{\substack{S_2 \notin \Sep_2 \\ S_2 \sse \Cpt_2(S_1)}} \prod_{\fd \in S_2} \phi_\fd
    \leq \left(\sum_{\substack{\fd_1 \neq \fd_2 \\ \fd_1 \sim \fd_2}} \phi_{\fd_1} \phi_{\fd_2} \right) \cdot \Exp{\sum_{\fd \not \sim S_1} \phi_\fd}.
\end{equation}
To see this, note that in the Taylor expansion of the exponential, every collection of distinct dimers $\fd$ will lead to
a term which is the product of the relevant $\phi_\fd$, with a coefficient of $1$ (this is similar to the strategy used in
the proof of Proposition \ref{prop:smallpolymers_full} above).
Note that the prefactor above does not depend on $S_1$.

Now consider the right-hand side of \eqref{eq:dimersepconstraint} which is equal to the right-hand side of \eqref{eq:dimerbinomial}.
Since
\begin{equation}
    e^{x - x^2/2} \leq 1 + x \leq e^x
\end{equation}
for all $x \geq 0$, we have
\begin{equation}
    \Exp{\sum_{\fd \not \sim S_1} \phi_\fd - \fr{1}{2} \sum_{\fd \not \sim S_1} \phi_\fd^2}
    \leq \sum_{S_2 \sse \Cpt_2(S_1)} \prod_{\fd \in S_2} \phi_\fd
    \leq \Exp{\sum_{\fd \not \sim S_1} \phi_\fd}.
\end{equation}
Moreover, since every $\phi_\fd$ is positive, we can drop the restriction that $\fd \not \sim S_1$ in the
second sum in the first exponential above, obtaining
\begin{equation}
    \Exp{- \fr{1}{2} \sum_{\fd \in \Dim} \phi_\fd^2} \cdot
    \Exp{\sum_{\fd \not \sim S_1} \phi_\fd} 
    \leq \sum_{S_2 \sse \Cpt_2(S_1)} \prod_{\fd \in S_2} \phi_\fd
    \leq \Exp{\sum_{\fd \not \sim S_1} \phi_\fd},
\end{equation}
wherein again the pre-factor in the left-hand side does not depend on $S$.
Thus the non-tilde case of the lemma will be proved if we can show that both
\begin{equation}
    \sum_{\substack{\fd_1 \neq \fd_2 \\ \fd_1 \sim \fd_2}} \phi_{\fd_1} \phi_{\fd_2} \pto 0
    \qquad \text{and} \qquad
    \sum_{\fd \in \Dim} \phi_\fd^2 \pto 0,
\end{equation}
which, by positivity of $\phi_\fd$ and the fact that $\fd \sim \fd$ for any $\fd \in \Dim$, is implied by
\begin{equation}
\label{eq:togotozero}
    \sum_{\fd_1 \sim \fd_2} \phi_{\fd_1} \phi_{\fd_2} \pto 0.
\end{equation}
This follows from a simple moment calculation which is relegated to the appendix.
Namely, Lemma \ref{lem:dimermoments} (b) states that the expectation of the above quantity
converges to $0$ whenever $p > 0.391$.
Moreover, since the same algebra goes through with $\phi_\fd$ replaced by $\fphi_\fd$,
and since $\fphi_\fd \leq \phi_\fd$, \eqref{eq:togotozero} also implies the tilde case of the lemma.
\end{proof}

With Lemma \ref{lem:dimer_sep} in hand, we have effectively relegated the interaction between the singleton part $S_1$
and the dimer part $S_2$ to the study of the quantity $\Adj(S_1)$, and as such
\begin{equation}
    \textbf{for the rest of this section, we will use } S \textbf{ instead of } S_1 \textbf{ to denote the singleton set,}
\end{equation}
to reduce notational clutter.

For instance, recalling the representation \eqref{eq:zleq2decomp} for $\polymerpf_{\leq 2}$,
Lemma \ref{lem:dimer_sep} implies that
\begin{equation}
    \polymerpf_{\leq 2} \psim \sum_{S \in \Sep_1} \left( \prod_{v \in S} \phi_v \cdot \Exp{\Delta - \Adj(S)} \right),
\end{equation}
crucially using the fact that the approximation in Lemma \ref{lem:dimer_sep} is uniform over all singleton sets.
Recalling the definition of $\pi$ from \eqref{eq:pidef}, we obtain
\begin{equation}
    \polymerpf_{\leq 2} \psim e^\Delta \cdot \fakepolymerpf \cdot \E_\pi \left[\ind{S \in \Sep_1} \cdot e^{-\Adj(S)}\right].
\end{equation}
Now recalling the definitions of $\fakepolymerpf$ and $\polymerpf_1$ from \eqref{eq:fakepolymerpfdef} and
\eqref{eq:polymerpfsubkdef}, we obtain
\begin{equation}
\label{eq:pf_mgf}
    \polymerpf_{\leq 2} \psim e^\Delta \cdot \polymerpf_1 \cdot \E_\pi \left[ e^{-\Adj(S)} \middle| S \in \Sep_1 \right].
\end{equation}
So, to prove the first approximation in Proposition \ref{prop:dimers}, which states that $\polymerpf_{\leq 2} \psim \polymerpf_1 \cdot e^\Delta$,
it is equivalent to show that
\begin{equation}
\label{eq:adjsmall_goal}
    \E_\pi \left[ e^{-\Adj(S)} \middle| S \in \Sep_1 \right] \pto 1.
\end{equation}
First of all, $\Adj(S) \geq 0$, and so the above expectation is always at most $1$.
To show that it converges to $1$, we need to prove some form of a corresponding upper bound on $\Adj(S)$.
This task will occupy us for the remainder of this section of the paper.

Note also that all of the above manipulations can be done with $\phi_\fd$ replaced by $\fphi_\fd$ everywhere, and we will obtain
\begin{equation}
    \fakepolymerpf_{\leq 2} \psim e^{\tilde{\Delta}} \cdot \polymerpf_1 \cdot \E_\pi \left[ e^{-\tAdj(S)} \middle| S \in \Sep_1 \right].
\end{equation}
Once we have a workable upper bound for $\Adj(S)$, the same bound will also follow for $\tAdj(S)$, since $\fphi_\fd \leq \phi_\fd$ deterministically.
So the tilde case of Proposition \ref{prop:dimers} follows from the same analysis as the non-tilde case, which we focus on henceforth.

\subsection{A worst-case argument to bound the contributions of adjacent dimers}
\label{sec:dimers_firststrategy}

In this section we present a sketch of a ``worst-case'' type argument to prove \eqref{eq:adjsmall_goal}
by showing that $\Adj(S)$ is small for all sets $S \sse \cH$, assuming a simple \emph{a priori}
bound on the size of $S$.
We will see what this size constraint should be shortly, and then find the worst-case $S$, which maximizes $\Adj(S)$
while satisfying this constraint.
If this worst-case $S$ has $\Adj(S) \pto 0$, then the proof of \eqref{eq:adjsmall_goal} will be finished.

Unfortunately, this strategy does not work for the full range of $p$ which we wish to consider, and in later sections
we will present an ``average-case'' argument which proves \eqref{eq:adjsmall_goal} in what we believe to be the full
regime of parameters for which it holds.
Nevertheless, some ideas used later will already surface in this easier worst-case analysis, so this 
somewhat non-rigorous section should be viewed as a warm-up.

\subsubsection{Size constraint}

First we remark that, as can be proven via simple moment calculations such as those done in Lemma \ref{lem:moments},
the typical size of a set $S$ sampled from $\pi$ is approximately $\f12 (2-p)^d$.
Since vertices are included in $S$ independently from one another, it is easy to show (via a Chernoff bound for instance)
that
\begin{equation}
\label{eq:setsmall_worstcase}
    \pi\left[|S| > C (2-p)^d\right] \leq e^{-c (2-p)^d}(1+o(1)),
\end{equation}
for some constants $C, c > 0$, where the $o(1)$ term converges to $0$ in probability.

On the other hand, by the definitions of $\fakepolymerpf$ from \eqref{eq:fakepolymerpfdef} and $\polymerpf_1$ from \eqref{eq:polymerpfsubkdef},
as well as recalling the definition of $\pi$ from \eqref{eq:pidef}, we have
\begin{equation}
    \pi\left[ S \in \Sep_1 \right] = \fr{\polymerpf_1}{\fakepolymerpf} \psim \fr{\polymerpf_1}{\fakepolymerpf_{\leq 2}},
\end{equation}
using Proposition \ref{prop:smallpolymers_full} for the approximation of $\fakepolymerpf$ by $\fakepolymerpf_{\leq 2}$.
Now, by similar manipulations as just done in the previous section to derive \eqref{eq:pf_mgf}, we may obtain
\begin{align}
    \fakepolymerpf_{\leq 2} &\psim e^{\tilde{\Delta}} \cdot \polymerpf_1 \cdot \E_\pi \left[ e^{-\tAdj(S)} \middle| S \in \Sep_1 \right] \\
    &\leq e^{\tilde{\Delta}} \cdot \polymerpf_1,
\end{align}
using simply the fact that $\tAdj(S) \geq 0$ in the last inequality.
Thus we obtain
\begin{equation}
\label{eq:seplarge_worstcase}
    \pi[S \in \Sep_1] \geq e^{-\tilde{\Delta}} (1 - o(1)),
\end{equation}
where again the $o(1)$ term converges to $0$ in probability.

Now, combining \eqref{eq:setsmall_worstcase} and \eqref{eq:seplarge_worstcase}, we find that
\begin{equation}
\label{eq:sizeconstraint}
    \pi\left[ |S| > C (2-p)^d \middle| S \in \Sep_1 \right] \leq \Exp{-c (2-p)^d + \tilde{\Delta}} \cdot (1+o(1)),
\end{equation}
where again $o(1) \pto 0$.
Now, moment calculations which can be found in Lemma \ref{lem:dimermoments} show that $\tilde{\Delta}$
concentrates at constant scale around its mean, which is
\begin{equation}
    \tilde{\mu}_2 = \fr{d(d-1)}{4} \Fr{(2-p)^2}{2}^d
\end{equation}
as defined first in \eqref{eq:mu2tildedefintro}.
Since $2-p < 2$ we have $(2-p)^d \gg \Fr{(2-p)^2}{2}^d$, which means that the bound \eqref{eq:sizeconstraint}
tends to $0$.
Thus, to prove \eqref{eq:adjsmall_goal}, it suffices to show that
\begin{equation}
    \max \left\{ \Adj(S) : |S| \leq C (2-p)^d \right\} \pto 0,
\end{equation}
where $C$ is as in \eqref{eq:setsmall_worstcase}.

\subsubsection{Worst-case sum}

Given that $|S| \leq C (2-p)^d$, since each vertex is only adjacent to at most $d^4$ distinct dimers,
the number of dimers $\fd \sim S$ is at most $\poly(d) \cdot (2-p)^d$.
Thus, to bound
\begin{equation}
    \Adj(S) = \sum_{\fd \sim S} \phi_\fd
\end{equation}
for $S$ with this size constraint, it suffices to bound the sum of the \emph{largest}
$\poly(d) \cdot (2-p)^d$ variables $\phi_\fd$.
To do this, we need to get a better understanding of how many dimers $\fd$ have each possible value of $\phi_\fd$.

Since we are just looking for an upper bound, let us first use the fact that if $\fd = \{u,v\}$ then
\begin{equation}
    \phi_\fd \leq 4 \cdot 2^{-\N(u) - \N(v)},
\end{equation}
because there are exactly two vertices which neighbor both $u$ and $v$ in the unpercolated hypercube $\Q$,
so $\N(\{u,v\}) \geq \N(u) + \N(v) - 2$.
Now we will use the fact that $\N(u)$ are independent $\Bin(d,p)$ variables (with respect to the randomness of the
percolation $\Qp$) to bound the number of dimers $\fd = \{u,v\}$ with various (low) values of $\N(u) + \N(v)$.

Using standard entropy estimates on binomial probabilities, many of which are included in Appendix \ref{sec:entropy},
we have
\begin{equation}
    \P[\N(u) = a] = \poly(d) \cdot 2^{-d H_p(r)},
\end{equation}
where $r = \fr{a}{d}$ and $H_p$ is the binary relative entropy function
\begin{equation}
\label{eq:hpdef}
    H_p(q) \coloneqq q \log_2 \fr{q}{p} + (1-q) \log_2 \fr{1-q}{1-p}.
\end{equation}
Now since there are $\poly(d) \cdot 2^d$ dimers in total and $\N(u)$ are independent for $u \in \cH$,
the number of dimers $\fd = \{u,v\}$ which have $\N(u) = a$ and $\N(v) = b$
is thus within a (fixed) polynomial factor of
\begin{equation}
    2^{d\left(1 - H_p\Fr{a}{d} - H_p\Fr{b}{d} \right)}
\end{equation}
with high probability.
Now by convexity of relative entropy, we have
\begin{equation}
    1 - H_p\Fr{a}{d} - H_p\Fr{b}{d} \leq 1 - 2 \cdot H_p\Rnd{\fr{a+b}{2d}},
\end{equation}
and so the number of dimers $\fd = \{u,v\}$ for which $\N(u) + \N(v) = c$ is, with high probability, at most
\begin{equation}
\label{eq:ensum}
    \poly(d) \sum_{a+b=c} 2^{d \left( 1 - H_p \Fr{a}{d} - H_p \Fr{b}{d} \right)}
    \leq \poly(d) \cdot 2^{d \left(1 - 2 \cdot H_p \Fr{c}{2d} \right)}.
\end{equation}
In the above bound we have used the fact that the sum in the left-hand side of \eqref{eq:ensum}
has only polynomially many terms.

For notational convenience, let us define
\begin{equation}
    h_c^* \coloneqq 1 - 2 \cdot H_p\Fr{c}{2d}.
\end{equation}
With this definition, we see that the sum of the $\poly(d) \cdot (2-p)^d$ largest values of $\phi_\fd$ is at most
\begin{equation}
\label{eq:bigsum}
    \poly(d) \sum_{c=0}^{c_{\max}} 2^{d \cdot h^*_c} \cdot 2^{-c},
\end{equation}
with high probability, where $c_{\max}$ is the maximal value of $c$ for which
$2^{d \cdot h^*_c} \leq \poly(d) \cdot (2-p)^d$, which ensures that the number of dimers $\fd$ satisfying
$\fphi_\fd = 2^{-c}$ is at most $\poly(d) \cdot (2-p)^d$.

Finally, since there are only polynomially many terms in the sum \eqref{eq:bigsum}, we will have $\Adj(S) \pto 0$
uniformly for $|S| \leq C (2-p)^d$ if we can show that
\begin{equation}
    \poly(d) \cdot 2^{d h^*_c - c} \ll 1
\end{equation}
whenever $c$ is small enough that $2^{d h^*_c} \leq \poly(d) \cdot (2-p)^d$.
In other words, setting $t = \fr{c}{2d}$, we need to have
\begin{equation}
\label{eq:optimization1}
    1 - 2 \cdot H_p(t) - 2 t < 0 \quad
    \text{for all} \quad t \in [0,1] \quad \text{satisfying} \quad
    1 - 2 \cdot H_p(t) < \log_2(2-p).
\end{equation}
This can be numerically checked, and it holds for $p > 0.549$, but does not hold for $p < 0.548$.
Thus, in order to achieve our goal of investigating the behavior near $p = \fr{1}{2}$, we will need a more
sophisticated argument which leverages the fact that \eqref{eq:adjsmall_goal} is an average
and so we can ignore worst-case sets $S$ if they are rare.

\subsection{The average-case argument}
\label{sec:dimers_secondstrategy}

In this section we lay the groundwork for the average-case argument to prove \eqref{eq:adjsmall_goal}, i.e.\ that
\begin{equation}
\label{eq:adjsmall_goal_again}
    \E_\pi \left[ e^{-\Adj(S)} \middle| S \in \Sep_1 \right] \pto 1,
\end{equation}
for all $p > \f12 - \gamma$, where $\Adj(S)$ is as in Definition \ref{def:adj} and $\pi$ is defined in \eqref{eq:pidef}.
This will finish the proof of Proposition \ref{prop:dimers}, as discussed in Section \ref{sec:dimers_sep}.
As mentioned at the beginning of Section \ref{sec:dimers}, we expect that the strategy we employ in this section and the next actually works for all $p$ 
such that the result does indeed hold, although we do not prove this.
The heuristic for this belief was outlined at the beginning of Section \ref{sec:dimers}, and is related to a phase
transition similar in spirit to the one which occurs at $p=\fr{2}{3}$, which was discussed in Section \ref{sec:nodimers_replacement}.

Note that since $\Adj(S) \geq 0$, to prove \eqref{eq:adjsmall_goal_again} it suffices to show that
\begin{equation}
\label{eq:adjconditioned}
    \pi\left[\Adj(S) > \eps \middle| S \in \Sep_1 \right] \pto 0
\end{equation}
for some choice of $\eps = \eps_d \to 0$ as $d \to \infty$.
In other words, instead of attempting to fully rule out sets with large $\Adj(S)$ as in the worst-case argument of the previous section,
we will show that such sets are rare under $\pi$ conditioned on $S \in \Sep_1$.
In fact, we will show a stronger statement, that
\begin{equation}
\label{eq:LB}
    \pi[\Adj(S) > \eps] \ll \pi[S \in \Sep_1] \cdot (1 + o(1))
\end{equation}
for some choice of $\eps = \eps_d$ decreasing to zero polynomially as a function of $d$, where the $o(1)$ term converges to $0$ in probability.
This will be achieved by the following two lemmas, which give a lower bound on the right-hand side and an upper bound on the left-hand side.

\begin{lemma}
\label{lem:seplowerbound}
For all $p \in (0,1)$,
\begin{equation}
    \pi[S \in \Sep_1] \geq \Exp{-\poly(d) \cdot \Fr{(2-p)^2}{2}^d} \cdot (1 - o(1)),
\end{equation}
where the $o(1)$ term tends to $0$ in probability.
\end{lemma}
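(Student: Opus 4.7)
The plan is to apply Harris's (FKG) inequality to factorize a monotone conjunction, and then control the resulting exponent via a first-moment bound. Specifically, $S \in \Sep_1$ iff no dimer $\fd = \{u,v\} \in \Dim$ is entirely contained in $S$, so $\{S \in \Sep_1\} = \bigcap_{\fd \in \Dim} \bar{A}_\fd$, where $\bar{A}_\fd := \{\fd \not\subseteq S\}$. Since $\pi$ is a product Bernoulli measure on $\cH$ that includes each $v$ independently with probability $\phi_v/(1+\phi_v)$, each $\bar{A}_\fd$ is a decreasing function of the independent indicators $\{\1[v \in S]\}_{v \in \cH}$, and so Harris's inequality for product measures would give
\begin{equation}
    \pi[S \in \Sep_1] \;\geq\; \prod_{\fd = \{u,v\} \in \Dim} \Rnd{1 - \fr{\phi_u \phi_v}{(1+\phi_u)(1+\phi_v)}}.
\end{equation}

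Next I would take logarithms, using that $\phi_v = 2^{-\N(v)} \in [0,1]$ deterministically (so each ratio above lies in $[0,\fr{1}{4}]$) together with the elementary bound $-\log(1-z) \leq 2z$ valid on $[0,\fr{1}{2}]$, to obtain
\begin{equation}
    -\log \pi[S \in \Sep_1] \;\leq\; 2 \sum_{\fd \in \Dim} \fr{\phi_u \phi_v}{(1+\phi_u)(1+\phi_v)} \;\leq\; 2 \sum_{\fd \in \Dim} \phi_u \phi_v \;=\; 2 \tilde{\Delta}.
\end{equation}
This yields $\pi[S \in \Sep_1] \geq e^{-2\tilde{\Delta}}$ as a deterministic (in $\pi$) function of the percolation.

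To conclude, I would invoke the first-moment computation from Appendix \ref{sec:moments}, which gives $\E[\tilde{\Delta}] = \tilde{\mu}_2 = \poly(d) \cdot \Fr{(2-p)^2}{2}^d$, and then apply Markov's inequality with a slowly-divergent multiplier (e.g.\ $f(d) = d$): this shows $\tilde{\Delta} \leq d \cdot \tilde{\mu}_2 \leq \poly(d) \cdot \Fr{(2-p)^2}{2}^d$ with probability $1 - o(1)$, so on this high-probability event $\pi[S \in \Sep_1] \geq \Exp{-\poly(d) \cdot \Fr{(2-p)^2}{2}^d}$. The residual $o(1)$-probability event is absorbed into the multiplicative $(1-o(1))$ factor in the statement by taking the error random variable to be $1$ there and $0$ on the good event.

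I do not anticipate a serious obstacle: the proof is short, and the only point requiring attention is that the FKG lower bound not be wasteful. This is automatic here, because the exponent $2\tilde{\Delta}$ captures the correct order of magnitude up to a polynomial factor in $d$, and such a factor is permitted by the $\poly(d)$ in the statement. Even a crude Markov-based upper tail bound on $\tilde{\Delta}$ therefore suffices.
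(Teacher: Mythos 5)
Your proof is correct and takes a genuinely different route from the paper's. The paper derives the bound by chaining three of its own structural inputs: Proposition~\ref{prop:smallpolymers_full} (to pass from $\fakepolymerpf$ to $\fakepolymerpf_{\leq 2}$), the uniform-in-$S$ approximation of Lemma~\ref{lem:dimer_sep} (to reach $\fakepolymerpf \leq e^{\tilde{\Delta}}\, \polymerpf_1 \cdot (1+o(1))$), and the concentration $\tilde{\Delta} - \tilde{\mu}_2 \pto 0$ of Lemma~\ref{lem:dimerconcentration}. You instead observe that $\{S \in \Sep_1\} = \bigcap_{\fd \in \Dim} \{\fd \not\sse S\}$ is an intersection of decreasing events under the product measure $\pi$, so Harris's (FKG) inequality factorizes it; the elementary estimate $-\log(1-z) \leq 2z$ (valid here since each factor is at least $3/4$, as $\phi_v/(1+\phi_v) \leq 1/2$) then converts the product lower bound into the \emph{deterministic} inequality $\pi[S \in \Sep_1] \geq e^{-2\tilde{\Delta}}$, and you finish with Markov's inequality and the first-moment identity $\E[\tilde{\Delta}] = \tilde{\mu}_2$ from Lemma~\ref{lem:dimermoments}(a). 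Your argument is shorter, more self-contained, and avoids the hypotheses $p > 2 - 2^{2/3}$ and $p > 0.391$ hidden in the paper's cited lemmas, so it genuinely covers all $p \in (0,1)$ as the statement claims. The extra factor of $2$ in your exponent versus the paper's $e^{-\tilde{\Delta}}$ is harmlessly absorbed into the $\poly(d)$, and your device of setting the error variable to $0$ on the high-probability Markov event and to $1$ otherwise cleanly realizes the $(1-o(1))$ factor.
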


\begin{lemma}
\label{lem:adjupperbound}
There exists a universal constant $\gamma > 0$ such that for $p > \fr{1}{2} - \gamma$,
there is some constant $\alpha_p > \fr{(2-p)^2}{2}$ such that
for any $\epsilon \geq \fr{1}{\poly(d)}$,
\begin{equation}
    \pi[\Adj(S) > \epsilon] \leq \Exp{- \fr{1}{\poly(d)} \cdot \alpha_p^d} \cdot (1 + o(1)),
\end{equation}
where the $o(1)$ term tends to $0$ in probability.
\end{lemma}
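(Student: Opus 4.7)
The plan is to bound $\Adj(S)$ by a weighted sum of independent Bernoulli random variables under $\pi$, and then apply a Chernoff-type inequality together with entropy estimates on binomial tails. Using the elementary inequality $\ind{S \cap M(\fd) \neq \emptyset} \leq \sum_{w \in M(\fd)} \ind{w \in S}$, where $M(\fd) := \{w \in \cH : w \text{ shares a neighbor with } \fd \text{ in } \Q\}$, one reduces to
\begin{equation}
    \Adj(S) \leq Z := \sum_{w \in \cH} A_w \ind{w \in S}, \qquad A_w := \sum_{\fd : w \in M(\fd)} \phi_\fd. \notag
\end{equation}
The overcounting factor here is at most $|M(\fd)| = O(d^2)$, hence polynomial and absorbed into the $\poly(d)$ in the final bound. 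Under $\pi$ the indicators $\{\ind{w \in S}\}_{w \in \cH}$ are mutually independent Bernoullis with parameters $q_w := \phi_w/(1+\phi_w)$, so $Z$ is a weighted sum of independent Bernoullis whose weights $A_w$ and parameters $q_w$ are both percolation-dependent random variables.

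Next, I identify a good event $\mathcal{G}$ over the percolation on which (i) $A_{\max} := \max_w A_w \leq B$ and (ii) $\mu := \sum_w q_w A_w \leq M$, for explicit deterministic $B, M$. For (i), a standard entropy estimate (compare Appendix \ref{sec:entropy}) gives $\P[\N(\fd) \leq c] \leq \poly(d) \cdot 2^{-2dH_p(c/(2d))}$ with $H_p$ as in \eqref{eq:hpdef}, and a union bound over the $\Theta(d^2 \cdot 2^d)$ dimers yields $\max_\fd \phi_\fd \leq 2^{-2dt^\star}$ with high probability for any $t^\star$ strictly less than the unique $t \in (0,p)$ with $H_p(t) = 1/2$. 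Since at most $O(d^4)$ dimers $\fd$ satisfy $w \in M(\fd)$ for each fixed $w$, this yields $B = \poly(d) \cdot 2^{-2dt^\star}$. For (ii), a straightforward first-moment calculation (in the spirit of Appendix \ref{sec:moments}, using that $\phi_w, \phi_\fd$ depend on essentially disjoint sets of percolation edges) gives $\mathbb{E}[\mu] \leq \poly(d) \cdot \rho_p^d$ for some $\rho_p < 1$, and Markov's inequality supplies the corresponding $M$.

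On $\mathcal{G}$, the standard Chernoff bound $\mathbb{E}_\pi[e^{tZ}] \leq \exp(\mu(e^{tB}-1)/B)$ optimized in $t$ yields
\begin{equation}
    \pi[Z > \epsilon] \leq \exp\!\left(-\frac{\epsilon \log(\epsilon/\mu) - \epsilon + \mu}{B}\right), \notag
\end{equation}
which for $\epsilon \geq 1/\poly(d)$ and $\mu$ exponentially small simplifies to $\exp(-\alpha_p^d/\poly(d))$ with $\alpha_p = 4^{t^\star}$. The desired inequality $\alpha_p > (2-p)^2/2$ translates into the pointwise condition $H_p(\log_4((2-p)^2/2)) > 1/2$, which at $p = \frac{1}{2}$ evaluates (via $H_{1/2}(t) = 1 - H(t)$ with $H$ the binary entropy) to $\approx 1 - H(0.085) \approx 0.58 > 1/2$. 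By continuity this persists on an open neighborhood of $p = \frac{1}{2}$, giving the required $\gamma > 0$.

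The main obstacle is that the worst-case bound $A_w \leq A_{\max}$ above is somewhat wasteful, so to push the threshold down to the paper's claimed $\tfrac12 - \gamma \approx 0.465$ one likely needs a finer decomposition of $\Adj(S)$. A natural refinement, consistent with the overview in Section \ref{sec:dimers_outline}, is to partition dimers into $\poly(d)$ pieces indexed jointly by a weight class of $\phi_\fd$ and a color class making the $M(\fd)$'s disjoint within each piece; within each piece the indicators $\ind{S \cap M(\fd) \neq \emptyset}$ are then genuinely independent under $\pi$, so a sharper Chernoff with the correct binomial-entropy prefactor applies class-by-class, and the final tail bound is recovered by union bounding over the $\poly(d)$ pieces. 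Throughout, the persistent subtlety the paper flags is that all the binomial variables involved have percolation-dependent parameters, which is precisely what forces the preliminary conditioning on $\mathcal{G}$ before any concentration inequality can be invoked.
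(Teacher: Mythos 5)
Your argument is correct and proves the lemma as stated, but by a genuinely different route from the paper. Where the paper's proof of Lemma \ref{lem:separated_adj} slices $\Adj(S)$ into $\poly(d)$ classes indexed by dimer type $(a,b)$, layer $\ell$, and status $\sigma$ --- so that each class is an honest binomial in $S$ with explicit Bernoulli parameter $2^{-a}$ or $2^{-c}$, controlled via Lemmas \ref{lem:bin} and \ref{lem:entropy} --- you dominate $\Adj(S)$ pointwise by a single weighted Bernoulli sum $Z=\sum_w A_w\ind{w\in S}$, condition on a percolation-level good event controlling $A_{\max}$ and $\mu=\sum_w q_w A_w$, and apply one Bennett--Chernoff inequality. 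Both routes produce $\alpha_p=4^{t^\star}$, but against different optimizations: your $t^\star$ sits just below the root of $H_p(t)=\tfrac12$ (so that whp no dimer has $\phi_\fd>4^{-d t^\star}$), while the paper's, from the constraint in \eqref{deft}, sits just below the smaller root of $2H_p(t)+2t=1$, which is strictly larger since $2H_p(t)+2t>2H_p(t)$ on $(0,p)$. At $p=\tfrac12$ your $\alpha_p$ approaches $4^{0.11}\approx 1.16$ against $(2-p)^2/2=9/8$, a margin of only about $0.04$, while the paper's approaches $4^{0.18}\approx 1.28$; correspondingly your crossing point is roughly $p\approx 0.485$ rather than the paper's $\approx 0.465$. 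Since the lemma only asserts existence of a universal $\gamma>0$, your proof is complete and self-contained, and you have correctly diagnosed both the source of the loss (Bennett's inequality with the global worst-case weight $A_{\max}$, which ignores that most dimers contribute far less) and the refinement that recovers the sharper constant --- your sketched weight-class-plus-coloring decomposition is in substance what Lemma \ref{lem:separated_adj} does. One minor remark: the overcounting factor $|M(\fd)|=O(d^2)$ in the union bound $\ind{\fd\sim S}\leq\sum_{w\in M(\fd)}\ind{w\in S}$ is harmless and need not be tracked at all, since $Z\geq\Adj(S)$ pointwise and you only ever use the one-sided implication $\pi[\Adj(S)>\eps]\leq\pi[Z>\eps]$.
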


These two lemmas combine to prove \eqref{eq:LB} which proves \eqref{eq:adjsmall_goal_again} and subsequently,
by the discussion in Section \ref{sec:dimers_sep}, finishes the proof of Proposition \ref{prop:dimers} in the first (non-tilde) case.
As mentioned in Section \ref{sec:dimers_statement}, the tilde case follows from the non-tilde case simply because $\fphi_\fd \leq \phi_\fd$,
and so $\tAdj(S) \leq \Adj(S)$, meaning that the bound \eqref{eq:LB} with $\Adj(S)$ replaced by $\tAdj(S)$ follows immediately from the original bound.
Thus, Lemmas \ref{lem:seplowerbound} and \ref{lem:adjupperbound} complete the full proof of Proposition \ref{prop:dimers}.

We now prove Lemma \ref{lem:seplowerbound}, with the proof of Lemma \ref{lem:adjupperbound},
which is one of the key novelties of this article, deferred to the next subsection.

\begin{proof}[Proof of Lemma \ref{lem:seplowerbound}]
First note that
\begin{align}
    \fakepolymerpf &\psim \fakepolymerpf_{\leq 2} \tag*{(by Proposition \ref{prop:smallpolymers_full})} \\
    &= \sum_{S \in \Sep_1} \left( \prod_{v \in S} \phi_v \cdot \sum_{\substack{S_2 \in \Sep_2 \\ S_2 \sse \Cpt_2(S)}} \prod_{\fd \in S_2} \fphi_\fd \right) \label{eq:zleq2decomp_fake} \\
    &\psim \sum_{S \in \Sep_1} \left( \prod_{v \in S} \phi_v \cdot \Exp{\tilde{\Delta} - \tAdj(S)} \right) \tag*{(by Lemma \ref{lem:dimer_sep})} \\
    &\leq e^{\tilde{\Delta}} \cdot \sum_{S \in \Sep_1} \prod_{v \in S} \phi_v. \tag*{(since $\tAdj(S) \geq 0$)}
\end{align}
Note that at step \eqref{eq:zleq2decomp_fake}, we used the manipulation done in \eqref{eq:zleq2decomp} but with $\phi_\fd$ replaced by $\fphi_\fd$.
Additionally, in the application of Lemma \ref{lem:dimer_sep} above, it was crucial that the approximation in that lemma is uniform over $S$.

Now, recalling the definition of $\pi$ from \eqref{eq:pidef}, rearranging the above display shows that
\begin{align}
    \pi[S \in \Sep_1] &= \fr{1}{\fakepolymerpf} \sum_{S \in \Sep_1} \prod_{v \in S} \phi_v \\
    &\geq \Exp{-\tilde{\Delta}} \cdot (1-o(1)),
\end{align}
where $o(1) \pto 0$.
Now, as it turns out, $\tilde{\Delta}$ concentrates at around its mean
\begin{equation}
    \tilde{\mu}_2 = \fr{d(d-1)}{4} \Fr{(2-p)^2}{2}^d,
\end{equation}
this notation being initially defined in \eqref{eq:mu2tildedefintro}.
This can be seen by moment calculations in the appendix, specifically in Lemma \ref{lem:dimermoments}.
The concentration result is also written explicitly as Lemma \ref{lem:dimerconcentration} below,
which states that
\begin{equation}
    \tilde{\Delta} - \tilde{\mu}_2 \pto 0.
\end{equation}
This finishes the proof.
\end{proof}

\subsection{Bounding the contribution of singleton sets with large dimer adjacency}
\label{sec:dimers_adj}

In this section we prove Lemma \ref{lem:adjupperbound}.
For convenience, we will replace the distribution $\pi$ with one which chooses each singleton $\fs$
independently with probability $\phi_\fs$ instead of the unwieldy ratio $\fr{\phi_\fs}{1 + \phi_\fs}$
(note that this makes sense since $\phi_\fs\le 1$).
This new measure stochastically dominates the old one (via inclusion of sets) since $\fr{\phi_\fs}{1 + \phi_\fs}\le \phi_\fs $.
Since $\Adj(S)$ is an increasing function of $S$ (recall from Definition \ref{def:adj} that it is the sum of $\phi_\fd$ over all $\fd \sim S$),
if we can show that the event $\{ \Adj(S) > \epsilon \}$ has small probability under this new measure, then it will
also have small probability under the old measure.

\begin{multline}
\textbf{Henceforth we denote by $\pi$ this new measure, which should not cause confusion} \\
\textbf{as we will not return to the old definition of $\pi$ for the rest of this section.}
\end{multline}

Now notice that, by rewriting the definition of $\Adj(S)$ from Definition \ref{def:adj}, we have
\begin{equation}
    \Adj(S) = \sum_{\fd \in \Dim} \phi_\fd \ind{\fd \sim S},
\end{equation}
and so broadly speaking our goal is to bound the tail of a sum of bounded random variables. Note that if the variables were i.i.d., one may have applied a version of Hoeffding's inequality
or a standard large deviations bound for sums of i.i.d. variables to get a bound on this tail.
However, the variables $\left\{ \phi_\fd \ind{\fd \sim S} \right\}_{\fd \in \Dim}$ are not independent under $\pi$
since many different dimers $\fd$ can be adjacent to one single vertex $v$, meaning that many different indicators $\ind{\fd \sim S}$ may be set to $1$ as a result of a single vertex being added to $S$.
Furthermore they are not identically distributed since the distribution of $\phi_\fd \ind{\fd \sim S}$ under $\pi$ (which samples vertices $v$ to include in $S$ based on their degree in $\Qp$)
depends on the percolation configuration near $\fd$.

Nonetheless, the dependence is local in nature and if the dimers are separated enough from each other, then indeed they correspond to independent variables $\phi_\fd \ind{\fd \sim S}$.
Specifically, the variables $\fd_1$ and $\fd_2$ are independent under $\pi$ when there is no vertex $v$ whose inclusion in $S$ can cause both $\fd_1 \sim S$ \emph{and} $\fd_2 \sim S$,
since the vertices are included in $S$ independently.
This is the case when the dimers have distance at least $3$ in the shared-neighbor graph, meaning that there is no vertex $v$ which
has a shared neighbor with $\fd_1$ and a shared neighbor with $\fd_2$, in the unpercolated hypercube $\Q$.

Hence to remedy the non-independence problem, we will partition the set of dimers into $\poly(d)$ \emph{layers} $\{\frL_\ell\}_\ell$
such that for each $\ell$, the dimers in $\frL_\ell$ are at shared-neighbor-distance at least $3$ from each other. One particularly quick way to see that this is possible is to recall that the chromatic number of a graph is at most the maximum degree plus one,
and each dimer is within distance $3$ of at most $\poly(d)$ other dimers.
Note that the classes $\frL_\ell$ are deterministic, i.e.\ they do not depend on the percolation configuration.

 It would be useful to have the following definition.

\begin{definition}
The \emph{layer} of a dimer $\fd$ is the unique integer $\ell$ such that $\fd \in \frL_\ell$.
\end{definition}

We will use the independence between dimers within each layer to get a good upper bound on the tail for the sum over each layer,
and then use a union bound over the only $\poly(d)$ many layers.
However, different dimers in the same layer may still have different distributions with different typical magnitudes,
and it is a priori unclear which types of dimers yield the dominant contribution to the sum. This necessitates another large deviation analysis. Towards this, we further decompose each layer in terms of the value of $\phi_\fd$, or, more precisely, by the number of neighbors
in $\Qp$ each of the constituent vertices have.
To make this precise, we fix an arbitrary ordering $\prec$ of the vertices and equate a dimer $\fd$ with its pair $(u,v)$
of vertices, where $u \prec v$.

\begin{definition}
The \emph{type} of a dimer $\fd = (u,v)$ is the ordered pair $(a,b)$ where $\N(u) = a$ and $\N(v) = b$.
We will also say that a singleton $\fs$ has \emph{type} $a$ if $\N(\fs) = a$.
\end{definition}

Note that there are only $(d+1)^2$ different possible types for a dimer.
For a dimer $\fd = (u,v)$ of type $(a,b)$, we have
\begin{equation}
\label{eq:typebound}
    \phi_\fd = 2^{-\N(\{u,v\})} \leq 2^{-\N(u) - \N(v) + 2} = 4 \cdot 2^{-a-b},
\end{equation}
because two vertices in the same side sharing a neighbor share exactly two neighbors in $\Q$, before percolation
(recall for instance the diagram \eqref{dimerpicture}).

Finally, due to the differences in the behaviors of dimers \emph{adjacent} to a vertex in $S$ and dimers
\emph{containing} a vertex in $S$, we separate the dimers by this distinction as well.

\begin{definition}
The \emph{status} of a dimer $\fd = (u,v)$ with respect to $S$ is an element
$\sigma \in \{\sfO_1, \sfO_2, \sfN\}$
with $\sigma = \sfO_1$ if $u \in S$, $\sigma = \sfO_2$ if $u \notin S, v \in S$ ($\sfO$ stands for ``overlap''),
and $\sigma = \sfN$ if $u,v \not \in S$ but one of them is adjacent to a singleton in $S$
($\sfN$ stands for ``neighbor'').
\end{definition}

We remark that \emph{dimers with the same type and status may still not have the same distribution} under $\pi$
due to the different possible values of $\N(\{u,v\})$ arising from the different ways in which the neighborhoods of
$u$ and $v$ may overlap.
However, the relationship \eqref{eq:typebound} will be enough for our purposes.
Broadly speaking, the rest of the proof is to bound the tail of each sum of variables corresponding to dimers
of the same layer, type, and status; we will then conclude with a union bound.

To start making this precise, let us denote by $\scrC(a,b;\ell;\sigma;S)$ the number of dimers of type $(a,b)$,
layer $\ell$, and status $\sigma$ with respect to $S$.
So, by \eqref{eq:typebound}, we have
\begin{equation}
    \Adj(S) \lesssim \sum_{a,b,\ell,\sigma} \scrC(a,b;\ell;\sigma;S) \cdot 2^{-a-b}.
\end{equation}
There are at most $\poly(d)$ terms in this sum, so $\Adj(S) > \epsilon$ implies that
for some choice of $a,b,\ell,\sigma$ we have
\begin{equation}
\label{eq:cbig}
    \scrC(a,b;\ell;\sigma;S) \cdot 2^{-a-b} > \fr{\epsilon}{\poly(d)},
\end{equation}
which is still at least $\fr{1}{\poly(d)}$ if $\epsilon > \fr{1}{\poly(d)}$.
So Lemma \ref{lem:adjupperbound} will follow from the following lemma, which states that,
with high probability in terms of the percolation configuration,
\eqref{eq:cbig} has low probability when $S$ is sampled from $\pi$.

\begin{lemma}
\label{lem:separated_adj}
There exists a universal constant $\gamma > 0$ such that for $p > \fr{1}{2}-\gamma$,
there is a constant $\alpha_p > \fr{(2-p)^2}{2}$ such that, for all $\eps = \eps_d > \poly(d)^{-1}$,
there is a function $f(d) > \poly(d)^{-1}$ (which may depend on $\eps_d$) such that  the following event
(determined by the percolation configuration) holds with probability tending to $1$.
\begin{equation}\label{keytailbound}
    \left\{ \text{For all choices of } a,b,\ell,\sigma, \text{ we have }
    \pi\Box{\scrC(a,b;\ell;\sigma;S) > \eps \cdot 2^{a+b}} \leq \Exp{- f(d) \cdot \alpha_p^d}
    \right\}.
\end{equation}
\end{lemma}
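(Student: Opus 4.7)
The plan is to fix $(a,b,\ell,\sigma)$ and establish the desired tail bound via a Chernoff estimate on an independent Bernoulli sum, then conclude by a union bound over the $\poly(d)$ choices of $(a,b,\ell,\sigma)$. The key structural observation is that any two dimers in the same layer $\frL_\ell$ have disjoint vertex-neighborhoods $\fd \cup A(\fd)$ (where $A(\fd)$ denotes the set of vertices in $\cH \setminus \fd$ sharing a $\Q$-neighbor with $\fd$), since by construction they are at shared-neighbor-distance at least $3$. Because $\pi$ samples each vertex of $\cH$ independently, this implies that, conditionally on the percolation, $\scrC(a,b;\ell;\sigma;S)$ is a sum of independent Bernoulli variables with respective parameters
\[
q_{\fd, \sigma} \coloneqq
\begin{cases}
\phi_u = 2^{-a} & \text{if } \sigma = \sfO_1, \\
(1 - \phi_u) \phi_v \leq 2^{-b} & \text{if } \sigma = \sfO_2, \\
(1 - \phi_u)(1 - \phi_v)\bigl[1 - \prod_{w \in A(\fd)}(1-\phi_w)\bigr] & \text{if } \sigma = \sfN,
\end{cases}
\]
the last of which is itself random in the percolation.

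Next I would define a percolation-regularity event $\mathcal{G}_d$, holding with probability tending to $1$, that controls: (i) the total number of type-$(a,b)$ dimers by $\poly(d) \cdot 2^{d(1 - H_p(r) - H_p(s))}$, with $r = a/d$ and $s = b/d$, via Markov applied to the product formula for $\P[\N(u)=a,\N(v)=b]$ (using independence of $\N(u), \N(v)$) and a union bound over the $O(d^2 \cdot 2^d)$ dimer candidates, where $H_p$ is the binary relative entropy of \eqref{eq:hpdef}; and (ii) for $\sigma = \sfN$, uniform control on $|A_c(\fd)| \coloneqq |\{w \in A(\fd) : \N(w) = c\}|$ in $\fd$ and $c$ by standard concentration. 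On $\mathcal{G}_d$, I would further decompose the status-$\sfN$ events by the type $c$ of a witness singleton $w \in A(\fd) \cap S$, which replaces the random Bernoulli parameter by deterministic sub-parameters of order $|A_c(\fd)| \cdot 2^{-c}$. Then applying the standard Chernoff bound $\pi[X \geq t] \leq \exp(-t\log(t/(e\mu)))$ to each independent Bernoulli sum, with $t = \eps \cdot 2^{a+b}$ and $\mu = \sum_\fd q_{\fd,\sigma}$, yields --- after substituting the $\mathcal{G}_d$ bounds and absorbing subexponential factors --- a tail exponent of the form $\eps \cdot 2^{d(r+s)} \cdot d \cdot G_\sigma(r,s)$, where $G_\sigma$ is an explicit linear combination of $r, s, H_p(r), H_p(s)$ (and of $c/d$ and $H_p(c/d)$ for $\sigma = \sfN$).

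The desired bound $\exp(-f(d) \alpha_p^d)$ thereby reduces to showing $2^{r+s} \geq \alpha_p$ throughout the region where the Chernoff bound is meaningful --- i.e.\ where $\scrC$ is not already deterministically capped by the total dimer count $N_{a,b,\ell}$. For $\sigma = \sfO_1$ this region is $\{r + s + H_p(r) + H_p(s) < 1\} \cap \{2r + s + H_p(r) + H_p(s) > 1\}$, and a Lagrange-multiplier computation shows that the minimum of $r + s$ on its nontrivial boundary is attained at the symmetric point $r = s$ satisfying $r + H_p(r) = \f12$. At $p = \f12$ this gives $r \approx 0.18$, so $\alpha_p \approx 2^{0.36} \approx 1.28$, strictly larger than $(2-p)^2/2 = 9/8 \approx 1.125$; by continuity the strict inequality persists in a neighborhood $p > \f12 - \gamma$ for some $\gamma > 0$, and numerically $\f12 - \gamma \approx 0.465$ is essentially the value of $p$ at which the optimal $r+s$ drops to $\log_2((2-p)^2/2)$.

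The main obstacle is the $\sigma = \sfN$ case: in addition to using $\mathcal{G}_d$ to discretize the random Bernoulli parameters via the witness-type decomposition, one must then verify the resulting three-variable optimization (over $r$, $s$, and $c/d$) still yields $\alpha_p > (2-p)^2/2$ for all $p > \f12 - \gamma$. Checking this more intricate extended optimization, rather than the Chernoff machinery itself, is the delicate step that ultimately pins down how small $\gamma$ can be taken.
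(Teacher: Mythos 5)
Your overall strategy matches the paper's proof quite closely: fix $(a,b,\ell,\sigma)$, exploit independence within a layer, control dimer counts via entropy bounds on binomials, apply a Chernoff-type tail bound, union bound over $\poly(d)$ choices, and determine $\gamma$ by comparing the resulting exponent base $\alpha_p$ to $(2-p)^2/2$. Your Lagrange-multiplier observation that the critical point of the optimization is at $r=s$, giving $\alpha_p = 2^{2t^\star}$ with $t^\star + H_p(t^\star) = \tfrac12$, reproduces exactly the quantity the paper extracts by a convexity reduction $H_p(r)+H_p(s)\geq 2H_p((r+s)/2)$. The numeric check $\alpha_{1/2}\approx 1.28 > 9/8$ and the continuity argument are also in the paper. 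The one place where your sketch diverges from what is actually needed is the $\sigma=\sfN$ case. You anticipate a genuinely harder \emph{three}-variable optimization over $(r,s,c/d)$ as the bottleneck determining $\gamma$; the paper sidesteps this entirely. After decomposing by witness type $c$, the paper bounds $\scrN(a,b,c;\ell)$ using a $\Bin(3d,p)$ entropy estimate, and the only additional condition to verify (for the applicability of the entropy lemma) is $s + H_p(s) > \tfrac13 + \delta$ with $s=(a+b+c)/3d$, a \emph{one}-variable inequality handled by Lemma~\ref{lem:minentropy} with $m=1$. Crucially, the final Chernoff exponent is still $\propto \eps' 2^{a+b} = \eps' 2^{2dt}$, not $\eps' 2^{a+b+c}$, because the event being bounded is $\scrC \geq \eps'\cdot 2^{a+b}$. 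So $c$ drops out of the exponent entirely and $\alpha_p = 2^{2t^\star}$ is unchanged; the threshold $\gamma$ is then pinned down by the single two-variable inequality $2(s+H_p(s)) + 2(2t+H_p(t)) > 2 + \delta$ (checked via Lemma~\ref{lem:minentropy} with $m=1,2$), which is actually the same inequality governing the $\sfO$ cases. Your worry that $\sfN$ imposes a more intricate constraint surface would make the bookkeeping considerably heavier than necessary; bundling $a+b+c$ into a single entropy variable is the cleaner move.
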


\begin{proof}[Proof of Lemma \ref{lem:separated_adj}]
The cases $\sigma \in \{\sfO_1,\sfO_2\}$ and $\sigma = \sfN$ are similar but with some minor differences.
We tackle the former case first.

\noindent \textbf{Case 1:} $\sigma = \sfO_1$. We only show the $\sfO_1$ case as the $\sfO_2$ case is
identical.

For each $a,b,\ell$, let $\scrN(a,b;\ell)$ denote the number of dimers of type $(a,b)$ in layer $\ell$;
note that this only depends on the percolation configuration and so is deterministic with respect to
the measure $\pi$.
So when $S$ is sampled from $\pi$ the count $\scrC(a,b;\ell;\sfO_1;S)$ is a binomial random variable with parameters
$\scrN(a,b;\ell)$ and $2^{-a}$, recalling that we are using a simplified version of $\pi$ which chooses
a singleton of type $a$ with probability $2^{-a}$, and also that dimers in a particular layer
are adjacent to $S$ independently due to their separation.
This implies that 
\begin{equation}
\label{eq:pibnd1}
    \pi\Box{\scrC(a,b;\ell;\sfO_1;S) > \epsilon \cdot 2^{a+b}} =
    \P\Box{\Bin(\scrN(a,b;\ell),2^{-a}) > \epsilon \cdot 2^{a+b}}.
\end{equation}
In order to estimate the right-hand side we will make use of various standard estimates for binomial
probabilities, which are stated and proved in the appendix.

First we will estimate $\scrN(a, b; \ell)$.
Due to the separation imposed by the layer constraint, when considered as a random variable which depends on
the random percolation configuration, $\scrN(a, b; \ell)$ itself is a sum of independent indicators,
one for each $\fd \in \frL_\ell$, of the event that $\fd$ has type $(a,b)$.
In other words, $\scrN(a,b;\ell)$ is itself a binomial random variable, measurable with respect to the percolation configuration.
A fixed dimer $\fd = (u,v)$ in layer $\ell$ 
is included in $\scrN(a, b; \ell)$ with probability
\begin{equation}
    \label{eq:pdefo}
    \P[\N(u) = a, \N(v) = b] \leq 2^{-d \Rnd{H_p\F{a}{d} + H_p\F{b}{d}}} \eqcolon p_{a, b},
\end{equation}
via the standard Lemma \ref{lem:bin}, which bounds binomial probabilities using the entropy function $H_p$ defined in \eqref{eq:hpdef}.
Thus $\scrN(a, b; \ell)$ is stochastically dominated by a $\Bin(2^d d^2, p_{a,b})$ random
variable. Therefore, for each polynomial $f(d)$ and for each choice of $a, b, \ell$, we have
\[
    \P[\scrN(a, b; \ell) \geq f(d) \cdot 2^d d^2 \cdot p_{a,b}] \leq \f{1}{f(d)}
\]
by Markov's inequality. Since the number of labels $(a, b, \ell)$ is at most $\poly(d)$,
by a union bound, choosing the correct $f(d)$, there is a polynomial $g(d)$ such that with probability $\geq 1 - \fr{1}{d}$,
\begin{equation}
    \label{eq:nboundo}
    \scrN(a, b; \ell) \leq 2^d \cdot g(d) \cdot p_{a,b}, \quad \text{for all $a, b, \ell$}.
\end{equation}

Now if $\scrN(a,b; \ell) \leq \epsilon \cdot 2^{a+b}$ then the probability on the right-hand side
of \eqref{eq:pibnd1} is zero.
Thus, by considering the values of $p_{a,b}$, \eqref{eq:nboundo} will allow us to rule out certain values of $a$ and $b$ a priori,
by showing that those values have $\scrN(a,b;\ell) \leq \eps \cdot 2^{a+b}$ with high probability.
The other values of $a$ and $b$ not ruled out here will be handled afterwards.

We will derive an entropy inequality for the values of $a$ and $b$ \emph{not} ruled out by this idea.
First, by convexity, we have
\begin{equation}
    H_p\Rnd{\fr{a}{d}} + H_p\Rnd{\fr{b}{d}} \geq 2 H_p\Rnd{\fr{a+b}{2d}},
\end{equation}
and so, defining
\begin{equation}
    p_{a+b} \coloneqq 2^{-2d H_p\Fr{a+b}{2d}}.
\end{equation}
we obtain $p_{a,b} \leq p_{a+b}$.
Thus $\scrN(a,b;\ell)$ is stochastically dominated by $\Bin(2^d d^2, p_{a+b})$, and so
\begin{align}
    \P\Box{\scrN(a, b; \ell) > \epsilon \cdot 2^{a + b}} &\leq \P\Box{\Bin(2^d d^2, p_{a + b}) > \epsilon \cdot 2^{a + b}} \\
                                                        &\leq 2^{d (1 - 2 H_p(t) - 2t)} \cdot \poly(d),
\end{align}
for $t = \fr{a+b}{2d}$, via Markov's inequality.
The bound vanishes exponentially fast in $d$ whenever
\begin{equation}
\label{eq:negligible1}
    1 - 2 H_p(t) - 2t < - \delta
\end{equation}
for some fixed absolute constant $\delta > 0$.
In other words, when we consider values of $a,b$ for which \eqref{eq:negligible1} holds with $t = \fr{a+b}{2d}$,
the right-hand side of \eqref{eq:pibnd1} is zero with high probability.

This means we only need to analyze the values of $a$ and $b$ for which \eqref{eq:negligible1} fails, i.e.\ for which
\begin{equation}
    \label{eq:cond1}
    2 H_p(t) + 2t \leq 1 + \delta,
\end{equation}
for some fixed absolute constant $\delta > 0$, where $t = \fr{a+b}{2d}$.
So for the remainder of the proof we assume this condition holds and also that 
$\scrN(a,b;\ell) > \epsilon \cdot 2^{a+b}$, so that we may effectively use
the binomial entropy bounds collected in the appendix
(again, if this condition fails then the probability \eqref{eq:pibnd1} is simply zero).

Let us return to \eqref{eq:pibnd1} with these assumptions.
We wish to apply Lemma \ref{lem:bin}, which states that
\begin{equation}
    \P[\Bin(n,q_1) > n q_2] \leq 2^{- n \cdot H_{q_1}(q_2)}
\end{equation}
when $q_1 \leq q_2$.
We will then apply Lemma \ref{lem:entropy}, which states that
\begin{equation}
    H_{q_1}(q_2) \geq \fr{1}{2} q_2 \cdot \log_2 \Fr{q_2}{q_1}
\end{equation}
whenever $q_2 \geq 10 q_1$.
To use these two lemmas for \eqref{eq:pibnd1}, where $n = \scrN(a,b;\ell)$, $q_1 = 2^{-a}$,
and $n q_2 = \eps \cdot 2^{a+b}$, we need
\begin{equation}
\label{eq:nbound1o}
    10 \cdot \scrN(a,b;\ell) \cdot 2^{-a} \leq \eps \cdot 2^{a+b},
    \qquad \text{i.e.} \qquad
    \scrN(a,b;\ell) \leq \fr{\eps}{10} 2^{2a+b}.
\end{equation}
Under \eqref{eq:nboundo}, which holds with probability $\geq 1 - \fr{1}{d}$, \eqref{eq:nbound1o} holds
as long as
\begin{equation}
    2^d \cdot g(d) \cdot p_{a,b} \leq \fr{\eps}{10} 2^{2a+b}.
\end{equation}
Recalling the definition of $p_{a,b}$ from \eqref{eq:pdefo}, this is equivalent to
\begin{equation}
    2^{-d \left(2s + H_p(s) + r + H_p(r)\right)} \leq 2^{-d} \fr{\eps}{10 g(d)},
\end{equation}
where we have defined $s = \fr{a}{d}$ and $r = \fr{b}{d}$.
Since $\fr{\eps}{10 g(d)}$ is only polynomially small, the above is implied for $d$ large enough whenever
\begin{equation}
\label{eq:delta1}
    2 s + H_p(s) + r + H_p(r) > 1 + \delta_1
\end{equation}
for some universal $\delta_1 > 0$.
To understand the above inequality, we turn to Lemma \ref{lem:minentropy} which states that
for any $m \geq 1$, the function
\begin{equation}
    f_m(p) \coloneqq \inf\{ m s + H_p(s) : s \in [0,1] \}
\end{equation}
is continuous, strictly increasing in $p \in (0,1)$, and satisfies
\begin{equation}
    f_m(\tfrac{1}{2}) = m + 1 - \log_2(2^m+1).
\end{equation}
With $m = 1$ and $m = 2$ respectively, this yields
\begin{align}
    r + H_{\fr{1}{2}}(r) &\geq 2 - \log_2 3 \approx 0.415, \label{eq:mlem1} \\
    2s + H_{\fr{1}{2}}(s) &\geq 3 - \log_2 5 \approx 0.678
\end{align}
for any $r, s \in [0,1]$.
The sum of these is strictly greater than $1$, which means, by the continuity and increasing nature of $f_m$
as a function of $p$, that there are some universal $\delta_1, \gamma_1$ such that \eqref{eq:delta1} holds for all
$r, s \in [0,1]$ and all $p > \fr{1}{2} - \gamma_1$.

\def\ts{t^\star}
\def\ss{s^\star}
All of this is to say that we may in fact apply Lemmas \ref{lem:bin} (which bounds binomial probabilities in terms of entropy $H_p(q)$)
and \ref{lem:entropy} (which gives a helpful estimate on $H_p(q)$) to \eqref{eq:pibnd1},
and we obtain
\begin{equation}
\label{eq:pibnd1result}
    \pi\left[\scrC(a, b; \ell; \sfO_1; S) \geq \e \cdot 2^{a + b}\right]
    = \P\left[\Bin(\scrN(a, b; \ell), 2^{-a}) \geq \e \cdot 2^{a + b}\right]
    \leq 2^{-\text{exponent}}
\end{equation}
where
\begin{align}
    \text{exponent} & \coloneqq \scrN(a, b; \ell) \cdot
    H_{2^{-a}}\F{\e \cdot 2^{a + b}}{\scrN(a, b; \ell)} \tag*{(Lemma \ref{lem:bin})} \\
        &\geq \scrN(a, b; \ell) \cdot \f12 \cdot \F{\e \cdot 2^{a + b}}{\scrN(a, b; \ell)} \log_2
        \F{\e \cdot 2^{a + b}}{\scrN(a, b; \ell) \cdot 2^{-a}} \tag*{(Lemma \ref{lem:entropy})} \\
        &\geq \fr{\eps}{2} \log_2(10) \cdot 2^{a + b} \tag*{(Equation \eqref{eq:nbound1o})} \\
        &= \fr{\eps}{2} \log_2(10) \cdot 2^{2dt}. \tag*{(Definition of $t$)}
\end{align}
Note that since \eqref{eq:nbound1o} holds for all $a,b,\ell$ simultaneously with probability at least $1 - \fr{1}{d}$
(since it is a consequence of \eqref{eq:nboundo}),
the above inequality for the exponent also holds simultanously for all $a,b,\ell$ with the same probability.
So we find that
\begin{equation}\label{exprate}
    \pi\left[\scrC(a,b;\ell,\sfO_1;S) > \eps \cdot 2^{a+b}\right] \leq \Exp{- \fr{\eps}{2} \log_2(10) \cdot 2^{2d\ts}}
\end{equation}
simultaneously for all $a,b,\ell$ with probability at least $1 - \fr{1}{d}$, where, recalling condition \eqref{eq:cond1}, we define
\begin{equation}\label{deft}
    \ts \coloneqq \inf \{ t : 2H_p(t) + 2t \leq 1 + \delta \}.
\end{equation}
Eventually, we will take $\alpha_p = 2^{2\ts}$, and we will analyze the above optimization problem
to show that $\alpha_p > \fr{(2-p)^2}{2}$, as stated in the lemma.

As mentioned earlier, the same upper bound holds for the $\sfO_2$ case with the same proof.
At this point we will derive the same upper bound for the $\sfN$ case, after which we will analyze the 
optimization problem just obtained to finish the proof.

\noindent \textbf{Case 2:} $\sigma = \sfN$.

In this case, the dimer $\fd$ does not overlap (share a vertex) with $S$, but there is a singleton $\fs
\in S$ such that $\fs \sim \fd$ (choose arbitrarily if there are multiple such singletons).
The arguments will be very similar to the overlap case, with some extra work to keep track of the properties of $\fs$.

Let us further subdivide $\scrC(a, b; \ell; \sfN; S)$ depending on the type of the singleton $\fs$ (i.e.\ the value of $\N(\fs)$).
In other words, let $\scrC(a, b, c; \ell; \sfN; S)$ be the count of pairs $(\fd, \fs)$, such that $\fd$ is in layer $\ell$, has type
$(a, b)$, status $\sfN$ with respect to $S$, and $\fd \sim \fs \in S$ with type $\N(\fs) = c$. Then,
\[
    \scrC(a, b; \ell; \sfN; S) \leq \sum_{c=0}^d \scrC(a, b, c; \ell; \sfN; S)
\]
({\it this is not an equality only because some dimers will be counted in multiple summands
if they have multiple neighbors $v$ in $S$}).
If this is at least $\e \cdot 2^{a + b}$, we must then have some choice of $c$ such that
\[
    \scrC(a, b, c; \ell; \sfN; S) \geq \f{\e}{d+1} \cdot 2^{a + b} \eqcolon \e' \cdot 2^{a + b},
\]
where $\eps' = \eps/\poly(d)$.

Let $\scrN(a, b, c; \ell)$ be the count of pairs $(\fd, \fs)$ of the kind counted in $\scrC(a, b, c; \ell; \sfN; S)$
but excluding the status-$\sfN$ condition and the
$\fs \in S$ condition (note that this count only depends on the percolation configuration). 

Now for every pair $(\fd, \fs)$ counted in $\scrN(a, b, c; \ell)$,
it is counted in $\scrC(a, b, c; \ell; \sfN; S)$ if and only if $\fs \in S$ and $\fd \cap S = \emptyset$, which happens with
probability at most $2^{-c}$ (under $\pi$).
Further, no two pairs $(\fd, \fs), (\fd', \fs')$ counted in $\scrN(a, b, c; \ell)$ satisfy $\fs = \fs'$ (unless $\fd = \fd'$), 
since otherwise this would imply $\fd \sim \fs = \fs' \sim \fd'$ and thus $\fd$ and $\fd'$ are within a distance of 2 in the
shared-neighbor graph, which is ruled out by construction of the layers $\frL_\ell$.
Consequently, under $\pi$, $\scrC(a, b, c; \ell; \sfN; S)$ is stochastically dominated by a
$\Bin(\scrN(a, b, c; \ell), 2^{-c})$ random variable and thus
\begin{equation}
    \label{eq:pibnd3}
    \pi\Box{\scrC(a, b, c; \ell; \sfN; S) > \e' \cdot 2^{a + b}}
    \leq \P\Box{\Bin(\scrN(a, b, c; \ell), 2^{-c}) > \e' \cdot 2^{a + b}}.
\end{equation}

As in the overlap case, we will now estimate $\scrN(a, b, c; \ell)$.
By definition we have
\begin{align}
    \scrN(a, b, c; \ell) &= \sum_{\substack{\fd \in \frL_\ell, \\ \fs \sim \fd}} \ind{\N(\fs) = c} \cdot
    \ind{\fd \text{ has type } (a,b)} \\
                        &\leq 2 d^2 \sum_{\fd \in \frL_\ell} \1_{E_\fd},
\end{align}
since there are at most $2 d^2$ neighbors $\fs$ of each $\fd$, where we set
\begin{equation}
    E_\fd \coloneq \{\exists \fs \sim \fd : \N(\fs) = c\} \cap \{\fd \text{ has type } (a,b)\}.
\end{equation}
By a union bound over all possible neighbors $v$ of $\fd = (u,w)$, we have
\begin{align}
    \P[E_\fd] &\leq 2 d^2 \cdot \P[\N(v) = c, \N(u) = a, \N(w) = b] \\
    &\leq 2 d^2 \cdot \P[\N(v) + \N(u) + \N(w) = a + b + c] \\
    &\leq 2 d^2 \cdot 2^{-3 d H_p\Rnd{\fr{a+b+c}{3d}}} \eqcolon p_{a+b+c},
\end{align}
using the fact that $\N(v) + \N(u) + \N(w)$ is a $\Bin(3d,p)$ random variable and applying Lemma \ref{lem:bin}.
Note also that the indicators $\1_{E_\fd}$ for $\fd \in \frL_\ell$ are independent (with respect to the percolation measure) by the separation within a layer.
Thus $\scrN(a,b,c;\ell)$ is stochastically dominated by $2 d^2 \cdot \Bin(2^d d^2, p_{a+b+c})$.
By the same argument as was used for \eqref{eq:nboundo}, there is some polynomial $g(d)$ such that
with probability $\geq 1 - \fr{1}{d}$, we have
\begin{equation}
    \label{eq:nbound}
    \scrN(a, b, c; \ell) \leq 2^d \cdot g(d) \cdot p_{a+ b+ c}, \quad \text{for all $a, b, c, \ell$}.
\end{equation}
Once again, we know that the right-hand side of \eqref{eq:pibnd3} is zero if
\begin{equation}
\label{eq:irrelevant2}
    \scrN(a, b, c; \ell) \leq \e' \cdot 2^{a + b},
\end{equation}
and we can use \eqref{eq:nbound} to determine which values of $a, b, c$ satisfy \eqref{eq:irrelevant2} with high probability.
This time, since
\begin{equation}
    \scrN(a,b,c;\ell) \leq 2 d^2 \scrN(a,b;\ell),
\end{equation}
the condition \eqref{eq:irrelevant2} is implied by 
\begin{equation}
    \scrN(a,b,\ell) \leq \e'' \cdot 2^{a+b}
\end{equation}
for some $\e'' \geq \fr{\e'}{\poly(d)}$.
So by the same argument as in the previous case, with high probability we have $\scrN(a,b,c;\ell) \leq \e' \cdot 2^{a+b}$ 
simultaneously for all $a,b,c,\ell$ which do not satisfy
\begin{equation}
\label{eq:cond2}
    2 H_p(t) + 2 t \leq 1 + \delta
\end{equation}
for some fixed $\delta > 0$, where $t = \fr{a+b}{2d}$ as before.
For the rest of the proof we assume this condition holds and that $\scrN(a,b,c;\ell) > \e' \cdot 2^{a+b}$, otherwise
the upper bound is just zero.

With these assumptions we return to \eqref{eq:pibnd3}.
Recall that in the previous case, we bounded the analogous probability \eqref{eq:pibnd1} by applying Lemmas \ref{lem:bin} and \ref{lem:entropy}
(which bound binomial probabilities in terms of entropy, and give a convenient estimate on the entropy, respectively) to obtain \eqref{eq:pibnd1result}.
We will apply the same strategy here, and, analogous to the conditions \eqref{eq:nbound1o} required in the previous case, we will require
the following conditions to apply Lemmas \ref{lem:bin} and \ref{lem:entropy}.
\begin{equation}
\label{eq:nbound3n}
    10 \cdot \scrN(a,b,c;\ell) \cdot 2^{-c} \leq \eps' \cdot 2^{a+b},
    \qquad \text{i.e.} \qquad
    \scrN(a,b,c;\ell) \leq \fr{\eps'}{10} 2^{a+b+c}.
\end{equation}
As before, if \eqref{eq:nbound} holds (which happens with proabability at least $1 - \fr{1}{d}$),
then \eqref{eq:nbound3n} holds as long as 
\begin{equation}
    2^d \cdot g(d) \cdot p_{a+b+c} \leq \fr{\eps'}{10} 2^{a+b+c}.
\end{equation}
This is equivalent to
\begin{equation}
    2^{3d(s+H_p(s))} \geq 2^d \fr{10 g(d)}{\eps'},
\end{equation}
where we have defined $s = \fr{a+b+c}{3d}$.
Since $\fr{10 g(d)}{\eps'}$ is polynomial, this holds for high enough $d$ whenever
\begin{equation}
\label{eq:delta2}
    s + H_p(s) > \fr{1}{3} + \delta_2
\end{equation}
for some universal $\delta_2 > 0$.
As in the previous case, specifically in \eqref{eq:mlem1}, we now apply Lemma \ref{lem:minentropy} with $m=1$ and we find that
\begin{equation}
    s + H_{\fr{1}{2}}(s) \geq 2 - \log_2 3 \approx 0.415 > \fr{1}{3}
\end{equation}
for all $s \in [0,1]$.
Again by the continuity and increasing nature of the optimization problem, afforded by Lemma \ref{lem:minentropy},
there are some universal $\delta_2, \gamma_2$ such that \eqref{eq:delta2} holds for all $s \in [0,1]$ 
and all $p > \fr{1}{2}-\gamma_2$.
In other words, we may indeed apply Lemmas \ref{lem:bin} and \ref{lem:entropy} to \eqref{eq:pibnd3}, obtaining
\begin{equation}
    \pi \Box{\scrC(a,b,c;\ell;\sfN;S) \geq \eps' 2^{a+b}}
    \leq \P \Box{\Bin(\scrN(a,b,c;\ell),2^{-c}) \geq \eps' \cdot 2^{a+b}}
    \leq 2^{-\text{exponent}},
\end{equation}
where
\begin{align}
    \text{exponent} &\coloneqq \scrN(a,b;\ell) \cdot H_{2^{-c}} \Fr{\eps' \cdot 2^{a+b}}{\scrN(a,b,c;\ell)} \tag*{(Lemma \ref{lem:bin})} \\
    &\geq \scrN(a,b,c;\ell) \cdot \fr{1}{2} \cdot \Fr{\eps' \cdot 2^{a+b}}{\scrN(a,b,c;\ell)} \log_2 \Fr{\eps' \cdot 2^{a+b}}{\scrN(a,b,c;\ell) \cdot 2^{-c}} \tag*{(Lemma \ref{lem:entropy})} \\
    &\geq \fr{\eps'}{2} \log_2(10) \cdot 2^{a+b} \tag*{(equation \eqref{eq:nbound3n})} \\
    &= \fr{\eps'}{2} \log_2(10) \cdot 2^{2dt}. \tag*{(Definition of $t$)}
\end{align}
Thus we obtain the same upper bound as in \eqref{exprate} with an exponential rate given
in terms of the same optimization problem \eqref{deft} over $t$,
with the same condition \eqref{eq:cond2} on $t$ as in the $\sfO_1$ case (recall \eqref{eq:cond1}).
Note that the polynomial prefactor inside the exponential in this case is potentially different due to the union bound
over $c$, but it is still uniform over $a,b,\ell$.

\noindent \textbf{Final analysis.}
In all cases $\sigma = \{ \sfO_1, \sfO_2, \sfN \}$ above, we arrive at the same upper bound \eqref{exprate} with the same exponential rate given in terms of the same optimization problem \eqref{deft}.
Thus, by a union bound, there is some function $f(d) > \poly(d)^{-1}$ depending on the choice of $\eps > \poly(d)^{-1}$ such that, with probability tending to $1$
we have the following inequality simultaneously for all $a,b,\ell,\sigma$:
\begin{equation}
    \pi\Box{\scrC(a,b;\ell;\sigma;S) > \epsilon \cdot 2^{a+b}} \leq
    \Exp{- f(d) \cdot 2^{2 d \ts}},
\end{equation}
where $\ts$ solves the optimization problem \eqref{deft}, restated as follows:
\begin{equation}
    \ts = \inf \{ t : 2 H_p(t) + 2 t \leq 1 + \delta \}
\end{equation}
for some small constant $\delta > 0$ which we may choose.
As mentioned before, we will take $\alpha_p = 2^{2\ts}$.
So, to finish the proof of the lemma, we must simply show that $\alpha_p > \fr{(2-p)^2}{2}$.

For this, a calculation relegated to Lemma \ref{lem:dimersumoptimization} shows that
\begin{equation}
    \fr{(2-p)^2}{2} = 2^{1 - 2 \ss - 2 H_p(\ss)},
\end{equation}
where $\ss$ maximizes $1 - 2 s - 2 H_p(s)$ over $s \in [0,1]$.
So to prove that $\alpha_p > \fr{(2-p)^2}{2}$, it suffices to show that
\begin{equation}
    \sup\{ 1 - 2s - 2H_p(s) : s \in [0,1]\} < 2 \cdot \inf\{ t : 2 H_p(t) + 2t \leq 1 + \delta \},
\end{equation}
or in other words that
\begin{equation}
\label{eq:stineq}
    1 - 2s - 2H_p(s) < 2 t
\end{equation}
for all $s \in [0,1]$ and all $t$ satisfying $2 H_p(t) + 2t \leq 1 + \delta$.
By this condition on $t$, \eqref{eq:stineq} would hold if we had
\begin{equation}
\label{eq:bigentropy}
    1 - 2s - 2H_p(s) + (1+\delta) < 2t + (2t + 2H_p(t))
\end{equation}
for all $t, s \in [0,1]$.
Indeed, \eqref{eq:stineq} is obtained by adding the inequality
\begin{equation}
    - (1+\delta) \leq - (2H_p(t) + 2t)
\end{equation}
to \eqref{eq:bigentropy}.
Rearranging \eqref{eq:bigentropy} yields
\begin{equation}
    2 (s + H_p(s)) + 2 (2t + H_p(t)) > 2 + \delta,
\end{equation}
and so we may apply Lemma \ref{lem:minentropy} again with $m = 1$ and $m = 2$.
This yields
\begin{align}
    2( 2 t + H_{\fr{1}{2}}(t) ) &\geq 2 (3 - \log_2(5)) > 1.35 \\
    2 (s + H_{\fr{1}{2}}(s) ) &\geq 2 (2 - \log_2(3)) > 0.83,
\end{align}
for all $s, t \in [0,1]$, and $1.35 + 0.38 = 2.18$ is strictly larger than $2$.
By the continuity and monotonicity from Lemma \ref{lem:minentropy},
there is some $\gamma_3 > 0$ such that for $p > \fr{1}{2} - \gamma_3$, the sum of the relevant
quantities is still at least $2 + \delta$, for our fixed universal choice of $\delta$, as long as it is small enough.
Thus, taking $\gamma = \min\{\gamma_1, \gamma_2, \gamma_3\}$, the proof is concluded.
\end{proof}

\section{Distributional limits}
\label{sec:clt}

In this section we prove Proposition \ref{prop:clt_iop}, which states that
\begin{equation}
    \left(
        \fr{\Psi^\cE - \mu}{\sigma}, \fr{\Psi^\cO - \mu}{\sigma}
    \right) \dto \Nor{0}{1} \otimes \Nor{0}{1}
\end{equation}
for some explicit deterministic quantities $\mu = \mu_{d,p}$ and $\sigma = \sigma_{d,p}$
(initially defined in \eqref{eq:mudefintro} and \eqref{eq:sigmadefintro}),
and for all $p$ larger than some specific threshold $0.455$.
It should be noted that the threshold $0.455$ is not a threshold for any qualitative change, but rather the point above which 
the \emph{specific approximate mean} $\mu_{d,p}$ we use is a good enough approximation.
This threshold is unrelated to the qualitatively meaningful threshold $\f12 - \gamma$ appearing in Corollary \ref{cor:approxinprob_iop}
(which is a consequence of Proposition \ref{prop:dimers}), aside from the fact that we have chosen the specific approximate mean in \eqref{eq:mudefintro}
to be valid for all possible values of $p > \f12 - \gamma \approx 0.465$.

Recall that in the previous few sections, we have finished the proof of Corollary \ref{cor:approxinprob_iop},
deriving an in-probability approximation for the number of independent sets in $\Qp$, namely that
\begin{equation}
    \Cnt \psim 2^{2^{d-1}} \cdot \left( e^{\Psi^\cE} + e^{\Psi^\cO} \right),
\end{equation}
where the random variables $\Psi^\cE$ and $\Psi^\cO$ can be written explicitly in terms of the percolation configuration.
We remind the reader that, as initially defined in \eqref{eq:psidefintro}, we have
\begin{equation}
    \Psi^\cH = \log \fakepolymerpf^\cH + \Delta^\cH - \tilde{\Delta}^\cH,
\end{equation}
for $\cH \in \{\cE,\cO\}$, where, as defined in \eqref{eq:fakepolymerpfdef} and \eqref{eq:deltadef},
\begin{equation}
    \fakepolymerpf^\cH = \prod_{v \in \cH} (1 + \phi_v), \qquad
    \Delta^\cH = \sum_{\fd \in \Dim^\cH} \phi_\fd, \qquad \text{and} \qquad
    \tilde{\Delta}^\cH = \sum_{\fd \in \Dim^\cH} \fphi_\fd.
\end{equation}

It is also worth remarking that the requirement that $p$ is bigger than some threshold in the statement is not necessary for the
central limit theorem itself to hold, when $\Psi^\cE$ and $\Psi^\cO$ are re-centered by their exact mean
and rescaled by their exact standard deviation.
However, the exact mean and standard deviation do not have explicit closed-form expressions in general.
So we introduce the simpler closed-form quantities $\mu$ and $\sigma$ which work as proxies for the mean and standard
deviation, but only for $p$ large enough.
The threshold for the validity of these approximations is below the threshold $\fr{1}{2} - \gamma$ in
Corollary \ref{cor:approxinprob_iop}/Proposition \ref{prop:dimers}, so these approximations do not restrict the range of $p$ for
which our main theorem holds.

The first step towards proving the CLT will show that the quantities $\log \fakepolymerpf^\cE$ and $\log \fakepolymerpf^\cO$
satisfy a joint central limit theorem. We then show that the quantities $\Delta^\cH$ and $\tilde{\Delta}^\cH$
both concentrate around deterministic values for each $\cH \in \{\cE,\cO\}$.
In other words, the fluctuations of $(\Psi^\cE, \Psi^\cO)$ are driven purely by those of
$(\log \fakepolymerpf^\cE, \log \fakepolymerpf^\cO)$.

Before moving on, we remark that the proof of the central limit theorem for $(\log \fakepolymerpf^\cE, \log \fakepolymerpf^\cO)$
uses the same techniques as our previous proof of a similar central limit theorem for a simplification of these variables in \cite[Lemma 3.1]{brcgw}.
Only a minor modification of that proof is required for the present setting, and the concentration for $\Delta^\cH$ and $\tilde{\Delta}^\cH$
only requires a few novel but relatively routine moment calculations.
Nevertheless, we include complete proofs for the reader's convenience.

\subsection{Joint central limit theorem for sums of logarithmic expressions}
For notational convenience, as in \eqref{eq:philogdef}, we set
\begin{equation}
    \Phi^\cH_{\log} = \log \fakepolymerpf^\cH = \sum_{\fs \in \cH} \log(1 + \phi_\fs),
\end{equation}
for $\cH \in \{ \cE, \cO \}$.
When appropriate, we will drop the superscript $\cH$; for instance, the mean and variance of these
sums are identical for both sides, and we will denote these simply by $\E[\Phi_{\log}]$ and
$\Var(\Phi_{\log})$.
Note that, while both $\Phi_{\log}^\cE$ and $\Phi_{\log}^\cO$ are sums of independent random variables, the two sums themselves
are not independent, because deleting one edge affects the neighborhood of one odd vertex and one even vertex.
However, their dependence is sufficiently weak that we will be able to prove the following joint CLT to a pair of
i.i.d.\ standard normal random variables.

\begin{lemma}
\label{lem:clt}
For $p \in (0,1)$,
\begin{equation}
    \left( \fr{\Phi^\cE_{\log} - \E[\Phi_{\log}]}{\sqrt{\Var(\Phi_{\log})}},
    \fr{\Phi^\cO_{\log} - \E[\Phi_{\log}]}{\sqrt{\Var(\Phi_{\log})}} \right)
    \dto \Nor{0}{1} \otimes \Nor{0}{1}.
\end{equation}
\end{lemma}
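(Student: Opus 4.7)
The plan closely follows the approach used in \cite[Lemma 3.1]{brcgw}: apply the Cram\'er--Wold device to reduce to a univariate statement, verify that the cross-side covariance is asymptotically negligible (which is what gives independence in the limit), and then invoke a standard Stein's method bound for sums of random variables with a bounded-degree dependency graph. By Cram\'er--Wold it suffices to show that for every $(\alpha,\beta) \in \R^2$,
\begin{equation}
\fr{\alpha (\Phi^\cE_{\log} - \E \Phi_{\log}) + \beta (\Phi^\cO_{\log} - \E \Phi_{\log})}{\sqrt{\Var(\Phi_{\log})}} \dto \Nor{0}{\alpha^2 + \beta^2}.
\end{equation}
I would write the numerator as $W = \sum_{v \in \Q} Y_v$, where $Y_v = (\alpha \ind{v \in \cE} + \beta \ind{v \in \cO})(\log(1+\phi_v) - \E \log(1+\phi_v))$. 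Since $\Q$ is bipartite, the summands associated to vertices on the same side have disjoint edge-neighborhoods and are therefore independent; while $u \in \cE$ and $v \in \cO$ share a single percolation edge iff they are adjacent in $\Q$. Hence the dependency graph of $\{Y_v\}_{v \in \Q}$ is precisely $\Q$ itself, of maximum degree $d$, and each $|Y_v| \leq (|\alpha|+|\beta|) \log 2$ is deterministically bounded.

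Next, I would carry out the second-moment asymptotics. Since $\N(v) \sim \Bin(d,p)$, one has $\E[\phi_v^k] = (1 - p + p \cdot 2^{-k})^d$, and Taylor expanding $\log(1+\phi_v) = \phi_v + O(\phi_v^2)$ gives $\Var(\Phi_{\log}) = \sigma^2(1+o(1))$ with $\sigma^2 = \f12(2 - \f32 p)^d$, consistent with \eqref{eq:sigmadefintro}. The crucial ingredient is the bound
\begin{equation}
\Cov(\Phi^\cE_{\log}, \Phi^\cO_{\log}) \leq d \cdot 2^{d-1} \cdot O\Rnd{(1-p/2)^{2d-2}} = o(\sigma^2),
\end{equation}
obtained by summing per-edge covariances (which are controlled because neighboring $u, v$ share exactly one edge variable in the percolation configuration) and using the strict inequality $(1-p/2)^2 < 1 - 3p/4$ valid for all $p \in (0,1)$. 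Combining, $\Var(W) = (\alpha^2 + \beta^2) \sigma^2 (1+o(1))$, and in particular the limit law has no cross-covariance.

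Finally, I would invoke a standard Stein's method bound for sums on a dependency graph of bounded degree (such as the Chen--Shao estimate), which bounds the Wasserstein distance from $W/\sqrt{\Var(W)}$ to a standard Gaussian by a quantity of the form $\poly(d) \cdot \sum_v \E|Y_v|^3 / \Var(W)^{3/2}$. The third moment satisfies $\E|Y_v|^3 = O\Rnd{(1-7p/8)^d}$, so the Stein bound is of order $\poly(d) \cdot (2 - 7p/4)^d / (2 - 3p/2)^{3d/2}$, which vanishes provided $(2-7p/4)^2 < (2-3p/2)^3$---an inequality one may verify holds throughout $p \in (0,1)$. The main obstacle is bookkeeping rather than conceptual: the variance, covariance, and third-moment calculations all involve competing exponential rates that are fairly close together, so one must track the Taylor error terms carefully to ensure the relevant strict inequalities hold uniformly over $p \in (0,1)$. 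Once this is handled, the argument is a routine adaptation of the strategy in \cite[Lemma 3.1]{brcgw}.
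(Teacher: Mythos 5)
Your approach is essentially identical to the paper's proof: a Cram\'er--Wold reduction to a univariate linear combination, the covariance estimate $\Cov(\Phi^\cE_{\log},\Phi^\cO_{\log}) = o(\sigma^2)$ (the paper's Lemma \ref{lem:cov}, which you re-derive per-edge using $(1-p/2)^2 < 1-3p/4$), and then a Stein's method dependency-graph CLT on the hypercube $\Q$ with maximum degree $d$. Your moment estimate $\E|Y_v|^3 = O\big(((8-7p)/8)^d\big)$ matches the paper's, and your final inequality $(8-7p)^2 < 2(4-3p)^3$ is exactly the first of the two inequalities the paper verifies.

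The one place to be careful: the paper invokes the specific dependency-neighborhood Wasserstein bound of \cite[Theorem 3.6]{ross}, which has \emph{two} terms --- a third-moment term $D^2\sigma^{-3}\sum_i \E|X_i|^3$ and a fourth-moment term $D^{3/2}\sigma^{-2}\sqrt{\sum_i\E[X_i^4]}$ --- and accordingly checks both $(8-7p)^2 < 2(4-3p)^3$ and $16-15p < 2(4-3p)^2$ (the second is what forces the first, as the paper notes). Your claimed bound $\poly(d)\cdot\sum_v\E|Y_v|^3/\Var(W)^{3/2}$ has only a third-moment term, and I would not call that ``the Chen--Shao estimate'' without a precise citation; in the Wasserstein setting a fourth-moment term typically appears. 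If you stick with the Ross/Stein form, you also need $\E|Y_v|^4 \lesssim ((16-15p)/16)^d$ and the verification that $16-15p < 2(4-3p)^2$ on $(0,1)$; this is an easy additional quadratic check. Otherwise the argument is complete, and the covariance step correctly delivers the asymptotic independence of the two coordinates.
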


We will invoke Stein's method to prove CLT for random
variables with a sparse dependency graph (a graph with the random variables as the vertex set and an edge set such that
any vertex is independent of the set of all variables outside its neighborhood).
The precise formulation we will rely on is 
\cite[Theorem 3.6]{ross} which was adapted from the main result of \cite{stein}.

\begin{theorem}{\cite[Theorem 3.6]{ross}}
\label{thm:dependentstein}
Let $X_1, \dotsc, X_n$ be random variables with $\E[X_i^4] < \infty$,
$\E[X_i] = 0$, $\sigma^2 = \Var(\sum_{i=1}^n X_i)$, and define $\widetilde{W} = \sum_{i=1}^n X_i / \sigma$. 
Let the collection $(X_i, \dotsc, X_i)$ have dependency neighborhoods $N_i$,
$i=1,\dotsc,n$, with $D \coloneqq \max_{1 \leq i \leq n} |N_i|$.
Then
\begin{align}
    \label{eq:wassersteinbound}
    \mathrm{d_{Was}}(\tilde{W}, W) \lesssim
     \fr{D^2}{\sigma^3} \sum_{i=1}^n \E|X_i|^3
        + \fr{D^{3/2}}{\sigma^2} 
        \sqrt{\sum_{i=1}^n \E[X_i^4]},
\end{align}
where $W$ is a standard normal random variable and $\mathrm{d_{Was}}$ is the $1$-Wasserstein distance.
\end{theorem}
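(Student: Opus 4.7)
The plan is to follow the standard Stein's-method route, in the formulation used by Ross, that converts a distance bound into estimates on the Stein operator $\mathcal{A}f(w) \coloneq f'(w) - wf(w)$. By Kantorovich--Rubinstein duality, $d_{\mathrm{Was}}(\tilde{W}, W) = \sup_g |\E[g(\tilde W)] - \E[g(W)]|$ over $1$-Lipschitz $g$. For each such $g$ the Stein equation $f'(w) - w f(w) = g(w) - \E g(W)$ has a solution $f = f_g$ with $\|f_g'\|_\infty \leq \sqrt{2/\pi}$ and $\|f_g''\|_\infty \leq 2$ (the classical Barbour/Stein bounds). The task then reduces to producing, uniformly in $f$ satisfying those two derivative bounds, an upper bound on $|\E[\mathcal{A}f(\tilde W)]|$ matching the right-hand side of \eqref{eq:wassersteinbound}.

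The second stage is the exploitation of the dependency structure. For each $i$, define $T_i \coloneq \sum_{j \in N_i} X_j$ and $W_i \coloneq \tilde W - T_i/\sigma$, so that by the defining property of $N_i$ the variable $X_i$ is independent of $W_i$, giving $\E[X_i f(W_i)] = 0$. Writing $\tilde W f(\tilde W) = \sigma^{-1} \sum_i X_i f(\tilde W)$ and performing a second-order Taylor expansion of $f(\tilde W)$ around $W_i$ yields
\begin{align*}
\E[\tilde W f(\tilde W)] \;=\; \tfrac{1}{\sigma^2} \sum_i \E\bigl[X_i T_i f'(W_i)\bigr] \;+\; R_1,
\qquad |R_1| \leq \tfrac{\|f''\|_\infty}{2\sigma^3} \sum_i \E\bigl[|X_i| T_i^2\bigr].
\end{align*}
Using the identity $\sigma^2 = \sum_i \E[X_i T_i]$ (which follows from pairwise uncorrelatedness outside the neighborhoods), one rewrites
\begin{align*}
\E[f'(\tilde W)] \;=\; \tfrac{1}{\sigma^2} \sum_i \E[X_i T_i]\,\E[f'(\tilde W)],
\end{align*}
so that the Stein discrepancy becomes a sum of two types of error: a deterministic remainder $R_1$, plus $\sigma^{-2}\sum_i \mathrm{Cov}(X_i T_i,\, f'(\tilde W))$ after adding and subtracting $f'(W_i)$ inside the expectations and collecting terms.

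The third stage is to convert these errors into the form claimed. For the remainder $R_1$, write $T_i^2 \leq |N_i|\sum_{j \in N_i} X_j^2$ and apply Young's inequality $|X_i|X_j^2 \leq \tfrac{1}{3}|X_i|^3 + \tfrac{2}{3}|X_j|^3$ so that $\E[|X_i|T_i^2] \leq D \sum_{j \in N_i}(\E|X_i|^3 + \E|X_j|^3)$; summing in $i$ each $\E|X_k|^3$ appears at most $|N_k| \leq D$ times, producing the first term $\frac{D^2}{\sigma^3}\sum_i \E|X_i|^3$. The covariance-type term is handled by a Cauchy--Schwarz splitting $|\mathrm{Cov}(X_i T_i, f'(\tilde W))| \leq \sqrt{\mathrm{Var}(X_i T_i)\,\mathrm{Var}(f'(\tilde W))}$, bounding $\mathrm{Var}(f'(\tilde W)) \leq \|f''\|_\infty^2 \cdot \mathrm{Var}(\tilde W) = \|f''\|_\infty^2$ (or instead controlling it via the dependency graph on pairs $(i,j)$ sharing a common neighborhood of size $\leq D$), and using $\E[(X_i T_i)^2] \leq D \sum_{j \in N_i} \E[X_i^2 X_j^2] \leq D \sum_{j \in N_i} \sqrt{\E X_i^4 \cdot \E X_j^4}$. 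Summing over $i$ and applying Cauchy--Schwarz once more in $i$ yields the second term $\frac{D^{3/2}}{\sigma^2}\sqrt{\sum_i \E[X_i^4]}$.

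The principal obstacle is handling the covariance term $\sigma^{-2}\sum_i \mathrm{Cov}(X_i T_i, f'(\tilde W))$. One must resist the temptation to simply bound it by $\|f''\|_\infty \sum_i \E[|X_i T_i| \cdot T_i/\sigma]$, which would give a weaker $D^2$ dependence; instead the gain to $D^{3/2}$ is achieved by a second Cauchy--Schwarz that pays an $L^2$-price in $i$ (coupling the $D$-neighborhood structure with the square-root in $\sum_i \E X_i^4$). Combining both error estimates and taking the supremum over $f$ with $\|f''\|_\infty \leq 2$ delivers the stated Wasserstein bound.
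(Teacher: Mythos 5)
The paper does not prove this statement; it is cited verbatim as \cite[Theorem 3.6]{ross} (with a pointer to \cite{stein} for its origin), so there is no ``paper's own proof'' to compare against. Your sketch does follow the standard Stein's-method-with-dependency-neighborhoods argument from that survey: Kantorovich duality, the Barbour--Stein derivative bounds on the solution to the Stein equation, defining $T_i = \sum_{j\in N_i} X_j$ and $W_i = \tilde W - T_i/\sigma$ so that $X_i$ is independent of $W_i$, a Taylor expansion, and the identity $\sigma^2 = \sum_i \E[X_i T_i]$. The treatment of the Taylor remainder to obtain the $\fr{D^2}{\sigma^3}\sum_i\E|X_i|^3$ term (via $T_i^2 \le |N_i|\sum_{j\in N_i}X_j^2$ and Young's inequality) is sound.

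There is, however, a genuine gap in your handling of the covariance term. Applying Cauchy--Schwarz term-by-term, i.e.\ bounding each $|\Cov(X_i T_i, f'(\tilde W))|$ separately and then summing over $i$, leaves you with $\sum_i \sqrt{\E[(X_iT_i)^2]}$, and the ``Cauchy--Schwarz once more in $i$'' you propose would introduce an extraneous factor of $\sqrt{n}$ that has no counterpart in \eqref{eq:wassersteinbound} and destroys the claimed $D^{3/2}$ dependence. The route you mention parenthetically is not an alternative but is the one that actually closes the argument: treat the covariance of the entire sum at once,
\[
\Big|\sum_i \Cov(X_i T_i, f'(\tilde W))\Big| = \Big|\Cov\Big(\sum_i X_i T_i, f'(\tilde W)\Big)\Big| \le \|f''\|_\infty \sqrt{\Var\Big(\sum_i X_i T_i\Big)},
\]
and then bound $\Var(\sum_i X_i T_i) = \sum\Cov(X_iX_j, X_kX_l)$ by observing that for each $(i,j)$ with $j\in N_i$ only $O(D^2)$ pairs $(k,l)$ contribute, while each nonzero covariance is at most a quarter-sum of fourth moments; this yields $\Var(\sum_i X_i T_i) \lesssim D^3\sum_i\E X_i^4$ and hence the stated $\fr{D^{3/2}}{\sigma^2}\sqrt{\sum_i\E X_i^4}$. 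Without this repackaging, the exponent on $D$ you would obtain is $D^2$ (or you pick up a spurious $\sqrt n$), not $D^{3/2}$.
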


In the above statement, the \emph{dependency neighborhood} of a variable $X_i$ in the collection $\{X_1, \dotsc, X_n\}$
is the set of indices $j$ for which $X_i$ and $X_j$ are not independent.

Presently we will not be concerned with the precise rate of convergence under the $1$-Wasserstein distance, and so
we forgo a precise definition of this distance.
Suffice it to say, $\tilde{W}$ converges in distribution to $W$ if $\mathrm{d_{Was}}(\tilde{W},W) \to 0$, so
we will be satisfied with showing that the right-hand side of \eqref{eq:wassersteinbound} converges to $0$.

\begin{proof}[Proof of Lemma \ref{lem:clt}]
Fix $c = (c_\cE, c_\cO)$ in the unit circle in $\R^2$, and set
\begin{equation}
    \Phi^c = c_\cE \left(\Phi^\cE_{\log} - \E[\Phi_{\log}]\right)
        + c_\cO \left(\Phi^\cO_{\log} - \E[\Phi_{\log}]\right).
\end{equation}
We can write $\Phi^\c$ as a sum of the following $2^d$ variables indexed by vertices $v$ of $\Q$:
\begin{equation}
    X_v \coloneqq c_\cH \left( \log (1 + \phi_v) - \E[\log (1 + \phi_v)] \right),
\end{equation}
where we take $\cH = \cE$ if $v \in \cE$ and $\cH = \cO$ if $v \in \cO$.

We will apply Theorem \ref{thm:dependentstein} to conclude that
\begin{equation}
    \fr{\Phi^c}{\sqrt{\Var(\Phi^c)}} \dto \Nor{0}{1}.
\end{equation}
Since this holds for an arbitrary $c$ in the unit circle,
the Cramer-Wold theorem then implies that
\begin{align}
\label{eq:cltintermediate}
    \left(
    \fr{\Phi_{\log}^\cE - \E[\Phi_{\log}]}{\sqrt{\Var(\Phi^c)}},
    \fr{\Phi_{\log}^\cO - \E[\Phi_{\log}]}{\sqrt{\Var(\Phi^c)}}
    \right)
    \dto \Nor{0}{1} \otimes \Nor{0}{1}.
\end{align}
Now, by Lemma \ref{lem:cov} we have
\begin{align}
    \Var(\Phi^c) &= c_\cE^2 \Var(\Phi_{\log}^\cE) + c_\cO^2 \Var(\Phi_{\log}^\cO)
        + c_\cE c_\cO \Cov(\Phi_{\log}^\cE, \Phi_{\log}^\cO) \\
        &= \Var(\Phi_{\log}) (1 + o(1)),
\end{align}
and so \eqref{eq:cltintermediate} implies the conclusion of the lemma.

So it just remains to verify the conditions of Theorem \ref{thm:dependentstein} and show that
the right-hand side of \eqref{eq:wassersteinbound} converges to $0$.
Of course by definition we have $\E[X_v] = 0$.
To bound the moments of the $X_v$, notice that
\begin{align}
    \E[X_v^k] &\leq \E\left[(\log(1 + \phi_v) - \E[\log(1 + \phi_v)])^k\right] \\
    &\leq \sum_{j=0}^k \binom{k}{j} \E\left[(\log(1 + \phi_v))^j\right] \E[\log(1 + \phi_v)]^{k-j} \\
    &\leq \sum_{j=0}^k \binom{k}{j} \E[\phi_v^j] \E[\phi_v]^{k-j} \\
    &\leq \sum_{j=0}^k \binom{k}{j} \E[\phi_v^k]^{j/k} \E[\phi_v^k]^{(k-j)/k} \\
    &= 2^k \cdot \E[\phi_v^k],
\end{align}
where we used the fact that $\log(1+\phi_v) \leq \phi_v$ (recall that $\phi_v > 0$) and then applied Jensen's inequality.
So we have
\begin{equation}
	\E[X_v^3] \lesssim \E[\phi_v^3] = \Fr{8-7p}{8}^d ,
\end{equation}
and
\begin{equation}
	\E[X_v^4] \lesssim \E[\phi_v^4] = \Fr{16-15p}{16}^d,
\end{equation}
where these moments are computed in the appendix, in Lemma \ref{lem:moments} (a).

Now, for the size of the dependency neighborhoods, each $X_v$ is independent of all but $d$ other variables,
which are exactly the ones corresponding to neighbors of $v$.
So by \eqref{eq:wassersteinbound}, the $1$-Wasserstein distance between $\Var(\Phi^c)^{-1/2} \Phi^c$ and a
standard normal random variable is, up to constants, at most
\begin{equation}
    \fr{d^2 2^d \Fr{8-7p}{8}^d}
        {\Fr{4-3p}{2}^{3d/2}} 
        + \fr{d^{3/2}\sqrt{2^d \Fr{16-15p}{16}^d}}
        {\Fr{4-3p}{2}^d}
    =
        d^2 \Fr{(8 - 7p)^2}{2(4-3p)^3}^{d/2}
        + d^{3/2} \Fr{16-15p}{2 (4-3p)^2}^{d/2}.
\end{equation}
The proof will thus be complete if we can show that both
\begin{equation}
    \fr{(8-7p)^2}{2(4-3p)^3} < 1
    \qquad \text{and} \qquad
    \fr{16-15p}{2(4-3p)^2} < 1.
\end{equation}
First notice that the second bound implies the first, since
\begin{equation}
    (8-7p)^2 \leq (16-15p)(4-3p).
\end{equation}
To verify the above inequality, note that the difference
\begin{equation}
    (8-7p)^2 - (16-15p)(4-3p)
\end{equation}
is a quadratic with a positive coefficient of $p^2$ (namely $49 - 45$) which vanishes
at $p=0$ and $p=1$, meaning that it must be negative for $0 < p < 1$.
Now, to actually prove the second inequality, note that
\begin{equation}
    2(4-3p)^2 - (16-15p) = 18p^2 - 33p + 16
\end{equation}
is another quadratic with a positive coefficient of $p^2$, and with discriminant
\begin{equation}
    33^2 - 4 \cdot 16 \cdot 18 = - 63,
\end{equation}
so it is positive for all $p$, finishing the proof.
\end{proof}

\subsection{Approximations of the expectation and variance}
\label{sec:clt_approx}

Lemma \ref{lem:clt} exhibits a joint central limit theorem for $\Phi_{\log}^\cE$ and $\Phi_{\log}^\cO$ upon
centering by $\E[\Phi_{\log}]$ and rescaling by $\sqrt{\Var(\Phi_{\log})}$.
However, there is no simple closed form for these quantities in general, and this is particularly an issue
when $p \leq \fr{1}{2}$, as will be discussed in Remark \ref{rmk:onehalf} below.
In this section, we construct quantities $\mu = \mu_{d,p}$ and $\sigma = \sigma_{d,p}$ with explicit closed-form
expressions that can be used as a replacement for the mean and standard deviation of $\Phi_{\log}$ in the central limit theorem.
Specifically, these quantities will satisfy
\begin{equation}
    \sigma^2 = \Var(\Phi_{\log}) (1 + o(1))
    \qquad \text{and} \qquad
    \mu = \E[\Phi_{\log}] + o(\sigma),
\end{equation}
which suffice for our purposes.

For $\sigma$, a simple moment calculation which has been relegated to the appendix as Lemma \ref{lem:logmoments} (a)
shows that
\begin{equation}
\label{eq:varapprox}
    \Var(\Phi_{\log}) = \fr{1}{2} \Rnd{2-\fr{3}{2}p}^d (1 + o(1))
\end{equation}
for all $p \in (0,1)$.
So, as previously defined in \eqref{eq:sigmadefintro}, we set
\begin{equation}
    \sigma^2 = \fr{1}{2} \Rnd{2 - \fr{3}{2}p}^d.
\end{equation}

Next we turn to $\mu$.
An initial strategy for approximating $\E[\Phi_{\log}]$ might be to expand $\log(1 + \phi_v)$ in a
Taylor series and then interchange the expectation and summation; since the moments of $\phi_v$ have explicit
expressions given in Lemma \ref{lem:moments}, this would in principle lead to a series expansion for $\E[\Phi_{\log}]$
which can be truncated to obtain a closed-form expression.
This does indeed work for $p > \fr{1}{2}$, and leads to an approximation of $\E[\Phi_{\log}]$ with additive $o(1)$ error.
However, for $p \leq \fr{1}{2}$, every term in the series grows exponentially with $d$, so there is no such approximation of
$\E[\Phi_{\log}]$ with only $o(1)$ additive error.

\begin{remark}
\label{rmk:onehalf}
The ability or inability to accurately approximate $\E[\Phi_{\log}]$ by a closed-form expression, or equivalently to approximate $\Phi_{\log}$ itself
by a polynomial in the variables $\phi_v$, appears to be the extent of the changes in this model at $p=\f12$.
So, unlike the many other features of $\Qp$ which exhibit a qualitative phase transition at $p=\f12$, the behavior of independent
sets does not change sharply at this value.
\end{remark}

In any case, for such values of $p \leq \f12$, the standard deviation proxy $\sigma$ itself also grows exponentially with $d$, and
only finitely many of the terms in the series $\Phi_{\log}$ grow faster, allowing us to obtain an approximation with $o(\sigma)$ additive
error, meaning this error is dominated in the CLT by the normalization by $\sigma$.
Specifically, for the range of $p$ that we consider, we only need to include three terms in the series.

Namely, for $k \geq 0$ we define
\begin{equation}
\label{eq:mu1kdef}
    \mu_1^k \coloneqq 2^{d-1} \E[\phi_v^k] = \fr{1}{2} \Rnd{2 - \fr{2^k-1}{2^{k-1}}p}^d,
\end{equation}
where the moment calculation here follows from Lemma \ref{lem:moments} (b).
Then, Lemma \ref{lem:logmoments} (b) states that
\begin{equation}
    \E[\Phi_{\log}] = \mu_1^1 - \fr{\mu_1^2}{2} + \fr{\mu_1^3}{3} + o(\sigma)
\end{equation}
for all $p > 0.455$.
It's worth noting that this range of $p$ is simply where $\mu_1^4 \ll \sigma$,
and is not related to the threshold $\fr{1}{2} - \gamma$ of our main theorem.
We also remark again that numerical calculations show that the threshold $\fr{1}{2} - \gamma$
can be taken as low as approximately $0.465$, although we do not prove this rigorously.
So in principle this three-term approximation holds for the full possible range of our theorem,
and regardless it holds for values of $p$ strictly below $\fr{1}{2}$.
Thus we set
\begin{equation}
    \mu_1 = \mu_1^1 - \fr{\mu_1^2}{2} + \fr{\mu_1^3}{3}
\end{equation}
as in \eqref{eq:mu1defintro},
and the above discussion leads to the following corollary of Lemma \ref{lem:clt}.

\begin{corollary}
\label{cor:clt_withapprox}
For $p > 0.455$,
\begin{equation}
    \left( \fr{\Phi^\cE_{\log} - \mu_1}{\sigma},
    \fr{\Phi^\cO_{\log} - \mu_1}{\sigma} \right)
    \dto \Nor{0}{1} \otimes \Nor{0}{1}.
\end{equation}
\end{corollary}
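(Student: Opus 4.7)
The plan is to deduce Corollary \ref{cor:clt_withapprox} directly from Lemma \ref{lem:clt} by replacing the exact mean $\E[\Phi_{\log}]$ and standard deviation $\sqrt{\Var(\Phi_{\log})}$ with the explicit proxies $\mu_1$ and $\sigma$, using the two approximation facts collected in Lemmas \ref{lem:logmoments}(a) and \ref{lem:logmoments}(b) together with a Slutsky-type argument applied jointly to the two coordinates.

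First I would record the algebraic identity
\begin{equation}
    \frac{\Phi_{\log}^\cH - \mu_1}{\sigma}
    = \frac{\sqrt{\Var(\Phi_{\log})}}{\sigma} \cdot \frac{\Phi_{\log}^\cH - \E[\Phi_{\log}]}{\sqrt{\Var(\Phi_{\log})}}
    + \frac{\E[\Phi_{\log}] - \mu_1}{\sigma},
\end{equation}
valid for each $\cH \in \{\cE,\cO\}$. By Lemma \ref{lem:logmoments}(a), the variance approximation $\Var(\Phi_{\log}) = \sigma^2 (1 + o(1))$ gives $\sqrt{\Var(\Phi_{\log})}/\sigma \to 1$ deterministically. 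By Lemma \ref{lem:logmoments}(b), valid in the range $p > 0.455$, the mean approximation $\E[\Phi_{\log}] = \mu_1 + o(\sigma)$ gives $(\E[\Phi_{\log}] - \mu_1)/\sigma \to 0$ deterministically. Both of these are statements about non-random quantities, so the corresponding ``scaling'' and ``shift'' factors in the identity above converge to $1$ and $0$ respectively along the sequence $d \to \infty$.

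Next I would apply Slutsky's theorem in $\R^2$ to the vector-valued version of the above identity. Lemma \ref{lem:clt} supplies the joint weak convergence
\begin{equation}
    \left( \frac{\Phi_{\log}^\cE - \E[\Phi_{\log}]}{\sqrt{\Var(\Phi_{\log})}}, \frac{\Phi_{\log}^\cO - \E[\Phi_{\log}]}{\sqrt{\Var(\Phi_{\log})}} \right)
    \dto \Nor{0}{1} \otimes \Nor{0}{1}.
\end{equation}
Multiplying each coordinate by the common deterministic factor $\sqrt{\Var(\Phi_{\log})}/\sigma \to 1$ and adding the common deterministic shift $(\E[\Phi_{\log}] - \mu_1)/\sigma \to 0$ preserves the weak limit, yielding the claimed convergence of $\bigl((\Phi_{\log}^\cE - \mu_1)/\sigma, (\Phi_{\log}^\cO - \mu_1)/\sigma\bigr)$ to a pair of independent standard normals.

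There is essentially no obstacle beyond this bookkeeping: the substantive work has been done in Lemma \ref{lem:clt}, which already established the joint CLT with the exact normalizations, and in the moment computations of Lemma \ref{lem:logmoments}, which verify that the three-term Taylor expansion of $\log(1+\phi_v)$ captures $\E[\Phi_{\log}]$ up to additive $o(\sigma)$ error precisely when $p > 0.455$ (the threshold being the point at which the fourth moment term $\mu_1^4$ becomes comparable to $\sigma$). In particular, the restriction to $p > 0.455$ enters the proof solely through the applicability of Lemma \ref{lem:logmoments}(b), consistent with the remark in the main text that this is a quantitative threshold governing the quality of the explicit approximation to the mean and not a qualitative phase boundary.
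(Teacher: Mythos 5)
Your argument is correct and matches the paper's intended reasoning: the corollary is deduced from Lemma \ref{lem:clt} by replacing the exact mean and standard deviation with $\mu_1$ and $\sigma$ via Lemma \ref{lem:logmoments}(a)--(b), and since both replacement errors are deterministic sequences (tending to $1$ multiplicatively and $0$ additively), a Slutsky-type argument preserves the joint weak limit. The paper does not spell out these steps explicitly, but the bookkeeping you supply is precisely what is needed, and you correctly locate the source of the $p > 0.455$ threshold in Lemma \ref{lem:logmoments}(b).
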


\subsection{Concentration of the dimer sums}

We now show that the quantities $\Delta^\cH$ and $\tilde{\Delta}^\cH$ concentrate at $o(1)$ scale around deterministic values.
As the joint behavior of these quantities across both sides is not relevant, we drop the superscript $\cH$ in
this section for readability.

In Lemma \ref{lem:dimermoments} (a), a short calculation leads to the following exact expressions for the means
of $\Delta$ and $\tilde{\Delta}$:
\begin{equation}
    \E[\Delta] = \mu_2, \qquad
    \E[\tilde{\Delta}] = \tilde{\mu}_2,
\end{equation}
where
\begin{align}
    \mu_2 &= \fr{d(d-1)}{4} \Fr{(2-p)^2}{2}^d \cdot \Fr{1 + (1-p)^2}{2 - p}^2, \\
    \tilde{\mu}_2 &= \fr{d(d-1)}{4} \Fr{(2-p)^2}{2}^d,
\end{align}
as previously defined in \eqref{eq:mu2defintro} and \eqref{eq:mu2tildedefintro}.
Additionally, Lemma \ref{lem:dimermoments} (b) calculates the second moments of $\phi_\fd$ and $\fphi_\fd$
and shows that for $p > 0.391$, we have $\Var(\Delta), \Var(\tilde{\Delta}) \ll 1$.
Again, this number arises simply as the range of $p$ for which $2^d \E[\phi_\fd^2] \ll 1$
and has nothing to do with the threshold $\fr{1}{2} - \gamma$ of our main theorem.
This variance calculation yields the following lemma, giving the $o(1)$ concentration of $\Delta$ and
$\tilde{\Delta}$ around their means.

\begin{lemma}
\label{lem:dimerconcentration}
For $p > 0.391$ we have
\begin{equation}
    \Delta - \mu_2 \pto 0
    \qquad \text{and} \qquad
    \tilde{\Delta} - \tilde{\mu}_2 \pto 0.
\end{equation}
\end{lemma}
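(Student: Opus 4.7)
The plan is to prove Lemma \ref{lem:dimerconcentration} via a standard second moment argument. By Chebyshev's inequality, it suffices to show that $\Var(\Delta) \to 0$ and $\Var(\tilde{\Delta}) \to 0$, since then $\Delta - \mu_2 = \Delta - \E[\Delta]$ and $\tilde{\Delta} - \tilde{\mu}_2 = \tilde{\Delta} - \E[\tilde{\Delta}]$ both tend to $0$ in probability. I would handle $\tilde{\Delta}$ first since it is simpler: $\fphi_{\fd_1}$ and $\fphi_{\fd_2}$ are independent whenever $\fd_1$ and $\fd_2$ share no vertex, because $\fphi_\fd = \prod_{v \in \fd} 2^{-\N(v)}$ depends only on the edges incident to $v \in \fd$, and distinct vertices have disjoint edge incidences. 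Therefore
\begin{equation}
    \Var(\tilde{\Delta}) = \sum_{\fd} \Var(\fphi_\fd) + \sum_{\substack{\fd_1 \neq \fd_2 \\ \fd_1 \cap \fd_2 \neq \emptyset}} \Cov(\fphi_{\fd_1}, \fphi_{\fd_2}).
\end{equation}
The diagonal contribution is bounded by $|\Dim| \cdot \E[\fphi_\fd^2] \lesssim d^2 \cdot 2^d \cdot \E[\fphi_\fd^2]$, and the second moment of $\fphi_\fd$ can be computed directly from independence: $\E[\fphi_\fd^2] = \E[2^{-2\N(u)}]\E[2^{-2\N(v)}] = ((4-3p)/4)^{2d}$. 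Thus the diagonal is controlled by $d^2\,((4-3p)^2/8)^d$, which is $o(1)$ exactly when $(4-3p)^2 < 8$, i.e.\ $p > (4-2\sqrt{2})/3 \approx 0.391$, matching the threshold. The off-diagonal sum over vertex-sharing pairs contains only $O(d^2 \cdot d^2 \cdot 2^d) = \poly(d) \cdot 2^d$ terms (each dimer shares a vertex with at most $\poly(d)$ others), and each covariance is bounded by $\E[\fphi_{\fd_1}\fphi_{\fd_2}] \leq \E[\fphi_\fd^2]$, giving the same asymptotic bound up to polynomial factors. All of this is precisely the content of Lemma \ref{lem:dimermoments}(b).

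For the non-tilde quantity $\Delta$, I would use essentially the same argument, with the key additional observation that $\phi_{\fd_1}$ and $\phi_{\fd_2}$ remain independent unless the neighborhoods $N(\fd_1)$ and $N(\fd_2)$ in $\Q$ share a vertex -- equivalently, unless $\fd_1$ and $\fd_2$ are ``2-linked at distance $\leq 1$'' in the shared-neighbor graph. Since $\phi_\fd \leq \fphi_\fd \cdot 4$ (using $\N(\fd) \geq \N(u) + \N(v) - 2$ from the diagram \eqref{dimerpicture}), the second moment estimate $\E[\phi_\fd^2] \lesssim \E[\fphi_\fd^2]$ holds up to a constant factor, and the relevant diagonal bound $\sum_\fd \E[\phi_\fd^2]$ is again controlled by $d^2 ((4-3p)^2/8)^d$. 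For the cross terms, the count of dimer pairs $(\fd_1, \fd_2)$ whose neighborhoods intersect is still only $\poly(d) \cdot 2^d$ (each dimer has only $\poly(d)$ vertices in its extended neighborhood), and the covariance is bounded by a simple moment estimate. Combining the diagonal and cross-term bounds yields $\Var(\Delta) = o(1)$ whenever $p > 0.391$.

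Neither part presents significant obstacles; the only minor technical point is correctly identifying the set of correlated pairs of dimers for $\Delta$ (which requires noting that two dimers have correlated $\phi$-values only when their extended neighborhoods overlap in $\Q$) and confirming that, since the cross-term count grows only polynomially in $d$ while the individual second moments decay like $((4-3p)^2/8)^d \cdot 2^{-d}$ on average, both sums decay exponentially. With the bounds of Lemma \ref{lem:dimermoments}(b) in hand, an application of Chebyshev's inequality immediately concludes the proof.
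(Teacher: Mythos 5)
Your approach is the same as the paper's: Chebyshev's inequality combined with the variance bound of Lemma \ref{lem:dimermoments}(b), and the variance estimate itself is derived in essentially the same way (diagonal plus polynomially-many correlated off-diagonal pairs, controlled via the second moment $\E[\fphi_\fd^2]$). The only inaccuracy is your claim that for $\Delta$ the variables $\phi_{\fd_1}$ and $\phi_{\fd_2}$ ``remain independent unless the neighborhoods $N(\fd_1)$ and $N(\fd_2)$ in $\Q$ share a vertex.'' In fact $\phi_\fd = 2^{-\N(\fd)}$ is a function only of the edges of $\Qp$ incident to the two vertices of $\fd$, so the correct independence condition is exactly the same as for the tilde case: $\phi_{\fd_1}$ and $\phi_{\fd_2}$ are independent whenever $\fd_1 \cap \fd_2 = \emptyset$, regardless of whether their neighborhoods overlap. (Two vertices $u\in\fd_1$, $v\in\fd_2$ on the same side of the cube with a common neighbor $w$ contribute via the \emph{disjoint} edges $\{u,w\}$ and $\{v,w\}$, so no dependence is introduced.) Your error is harmless here, since it only enlarges the set of pairs over which you estimate covariances and you still get a $\poly(d)\cdot 2^d$ count, but it is worth correcting both because it misrepresents the probabilistic structure and because it slightly weakens the constants in the bound.
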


Now recall from \eqref{eq:psidefiop} that
\begin{equation}
    \Psi^\cH = \Phi_{\log}^\cH + \Delta^\cH - \tilde{\Delta}^\cH
\end{equation}
for $\cH \in \{\cE, \cO\}$,
and set
\begin{equation}
    \mu = \mu_1 + \mu_2 - \tilde{\mu}_2,
\end{equation}
as originally defined in \eqref{eq:mudefintro}.
Then Corollary \ref{cor:clt_withapprox} and Lemma \ref{lem:dimerconcentration} yield
the following, which was previously stated as Proposition \ref{prop:clt_iop}.

\begin{proposition}
\label{prop:clt_withdimers}
For $p > 0.455$, we have
\begin{equation}
    \left(
        \fr{\Psi^\cE - \mu}{\sigma},
        \fr{\Psi^\cO - \mu}{\sigma}
    \right)
    \dto \Nor{0}{1} \otimes \Nor{0}{1}.
\end{equation}
\end{proposition}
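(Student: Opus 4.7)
The plan is to deduce Proposition \ref{prop:clt_withdimers} directly from Corollary \ref{cor:clt_withapprox} and Lemma \ref{lem:dimerconcentration} via Slutsky's theorem. Since $\mu = \mu_1 + \mu_2 - \tilde{\mu}_2$ and $\Psi^\cH = \Phi_{\log}^\cH + \Delta^\cH - \tilde{\Delta}^\cH$, the decomposition
\begin{equation}
    \fr{\Psi^\cH - \mu}{\sigma} = \fr{\Phi_{\log}^\cH - \mu_1}{\sigma} + \fr{\Delta^\cH - \mu_2}{\sigma} - \fr{\tilde{\Delta}^\cH - \tilde{\mu}_2}{\sigma}
\end{equation}
holds for each $\cH \in \{\cE, \cO\}$. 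The main term on the right already satisfies the desired joint CLT by Corollary \ref{cor:clt_withapprox}, so it suffices to argue that the two dimer correction terms are $o_{\P}(1)$ for $p > 0.455$ and then invoke Slutsky.

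For the corrections, Lemma \ref{lem:dimerconcentration} gives $\Delta^\cH - \mu_2 \pto 0$ and $\tilde{\Delta}^\cH - \tilde{\mu}_2 \pto 0$ whenever $p > 0.391$, a range comfortably containing $p > 0.455$. Recalling that $\sigma^2 = \f12(2 - \f32 p)^d$, for any $p < \f23$ we have $\sigma \to \infty$, so the quotients $(\Delta^\cH - \mu_2)/\sigma$ and $(\tilde{\Delta}^\cH - \tilde{\mu}_2)/\sigma$ trivially tend to $0$ in probability. At the critical point $p = \f23$ the denominator is a positive constant, and the same conclusion follows. For $p > \f23$, where $\sigma \to 0$, one needs the finer estimate that the second moments of $\Delta^\cH - \mu_2$ and $\tilde{\Delta}^\cH - \tilde{\mu}_2$ decay strictly faster than $\sigma^2$; this comparison can be read off directly from the explicit second-moment expressions in Lemma \ref{lem:dimermoments}(b), since the relevant exponential base $\Fr{(2-p)^2}{2}^d$ (or its analogue) is strictly smaller than $\Fr{2-\f32 p}{1}^d$ on $(\f23, 1)$, as can be checked by a routine calculation.

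Once both corrections are shown to be $o_{\P}(1)$, Slutsky's theorem applied in $\R^2$ (combining the $\R^2$-valued convergent sequence from Corollary \ref{cor:clt_withapprox} with the $\R^2$-valued $o_\P(1)$ perturbation formed from the dimer corrections for $\cH = \cE, \cO$) yields the claimed joint convergence to $\Nor{0}{1} \otimes \Nor{0}{1}$. The only step that requires more than bookkeeping is the routine but genuine second-moment check needed to handle $p > \f23$; for $p \in (0.455, \f23]$, the proof reduces to noting that $\sigma$ is bounded below and invoking the in-probability statements already recorded.
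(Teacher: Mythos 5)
Your proposal follows the same route the paper takes (decompose $\Psi^\cH$ as $\Phi_{\log}^\cH + (\Delta^\cH - \tilde{\Delta}^\cH)$, apply Corollary \ref{cor:clt_withapprox} to the $\Phi_{\log}$ part, control the dimer corrections via Lemma \ref{lem:dimerconcentration}, and conclude by Slutsky); the paper itself compresses this into a one-line deduction. You are right to flag that the $p > \tfrac{2}{3}$ case requires more than $\Delta^\cH - \mu_2 \pto 0$, since there $\sigma \to 0$; this is a genuine detail the paper leaves implicit, and the variance comparison you sketch is the correct way to supply it.

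One small inaccuracy in your accounting: the exponential base governing $\Var(\Delta)$ is not $\fr{(2-p)^2}{2}$ (that is the scale of $\mu_2$ itself). From the proof of Lemma \ref{lem:dimermoments}(b) one has
\begin{equation}
  \Var(\Delta) \lesssim \poly(d) \cdot 2^d\Rnd{1 - \tfrac{3}{4}p}^{2d}
  = \poly(d) \cdot \Rnd{\tfrac{1}{2}\bigl(2-\tfrac{3}{2}p\bigr)^2}^{d},
\end{equation}
so the ratio $\Var(\Delta)/\sigma^2$ is $\poly(d)\cdot\bigl(\tfrac{2-\frac{3}{2}p}{2}\bigr)^d \to 0$ for all $p \in (0,1)$, a cleaner and even stronger comparison than the one you propose (and for which no numerical check is needed). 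Your stated inequality $\fr{(2-p)^2}{2} < 2 - \fr{3}{2}p$ is also true, so the conclusion is unaffected, but the base you wrote is larger than the one that actually appears in the second-moment bound.
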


We finally remark that, as can be checked numerically, the quantities $\mu_2$ and $\tilde{\mu}_2$ are indeed
of larger order than $\sigma$ when $p$ is less than $0.508$.
This means that, to ensure the validity of our main theorem which holds for values of $p$ below $\fr{1}{2}$,
these terms must be included in the formula for $\mu$, and cannot be absorbed into an $o(\sigma)$ error term
like the tail of the infinite series discussed in Section \ref{sec:clt_approx}.
\section{Sampling}
\label{sec:sampling}

\def\ASA{\AS^{2,3,4}}
\def\AB{\A_{2,3}}
\def\STw{\mathsf{StepTwo}}
\def\STh{\mathsf{StepThree}}
\def\SF{\mathsf{StepFour}}
\def\Crit{\mathsf{Crit}}

In this section we prove Theorem \ref{thm:sampler}, exhibiting an approximate sampling algorithm which, given a quenched realization
of $\Qp$, yields an independent set with distribution very close to that of a uniformly random independent set in $\Qp$.

We remind the reader of the structure of this algorithm (initially presented in Definition \ref{def:approxsampler_intro}), now using notation developed in the body
of the work.
In order to present the algorithm, let us first introduce the probabilities $q^\cE$ and $q^\cO$ defined by
\begin{equation}
\label{eq:qhdef}
	q^\cH \propto e^{\Psi^\cH} = \fakepolymerpf^\cH \cdot \Exp{\Delta^\cH - \tilde{\Delta}^\cH}, \qquad q^\cE + q^\cO = 1,
\end{equation}
recalling the definition of $\fakepolymerpf^\cH$ from \eqref{eq:fakepolymerpfdef} and the definitions of $\Delta^\cH$ and $\tilde{\Delta}^\cH$
from \eqref{eq:deltadef}.
As we will see shortly in Lemma \ref{lem:proper} below, $q^\cE$ and $q^\cO$ approximate the probabilities that a uniformly random independent
set $I$ in $\Qp$ has the even side or the odd side as its defect side.

Additionally we remind the reader that the the algorithm may fail (in particular at steps \ref{step:fail1_s1construct} and
\ref{step:fail2_shatconstruct} below), and we enforce our usual convention of not distinguishing between a set of vertices and the
corresponding set of polymers, as mentioned in Section \ref{sec:iop_polymer}.

\begin{definition}[$\AS$]
\label{def:approxsampler}
	Define the random independent set $\hat{I}$ in $\Qp$ via the following procedure.
	\begin{enumerate}
		\item Let $\cH$ be $\cE$ or $\cO$, with probabilities $q^\cE$ and $q^\cO$ respectively.
		\label{step:hpick}
		\item Let $\tilde{S}$ denote a subset of $\cH$ constructed by including each $v \in \cH$ independently with probability
		$\fr{\phi_v}{1 + \phi_v}$. In other words, sample $\tilde{S} \sim \pi^\cH$, where $\pi^\cH$ was defined in \eqref{eq:pidef}
		(with the superscript $\cH$ being implicit in that definition).
		\label{step:stildeconstruct}
		\item If $\tilde{S} \notin \Sep_{\leq 2}$ (recall this notation from just above \eqref{eq:smallpolymerpfdef}), terminate the algorithm with a result of failure.
		Otherwise, remove all dimers from $\tilde{S}$ and let $S_1$ denote the resulting set of well-separated singletons.
		\label{step:fail1_s1construct}
		\item Let $S_2$ denote a collection of dimers constructed by including each $\{u,v\} \in \Dim^\cH$ independently with probability
		$\fr{\phi_\fd}{1 + \phi_\fd}$.
		\label{step:s2construct}
		\item If $S_1 \cup S_2 \notin \Sep_{\leq 2}$, terminate the algorithm with a result of failure.
		Otherwise, let $\hat{S} = S_1 \cup S_2$.
		\label{step:fail2_shatconstruct}
		\item Let $\hat{I}$ denote the independent set with $\hat{I} \cap \cH = \hat{S}$ and with every vertex in $\Q \setminus \cH$
		which does not neighbor $\hat{S}$ included independently with probability $\fr{1}{2}$ each.
		\label{step:final}
	\end{enumerate}
\end{definition}

There are three parts to the proof of Theorem \ref{thm:sampler}.
First, we must show that the defect side of a uniformly random independent set $I$ in $\Qp$ is $\cH$ with probability approximately $q^\cH$.
Next,  we must show that $\AS$ fails (at steps \ref{step:fail1_s1construct} or \ref{step:fail2_shatconstruct}) with low probability.
Finally, we must show that if $\AS$ does not fail, then the set $\hat{S}$ constructed in step \ref{step:fail2_shatconstruct} is close in
distribution to the defect side of a uniformly random independent set with defects on side $\cH$, and that this implies that the total variation
distance between a uniformly random independent set $I$ and the independent set $\hat{I}$ constructed via $\AS$ is small.

\subsection{The defect side of a uniformly random independent set}

Let us begin with a lemma detailing some properties of the defect side of a uniformly random independent set; these statements are simply
reinterpretations of various previous statements throughout the article.

\begin{lemma}
\label{lem:proper}
Let $I$ denote a uniformly random independent set in $\Qp$.
\begin{enumerate}
	\item  For $p > 0$, 
	\begin{equation}
		\P\left[I \cap \cE \in \Sep^\cE \text{ or } I \cap \cO \in \Sep^\cO \text{, but not both}\right] \pto 1.
	\end{equation}

	\item For $p > 0$ and any $S \in \Sep^\cE$ (uniformly),
	\begin{equation}
		\P\left[I \cap \cE = S \middle| I \cap \cE \in \Sep^\cE\right] \psim \fr{\prod_{\fp \in S} \phi_\fp}{\polymerpf^\cE},
	\end{equation}
	and similarly for $\cE$ replaced by $\cO$.

	\item For $p > \fr{1}{2} - \gamma$ (where $\gamma > 0$ is as in Proposition \ref{prop:dimers}),
	\begin{equation}
		\P\left[I \cap \cE \in \Sep^\cE \right] \psim q^\cE,
	\end{equation}
	and similarly for $\cE$ replaced by $\cO$.
\end{enumerate}
\end{lemma}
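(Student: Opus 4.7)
The plan is to reduce all three claims to exact counting arguments combined with results already established in the paper. The key combinatorial input is \eqref{eq:numindS}: for any $S \subseteq \cE$, the number of independent sets $I$ with $I \cap \cE = S$ equals exactly $2^{2^{d-1}-\N(S)}$. Summing over $S \in \Sep^\cE$ gives the exact deterministic identity
\begin{equation}
    \#\{I \text{ independent}: I \cap \cE \in \Sep^\cE\} = 2^{2^{d-1}} \polymerpf^\cE,
\end{equation}
and analogously for $\cO$. From this, part (2) follows immediately as the deterministic (quenched) ratio
\begin{equation}
    \P\Box{I \cap \cE = S \mid I \cap \cE \in \Sep^\cE} = \fr{2^{2^{d-1}-\N(S)}}{2^{2^{d-1}} \polymerpf^\cE} = \fr{\prod_{\fp \in S} \phi_\fp}{\polymerpf^\cE},
\end{equation}
which holds exactly for every $S \in \Sep^\cE$ once the percolation configuration is fixed, hence trivially in the $\psim$ sense uniformly in $S$.

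For part (1), I would introduce the counts $N_E, N_O, N_{EO}, N_\emptyset$ of independent sets for which (respectively) only the $\cE$ side is polymer-decomposable, only the $\cO$ side is, both are, and neither is. The target is to show $(N_E + N_O)/\Cnt \pto 1$, which reduces to showing that both $N_\emptyset / \Cnt$ and $N_{EO}/\Cnt$ tend to $0$ in probability. For $N_\emptyset = \TooBig$, I would combine Corollary \ref{cor:toobig} with the a priori lower bound $\Cnt \geq \E[\Cnt] \cdot \Exp{-\f12(2-p)^d(1+o(1))}$ arising from \eqref{expectation12}, then apply Markov's inequality, exactly as done in the proof of Proposition \ref{prop:polymerdecomp}. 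For $N_{EO}$, an inclusion--exclusion using the identity above gives
\begin{equation}
    N_{EO} = 2^{2^{d-1}}(\polymerpf^\cE + \polymerpf^\cO) - (\Cnt - \TooBig),
\end{equation}
which is precisely the nonnegative quantity controlled in the proof of Proposition \ref{prop:polymerdecomp} and shown there to be $o_{\P}(\Cnt)$.

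For part (3), we have the exact deterministic equality $\P\Box{I \cap \cE \in \Sep^\cE} = 2^{2^{d-1}} \polymerpf^\cE / \Cnt$ from the identity above. By Corollary \ref{cor:approxinprob_iop}, $\polymerpf^\cE \psim \fakepolymerpf^\cE \cdot e^{\Delta^\cE - \tilde{\Delta}^\cE} = e^{\Psi^\cE}$, and combining this with Proposition \ref{prop:polymerdecomp_iop} gives $\Cnt / 2^{2^{d-1}} \psim e^{\Psi^\cE} + e^{\Psi^\cO}$, so
\begin{equation}
    \P\Box{I \cap \cE \in \Sep^\cE} \psim \fr{e^{\Psi^\cE}}{e^{\Psi^\cE} + e^{\Psi^\cO}} = q^\cE,
\end{equation}
as desired. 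No substantial obstacle arises: the lemma is essentially a packaging of results proved earlier, and the most technically involved step is the invocation of Corollary \ref{cor:approxinprob_iop} in part (3), which encapsulates all of the work of Section \ref{sec:dimers}. The only minor point requiring care is that the denominators appearing in each ratio are well-behaved under $\psim$, which is automatic because each approximation is multiplicative and holds in probability.
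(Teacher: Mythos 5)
Your proof is correct and follows the same overall approach as the paper: the exact combinatorial identity from \eqref{eq:numindS} gives $\#\{I : I\cap\cE\in\Sep^\cE\} = 2^{2^{d-1}}\polymerpf^\cE$; part (1) is then handled by controlling $\TooBig$ and the double-counted sets exactly as in the proof of Proposition \ref{prop:polymerdecomp}; and part (3) is a direct application of Corollary \ref{cor:approxinprob_iop} and Proposition \ref{prop:polymerdecomp_iop}. One small remark: your treatment of part (2) is arguably cleaner than the paper's, since you correctly note that once the percolation configuration is fixed, the conditional probability equals $\prod_{\fp\in S}\phi_\fp / \polymerpf^\cE$ as an exact identity (both numerator and denominator being exact counts), whereas the paper's exposition introduces a $(1-o(1))$ factor that is not actually needed for that part.
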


\begin{proof}[Proof of Lemma \ref{lem:proper}]
Recall from the discussion in Section \ref{sec:polymerdecomp_quenched} that $2^{2^{d-1}} \polymerpf^\cE$ is the number of
independent sets in $\Qp$ whose even side is polymer-decomposable, and similarly for $2^{2^{d-1}} \polymerpf^\cO$.
Now Corollary \ref{cor:toobig} states that the number of independent sets that are \emph{not accounted for} in
\begin{equation}
	\label{eq:sumpolypfs}
	2^{2^{d-1}} \left( \polymerpf^\cE + \polymerpf^\cO \right)
\end{equation}
is an $o(1)$-in-probability fraction of all independent sets.
Further, by Proposition \ref{prop:polymerdecomp}, which states that $\Cnt$ is equal to \eqref{eq:sumpolypfs} times a
$(1+o(1))$-in-probability factor, we observe that the number of independent sets which are \emph{double-counted} in
\eqref{eq:sumpolypfs} is also an $o(1)$-in-probability fraction of all independent sets.
In other words, all but an $o(1)$-in-probability fraction of independent sets are polymer decomposable on exactly
one of the two sides, which proves part (1).

Of these sets, there are $2^{2^{d-1}} \polymerpf^\cE (1-o(1))$ which are decomposable on the even side
and similarly for the odd side.
And, as mentioned at the beginning of Section \ref{sec:iop} (in particular see \eqref{eq:numindS}), the number of independent
sets which have even side $S$ is $2^{2^{d-1} -\N(S)}$, which proves part (2) for the even side; the odd case follows
by the same reasoning.

Finally, by Corollary \ref{cor:approxinprob_iop}, for $p > \fr{1}{2} - \gamma$ we have
\begin{equation}
	\polymerpf^\cE \psim \fakepolymerpf^\cE \cdot \Exp{\Delta^\cE - \tilde{\Delta}^\cE},
\end{equation}
and similarly for $\cO$, which yields part (3), recalling the definition \eqref{eq:qhdef} of $q^\cE$ and $q^\cO$.
\end{proof}

Now let us establish that total variation closeness of defect sides is sufficient to yield total variation closeness of the
full independent sets.

\begin{lemma}
\label{lem:stoi}
Let $I$ denote a uniformly random independent set in $\Qp$ and let $\hat{I}$ denote the output of $\AS$,
given that $\AS$ does not fail.
Let $S$ denote the random subset of $\cE$ given by $I \cap \cE$, conditioned on $I \cap \cE \in \Sep$.
Let $\hat{S}$ denote the random subset of $\cE$ given in step \ref{step:fail2_shatconstruct} of $\AS$, conditioned on choosing
$\cH = \cE$ in step \ref{step:hpick} (and on $\AS$ not failing).
If $\TVbin{S}{\hat{S}} \pto 0$ and the analogous statement holds for $\cO$, then $\TVbin{I}{\hat{I}} \pto 0$ as well.
\end{lemma}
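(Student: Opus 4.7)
The plan is to construct, for each realization of $\Qp$ in a high-probability event, an explicit coupling of $I$ and $\hat{I}$ under which they agree with probability $1 - o(1)$ in probability. Since $\TVbin{I}{\hat{I}}$ is bounded above by the disagreement probability of any such coupling, this will yield the conclusion. The coupling will be built in three hierarchical stages mirroring the structure of $\AS$: first the choice of defect side, then the defect set on that side, then the non-defect side.

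For the first stage, by Lemma \ref{lem:proper}(1), with probability $1 - o(1)$ in probability the set $I$ has a well-defined defect side $\cH_I \in \{\cE,\cO\}$ (the side such that $I \cap \cH_I \in \Sep^{\cH_I}$), and by Lemma \ref{lem:proper}(3) we have $\P[\cH_I = \cE] \psim q^\cE$. Since the defect side $\cH_{\hat{I}}$ chosen in step \ref{step:hpick} of $\AS$ has distribution exactly $(q^\cE, q^\cO)$ by construction, the two laws on $\{\cE,\cO\}$ differ in total variation by $o(1)$ in probability, so we may couple $\cH_I$ with $\cH_{\hat{I}}$ so that they agree with probability $1 - o(1)$ in probability. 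For the second stage, condition on $\cH_I = \cH_{\hat{I}} = \cH$. By Lemma \ref{lem:proper}(2), the conditional law of $S := I \cap \cH$ given $I \cap \cH \in \Sep^\cH$ is close (uniformly in $S$) to $\prod_{\fp \in S} \phi_\fp / \polymerpf^\cH$, which by the hypothesis of the lemma is close in total variation to the law of $\hat{S}$. Use the optimal coupling to make $S = \hat{S}$ with probability $1 - \TVbin{S}{\hat{S}} \to 1$ in probability.

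For the third stage, condition on the outcomes of the first two stages, so that both $I$ and $\hat{I}$ have defect side $\cH$ with a common defect set $S = \hat{S}$. The law of $I \cap (\Q \setminus \cH)$ given $I \cap \cH = S$ is uniform on subsets of $(\Q \setminus \cH) \setminus N(S)$: every such subset yields a valid independent set (as noted in the derivation of \eqref{eq:numindS}), so they all receive equal mass under the uniform measure on independent sets. This is exactly the law prescribed in step \ref{step:final} of $\AS$, which includes each non-neighbor of $\hat{S}$ in $\Q \setminus \cH$ independently with probability $\f{1}{2}$. Hence the non-defect sides can be coupled identically. Composing the three couplings, a union bound yields $\P[I = \hat{I}] \pto 1$.

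The main subtlety is that the approximations from Lemma \ref{lem:proper} hold only in probability with respect to the percolation $\Qp$, so the couplings at each stage must be interpreted as couplings of the conditional distributions of $I$ and $\hat{I}$ given the quenched configuration; the disagreement probabilities at the three stages are then bounded by a sum of total-variation distances between pairs of conditional laws, each of which tends to zero in probability. Beyond this bookkeeping, no new ideas should be required: the argument reduces to elementary total-variation manipulations combined with the cited lemmas and the observation that uniform sampling of an independent set in $\Qp$ factors through its intersection with the defect side exactly as $\AS$ does.
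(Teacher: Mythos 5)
Your proposal is correct and follows essentially the same three-stage coupling strategy as the paper's proof (couple the choice of defect side, then the defect set, then the non-defect side). You are slightly more explicit in invoking Lemma \ref{lem:proper}(3) to justify the first-stage coupling --- the paper cites only part (1), though part (3) is what guarantees the two side-choice laws are TV-close --- but otherwise the arguments match.
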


\begin{proof}[Proof of Lemma \ref{lem:stoi}]
We need to exhibit a coupling between $I$ and $\hat{I}$ such that $\P[I \neq \hat{I}] \pto 0$.
By Lemma \ref{lem:proper} (1), we may couple the choice of $\cH$ in step \ref{step:hpick} of $\AS$ to agree with the polymer-decomposable
defect side of $I$ with probability $\pto 1$ (we may simply ignore the event where both $I \cap \cE$ and $I \cap \cO$ are polymer-decomposable,
since this happens with probability $\pto 0$).
The hypothesis of the lemma is that we may couple the defect sides of $I$ and $\hat{I}$ with high probability,
and given that $I$ has defect side $S$, the distribution of the non-defect side is a uniformly random subset of the vertices which do not
neighbor $S$.
This is exactly what is achieved in step \ref{step:final} of $\AS$, meaning that on the event that the defect sides $S$ and $\hat{S}$
are coupled, the entire independent sets $I$ and $\hat{I}$ may be coupled perfectly, finishing the proof.
\end{proof}

Thus, to prove Theorem \ref{thm:sampler}, it suffices to show that $\AS$ fails with probability $\pto 0$, and that,
given that $\AS$ does not fail, the defect side constructed by the algorithm is close in total variation distance to the defect
side of a uniformly random set.
These are captured in the following two lemmas, which together finish the proof via an application of Lemma \ref{lem:stoi}.

\begin{lemma}
\label{lem:samplerdoesntfail}
For all $p > \fr{1}{2} - \gamma$, where $\gamma > 0$ is defined as in Proposition \ref{prop:dimers},
\begin{equation}
	\P[\AS \text{ fails}] \pto 0.
\end{equation}
\end{lemma}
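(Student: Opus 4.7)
The algorithm $\AS$ can only fail at Steps \ref{step:fail1_s1construct} or \ref{step:fail2_shatconstruct}, and the plan is to bound each failure probability separately and combine via a union bound, uniformly over the choice of $\cH$ in Step \ref{step:hpick}. Fix $\cH \in \{\cE,\cO\}$ for the rest of the discussion.

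For Step \ref{step:fail1_s1construct}, since $\pi^\cH$ is a product measure with $\pi^\cH[\tilde{S} = T] = (\fakepolymerpf^\cH)^{-1} \prod_{v \in T} \phi_v$ for any $T \sse \cH$, and since for large $d$ the size constraint in the definition of relaxed polymers is automatic at size $\leq 2$, I would compute
\begin{equation}
\pi^\cH\bigl[\tilde{S} \in \Sep^\cH_{\leq 2}\bigr] = \frac{\fakepolymerpf^\cH_{\leq 2}}{\fakepolymerpf^\cH}.
\end{equation}
Since $p > \f12 - \gamma > 2 - 2^{2/3}$, Proposition \ref{prop:smallpolymers_full} applied with $k = 2$ shows this ratio tends to $1$ in probability, so the Step \ref{step:fail1_s1construct} failure probability is $o(1)$-in-probability.

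Next, I would condition on $\tilde{S} \in \Sep^\cH_{\leq 2}$ and decompose the Step \ref{step:fail2_shatconstruct} failure event $\{S_1 \cup S_2 \notin \Sep^\cH_{\leq 2}\}$ into two cases: (a) some $\fd \in S_2$ shares a neighbor with or overlaps $S_1$, or (b) some pair of distinct dimers in $S_2$ share a neighbor or overlap. Since $S_2$ is built by including each dimer independently, a union bound over (a) and (b) yields, conditionally on $\tilde{S}$,
\begin{equation}
\P\bigl[\text{Step \ref{step:fail2_shatconstruct} fails} \mid \tilde{S}\bigr] \leq \sum_{\fd \sim S_1} \frac{\phi_\fd}{1+\phi_\fd} + \sum_{\fd_1 \sim \fd_2} \frac{\phi_{\fd_1}\phi_{\fd_2}}{(1+\phi_{\fd_1})(1+\phi_{\fd_2})} \leq \Adj(S_1) + \sum_{\fd_1 \sim \fd_2} \phi_{\fd_1}\phi_{\fd_2}.
\end{equation}
The second sum is $o(1)$-in-probability by the moment bound in Lemma \ref{lem:dimermoments} (b) (valid for $p > 0.391$) that already appears in the proof of Lemma \ref{lem:dimer_sep}. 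For the first, since $S_1 \sse \tilde{S}$ and $\Adj$ is monotone in its set argument, $\Adj(S_1) \leq \Adj(\tilde{S})$, and Lemma \ref{lem:adjupperbound} gives $\pi^\cH[\Adj(\tilde{S}) > \eps] \pto 0$ for some $\eps = \eps_d \to 0$ with $\eps \geq \poly(d)^{-1}$; dividing by the conditioning probability $\pi^\cH[\tilde{S} \in \Sep^\cH_{\leq 2}] \psim 1$ transfers the bound to the conditional measure.

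The main subtlety in this argument is that $S_1$ is not itself distributed as $\pi^\cH$ (it is obtained by retaining only the singleton components of a $\pi^\cH$-sample conditioned to lie in $\Sep^\cH_{\leq 2}$), so one cannot directly invoke Lemma \ref{lem:adjupperbound} for $\Adj(S_1)$; the monotonicity $\Adj(S_1) \leq \Adj(\tilde{S})$ circumvents the need to identify the exact marginal law of $S_1$. Summing the two failure bounds and averaging over the choice of $\cH$ in Step \ref{step:hpick} finishes the proof.
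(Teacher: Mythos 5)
Your proof is correct and uses the same two-pronged structure as the paper (Lemma \ref{lem:samplerdoesntfail} is there split into Lemmas \ref{lem:samplerdoesntfail_1} and \ref{lem:samplerdoesntfail_2}), but for the Step \ref{step:fail2_shatconstruct} failure you take a genuinely different and cleaner route. The paper's proof derives the approximate marginal law of $S_1$ conditioned on Step \ref{step:fail1_s1construct} succeeding, via Lemma \ref{lem:dimer_sep} and Proposition \ref{prop:dimers} (arriving at \eqref{eq:probofs1}), and then bounds the failure probability by $\E_\pi[\min\{1,\Adj(S_1)\}\mid S_1\in\Sep_1]$, finally invoking \eqref{eq:adjconditioned_again} which itself combines Lemmas \ref{lem:seplowerbound} and \ref{lem:adjupperbound}. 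Your monotonicity observation $\Adj(S_1)\leq\Adj(\tilde{S})$ sidesteps identifying the marginal law of $S_1$ altogether, and since you condition on $\{\tilde{S}\in\Sep_{\leq 2}\}$ (probability $\psim 1$ by Proposition \ref{prop:smallpolymers_full}) rather than on $\{S_1\in\Sep_1\}$ (probability only bounded below by an exponentially small quantity in Lemma \ref{lem:seplowerbound}), you do not need Lemma \ref{lem:seplowerbound} or the careful comparison between $\alpha_p$ and $(2-p)^2/2$. This shaves a real chunk of machinery off the proof.

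One small caveat worth flagging: you cite Lemma \ref{lem:adjupperbound} for the claim $\pi[\Adj(\tilde{S})>\eps]\pto 0$, but the lemma as stated only asserts $\alpha_p > (2-p)^2/2$, which guarantees $\alpha_p > 1$ (and hence a useful bound) only when $p < 2-\sqrt{2}$. For $p\geq 2-\sqrt{2}$ you should either observe that the proof of Lemma \ref{lem:separated_adj} actually yields $\alpha_p = 2^{2t^\star} > 1$ whenever $t^\star>0$ (which holds for $p > 1-2^{-1/2}\approx 0.293$), or simply use the deterministic bound $\Adj(\tilde{S})\leq\Delta$ together with $\Delta\pto 0$ from Lemma \ref{lem:dimerconcentration}, since $\mu_2\to 0$ in that regime. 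Also, be aware that $\pi$ is redefined in Section \ref{sec:dimers_adj} (inclusion probability $\phi_v$ rather than $\phi_v/(1+\phi_v)$) while $\AS$ Step \ref{step:stildeconstruct} uses the original $\pi$; the stochastic domination noted at the start of that section and the monotonicity of $\Adj$ mean the bound transfers, but you should say so explicitly. Neither point is a gap in the underlying argument.
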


\begin{lemma}
\label{lem:sshatclose}
Let $S$ denote $I \cap \cE$, where $I$ is a uniformly random independent set conditioned on the event $I \cap \cE \in \Sep^\cE$.
Let $\hat{S}$ denote the result of step \ref{step:fail2_shatconstruct} in $\AS$, conditioned on choosing $\cH = \cE$ in step \ref{step:hpick}
and on the algorithm not failing at steps \ref{step:fail1_s1construct} or \ref{step:fail2_shatconstruct}.
For all $p > \fr{1}{2} - \gamma$, where $\gamma > 0$ is defined as in Proposition \ref{prop:dimers}, we have
\begin{equation}
	\TVbin{S}{\hat{S}} \pto 0.
\end{equation}
Additionally, the analogous statement with $\cE$ replaced by $\cO$ also holds.
\end{lemma}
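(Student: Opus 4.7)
The plan is to express the laws of $S$ and $\hat S$ as explicit (random) probability measures on $\Sep^\cE_{\leq 2}$, show that they differ only by the $T_1$-measurable Radon--Nikodym factor $e^{-\tAdj(T_1)}/\E_S[e^{-\tAdj(T_1)}]$, and then invoke results from Section~\ref{sec:dimers_adj} to conclude that this factor is uniformly close to $1$ in $S$-probability. Lemma~\ref{lem:proper}(2) together with Proposition~\ref{prop:smallpolymers} shows that the conditional law of $S$ assigns $o(1)$-in-probability mass to $\Sep^\cE \setminus \Sep^\cE_{\leq 2}$, and on $\Sep^\cE_{\leq 2}$ is uniformly well-approximated by $\P[S = T] \psim \prod_{\fp \in T} \phi_\fp / \polymerpf^\cE_{\leq 2}$; this takes care of the easy half.

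For $\hat S$, I would trace $\AS$ carefully. Writing any $T \in \Sep^\cE_{\leq 2}$ as $T_1 \cup T_2$ with $T_1 \in \Sep^\cE_1$ and $T_2$ a collection of dimers, the event $\{\hat S = T\} \cap \{\text{no failure}\}$ conditioned on $\cH = \cE$ requires $S_2 = T_2$ and $\tilde S = T_1 \cup V(\mathcal{D})$ for some dimer collection $\mathcal{D}$ with $T_1 \cup \mathcal{D} \in \Sep^\cE_{\leq 2}$; here $V(\mathcal{D}) = \bigcup_{\fd \in \mathcal{D}}\fd$, and the flexibility in $\mathcal{D}$ reflects that step~(3) discards any dimers present in $\tilde S$ after checking separability. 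Using $\tilde S \sim \pi^\cE$ and the independence of $S_2$ from $\tilde S$, this joint probability factorizes into $\tfrac{\prod_{v \in T_1}\phi_v}{\fakepolymerpf^\cE} \sum_{\mathcal{D}} \prod_{\fd \in \mathcal{D}} \fphi_\fd$ times $\prod_{\fd \in T_2}\phi_\fd / \prod_{\fd \in \Dim^\cE}(1+\phi_\fd)$. The $\tilde{\cdot}$-version of Lemma~\ref{lem:dimer_sep} applied to the inner sum (which holds uniformly in $T_1$) evaluates it as $\psim e^{\tilde\Delta - \tAdj(T_1)}$, and standard log-expansion yields $\prod_\fd(1+\phi_\fd) \psim e^\Delta$ for $p > 0.391$ (by the same moment bound underlying Lemma~\ref{lem:dimer_sep}). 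After normalizing, the $\hat S$-distribution on $\Sep^\cE_{\leq 2}$ is uniformly proportional to $\prod_{\fp \in T}\phi_\fp \cdot e^{-\tAdj(T_1)}$, which is the announced reweighting of $S$-dist by $e^{-\tAdj(T_1)}$.

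It remains to bound $\TVbin{S}{\hat S}$ by the concentration of $e^{-\tAdj(T_1)}$ under $S$-dist, using the standard estimate $\TVbin{S}{\hat S} \leq \tfrac{1}{2}\E_S\!\big[\big|1 - e^{-\tAdj(T_1)}/\E_S[e^{-\tAdj(T_1)}]\big|\big]$. Since $0 \leq \tAdj(T_1) \leq \Adj(T_1)$ pointwise, it suffices to show $\Adj(T_1) \pto 0$ under the $S$-marginal on $T_1$. Integrating $T_2$ out via the non-tilde Lemma~\ref{lem:dimer_sep} reveals this $S$-marginal to be $\propto \prod_v\phi_v \cdot e^{-\Adj(T_1)}\ind{T_1 \in \Sep^\cE_1}$, i.e.\ an absolutely-continuous reweighting of $\pi^\cE(\cdot \mid \Sep^\cE_1)$ by the density $e^{-\Adj(T_1)} \in (0,1]$ whose normalizer $\E_{\pi^\cE}[e^{-\Adj} \mid \Sep^\cE_1] \pto 1$ was proved during Proposition~\ref{prop:dimers} (this is precisely~\eqref{eq:adjsmall_goal_again}). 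Combined with the tail bound $\pi^\cE[\Adj > \epsilon \mid \Sep^\cE_1] \pto 0$ for some $\epsilon_d \to 0$ obtained from Lemmas~\ref{lem:seplowerbound} and~\ref{lem:adjupperbound}, this transfers to the $S$-marginal and yields $\Adj(T_1) \pto 0$, as required. The main obstacle will be the algorithmic bookkeeping in the middle paragraph---carrying out the sum over the hidden dimers of $\tilde S$ uniformly in $T_1$ via Lemma~\ref{lem:dimer_sep} to cleanly isolate the correction factor $e^{-\tAdj(T_1)}$---after which the decay of $\tAdj(T_1)$ under $S$-dist reduces to a near-direct application of Section~\ref{sec:dimers_adj}.
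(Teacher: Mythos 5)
Your proposal is correct and follows the same overall strategy as the paper's proof: split the TV distance into a negligible $\Sep^\cE\setminus\Sep^\cE_{\leq 2}$ tail plus the main sum over $\Sep^\cE_{\leq 2}$, compute the two probability mass functions on $\Sep^\cE_{\leq 2}$ explicitly via Lemma~\ref{lem:proper}(2) on one side and the algorithm's structure on the other, identify their ratio using Lemma~\ref{lem:dimer_sep}, and conclude via the conditional tail estimates from Section~\ref{sec:dimers_adj}.

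There is, however, a genuinely interesting discrepancy in the middle step which is worth recording. You compute the law of $\hat S$ by writing $\P[\hat S=T \mid \text{no fail}]$ as $\P[S_1=T_1]\P[S_2=T_2]$ divided by the $T$-independent normalizer $\P[\text{no fail}]$, which cleanly yields an unnormalized density proportional to $\prod_{\fp\in T}\phi_\fp\cdot e^{-\tAdj(T_1)}$. The paper instead multiplies the two conditional probabilities $\P[S_1=T_1\mid\text{step 3 ok}]\cdot\P[S_2=T_2\mid S_1=T_1,\text{step 5 ok}]$, arriving at the Radon--Nikodym factor $e^{\Adj(T_1)-\tAdj(T_1)}$. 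But this product is \emph{not} $\P[S_1=T_1, S_2=T_2\mid\text{step 3 ok, step 5 ok}]$: conditioning on step~5 succeeding also tilts the marginal law of $S_1$ by $\P[F_5\mid S_1=T_1]\psim e^{-\Adj(T_1)}$, and the paper's factorization drops exactly this tilt. Your version is therefore the technically correct one. The reason the paper still arrives at the right conclusion is that the spurious factor $e^{\Adj(T_1)}$ is $1+o(1)$ in the relevant conditional measure, so the resulting quantity $\E_\pi[\,|e^{-\Adj}-e^{-\tAdj}|\mid\Sep^\cE_1]$ still tends to zero even though it is not the actual TV distance. Your direct bookkeeping avoids this pitfall altogether.

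The only places where you are somewhat terse are at the very end: (i) the transfer from $\pi^\cE[\Adj>\eps\mid\Sep^\cE_1]\pto 0$ to smallness under the $S$-marginal needs the one-line observation that the reweighting density $e^{-\Adj}\in(0,1]$ can only shrink the tail (since the normalizer $\E_{\pi^\cE}[e^{-\Adj}\mid\Sep^\cE_1]\pto 1$), and (ii) passing from $\tAdj(T_1)\pto 0$ under the $S$-marginal to smallness of $\E_S\bigl[\bigl|1-e^{-\tAdj}/\E_S[e^{-\tAdj}]\bigr|\bigr]$ uses the boundedness $0\leq e^{-\tAdj}\leq 1$, which gives $\E_S\bigl[\bigl|e^{-\tAdj}-\E_S[e^{-\tAdj}]\bigr|\bigr]\leq 2\bigl(1-\E_S[e^{-\tAdj}]\bigr)\pto 0$. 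Both are routine to fill in, and the argument as a whole is sound.
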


We will prove Lemma \ref{lem:samplerdoesntfail} in Section \ref{sec:sampling_fail}, and we will prove Lemma \ref{lem:sshatclose} in Section \ref{sec:sampling_close} below.

\subsection{The algorithm rarely fails}
\label{sec:sampling_fail}

Let us assume that in step \ref{step:hpick}, we chose $\cH = \cE$.
So we will drop all superscripts of $\cH$ to simplify the notation.
The case where $\cH = \cO$ follows by identical reasoning.
For $\AS$ to fail at all, it must fail at step \ref{step:fail1_s1construct} or step \ref{step:fail2_shatconstruct},
so Lemma \ref{lem:samplerdoesntfail} follows from the following pair of lemmas, which we prove in turn.

\begin{lemma}
\label{lem:samplerdoesntfail_1}
For all $p > 2 - 2^{2/3} \approx 0.413$,
\begin{equation}
	\P[\AS \text{ fails at step \ref{step:fail1_s1construct}}] \pto 0.
\end{equation}
\end{lemma}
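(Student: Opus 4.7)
The plan is to reduce this directly to Proposition \ref{prop:smallpolymers_full}. Failure at step \ref{step:fail1_s1construct} depends only on the outcome of step \ref{step:stildeconstruct}, where $\tilde{S}$ is drawn from $\pi^\cH$ as defined in \eqref{eq:pidef}. Conditioning on both the percolation configuration and the choice of $\cH \in \{\cE,\cO\}$ made in step \ref{step:hpick}, the conditional failure probability is exactly
\begin{equation}
    \pi^\cH[\tilde{S} \notin \Sep_{\leq 2}^\cH] = 1 - \frac{\fakepolymerpf_{\leq 2}^\cH}{\fakepolymerpf^\cH}.
\end{equation}
This is immediate from the definition of $\pi^\cH$, using the identification (justified for all sufficiently large $d$ and any fixed $k$ by the remark just above Proposition \ref{prop:smallpolymers_full}) between a subset $\tilde{S} \sse \cH$ whose 2-linked decomposition has all components of size at most $2$ and the corresponding element of $\Sep_{\leq 2}^\cH$. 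Under this identification, the numerator $\fakepolymerpf_{\leq 2}^\cH = \sum_{S \in \Sep_{\leq 2}^\cH} \prod_{\fp \in S} \fphi_\fp$ is exactly the sum of $\pi^\cH$-weights $\prod_{v \in \tilde{S}} \phi_v$ taken over admissible configurations, while $\fakepolymerpf^\cH$ is the corresponding normalization over all $\tilde{S} \sse \cH$.

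Since $p > 2 - 2^{2/3}$, Proposition \ref{prop:smallpolymers_full} with $k=2$ applies on both sides, giving $\fakepolymerpf_{\leq 2}^\cH \psim \fakepolymerpf^\cH$, i.e.\ $\fakepolymerpf_{\leq 2}^\cH / \fakepolymerpf^\cH \pto 1$ for each $\cH \in \{\cE, \cO\}$. Therefore the full failure probability at step \ref{step:fail1_s1construct}, namely
\begin{equation}
    q^\cE \cdot \left(1 - \frac{\fakepolymerpf_{\leq 2}^\cE}{\fakepolymerpf^\cE}\right)
    + q^\cO \cdot \left(1 - \frac{\fakepolymerpf_{\leq 2}^\cO}{\fakepolymerpf^\cO}\right),
\end{equation}
is bounded above by the sum of the two $o(1)$-in-probability quantities (since $q^\cE, q^\cO \in [0,1]$) and hence tends to $0$ in probability with respect to the percolation, as required.

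There is no real technical obstacle: the content of the lemma is essentially a translation of Proposition \ref{prop:smallpolymers_full} from the language of partition function ratios into the probabilistic language of $\AS$, together with the observation that the set-theoretic check $\tilde{S} \in \Sep_{\leq 2}^\cH$ performed by the algorithm matches the polymer-theoretic sum defining $\fakepolymerpf_{\leq 2}^\cH$.
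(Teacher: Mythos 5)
Your proposal is correct and takes essentially the same approach as the paper: express the conditional failure probability as $1 - \fakepolymerpf_{\leq 2}^\cH/\fakepolymerpf^\cH$ directly from the definition of $\pi^\cH$, then invoke Proposition \ref{prop:smallpolymers_full} (with $k=2$, which applies because $p > 2 - 2^{2/3}$) to conclude that this ratio tends to $1$ in probability. The only superficial difference is that you handle the choice of $\cH$ explicitly via the weights $q^\cE, q^\cO$, whereas the paper fixes $\cH = \cE$ without loss of generality and drops the superscript; the content is the same.
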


\begin{lemma}
\label{lem:samplerdoesntfail_2}
For all $p > \f12 - \gamma$, where $\gamma > 0$ is defined as in Proposition \ref{prop:dimers},
\begin{equation}
	\P\left[\AS \text{ fails at step \ref{step:fail2_shatconstruct}} \middle| \AS \text{ does not fail at step \ref{step:fail1_s1construct}} \right] \pto 0
\end{equation}
\end{lemma}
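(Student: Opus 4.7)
The plan is to decompose the failure event at step \ref{step:fail2_shatconstruct}, namely $\{S_1 \cup S_2 \notin \Sep_{\leq 2}\}$, into two sub-events: (i) there exists some $\fd \in S_2$ with $\fd \sim S_1$ (in the sense of Definition \ref{def:adj}, which covers both vertex overlap and shared neighbors), and (ii) there exist distinct $\fd_1, \fd_2 \in S_2$ with $\fd_1 \sim \fd_2$. Bounding the conditional failure probability then reduces to handling each of these separately.

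Part (ii) is the easier one. Since $S_2$ is constructed in step \ref{step:s2construct} independently of $S_1$, its law is unaffected by the conditioning on step \ref{step:fail1_s1construct} succeeding. A union bound combined with Markov's inequality yields
\begin{equation*}
    \P[\text{event (ii)}] \leq \sum_{\fd_1 \neq \fd_2,\; \fd_1 \sim \fd_2} \frac{\phi_{\fd_1}}{1+\phi_{\fd_1}} \cdot \frac{\phi_{\fd_2}}{1+\phi_{\fd_2}} \leq \sum_{\fd_1 \sim \fd_2} \phi_{\fd_1} \phi_{\fd_2},
\end{equation*}
which was already shown to be $\pto 0$ for $p > 0.391$ in the course of the proof of Lemma \ref{lem:dimer_sep}.

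Part (i) is the main point. Conditioning on $S_1$ and again using the independence of $S_2$, another Markov bound gives $\P[\text{event (i)} \mid S_1] \leq \sum_{\fd \sim S_1} \phi_\fd = \Adj(S_1)$. So the task reduces to showing $\Adj(S_1) \pto 0$ under the conditional distribution of $S_1$ given $\tilde{S} \in \Sep_{\leq 2}$. Since $\Adj$ is monotone in its argument and $S_1 \subseteq \tilde{S}$, it suffices to prove $\Adj(\tilde{S}) \pto 0$ under the same conditional law. The \emph{unconditional} version of this is a direct consequence of Lemma \ref{lem:adjupperbound}, applied with some sequence $\eps_d \to 0$ going to zero slowly enough and using that the modified $\pi$ appearing in Section \ref{sec:dimers_adj} stochastically dominates the original $\pi$ used in $\AS$. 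To remove the conditioning, I will invoke Proposition \ref{prop:smallpolymers_full} in the form
\begin{equation*}
    \pi[\tilde{S} \in \Sep_{\leq 2}] \;=\; \frac{\fakepolymerpf_{\leq 2}}{\fakepolymerpf} \pto 1,
\end{equation*}
which ensures that the conditioning alters probabilities by only a $(1+o(1))$-in-probability factor.

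The main subtlety, rather than a genuine obstacle, is the last step of correctly transferring Lemma \ref{lem:adjupperbound} from the modified $\pi$ of Section \ref{sec:dimers_adj} to the original $\pi$ of Definition \ref{def:approxsampler}, and then from the unconditional measure to the one conditioned on $\tilde{S} \in \Sep_{\leq 2}$; both moves follow immediately from stochastic domination, the monotonicity of $\Adj$, and the small-polymer reduction afforded by Proposition \ref{prop:smallpolymers_full}.
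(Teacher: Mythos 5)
Your proposal is correct, and it takes a genuinely simpler route than the paper's proof. The decomposition of the failure event at step~\ref{step:fail2_shatconstruct} into $S_2 \notin \Sep_2$ and $S_2 \not\subseteq \Cpt_2(S_1)$ is the same as the paper's, and the treatment of the first piece (your event (ii)) is identical. The difference is in the second piece. The paper works with the exact conditional distribution of $S_1$ given that step~\ref{step:fail1_s1construct} succeeds, which requires Lemma~\ref{lem:dimer_sep} and Proposition~\ref{prop:dimers} to derive the representation \eqref{eq:probofs1}; it then bounds the expectation $\E[\min\{1,\Adj(S_1)\}]$ by $\E_\pi[\min\{1,\Adj(S_1)\} \mid S_1 \in \Sep_1]$ and invokes \eqref{eq:adjconditioned_again} (which itself needs the lower bound from Lemma~\ref{lem:seplowerbound}). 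You bypass all of that by noting that $S_1 \subseteq \tilde{S}$ deterministically (since $S_1$ is obtained from $\tilde{S}$ by removing vertices), and that $\Adj$ is monotone under inclusion, so $\Adj(S_1) \leq \Adj(\tilde{S})$; then since the conditioning event $\{\tilde{S} \in \Sep_{\leq 2}\}$ has $\pi$-probability tending to $1$ by Proposition~\ref{prop:smallpolymers_full}, the conditional tail of $\Adj(\tilde{S})$ is within a $(1+o(1))$ factor of the unconditional tail, which is controlled directly by Lemma~\ref{lem:adjupperbound} together with the stochastic domination by the modified $\pi$. This avoids Lemma~\ref{lem:seplowerbound}, Lemma~\ref{lem:dimer_sep}, and Proposition~\ref{prop:dimers} entirely for this lemma. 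The one thing worth spelling out more carefully is the final reduction: what is really needed is that $\E[\min\{1,\Adj(S_1)\} \mid \text{step~\ref{step:fail1_s1construct} succeeds}] \pto 0$, which follows from your tail bound by splitting $\E[\min\{1,\Adj(S_1)\} \mid \cdot] \leq \eps_d + \pi[\Adj(\tilde{S}) > \eps_d \mid \tilde{S} \in \Sep_{\leq 2}]$ for a slowly vanishing $\eps_d$; you gesture at this but the explicit step would tighten the write-up. It is also worth noting that the paper's more elaborate machinery is not wasted, as the representation \eqref{eq:probofs1} is needed anyway in the proof of Lemma~\ref{lem:sshatclose}, but that is a separate matter.
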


\begin{proof}[Proof of Lemma \ref{lem:samplerdoesntfail_1}]
The probability that $\AS$ fails at step \ref{step:fail1_s1construct} is the same as the probability that
a sample from $\pi$ is not in $\Sep_{\leq 2}$.
Recalling the definition of $\pi$ from \eqref{eq:pidef} as well as the definitions of $\fakepolymerpf$ and $\fakepolymerpf_{\leq 2}$ from 
\eqref{eq:fakepolymerpfdef} and \eqref{eq:smallpolymerpfdef} respectively, we see that
\begin{equation}
\label{eq:failearly}
	\P[\AS \text{ fails at step \ref{step:fail1_s1construct}}]
	= 
	\fr{\fakepolymerpf - \fakepolymerpf_{\leq 2}}{\fakepolymerpf} \pto 0,
\end{equation}
invoking Proposition \ref{prop:smallpolymers_full} which states that $\fakepolymerpf \psim \fakepolymerpf_{\leq 2}$
for all $p > 2 - 2^{2/3}$.
\end{proof}

\begin{proof}[Proof of Lemma \ref{lem:samplerdoesntfail_2}]
$\AS$ could fail in two different ways at step \ref{step:fail2_shatconstruct}: $S_1 \cup S_2 \notin \Sep_{\leq 2}$ only if either
$S_2 \notin \Sep_2$ or $S_2 \not \sse \Cpt_2(S_1)$.

Let us consider the case $\S_2 \notin \Sep_2$ first.
By a union bound,
\begin{equation}
\label{eq:s2notpd}
	\P[S_2 \notin \Sep_2] \leq \sum_{\fd_1 \sim \fd_2} \phi_{\fd_1} \phi_{\fd_2};
\end{equation}
here we have also used the bound $\fr{\phi_\fd}{1 + \phi_\fd} \leq \phi_\fd$.
Now, by a simple moment calculation done in Lemma \ref{lem:dimermoments} (b) below, the right-hand side in \eqref{eq:s2notpd}
has expectation (in terms of the percolation configuration) tending to $0$ for $p > 0.391$; this shows immediately that
\begin{equation}
\label{eq:s2notpdsmall}
	\P[S_2 \notin \Sep_2] \pto 0.
\end{equation}

Let us next consider the failure via $S_2 \not \sse \Cpt_2(S_1)$.
Using the bound $\fr{\phi_\fd}{1+\phi_\fd} \leq \phi_\fd$ again, as well as another union bound, conditioning on the value of the singleton set $S_1$ constructed in
step \ref{step:fail1_s1construct}, we have
\begin{equation}
\label{eq:s2notcpt}
	\P\left[S_2 \not \sse \Cpt_2(S_1) \middle| \text{step \ref{step:fail1_s1construct} yields } S_1 \right]
	\leq \sum_{\fd \sim S_1} \phi_\fd = \Adj(S_1),
\end{equation}
recalling the notation $\Adj$ from Definition \ref{def:adj}.
Since probabilities are at most $1$, we may replace this upper bound by $\min\{1,\Adj(S_1)\}$, which will be important in the sequel for turning
smallness in probability into smallness in expectation.

The upper bound of $\Adj(S_1)$ given in \eqref{eq:s2notcpt} is actually \emph{not} small under the worst-case choice of $S_1$,
as first alluded to in Remark \ref{rmk:worstvsavg} and discussed in further detail in Section \ref{sec:dimers_firststrategy}.
Instead, we need to use our average-case analysis and leverage the fact that $\Adj(S_1)$ is small with high probability for $S_1 \sim \pi$ conditioned to be in $\Sep_1$,
as discussed in Section \ref{sec:dimers_secondstrategy}; in particular, we will make use of \eqref{eq:adjconditioned},
which is a consequence of Lemmas \ref{lem:seplowerbound} and \ref{lem:adjupperbound} and which states that,
for all $p > \f12 - \gamma$,
\begin{equation}
\label{eq:adjconditioned_again}
	\pi \left[ \Adj(S_1) > \eps \middle| S_1 \in \Sep_1 \right] \pto 0
\end{equation}
for some choice of $\eps = \eps_d \to 0$ as $d \to \infty$.
Note that the same statement holds for $\tAdj$ as well, which is the same as $\Adj$ but with each $\phi_\fd$ replaced by $\fphi_\fd$;
this is simply because $\fphi_\fd \leq \phi_\fd$.
These facts will also be relevant in Section \ref{sec:sampling_close} below, where we prove Lemma \ref{lem:samplerdoesntfail_2}, showing
that the defect side generated by the sampler is close to the defect side of a uniform independent set.

To make use of \eqref{eq:adjconditioned_again}, we will need to understand the distribution of the set $S_1$ constructed in
step \ref{step:fail1_s1construct} of $\AS$.
Since $\AS$ fails if the set $\tilde{S} \sim \pi$ is not in $\Sep_{\leq 2}$,
in the case where the algorithm does not fail, $\tilde{S}$ is conditioned to be in $\Sep_{\leq 2}$, meaning that the probability
of a particular value of $S_1$ constructed in step \ref{step:fail1_s1construct} is
\begin{equation}
	\P\left[\text{step \ref{step:fail1_s1construct} yields } S_1 \middle| \text{step \ref{step:fail1_s1construct} does not fail} \right]
	=
	\fr{1}{\fakepolymerpf_{\leq 2}} \prod_{v \in S_1} \phi_v
	\sum_{\substack{S_2 \in \Sep_2 \\ S_2 \sse \Cpt_2(S_1)}}
	\prod_{\fd \in S_2} \fphi_\fd.
\end{equation}
By Lemma \ref{lem:dimer_sep}, the inner sum over $S_2$ is approximated in probability by $e^{\tilde{\Delta} - \tAdj(S_1)}$,
and this approximation is \emph{uniform} in the choice of $S_1$.
So
\begin{equation}
	\P\left[\text{step \ref{step:fail1_s1construct} yields } S_1 \middle| \text{step \ref{step:fail1_s1construct} does not fail} \right]
	\psim
	\fr{\prod_{v \in S_1} \phi_v}{\fakepolymerpf_{\leq 2}} \cdot \Exp{\tilde{\Delta} - \tAdj(S_1)}.
\end{equation}
Now, by Proposition \ref{prop:dimers} which states that $\fakepolymerpf_{\leq 2} \psim \polymerpf_1 \cdot e^{\tilde{\Delta}}$
for $p > \fr{1}{2} - \gamma$, we have
\begin{equation}
	\label{eq:probofs1}
	\P\left[\text{step \ref{step:fail1_s1construct} yields } S_1 \middle| \text{step \ref{step:fail1_s1construct} does not fail} \right]
	\psim
	\fr{\prod_{v \in S_1} \phi_v}{\polymerpf_1} \cdot e^{-\tAdj(S_1)}
\end{equation}
\emph{uniformly} for any $S_1 \in \Sep_1$ (sets $S_1 \notin \Sep_1$ are not possible at this stage).
Combining \eqref{eq:probofs1} with \eqref{eq:s2notcpt} and recalling the definition of $\pi$ from \eqref{eq:pidef}, we obtain
\begin{align}
	\P\left[S_2 \not \sse \Cpt_2(S_1) \middle| \text{step \ref{step:fail1_s1construct} does not fail} \right]
	&= \E \left[ \P\left[S_2 \not \sse \Cpt_2(S_1) \middle| \text{step \ref{step:fail1_s1construct} yields } S_1 \right]\right] \\
	&\leq \E \left[ \min \{ 1, \Adj(S_1) \} \right] \\
	&\psim \sum_{S_1 \in \Sep_1} \fr{\prod_{v \in S_1} \phi_v}{\polymerpf_1} \cdot e^{-\tAdj(S_1)} \cdot \min\{1,\Adj(S_1)\} \\
	&\leq \E_{S_1 \sim \pi} \left[ \min\{1,\Adj(S_1)\} \middle| S_1 \in \Sep_1 \right],
\end{align}
where the last inequality follows simply from the fact that $\tAdj(S_1) \geq 0$.
Now, using \eqref{eq:adjconditioned_again}, we find that the right-hand side converges to $0$ in probability.
Combining this with \eqref{eq:s2notpdsmall} and \eqref{eq:failearly} finishes the proof.
\end{proof}

\subsection{The defect distributions are close}
\label{sec:sampling_close}

Now we turn to the proof of Lemma \ref{lem:sshatclose}, showing that the defect side from the sampler is close to the defect side of a uniform independent set,
in total variation distance.
Recall that in the context of that lemma, $S$ denotes $I \cap \cE$, where $I$ is a uniformly random independent set conditioned on the event $I \cap \cE \in \Sep^\cE$.
Additionally, $\hat{S}$ denote the result of step \ref{step:fail2_shatconstruct} in $\AS$, conditioned on choosing $\cH = \cE$ in step \ref{step:hpick}
and on the algorithm not failing at steps \ref{step:fail1_s1construct} or \ref{step:fail2_shatconstruct}.
We only focus on the $\cE$ case in this section as the $\cO$ case is identical, and
henceforth we drop all $\cE$ superscripts.

\begin{proof}[Proof of Lemma \ref{lem:sshatclose}]
Given that $\AS$ does not fail, the defect side $\hat{S}$ obtained in step \ref{step:fail2_shatconstruct}
is guaranteed to be in $\Sep_{\leq 2}$, meaning that it can be decomposed as $S_1 \in \Sep_1$ and $S_2 \in \Sep_2$ with $S_2 \sse \Cpt_2(S_1)$.
Additionaly, since $S$ is guaranteed to be in $\Sep$ by our conditioning assumption, we have
\begin{equation}
\label{eq:tvbin1}
	\TVbin{S}{\hat{S}} =
	\sum_{S_1 \in \Sep_1} \sum_{\substack{S_2 \in \Sep_2 \\ S_2 \sse \Cpt_2(S_1)}}
	\left| \P[S = S_1 \cup S_2] - \P[\hat{S} = S_1 \cup S_2] \right|
	+ \sum_{S' \in \Sep \setminus \Sep_{\leq 2}} \P[S = S'].
\end{equation}

By Lemma \ref{lem:proper} (2), for any $S' \in \Sep$,
\begin{equation}
\label{eq:polymerrep}
	\P[S = S'] \psim \fr{\prod_{\fp \in S'} \phi_\fp}{\polymerpf},
\end{equation}
and this approximation is uniform among $S' \in \Sep$.
This implies that the second sum in \eqref{eq:tvbin1} is 
\begin{equation}
\label{eq:trimersumsmall}
	\sum_{S' \in \Sep \setminus \Sep_{\leq 2}} \P[S = S'] \psim \fr{\polymerpf - \polymerpf_{\leq 2}}{\polymerpf} \pto 0,
\end{equation}
recalling the definitions of $\polymerpf$ and $\polymerpf_{\leq 2}$ from \eqref{eq:polymerpfdef} and \eqref{eq:smallpolymerpfdef}
and
invoking Proposition \ref{prop:smallpolymers} which states that $\polymerpf \psim \polymerpf_{\leq 2}$ for $p > 2 - 2^{2/3}$.

Now let us turn to the first sum.
For $S$ the defect side of the uniform independent set, \eqref{eq:polymerrep} yields
\begin{equation}
\label{eq:uniforms1s2prob}
	\P[S = S_1 \cup S_2] \psim \fr{\prod_{v \in S_1} \phi_v \prod_{\fd \in S_2} \phi_\fd}{\polymerpf}
	\psim \fr{\prod_{v \in S_1} \phi_v \prod_{\fd \in S_2} \phi_\fd}{\polymerpf_1 \cdot e^\Delta},
\end{equation}
where we have invoked Proposition \ref{prop:smallpolymers} (which states that $\polymerpf \psim \polymerpf_{\leq2}$)
and Proposition \ref{prop:dimers} (which states that $\polymerpf_{\leq 2} \psim \polymerpf_1 \cdot e^\Delta$)
to approximate $\polymerpf$ by $\polymerpf_1 \cdot e^\Delta$.
It's crucial to note that all of these approximations are uniform in $S_1$ and $S_2$, in the sense that the error terms are bounded by quantities tending (in probability)
to $0$ which do not depend on $S_1$ or $S_2$.

Now let us examine the corresponding probability for $\hat{S}$ constructed in step \ref{step:fail2_shatconstruct} of $\AS$.
Recall from \eqref{eq:probofs1} that
\begin{equation}
\label{eq:algs1prob}
	\P\left[\text{step \ref{step:fail1_s1construct} yields } S_1 \middle| \text{step \ref{step:fail1_s1construct} does not fail} \right]
	\psim
	\fr{\prod_{v \in S_1} \phi_v}{\polymerpf_1} \cdot e^{-\tAdj(S_1)}.
\end{equation}
Now, given that we obtain $S_1$ in step \ref{step:fail1_s1construct} and that $\AS$ does not fail at step \ref{step:fail2_shatconstruct},
the probability of obtaining
a particular value of $S_2$ in that step (which must satisfy $S_2 \in \Sep_2$ and $S_2 \sse \Cpt_2(S_1)$) is
\begin{equation}
\label{eq:algs2prob}
	\P\left[\text{step \ref{step:fail2_shatconstruct} yields } S_2 \middle| \text{step \ref{step:fail1_s1construct} yields } S_1 \right]
	= \fr{\prod_{\fd \in S_2} \phi_\fd}{\sum_{\substack{S_2' \in \Sep_2 \\ S_2' \sse \Cpt_2(S_1)}} \prod_{\fd \in S_2'} \phi_\fd}
	\psim \fr{\prod_{\fd \in S_2} \phi_\fd}{e^{\Delta}} e^{\Adj(S_1)},
\end{equation}
where we have applied Lemma \ref{lem:dimer_sep} which approximates the big sum in the denominator by $e^{\Delta - \Adj(S_1)}$
 and the approximation here is again uniform in $S_1$.

Combining \eqref{eq:uniforms1s2prob}, \eqref{eq:algs1prob} and \eqref{eq:algs2prob}, we find that the first sum in \eqref{eq:tvbin1}
satisfies
\begin{align}
	\sum_{S_1 \in \Sep_1} \sum_{\substack{S_2 \in \Sep_2 \\ S_2 \sse \Cpt_2(S_1)}}
	&\left| \P[S = S_1 \cup S_2] - \P[\hat{S} = S_1 \cup S_2] \right| \\
	&\psim \sum_{S_1 \in \Sep_1} \sum_{\substack{S_2 \in \Sep_2 \\ S_2 \sse \Cpt_2(S_1)}}
	\fr{\prod_{v \in S_1} \phi_v \prod_{\fd \in S_2} \phi_\fd}{\polymerpf_1 \cdot e^\Delta} \left| 1 - e^{\Adj(S_1) - \tAdj(S_1)} \right| \\
	&= \sum_{S_1 \in \Sep_1} \fr{\prod_{v \in S_1} \phi_v}{\polymerpf_1} \left|1 - e^{\Adj(S_1) - \tAdj(S_1)} \right|
	\cdot \underbrace{\fr{\sum_{\substack{S_2 \in \Sep_2 \\ S_2 \sse \Cpt_2(S_1)}} \prod_{\fd \in S_2} \phi_\fd}{e^\Delta}}_{(*)} \\
\end{align}
To handle the factor labeled $(*)$, we apply Lemma \ref{lem:dimer_sep}, which states that
\begin{equation}
	\sum_{\substack{S_2 \in \Sep_2 \\ S_2 \sse \Cpt_2(S_1)}} \prod_{\fd \in S_2} \phi_\fd \psim \Exp{\sum_{\fd \not \sim S_1} \phi_\fd} = \Exp{\Delta - \Adj(S_1)},
\end{equation}
where the approximation is uniform in $S_1$.
So, continuing from the previous display, the first sum in \eqref{eq:tvbin1} is
\begin{equation}
	\psim \sum_{S_1 \in \Sep_1} \fr{\prod_{v \in S_1} \phi_v}{\polymerpf_1} \left|1 - e^{\Adj(S_1) - \tAdj(S_1)} \right| e^{-\Adj(S_1)}
	= \E_\pi \left[ \left| e^{-\Adj(S_1)} - e^{-\tAdj(S_1)} \right| \middle| S_1 \in \Sep_1 \right].
\end{equation}
Now, since $\Adj(S_1), \tAdj(S_1) \geq 0$ and the derivative of $t \mapsto e^{-t}$ is bounded by $1$ for $t \geq 0$, the right-hand side above is at most
\begin{equation}
	\E_\pi \left[ | \Adj(S_1) - \tAdj(S_1) | \middle| S_1 \in \Sep_1 \right],
\end{equation}
which tends to $0$ in probability by \eqref{eq:adjconditioned_again} as well as the corresponding fact for $\tAdj$.
In combination with \eqref{eq:trimersumsmall}, this finishes the proof of the lemma.
\end{proof}

\appendix

\section{Moment calculations}
\label{sec:moments}

Recall from \eqref{eq:phidef} and \eqref{eq:fakephidef} that for each polymer $\fp$, we have defined $\phi_\fp = 2^{-\N(\fp)}$, and
$\fphi_\fp = \prod_{v \in \fp} 2^{-\N(v)}$.
Additionally we introduce the notation
\begin{equation}
    \Phi_k = \sum_{v \in \cH} \phi_v^k
\end{equation}
for each $k \geq 0$, and we recall from \eqref{eq:philogdef} and \eqref{eq:deltadef} the notation
\begin{equation}
    \Phi_{\log} = \sum_{v \in \cH} \log(1 + \phi_v), \qquad
    \Delta = \sum_{\fd \in \Dim^\cH} \phi_\fd
    \qquad \text{and} \qquad
    \tilde{\Delta} = \sum_{\fd \in \Dim^\cH} \fphi_\fd.
\end{equation}
Note that these quantities depend on the side $\cH \in \{\cE, \cO\}$, but for most of the following
moment calculations, the result will be identical for both sides and for notational convenience
we will not mention the side explicitly.
The exception will be the covariance of $\Phi_{\log}$ for the even and odd side, and there we will
use the notation $\Phi_{\log}^\cE$ and $\Phi_{\log}^\cO$ to distinguish between the two sides.

\subsection{Singleton moments}

Recall the following constants defined in \eqref{eq:mu1kdef} which turn out to be the expectations of $\Phi_k$,
or equivalently $2^{d-1}$ times the $k$th moments of $\phi_v$.
\begin{equation}
    \mu_1^k = \fr{1}{2} \Rnd{2 - \fr{2^k-1}{2^{k-1}} p}^d.
\end{equation}
We will also use the notation
\begin{equation}
    \sigma^2 = \mu_1^2 = \fr{1}{2} \Rnd{2 - \fr{3}{2} p}^d
    \qquad \text{and} \qquad
    \sigma = \sqrt{\sigma^2}.
\end{equation}

We first collect the following moments of singleton variables and sums of singleton variables.

\begin{lemma}
\label{lem:moments}
The following facts hold for all $p \in (0,1)$.
\begin{enumerate}[(a)]
    \item For each $v$ in $\cH$ and each $k \geq 0$, we have
    \begin{align}
        \E[\phi_v^k] &= \left(1 - \frac{2^k-1}{2^k} p\right)^d, \\
        \Var(\phi_v^k) &= \E[\phi_v^{2k}](1 - o(1)).
    \end{align}

    \item For each $k \geq 0$, we have
    \begin{align}
        \E[\Phi_k] &= \mu_1^k, \\
        \Var(\Phi_k) &= \mu_1^{2k} (1-o(1)).
    \end{align}
\end{enumerate}
\end{lemma}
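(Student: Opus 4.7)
The plan is to compute both parts by direct calculation, leveraging the fact that $\N(v) \sim \Bin(d,p)$ for each vertex $v$, and that the degrees of distinct vertices on the same side of the bipartition are independent.

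For part (a), I would begin with the simple observation that $\phi_v^k = 2^{-k\N(v)}$ and compute the expectation via the binomial MGF:
\begin{equation}
    \E[\phi_v^k] = \E[2^{-k\N(v)}] = \sum_{j=0}^d \binom{d}{j} p^j (1-p)^{d-j} 2^{-kj} = \Rnd{1 - p + \fr{p}{2^k}}^d = \Rnd{1 - \fr{2^k-1}{2^k}p}^d.
\end{equation}
For the variance claim, I would write $\Var(\phi_v^k) = \E[\phi_v^{2k}] - \E[\phi_v^k]^2$ and show the subtracted term is exponentially smaller. Setting $a = 1 - \fr{2^k-1}{2^k}p$ and $b = 1 - \fr{2^{2k}-1}{2^{2k}}p$, a short algebraic simplification (using $2^{2k}-1 = (2^k-1)(2^k+1)$) gives
\begin{equation}
    a^2 - b = -p(1-p)\Rnd{\fr{2^k-1}{2^k}}^2 < 0,
\end{equation}
so $(a^2/b)^d \to 0$, which gives $\E[\phi_v^k]^2 = o(\E[\phi_v^{2k}])$ and hence $\Var(\phi_v^k) = \E[\phi_v^{2k}](1-o(1))$.

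For part (b), the expectation is immediate by linearity: $\E[\Phi_k] = 2^{d-1} \E[\phi_v^k] = \mu_1^k$ after rewriting $2^{d-1} \cdot (1 - \fr{2^k-1}{2^k}p)^d = \fr{1}{2}(2 - \fr{2^k-1}{2^{k-1}}p)^d$. The key observation for the variance is that the random variables $\{\phi_v\}_{v \in \cH}$ are \emph{mutually independent}: since the hypercube is bipartite with bipartition $\cE \cup \cO$, every edge has one endpoint in $\cE$ and one in $\cO$, so for any two distinct $u, v \in \cH$ the edge sets determining $\N(u)$ and $\N(v)$ (namely the $d$ edges incident to $u$ and to $v$ respectively) are disjoint. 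Consequently $\Var(\Phi_k) = 2^{d-1} \Var(\phi_v^k)$, and combining with part (a) gives
\begin{equation}
    \Var(\Phi_k) = 2^{d-1} \E[\phi_v^{2k}](1-o(1)) = \E[\Phi_{2k}](1-o(1)) = \mu_1^{2k}(1-o(1)).
\end{equation}

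There is no substantial obstacle here; the whole lemma reduces to the binomial MGF computation, a one-line sign check for the variance ratio, and the bipartite independence observation. The only minor point worth flagging is to verify that $a^2 < b$ (rather than the reverse), which the explicit factorization above handles transparently.
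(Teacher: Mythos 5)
Your proof is correct and follows essentially the same route as the paper: compute $\E[\phi_v^k]$ via the binomial generating function, verify $\E[\phi_v^k]^2 \ll \E[\phi_v^{2k}]$ by elementary algebra, and use independence of $\{\phi_v\}_{v \in \cH}$ for part (b). The only difference is cosmetic: where the paper checks $a^2 < b$ by a chain of inequalities after clearing denominators, you exhibit the clean identity $a^2 - b = -p(1-p)\bigl(\tfrac{2^k-1}{2^k}\bigr)^2 < 0$, which is a slightly tidier way to do the same check.
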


\begin{proof}[Proof of Lemma \ref{lem:moments}]
\qquad
\begin{enumerate}[(a)]
    \item 
    Since $\N(v)$ is a sum of $d$ independent $\Ber(p)$ variables,
    \begin{equation}
        \E[\phi_v^k] = \left((1-p) + 2^{-k} p\right)^d = \left(1 - \frac{2^k-1}{2^k} p\right)^d.
    \end{equation}
    For the second claim, it suffices to show that $\E[\phi_v^k]^2 \ll \E[\phi_v^{2k}]$,
    or in other words, that
    \begin{equation}
        \left(1 - \frac{2^k-1}{2^k} p\right)^2 < 1 - \frac{2^{2k}-1}{2^{2k}} p.
    \end{equation}
    The validity of this inequality for $0 < p < 1$ can be checked by multiplying
    both sides by $2^{2k}$ and expanding the square; this gives
    \begin{align}
        2^{2k} - 2 \cdot 2^k (2^k-1) p + (2^k-1)^2 p^2 &< 2^{2k} - (2^{2k} - 1) p \\
        - 2 \cdot 2^k (2^k-1) + (2^k - 1)^2 p &< -(2^k+1) (2^k-1) \\
        (2^k-1)^2 p &< (2^k-1)(2 \cdot 2^k - (2^k + 1)) \\
        (2^k-1) p &< 2 \cdot 2^k - (2^k+1) = 2^k-1,
    \end{align}
    which holds as $p < 1$.

    \item
    These claims follow from the previous part, using the fact that the number of vertices
    in each side is $2^{d-1}$.
    The claim about the variance also uses the independence of $\phi_v$ amongst $v \in \cH$.
    \qedhere
\end{enumerate}
\end{proof}

Next we obtain approximations for the mean and variance of $\Phi_{\log}$ which are used in Section \ref{sec:clt}.

\begin{lemma}
\label{lem:logmoments}
The following approximations hold for $\Phi_{\log}$.
\begin{enumerate}[(a)]
    \item
    For all $p \in (0,1)$, we have
    \begin{equation}
        \Var(\Phi_{\log}) = \sigma^2 (1 + o(1))
    \end{equation}
    \item 
    For $p > 0.455$, we have
    \begin{equation}
        \E[\Phi_{\log}] = \mu_1^1 - \fr{\mu_1^2}{2} + \fr{\mu_1^3}{3} + o(\sigma)
    \end{equation}
\end{enumerate}
\end{lemma}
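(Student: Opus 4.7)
The plan is to use a Taylor expansion of $\log(1+x)$ around $x=0$, combined with the moment formulas from Lemma \ref{lem:moments}. The key elementary fact is that for $x \in [0,1]$, the series $\log(1+x) = \sum_{k \geq 1} (-1)^{k+1} x^k/k$ is alternating with terms decreasing in absolute value, so for any $n \geq 1$,
\begin{equation}
    \left| \log(1+x) - \sum_{k=1}^n \frac{(-1)^{k+1} x^k}{k} \right| \leq \frac{x^{n+1}}{n+1}.
\end{equation}
Since $\phi_v = 2^{-\N(v)} \in (0,1]$, this applies pointwise.

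For part (b), applying the remainder bound with $n=3$ and then taking expectation and summing over $v \in \cH$,
\begin{equation}
    \left| \E[\Phi_{\log}] - \mu_1^1 + \tfrac{\mu_1^2}{2} - \tfrac{\mu_1^3}{3} \right| \leq \frac{\mu_1^4}{4}.
\end{equation}
So it suffices to show $\mu_1^4 = o(\sigma)$ for $p > 0.455$. Computing $\mu_1^4/\sigma = \sqrt{2} \cdot r^{d/2}$ where $r = (2 - \tfrac{15}{8}p)^2 / (2 - \tfrac{3}{2}p)$, we need $r < 1$, which after expanding reduces to the quadratic inequality $225 p^2 - 384 p + 128 < 0$. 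Its positive root is approximately $0.454$, so the claim holds for $p > 0.455$.

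For part (a), I write $\log(1+\phi_v) = \phi_v + g(\phi_v)$ where $g(x) = \log(1+x) - x$ satisfies $|g(x)| \leq x^2/2$ for $x \in [0,1]$. Using the independence of $\{\phi_v : v \in \cH\}$,
\begin{equation}
    \Var(\Phi_{\log}) = \sum_{v \in \cH} \Var(\phi_v) + 2 \sum_{v \in \cH} \Cov(\phi_v, g(\phi_v)) + \sum_{v \in \cH} \Var(g(\phi_v)).
\end{equation}
By Lemma \ref{lem:moments}, the first sum equals $2^{d-1} \E[\phi_v^2](1 - o(1)) = \sigma^2 (1 - o(1))$. For the third sum, $\Var(g(\phi_v)) \leq \E[g(\phi_v)^2] \leq \E[\phi_v^4]/4$, and summing gives at most $\mu_1^4/4$. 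For the cross term, I apply Cauchy–Schwarz on the sum:
\begin{equation}
    \sum_{v} |\Cov(\phi_v, g(\phi_v))| \leq \sqrt{\sum_v \Var(\phi_v) \cdot \sum_v \E[g(\phi_v)^2]} \leq \sqrt{\sigma^2 \cdot \mu_1^4 / 4} (1 + o(1)).
\end{equation}
Both error terms are $o(\sigma^2)$ whenever $\mu_1^4 = o(\sigma^2)$; since $\mu_1^4 / \sigma^2 = \bigl((16-15p)/(16-12p)\bigr)^d$ and $(16-15p) < (16-12p)$ for all $p \in (0,1)$, this ratio decays exponentially, giving the claim for all $p \in (0,1)$.

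The main obstacle is somewhat minor but worth noting: one cannot naively exchange the infinite Taylor series with the expectation because $\phi_v$ can take the value $1$ (when $\N(v) = 0$), at which point the Taylor series is only conditionally convergent. The remainder-form estimate above circumvents this, at the cost of needing to track the numerical threshold in $p$ arising from when $\mu_1^4 \ll \sigma$ rather than $\mu_1^4 \ll \sigma^2$, which is why part (b) needs $p > 0.455$ while part (a) works for all $p \in (0,1)$.
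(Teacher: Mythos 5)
Your proposal is correct. For part (b), your argument is essentially identical to the paper's: both bracket $\log(1+x)$ between the degree-$3$ and degree-$4$ Taylor polynomials (yours via the alternating-series remainder, the paper's via the inequality $x-\tfrac{x^2}{2}+\tfrac{x^3}{3}-\tfrac{x^4}{4}\leq\log(1+x)\leq x-\tfrac{x^2}{2}+\tfrac{x^3}{3}$), and both reduce to checking $\mu_1^4\ll\sigma$ via the quadratic $225p^2-384p+128<0$, whose relevant root is $\approx 0.454$.

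For part (a) you take a genuinely different route. The paper bounds $\E[\log(1+\phi_v)]$ and $\E[\log^2(1+\phi_v)]$ separately with explicit polynomial inequalities, then combines them into two-sided bounds on $\Var(\log(1+\phi_v))$ and argues term-by-term that the non-leading pieces are exponentially smaller. Your decomposition $\log(1+\phi_v)=\phi_v+g(\phi_v)$ with $|g(x)|\leq x^2/2$, followed by the bilinear expansion
\begin{equation}
\Var(\Phi_{\log})=\sum_v\Var(\phi_v)+2\sum_v\Cov(\phi_v,g(\phi_v))+\sum_v\Var(g(\phi_v))
\end{equation}
and Cauchy--Schwarz on the cross term, is cleaner structurally: it isolates the leading variance $\sigma^2$ in a single step, puts all error terms under the umbrella condition $\mu_1^4=o(\sigma^2)$, and avoids having to expand and compare four separate exponential quantities. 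The trade-off is that the paper's computation is more explicit about the exact forms of all error terms, which may be preferable if one wanted sharper quantitative control; your argument is more modular and shows at a glance why part (a) holds unconditionally in $p$ while part (b) does not, since the relevant condition downgrades from $\mu_1^4\ll\sigma^2$ (which holds for all $p\in(0,1)$ since $16-15p<16-12p$) to $\mu_1^4\ll\sigma$. Your observation about $\phi_v=1$ when $\N(v)=0$ rendering the naive term-by-term exchange of the infinite Taylor series with the expectation delicate is a valid point, and both approaches circumvent it by working only with finitely many terms plus a remainder.
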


\begin{proof}[Proof of Lemma \ref{lem:logmoments}]
\qquad
\begin{enumerate}[(a)]
\item 
Since $x \geq \log(1 + x) \geq x - \fr{x^2}{2}$ for all $x \in [0,1]$, 
using Lemma \ref{lem:moments} (a), we have
\begin{equation}
    \Rnd{1 - \fr{1}{2}p}^d \geq
    \E[\log(1 + \phi_v)] \geq
    \Rnd{1 - \fr{1}{2}p}^d - \fr{1}{2} \Rnd{1 - \fr{3}{4}p}^d,
\end{equation}
and since $x^2 \geq \log^2(1+\phi_v) \geq x^2 - x^3 + \fr{1}{4} x^4$, we also have
\begin{equation}
    \Rnd{1 - \fr{3}{4}p}^d \geq
    \E[\log^2(1+\phi_v)] \geq
    \Rnd{1 - \fr{3}{4}p}^d - \Rnd{1 - \fr{7}{8}p}^d + \fr{1}{4} \Rnd{1 - \fr{15}{16}p}^d.
\end{equation}
Combining these, we obtain
\begin{align}
    \label{eq:varlogUB}
    \Rnd{1 - \fr{3}{4}p}^d &- \Rnd{1 - \fr{1}{2}p}^{2d} 
    + \Rnd{1 - \fr{1}{2}p}^d \Rnd{1 - \fr{3}{4}p}^d - \fr{1}{4} \Rnd{1 - \fr{3}{4}p}^{2d} \\
    &\geq \Var(\log(1+\phi_v)) \geq \\
    \label{eq:varlogLB}
    \Rnd{1 - \fr{3}{4}p}^d &- \Rnd{1 - \fr{7}{8}p}^d + \fr{1}{4} \Rnd{1 - \fr{15}{16}p}^d
    - \Rnd{1 - \fr{1}{2}p}^{2d}.
\end{align}
Now for both the upper bound \eqref{eq:varlogUB} and the lower bound \eqref{eq:varlogLB},
each term after the first is exponentially smaller than the first term.
This is clear by inspection for all terms other than $\Rnd{1 - \fr{1}{2}p}^{2d}$,
and the fact that this term is also exponentially smaller than $\Rnd{1 - \fr{3}{4}p}^d$
follows from the same argument as in Lemma \ref{lem:moments} above.
This means that
\begin{equation}
    \Var(\log(1+\phi_v)) = \Rnd{1 - \fr{3}{4}p}^d (1 + o(1)),
\end{equation}
and so since $\Phi_{\log}$ is a sum of $2^{d-1}$ independent random variables of the form
$\log(1+\phi_v)$, we have
\begin{equation}
    \Var(\Phi_{\log}) = \sigma^2(1 + o(1)),
\end{equation}
since $\sigma^2 = 2^{d-1} \Rnd{1 - \fr{3}{4}p}^d$.

\item
This time we use the fact that
\begin{equation}
    x - \fr{x^2}{2} + \fr{x^3}{3}
    \geq \log(1+x) \geq x - \fr{x^2}{2} + \fr{x^3}{3} - \fr{x^4}{4}
\end{equation}
for $x \geq 0$.
This gives
\begin{equation}
    0 \geq \E[\Phi_{\log}] - \Rnd{\mu_1^1 - \fr{\mu_1^2}{2} + \fr{\mu_1^3}{3}} \geq - \fr{\mu_1^4}{4}
\end{equation}
We thus want to show that $\mu_1^4 \ll \sigma$, which is implied by
\begin{equation}
    \Rnd{2 - \fr{15}{8} p}^2 < \Rnd{2 - \fr{3}{2} p}.
\end{equation}
The difference between the left-hand side and the right-hand side is
\begin{equation}
    2 - 6 p + \fr{225}{64} p^2,
\end{equation}
which has roots
\begin{equation}
    \fr{6 \pm \sqrt{36 - 225/8}}{225/32} \approx 0.4542, 1.2524.
\end{equation}
Therefore, for $0.455 < p < 1$, we have $\mu_1^4 \ll \sigma$, finishing the proof.
\qedhere
\end{enumerate}
\end{proof}

The following lemma gives the independence input for Lemma \ref{lem:clt} allowing us to show
that the scaling limit of $(\Phi_{\log}^\cE, \Phi_{\log}^\cO)$ is a pair of independent Gaussians.

\begin{lemma}
\label{lem:cov}
For all $p \in (0,1)$,
\begin{equation}
    \Cov(\Phi_{\log}^\cE, \Phi_{\log}^\cO) \ll \Var(\Phi_{\log}).
\end{equation}
\end{lemma}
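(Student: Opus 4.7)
The plan is to expand the covariance as a double sum over pairs $(v,u) \in \cE \times \cO$ and show that only those pairs corresponding to edges of $\Q$ contribute, and even those contributions are exponentially small enough that their total is dwarfed by $\sigma^2$.

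First I would write
\begin{equation}
    \Cov(\Phi_{\log}^\cE, \Phi_{\log}^\cO) = \sum_{v \in \cE} \sum_{u \in \cO} \Cov\bigl(\log(1+\phi_v),\, \log(1+\phi_u)\bigr),
\end{equation}
and note that if $v$ and $u$ are non-neighbors in the unpercolated hypercube $\Q$, then $\N(v)$ and $\N(u)$ depend on disjoint sets of edges of $\Qp$, so $\phi_v$ and $\phi_u$ are independent and the corresponding covariance vanishes. Thus only the $d \cdot 2^{d-1}$ pairs $(v,u)$ forming an edge $e = \{v,u\}$ of $\Q$ contribute.

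For such a pair, I would condition on $\be = \mathbf{1}\{e \in \Qp\}$: writing $\N(v) = \be + \N'(v)$ and $\N(u) = \be + \N'(u)$, the counts $\N'(v)$ and $\N'(u)$ depend on disjoint edge sets and are independent even of each other, while conditional on $\be$ the variables $\phi_v$ and $\phi_u$ are also independent. Therefore
\begin{equation}
    \Cov\bigl(\log(1+\phi_v), \log(1+\phi_u)\bigr) = \Cov\bigl(f(\be), g(\be)\bigr) = p(1-p)\,\bigl(f(1)-f(0)\bigr)\bigl(g(1)-g(0)\bigr),
\end{equation}
where $f(\be) = \E[\log(1+\phi_v) \mid \be]$ and similarly for $g$. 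Using the 1-Lipschitz bound $|\log(1+a) - \log(1+b)| \leq |a-b|$ and the fact that passing from $\be=0$ to $\be=1$ halves $\phi_v$, I would bound
\begin{equation}
    |f(1)-f(0)| \leq \tfrac{1}{2}\,\E[2^{-\N'(v)}] = \tfrac{1}{2}(1-\tfrac{p}{2})^{d-1},
\end{equation}
since $\N'(v) \sim \Bin(d-1,p)$, and identically for $g$.

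Combining these estimates and summing over the $d \cdot 2^{d-1}$ contributing pairs, I would obtain
\begin{equation}
    |\Cov(\Phi_{\log}^\cE, \Phi_{\log}^\cO)| \leq C\, d \cdot 2^{d-1}\, (1-\tfrac{p}{2})^{2d}
\end{equation}
for a universal constant $C$. Comparing this with $\sigma^2 = 2^{d-1}(1-\tfrac{3p}{4})^d$, the ratio is $C d \,\bigl((1-p/2)^2/(1-3p/4)\bigr)^d$, and an elementary calculation $(1-p/2)^2 - (1-3p/4) = p^2/4 - p/4 < 0$ for $p \in (0,1)$ shows the base is strictly less than $1$. Hence the covariance decays exponentially faster than $\sigma^2$, completing the proof. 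There is no serious obstacle here; the one subtlety worth double-checking is the conditioning argument that reduces a covariance under shared-edge dependence to a Bernoulli covariance, which cleanly isolates the only source of dependence between the two sides.
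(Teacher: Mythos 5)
Your proof is correct, but it takes a genuinely different route from the paper's. The paper first invokes the FKG inequality (both $\Phi_{\log}^\cE$ and $\Phi_{\log}^\cO$ are decreasing in the percolation configuration) to conclude the covariance is nonnegative, then upper-bounds each per-pair covariance by the crude $\Cov(X,Y) \leq \E[XY]$ followed by $\log(1+x) \leq x$, and finally computes $\E[\phi_v \phi_u] = (1-\tfrac{3}{4}p)(1-\tfrac{1}{2}p)^{2d-2}$ directly for a neighboring pair. Your argument instead isolates the single shared edge $e$ as the sole source of dependence, applies the law of total covariance together with the conditional independence of $\phi_v$ and $\phi_u$ given $\be$ to get the exact identity
\begin{equation}
    \Cov\bigl(\log(1+\phi_v), \log(1+\phi_u)\bigr) = p(1-p)\bigl(f(1)-f(0)\bigr)\bigl(g(1)-g(0)\bigr),
\end{equation}
and then uses the $1$-Lipschitz bound on $\log(1+\cdot)$ to estimate each factor. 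Both lead to a per-pair bound of order $(1-\tfrac{p}{2})^{2d}$ (yours carries an extra benign constant $p(1-p)/4$), and the final comparison $(1-p/2)^2 < 1 - \tfrac{3}{4}p$ is the same elementary inequality the paper uses. What your route buys is that it is self-contained and more structurally transparent: it identifies exactly why the dependence is weak (a single Bernoulli edge variable) and gives a signed identity rather than a one-sided bound, so FKG is not needed. The paper's route is slightly shorter to write once FKG is invoked. Either proof suffices.
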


\begin{proof}[Proof of Lemma \ref{lem:cov}]
First note that $\Cov(\Phi^\cE_{\log}, \Phi^\cO_{\log}) \geq 0$ by the FKG inequality since both
random variables are decreasing in the percolation configuration.
Now, since $\phi_v$ and $\phi_w$ are independent when $v \not\sim w$ in $\Q$, we have
\begin{align}
    \Cov(\Phi_{\log}^\cE, \Phi_{\log}^\cO)
    &= \sum_{\substack{v \in \cE, u \in \cO \\ v \sim u}}
    \Rnd{\E[\log(1+\phi_v) \log(1 +\phi_u)] -
        \E[\log(1+\phi_v)] \E[\log(1+\phi_u)]} \\
    &\leq \sum_{\substack{v \in \cE, u \in \cO \\ v \sim u}}
        \E[\log(1 + \phi_v) \log(1 + \phi_u)] \\
    &\leq \sum_{\substack{v \in \cE, u \in \cO \\ v \sim u}}
        \E[\phi_v \phi_u],
\end{align}
using the inequality $\log(1+x) \leq x$ for $x \in [0,1]$ at the last step.
Now, when $v \sim w$, there is exactly one edge connecting them in $\Q$, and there are
$2d-2$ edges which are incident to only one of the two vertices.
So $\phi_v \phi_w \sim 2^{-2 \cdot \Ber(p)} 2^{-\Bin(2d-2,p)}$, where the two random variables 
in the exponent are independent. 
Additionally, for each of the $2^{d-1}$ vertices $v \in \cE$, there are $d$ vertices $u \in \cO$ such that
$v \sim u$, and so we have
\begin{align}
    \Cov(\Phi_{\log}^\cE, \Phi_{\log}^\cO)
    &= 2^{d-1} d \Rnd{1 - \fr{3}{4}p} \Rnd{1 - \fr{1}{2}p}^{2d-2},
\end{align}
which is $\ll \sigma^2$ since
\begin{equation}
    2 \cdot \Rnd{1 - \fr{1}{2}p}^2 = 2 - 2 p + \fr{1}{2} p^2 < 2 - 2p + \fr{1}{2} p = 2 - \fr{3}{2} p.
\end{equation}
The conclusion thus follows by Lemma \ref{lem:logmoments} (a).
\end{proof}

\subsection{Dimer moments}

Recall the following constants defined in \eqref{eq:mu2defintro} and \eqref{eq:mu2tildedefintro},
which turn out to be the expectations of $\Delta$ and $\tilde{\Delta}$ respectively:
\begin{align}
    \mu_2 &= \fr{d(d-1)}{4} \Fr{(2-p)^2}{2}^{d} \cdot \Fr{1 + (1-p)^2}{2 - p}^2, \\
    \tilde{\mu}_2 &= \fr{d(d-1)}{4} \Fr{(2-p)^2}{2}^{d}.
\end{align}
Notice that the above two expressions differ only by a multiple which does not depend on $d$.
Additionally, the exponential orders of $\mu_2$ and $\tilde{\mu}_2$ are the same as that of $\fr{1}{2^d} (\mu_1^1)^2$.
Intuitively, this is because each $\phi_\fd$ and $\fphi_\fd$ behaves approximately like a product of two independent
singleton variables $\phi_v$, since most of the neighbors of $\fd$ are not neighbors of both vertices in $\fd$.
Moreover, there are the same number of singletons and dimers, up to a $\poly(d)$ factor.

\begin{lemma}
\label{lem:dimermoments}
Recall the definitions of $\Delta$ and $\tilde{\Delta}$ from \eqref{eq:deltadef} above.
\begin{enumerate}[(a)]
    \item For all $p \in (0,1)$, we have
    \begin{equation}
        \E[\Delta] = \mu_2
        \qquad \text{and} \qquad
        \E[\tilde{\Delta}] = \tilde{\mu}_2.
    \end{equation}

    \item For all $p > 0.391$, we have
    \begin{equation}
        \Var(\Delta) \ll 1
        \qquad \text{and} \qquad
        \Var(\tilde{\Delta}) \ll 1.
    \end{equation}
    Additionally for $p > 0.391$, we have
    \begin{equation}
        \E[ \sum_{\fd_1 \sim \fd_2} \phi_{\fd_1} \phi_{\fd_2} ] \ll 1
        \qquad \text{and} \qquad
        \E[ \sum_{\fd_1 \sim \fd_2} \fphi_{\fd_1} \fphi_{\fd_2} ] \ll 1,
    \end{equation}
    where $\fd_1 \sim \fd_2$ if the neighborhoods of the two dimers overlap.
\end{enumerate}
\end{lemma}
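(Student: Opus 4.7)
For part (a), by linearity of expectation and the vertex-transitivity of $\Q$, $\E[\Delta] = |\Dim^\cH| \cdot \E[\phi_\fd]$ and $\E[\tilde{\Delta}] = |\Dim^\cH| \cdot \E[\fphi_\fd]$ for any fixed dimer $\fd$. Counting unordered pairs of $\cH$-vertices at Hamming distance $2$ gives $|\Dim^\cH| = \binom{d}{2} \cdot 2^{d-1}/2$. For $\fphi_\fd = 2^{-\N(u)-\N(v)}$ with $\fd = \{u,v\}$, since $u,v$ are at distance $2$, all $2d$ of their incident edges in $\Q$ are distinct, so $\N(u) + \N(v) \sim \Bin(2d, p)$ and $\E[\fphi_\fd] = ((2-p)/2)^{2d}$. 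For $\phi_\fd = 2^{-\N(\fd)}$, I partition $N_\Q(\fd)$ into the two common neighbors of $u$ and $v$ and the $2(d-2)$ exclusive neighbors; each exclusive neighbor contributes an independent $\Ber(p)$ to $\N(\fd)$, while each common neighbor contributes the indicator that at least one of its two incident edges to $\fd$ is open (probability $1 - (1-p)^2$), giving $\E[\phi_\fd] = ((2-p)/2)^{2(d-2)} \cdot ((1 + (1-p)^2)/2)^2$. Multiplying by $|\Dim^\cH|$ and simplifying produces the claimed formulas for $\mu_2$ and $\tilde{\mu}_2$.

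For part (b), I first observe that $\fphi_\fd \leq \phi_\fd$ pointwise (since $\N(\fd) \leq \N(u) + \N(v)$), so both $\fphi$ statements follow from the corresponding $\phi$ statements. Next, $\Var(\Delta) = \sum_{\fd_1, \fd_2} \Cov(\phi_{\fd_1}, \phi_{\fd_2})$, and when $\fd_1 \cap \fd_2 = \emptyset$ the variables $\phi_{\fd_1}, \phi_{\fd_2}$ depend on disjoint edge sets (edges incident to $\fd_i$'s vertices) and are therefore independent; hence $\Var(\Delta) \leq \E[\sum_{\fd_1 \sim \fd_2} \phi_{\fd_1} \phi_{\fd_2}]$, so the variance bound reduces to the expectation bound. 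My plan is thus to prove $\E[\sum_{\fd_1 \sim \fd_2} \phi_{\fd_1} \phi_{\fd_2}] \ll 1$ by splitting the sum into three cases.

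The diagonal case $\fd_1 = \fd_2$ contributes $|\Dim^\cH| \cdot \E[\phi_\fd^2]$; by the same partition argument as in (a), $\E[\phi_\fd^2] = \Theta(((4-3p)/4)^{2d})$, and the total is $\Theta(d^2 \cdot ((4-3p)^2/8)^d)$, vanishing exactly when $(4-3p)^2 < 8$, i.e., $p > (4 - 2\sqrt{2})/3 \approx 0.391$. The vertex-sharing case ($|\fd_1 \cap \fd_2| = 1$) consists of $O(d^4 \cdot 2^d)$ pairs, and using the deterministic bound $\phi_\fd \leq 4 \fphi_\fd$ (which follows from $\N(\fd) \geq \N(u) + \N(v) - 2$, since the two common neighbors of $u,v$ account for at most $2$ in the excess) reduces the problem to bounding $\E[\fphi_{\fd_1}\fphi_{\fd_2}]$; with $v_0$ denoting the shared vertex, $\fphi_{\fd_1} \fphi_{\fd_2} = 2^{-2\N(v_0)} \cdot \prod_{w \neq v_0} 2^{-\N(w)}$, where the product is over three other vertices contributing independently, giving $\E[\phi_{\fd_1}\phi_{\fd_2}] = O(((4-3p)/4)^d \cdot ((2-p)/2)^{3d})$; after multiplication by the count this remains $\ll 1$ under the same threshold $p > 0.391$. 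Finally, for the neighbor-sharing case (disjoint dimers with a common $\Q$-neighbor), there are $O(d^5 \cdot 2^d)$ pairs, and independence across the four distinct vertices plus $\phi \leq 4 \fphi$ yields $\E[\phi_{\fd_1}\phi_{\fd_2}] \leq 16 \cdot ((2-p)/2)^{4d}$, imposing only $p > 2 - 2^{3/4} \approx 0.318$, which is implied.

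The main obstacle is the vertex-sharing case, which requires both a careful count of adjacent dimer pairs and a somewhat delicate moment computation: one must simultaneously track the doubled exponent $2\N(v_0)$ at the shared vertex and the three independent singleton contributions, while applying the $\phi_\fd \leq 4 \fphi_\fd$ reduction to keep the expression tractable. The diagonal and neighbor-sharing cases reduce to routine binomial moment calculations once the counting is set up.
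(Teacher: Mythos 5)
Your part (a) matches the paper's approach. One thing to reconcile: your count $|\Dim^\cH| = \binom{d}{2}2^{d-1}/2 = \tfrac{d(d-1)}{4}2^{d-1}$ is the correct number of unordered pairs of $\cH$-vertices at Hamming distance $2$ (verify for $d=3$: all $\binom{4}{2}=6$ pairs in $\cE$ qualify, and your formula gives $6$). But multiplying it by $\E[\fphi_\fd] = ((2-p)/2)^{2d}$ yields $\tfrac{d(d-1)}{8}\bigl((2-p)^2/2\bigr)^d$, which is $\tilde\mu_2/2$, not $\tilde\mu_2$. The paper's own proof passes through the count $2^{d-1}\tfrac{d(d-1)}{2}$ (twice yours), which does reproduce the stated $\tilde\mu_2$; you should flag this factor-of-two discrepancy explicitly rather than asserting that ``simplifying produces the claimed formulas,'' since as written your arithmetic does not.

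For part (b), your route is correct but more granular than the paper's. The paper applies Cauchy--Schwarz, $\E[\phi_{\fd_1}\phi_{\fd_2}] \le \sqrt{\E[\phi_{\fd_1}^2]\,\E[\phi_{\fd_2}^2]} = \E[\phi_\fd^2]$, which collapses \emph{all} adjacent pairs to the diagonal bound $\poly(d)\cdot 2^d \cdot \E[\phi_\fd^2]$ and gives the threshold $p > \tfrac{2}{3}(2-\sqrt2) \approx 0.3905$ in one line. Your three-case split reaches the same threshold and additionally reveals that the off-diagonal contributions (vertex-sharing and neighbor-sharing) satisfy weaker thresholds than the diagonal, which is a mildly sharper observation though unnecessary here. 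One computational slip in your vertex-sharing case: when $\fd_1 = \{v_0,u_1\}$ and $\fd_2 = \{v_0,u_2\}$ share a single vertex, there are only \emph{two} other vertices ($u_1, u_2$), not three, so $\E[\fphi_{\fd_1}\fphi_{\fd_2}] = ((4-3p)/4)^d\,((2-p)/2)^{2d}$ rather than $((2-p)/2)^{3d}$; the conclusion still holds (the corrected bound vanishes for $p \gtrsim 0.355$, still dominated by the diagonal threshold), but the stated exponent is wrong.
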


\begin{proof}[Proof of Lemma \ref{lem:dimermoments}]
\qquad
\begin{enumerate}[(a)]
    \item 
    Let us consider $\tilde{\Delta}$ first.
    For $\fd = \{u,v\}$, we have
    \begin{equation}
        \fphi_\fd = 2^{-\N(u)-\N(v)} \stackrel{d}{=} 2^{-\Bin(2d,p)},
    \end{equation}
    and so by the same reasoning as in Lemma \ref{lem:moments} (a), we have
    \begin{equation}
        \E[\fphi_\fd] = \Rnd{1 - \fr{1}{2}p}^{2d}.
    \end{equation}
    Now there are $2^{d-1} \fr{d(d-1)}{2}$ possible dimers $\fd \in \Dim^\cH$, and so we have
    \begin{equation}
        \E[\tilde{\Delta}] = 2^{d-1} \fr{d(d-1)}{2} \Rnd{1 - \fr{1}{2}p}^{2d} = \tilde{\mu}_2.
    \end{equation}
    As for $\Delta$, we must consider $\phi_\fd$ instead of $\fphi_\fd$.
    Now for $\fd = \{u,v\}$, we know that $u$ and $v$ share a neighbor; in this case, in fact, 
    $u$ and $v$ share exactly two neighbors since their binary string representations differ in exactly two bits,
    and swapping either one of these bits leads to a shared neighbor.
    Each of these common neighbors is a neighbor of $\fd$ in $\Qp$ with probability $1-(1-p)^2$,
    since it is not a neighbor exactly when both relevant edges are absent.
    There are also $2d-2$ vertices in the opposite side which are neighbors of exactly one of $u$ or $v$.
    So we have
    \begin{equation}
        \phi_\fd \stackrel{d}{=} 2^{-\Bin(2,1-(1-p)^2) - \Bin(2d-2,p)},
    \end{equation}
    where the two Binomials are independent.
    So we have
    \begin{align}
        \E[\phi_\fd] &= \Rnd{(1-p)^2 + \fr{1}{2} (1- (1-p)^2)}^2 \Rnd{1 - \fr{1}{2}p}^{2d-2} \\
        &= \Fr{\fr{1}{2} + \fr{1}{2} (1-p)^2}{1 - \fr{1}{2}p}^2 \Rnd{1 - \fr{1}{2}p}^{2d} \\
        &= \Fr{1 + (1-p)^2}{2 - p}^2 \Rnd{1 - \fr{1}{2}p}^{2d}.
    \end{align}
    This differs from $\E[\fphi_\fd]$ by the same factor as $\mu_2$ differs from $\tilde{\mu}_2$, and so
    we obtain $\E[\Delta] = \mu_2$.

    \item
    First note that $\phi_{\fd_1}$ is independent of $\phi_{\fd_2}$ if $\fd_1 \cap \fd_2 = \emptyset$,
    and for each $\fd_1$ there are at most $2 d^2$ dimers $\fd_2$ for which this does not hold.
    So we have
    \begin{align}
        \Var(\Delta) &\leq \sum_{\substack{\fd_1, \fd_2 \in \Dim \\ \fd_1 \cap \fd_2 \neq \emptyset}} \E[\phi_{\fd_1} \phi_{\fd_2}] \\
        &\leq \sum_{\substack{\fd_1, \fd_2 \in \Dim \\ \fd_1 \cap \fd_2 \neq \emptyset}} \sqrt{\E[\phi_{\fd_1}^2] \E[\phi_{\fd_2}^2]} \\
        &\leq \poly(d) \cdot 2^d \cdot \E[\phi_\fd^2].
    \end{align}
    Now using the expression for $\phi_\fd$ from the previous part, we see that
    \begin{equation}
        \E[\phi_\fd^2] = c_p \cdot \E[2^{-2 \cdot \Bin(2d,p)}]
    \end{equation}
    for some constant $c_p$ depending only on $p$.
    Now, as in Lemma \ref{lem:moments} (a), we have
    \begin{equation}
        \E[2^{-2 \cdot \Bin(2d,p)}] = \Rnd{1 - \fr{3}{4}p}^{2d}.
    \end{equation}
    Thus
    \begin{equation}
        \Var(\Delta) \lesssim \poly(d) \cdot 2^d \Rnd{1 - \fr{3}{4}p}^{2d},
    \end{equation}
    and this is $\ll 1$ when $\Rnd{1 - \fr{3}{4}p}^2 < \fr{1}{2}$, which happens for
    $p > \fr{2}{3} (2 - \sqrt{2}) \approx 0.3905$.
    Additionally, $\tilde{\Delta}$ is a sum of i.i.d.\ random variables $\fphi_\fd \leq \phi_\fd$, and so by the
    same reasoning we have $\Var(\tilde{\Delta}) \ll 1$ as well.

    The second statement, about the sums over $\fd_1 \sim \fd_2$, follows by the same reasoning since for any dimer
    $\fd_1$ there are at most $\poly(d)$ dimers $\fd_2$ which satisfy $\fd_1 \sim \fd_2$.
    \qedhere
\end{enumerate}
\end{proof}

\section{Entropy approximations for binomials}
\label{sec:entropy}

Recall the binary (relative) entropy between two Bernoulli distributions with parameters $p$ and $q$,
defined in \eqref{eq:hpdef} above:
\begin{equation}
	H_p(q) = q \log_2\f{q}{p} + (1 - q) \log_2\f{1 - q}{1 - p}.
\end{equation}
The following lemma is standard; see for instance \cite[Page 353]{cover}.

\begin{lemma}
	\label{lem:bin}
	For $p,q \in [0,1]$ with $q \geq p$, we have $\P[\Bin(n, p) \geq nq] \leq 2^{-n \cdot H_p(q)}$.
\end{lemma}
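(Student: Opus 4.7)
The plan is to use the standard change-of-measure trick, comparing the $\Bin(n,p)$ mass function to the $\Bin(n,q)$ mass function term by term. Write
\begin{equation}
    \P[\Bin(n,p) \geq nq] = \sum_{k \geq nq} \binom{n}{k} p^k (1-p)^{n-k}
    = \sum_{k \geq nq} \binom{n}{k} q^k (1-q)^{n-k} \cdot \Fr{p}{q}^k \Fr{1-p}{1-q}^{n-k}.
\end{equation}
The key observation is that the likelihood-ratio factor $\Rnd{\fr{p}{q}}^k \Rnd{\fr{1-p}{1-q}}^{n-k}$ is a decreasing function of $k$: since $q \geq p$, we have $\fr{p}{q} \leq 1 \leq \fr{1-p}{1-q}$, so increasing $k$ by one multiplies the factor by $\fr{p(1-q)}{q(1-p)} \leq 1$.

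Consequently, for every $k \geq nq$, the likelihood ratio is bounded above by its value at $k = nq$, which is
\begin{equation}
    \Fr{p}{q}^{nq} \Fr{1-p}{1-q}^{n(1-q)} = 2^{-n \left( q \log_2 \fr{q}{p} + (1-q) \log_2 \fr{1-q}{1-p} \right)} = 2^{-n \cdot H_p(q)}.
\end{equation}
Factoring this uniform bound out of the sum yields
\begin{equation}
    \P[\Bin(n,p) \geq nq] \leq 2^{-n \cdot H_p(q)} \sum_{k \geq nq} \binom{n}{k} q^k (1-q)^{n-k} \leq 2^{-n \cdot H_p(q)},
\end{equation}
where the last inequality uses that the sum is bounded by $\sum_{k=0}^{n} \binom{n}{k} q^k (1-q)^{n-k} = 1$.

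There is no real obstacle; the argument is entirely elementary. The only minor wrinkle is that $nq$ need not be an integer, but since the bound on the likelihood ratio relies only on monotonicity in $k$ (not on $nq$ being the actual summation endpoint), the inequality $k \geq nq$ suffices without any further care. As an aside, this is equivalent to the Chernoff bound obtained by optimizing the exponential Markov inequality $\P[\Bin(n,p) \geq nq] \leq e^{-tnq}((1-p)+pe^t)^n$ over $t \geq 0$, whose optimum is attained at $e^t = \fr{q(1-p)}{p(1-q)} \geq 1$ and recovers the same exponent; however, the change-of-measure argument above is more transparent and avoids any calculus.
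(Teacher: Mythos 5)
The paper does not actually prove Lemma \ref{lem:bin}; it simply cites it as a standard fact with a pointer to \cite[Page 353]{cover}. Your change-of-measure argument is a correct, self-contained proof of that standard fact: the likelihood-ratio factor $(p/q)^k((1-p)/(1-q))^{n-k}$ is indeed monotone decreasing in $k$ when $q \geq p$, its value at the (real) threshold $k=nq$ is exactly $2^{-nH_p(q)}$, and pulling that uniform bound out of the sum leaves a tail of the $\Bin(n,q)$ mass, which is at most $1$. Your remark about $nq$ not being an integer is handled correctly, since you bound the ratio by its value at the real number $nq$ rather than at a summation endpoint. The one degenerate edge case your write-up glosses over is $q=1$ (where $1-q=0$ makes the likelihood ratio formally ill-defined), but there the bound reads $\P[\Bin(n,p)\geq n]=p^n=2^{-nH_p(1)}$ and holds with equality by inspection, so nothing is lost. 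In short: the paper outsources this lemma to a reference, and your proof is a clean inline substitute that reaches the same conclusion by the standard exponential-tilting route, equivalent (as you note) to optimizing the Chernoff bound.
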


Next, we need a lemma which bounds the binary relative entropy in terms of a more easily accessible quantity.

\begin{lemma}
	\label{lem:entropy}
	For any $0 < p < x < 1$ with $x \geq 10p$, we have
	\[
		H_p(x) \geq \f12 \cdot x \log_2(x/p)
	\]
\end{lemma}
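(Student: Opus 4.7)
The plan is to expand the definition of $H_p(x)$ and isolate the positive contribution. Writing
\[
    H_p(x) = x \log_2\!\tfrac{x}{p} + (1-x) \log_2\!\tfrac{1-x}{1-p},
\]
it suffices to show that the negative term $(1-x)\log_2\tfrac{1-x}{1-p}$ (which is indeed negative when $x>p$, since then $1-x<1-p$) is bounded below by $-\tfrac{1}{2} x \log_2(x/p)$. Thus the goal reduces to establishing
\[
    \tfrac{1}{2}\, x \log_2(x/p) \;\geq\; (1-x) \log_2\!\tfrac{1-p}{1-x}.
\]

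The first step would be to control the RHS of this reduced inequality via a standard estimate on the logarithm. Writing $1 - \tfrac{x-p}{1-p} = \tfrac{1-x}{1-p}$ and using $\ln(1-y) \geq -\tfrac{y}{1-y}$ for $y \in [0,1)$, with $y = \tfrac{x-p}{1-p}$, gives
\[
    (1-x)\log_2\!\tfrac{1-p}{1-x} \;\leq\; (1-x) \cdot \tfrac{(x-p)/(1-p)}{(1-x)/(1-p)} \cdot \log_2 e \;=\; (x-p)\log_2 e \;\leq\; x \log_2 e.
\]

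The second step is then the easy numerical comparison: since $x \geq 10 p$, we have $\log_2(x/p) \geq \log_2 10 > 2\log_2 e$ (because $10 > e^2 \approx 7.39$), so $\tfrac{1}{2} x \log_2(x/p) \geq x \log_2 e$, which completes the chain of inequalities. No step here is a real obstacle; the only mildly delicate point is the choice of the bound $\ln(1-y) \geq -y/(1-y)$ (rather than the weaker $\ln(1-y)\geq -y/(1-y)^{\text{anything}}$), which is exactly what produces the cancellation of the factor $(1-x)$ and reduces matters to a constant-factor comparison of $\log_2(x/p)$ against $\log_2 e$. The hypothesis $x \geq 10p$ enters solely to ensure this final comparison holds with room to spare.
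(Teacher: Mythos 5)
Your proof is correct and is essentially the paper's proof: both reduce to bounding the (negative) $(1-x)\log_2\frac{1-x}{1-p}$ term by the same elementary inequality $\ln(1+t)\le t$ (your $\ln(1-y)\ge -y/(1-y)$ with $y=\frac{x-p}{1-p}$ is the same bound after clearing the denominator, yielding $(1-x)\log_2\frac{1-p}{1-x}\le (x-p)\log_2 e$). The only cosmetic difference is that you drop $x-p\le x$ before the final numerical comparison, turning the paper's condition $\frac{1-p/x}{\ln(x/p)}<\frac12$ into the slightly weaker $x/p\ge e^2$, which still holds comfortably under $x\ge 10p$.
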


We remark that the constant 10 is not the best possible constant here, although 5 seems to be closer to being optimal.

\begin{proof}[Proof of Lemma \ref{lem:entropy}]
	Recall from \eqref{eq:hpdef} that 
	\[
		H_p(x) = x \log_2\f{x}{p} + (1 - x) \log_2\f{1 - x}{1 - p}.
	\]
	Since $x > p$, $1 -x < 1 - p$, and thus the second summand is negative. Further, the ratio between the (negative) second 
	summand and the first summand is
	\begin{align}
		\f{(1 - x) \log_2 \f{1 - p}{1 - x}}{x \log_2 \f{x}{p}} &= \f{(1 - x) \log_2\Rnd{1 + \f{x - p}{1 - x}}}{x
		\log_2 \f x p} \\
															   &= \f{(1 - x) \ln\Rnd{1 + \f{x - p}{1 - x}}}{x
															   \ln\f x p} \\
															   &\leq \f{(1 - x) \Rnd{\f{x - p}{1 - x}}}{x \ln \f x p} \tag*{($\ln (1 + t) \leq t$)}\\
															   &= \f{1 - \f p x}{\ln \f x p} = \fr{\fr{p}{x} - 1}{\ln \f p x}.
	\end{align}
	Now since we assumed that $\fr{p}{x} \leq \fr{1}{10}$ and $\fr{t-1}{\ln t}$ is an increasing function of $t > 0$,
	the right-hand side above is at most $\fr{0.1 - 1}{\ln 0.1} \approx 0.3908 < \f12$,
	finishing the proof.
\end{proof}

The following lemma is used repeatedly in Section \ref{sec:dimers} to show that various optimization problems have valid
solutions for a range of $p$ which includes values slightly below $\fr{1}{2}$.

\def\ss{s^\star}
\begin{lemma}
	\label{lem:minentropy}
	For every fixed $m \geq 1$, let 
	\[
		f(p) \coloneq \inf_{s \in (0, 1)} ms + H_p(s), \quad p \in (0, 1).
	\]
	Then $f$ is continuous and strictly increasing in $p$, and
	\[
		f\Rnd{\tfrac{1}{2}} = m + 1 - \log_2\Rnd{2^{m} + 1}.
	\]
\end{lemma}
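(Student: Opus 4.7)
The plan is to show that the infimum is attained at a unique interior critical point $s^\star(p)$ which can be found explicitly, and then plug this back in to obtain a closed-form expression for $f(p)$ from which all three claims follow immediately. Since $H_p(s)$ is strictly convex in $s \in (0,1)$ with $H_p(s) \to +\infty$ at the endpoints and $ms$ is affine, the objective $g(s,p) \coloneq ms + H_p(s)$ is strictly convex in $s$ and attains its minimum on $(0,1)$ at the unique $s$ where $\partial_s g = 0$. A direct computation gives
\begin{equation}
    \partial_s g(s,p) = m + \log_2 \Rnd{\frac{s(1-p)}{(1-s)p}},
\end{equation}
so the optimizer $s^\star = s^\star(p)$ is characterized by $\frac{s^\star(1-p)}{(1-s^\star)p} = 2^{-m}$, which rearranges to
\begin{equation}
    s^\star(p) = \frac{p}{2^m - (2^m-1)p}.
\end{equation}

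Next, I plug $s^\star$ back in. The defining identity $\frac{s^\star(1-p)}{(1-s^\star)p} = 2^{-m}$ can be rewritten as $\log_2 \frac{1-s^\star}{1-p} = m + \log_2 \frac{s^\star}{p}$, and substituting this into the definition of $H_p$ yields
\begin{equation}
    H_p(s^\star) = s^\star \log_2 \tfrac{s^\star}{p} + (1-s^\star)\Rnd{m + \log_2 \tfrac{s^\star}{p}} = m(1-s^\star) + \log_2 \tfrac{s^\star}{p}.
\end{equation}
Therefore
\begin{equation}
    f(p) = m s^\star + H_p(s^\star) = m + \log_2 \frac{s^\star}{p} = m - \log_2\Rnd{2^m - (2^m-1)p},
\end{equation}
using the explicit formula for $s^\star(p)$ in the last step.

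From this closed form all three conclusions are immediate. The right-hand side is a smooth function of $p \in (0,1)$, hence $f$ is continuous. Since $m \geq 1$ implies $2^m - 1 > 0$, the argument of the logarithm is strictly decreasing in $p$, so $f$ is strictly increasing in $p$. Finally, evaluating at $p = \tfrac{1}{2}$ gives $2^m - (2^m-1)/2 = (2^m+1)/2$, so
\begin{equation}
    f\Rnd{\tfrac{1}{2}} = m - \log_2\Rnd{\tfrac{2^m+1}{2}} = m + 1 - \log_2(2^m + 1),
\end{equation}
as claimed. There is no real obstacle here beyond careful bookkeeping: the one subtlety is ensuring $s^\star(p) \in (0,1)$ (which holds since the denominator $2^m - (2^m-1)p$ lies strictly between $1$ and $2^m$ for $p \in (0,1)$), guaranteeing that the infimum is attained in the open interval and the calculus argument is legitimate.
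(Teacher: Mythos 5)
Your proof is correct and, in fact, takes a cleaner route than the paper's. Both arguments find the interior critical point $s^\star(p) = p/((1-p)2^m + p) = p/(2^m - (2^m-1)p)$ by setting $\partial_s(ms + H_p(s)) = 0$. The paper stops there: it proves monotonicity via an envelope-theorem argument (differentiating $f(p) = g(p, s^\star(p))$, noting the $\partial_s g$ term vanishes at the optimum, and using that $\partial_p H_p(x) > 0$ when $x < p$ together with the observation $s^\star < p$), and it only substitutes $p = \tfrac12$ numerically. You instead push the algebra one step further and obtain the closed form
\[
f(p) = m - \log_2\bigl(2^m - (2^m - 1)p\bigr),
\]
which makes continuity, strict monotonicity, and the value at $p = \tfrac12$ all immediate by inspection. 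This is genuinely more informative: the paper's argument only tells you $f$ is increasing, whereas yours gives the function explicitly, which would be useful if one needed quantitative estimates on the threshold $\gamma$.

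One small slip in the justification of interiority: you write that $H_p(s) \to +\infty$ as $s$ approaches the endpoints, but this is false; $H_p(0) = -\log_2(1-p)$ and $H_p(1) = -\log_2 p$ are both finite. What does blow up is the \emph{derivative}: $\partial_s H_p(s) \to -\infty$ as $s \to 0^+$ and $\to +\infty$ as $s \to 1^-$. In any case the correct and sufficient reason the infimum over the open interval is attained at the computed $s^\star$ is the one you give at the end, namely that $s^\star(p) \in (0,1)$ together with strict convexity of $s \mapsto ms + H_p(s)$, so the conclusion stands.
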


\begin{proof}[Proof of Lemma \ref{lem:minentropy}]
	\def\pd{\partial}
	Let us write $g(p, s) = ms + H_p(s)$ for
	the objective function, and $\ss(p)$ for the minimizer. Then, to compute $\ss = \ss(p)$, note that $g$ is convex in
	$s$, and thus it suffices to equate its derivative to zero. The
	derivative of $H_p(x)$ is $\log_2(x/p) - \log_2((1 - x)/(1 - p))$. Setting $\pd g/\pd s$ to zero, we get
	\begin{align}
		m + \Rnd{\log_2\f{\ss}{p} - \log_2\f{1 - \ss}{1 - p}} = 0 & \iff \f{\ss}{1 - \ss} = 2^{-m} \f{p}{1 - p} \\
																  & \iff \ss = \f{p}{(1 - p) 2^m + p}.
	\end{align}
	In particular, $\ss < p$ always. Observe that continuity of $f$ follows from the continuity of $H_p$ and $\ss(p)$.
	The derivative of $f$ is
	\begin{equation}
		f'(p) = \f{\pd g}{\pd p}\Big|_{s = \ss(p)} + \f{\pd g}{\pd s}\Big|_{s = \ss(p)} \cdot (\ss)'(p)
	\end{equation}
	The second term is zero due to the optimality of $\ss$. Further, since $\pd H_p(x)/\pd p$ is positive when $x < p$, 
	we have that $f'(p) > 0$ always, showing that $f$ is strictly increasing.

	Setting $p = 1/2$, we get $\ss = 1/(2^m + 1)$, and thus
	\begin{align}
		f(1/2) &= g(1/2, \ss(1/2)) \\ &= \f{m}{2^m + 1} + \f{1}{2^m + 1} \log_2 \F{1}{1/2 \cdot (2^m + 1)} + \f{2^m}{2^m +
		1}\log_2\F{2^m}{1/2 \cdot (2^m + 1)} \\
								  &= m + 1 - \log_2(2^m + 1),
	\end{align}
	as required.
\end{proof}

Finally, the following lemma expresses the exponential quantity appearing in the dimer moment calculations
of Lemma \ref{lem:dimermoments} as the solution of an optimization problem.

\begin{lemma}
\label{lem:dimersumoptimization}
For all $p \in (0,1)$, 
\begin{equation}
	\fr{1}{\poly(d)} \cdot 2^{d (1-2\ss-2H_p(\ss))} \leq 
	\Fr{(2-p)^2}{2}^{d} \leq \poly(d) \cdot 2^{d (1-2\ss-2H_p(\ss))},
\end{equation}	
where $\ss$ maximizes $1 - 2 s - 2 H_p(s)$ over $s \in [0,1]$.
In particular, this implies that
\begin{equation}
	\fr{(2-p)^2}{2} = 2^{1 - 2\ss - 2 H_p(\ss)}.
\end{equation}
\end{lemma}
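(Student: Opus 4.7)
The plan is to identify the maximizer $\ss$ explicitly using Lemma \ref{lem:minentropy}, then verify the ``in particular'' identity by direct substitution, at which point the bracketing inequalities become trivial.

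First I would observe that maximizing $1 - 2s - 2H_p(s)$ over $s \in [0,1]$ is equivalent to minimizing $s + H_p(s)$, which is exactly the optimization problem treated by Lemma \ref{lem:minentropy} in the case $m = 1$. That lemma furnishes the explicit minimizer $\ss = p/(2-p)$, so in particular $1 - \ss = 2(1-p)/(2-p)$. Substituting these into the definition of $H_p$ from \eqref{eq:hpdef} and collecting terms, a short computation gives $\ss + H_p(\ss) = 1 - \log_2(2-p)$, and hence
\[
1 - 2\ss - 2H_p(\ss) \;=\; -1 + 2\log_2(2-p) \;=\; \log_2 \frac{(2-p)^2}{2}.
\]
This is precisely the ``in particular'' identity, and the bracketing inequalities with $\poly(d)$ prefactors follow trivially since both sides then agree exactly at the exponential level.

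There is no substantive obstacle; the argument reduces to an elementary algebraic verification once Lemma \ref{lem:minentropy} has been invoked. It is worth noting, however, that the polynomial-factor form of the inequality has a natural interpretation via a Laplace-type estimate, which justifies stating the lemma that way: by Lemma \ref{lem:dimermoments}(a) one has $\sum_{\fd \in \Dim^\cH} \E[\fphi_\fd] = \poly(d) \cdot ((2-p)^2/2)^d$, while classifying pairs $\fd = \{u,v\}$ by the values of $(\N(u),\N(v))$ and applying Lemma \ref{lem:bin} expresses the exponential order of the same quantity as $2^{d (1 - 2\ss - 2H_p(\ss))}$, with $\ss$ arising as the joint (symmetric) maximizer over $(s,t) \in [0,1]^2$ of $-H_p(s) - H_p(t) - s - t$. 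Matching the two evaluations of this sum recovers the identity. Either route is fine; I would present the direct algebraic one since it is more compact.
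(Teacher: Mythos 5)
Your proof is correct, and it takes a genuinely different — and in fact more elementary — route than the paper. The paper establishes the bracketing by a Laplace-type argument: it writes $\left((2-p)^2/2\right)^d = 2^d\,\E\left[2^{-\Bin(2d,p)}\right]$, expands this as a sum of $2d+1$ terms, bounds each term using Stirling's entropy estimate, observes that a polynomial sum is within $\poly(d)$ of its largest term, and then uses concavity of $s\mapsto 1-2s-2H_p(s)$ together with a Taylor expansion to compare the discrete optimizer $k^\star/2d$ with the continuous one $s^\star$. You instead note that maximizing $1-2s-2H_p(s)$ is the same as minimizing $s+H_p(s)$, invoke the explicit minimizer $s^\star=p/(2-p)$ from (the proof of) Lemma \ref{lem:minentropy} with $m=1$, and verify by direct substitution that $s^\star+H_p(s^\star)=1-\log_2(2-p)$, whence $1-2s^\star-2H_p(s^\star)=\log_2\frac{(2-p)^2}{2}$ exactly. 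This makes the bracketing inequalities trivial (in fact the two sides agree with no $\poly(d)$ correction at all). The advantage of your route is that it is shorter, avoids the Stirling/Laplace machinery, and produces a clean closed form for $s^\star$ along the way; the paper's route is arguably more robust in settings where the maximizer does not admit such a tidy closed form, which is the regime it is really designed for. One cosmetic remark: Lemma \ref{lem:minentropy} as stated gives $f(1/2)$ and monotonicity but does not list the formula for the minimizer in its conclusion, so you should either re-derive $s^\star=p/(2-p)$ from the first-order condition $\frac{s}{1-s}=\frac12\cdot\frac{p}{1-p}$ (one line, since the objective is convex) or cite it as an extract from that lemma's proof.
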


\def\ks{k^\star}
\begin{proof}[Proof of Lemma \ref{lem:dimersumoptimization}]
By the same argument as in Lemma \ref{lem:moments},
\begin{equation}
\label{eq:binsum}
	\fr{(2-p)^{2d}}{2^d} = 2^d \E [2^{-\Bin(2d,p)}]
	= 2^d \sum_{k=0}^{2d} 2^{-k} \P[\Bin(2d,p) = k].
\end{equation}
Now a standard derivation using Stirling's approximation (see e.g.\ \cite[Page 353]{cover}),
we have
\begin{equation}
	\fr{1}{2d+1} 2^{2d H(k/2d)} \leq \binom{2d}{k} \leq 2^{2d H(k/2d)},
\end{equation}
where
\begin{equation}
	H(q) = - q \log_2 q - (1-q) \log_2 (1-q)
\end{equation}
is the standard binary entropy function.
This immediately implies that
\begin{equation}
	\fr{1}{2d+1} 2^{-2dH_p(k/2d)} \leq \P[\Bin(2d,p) = k] \leq 2^{-2dH_p(k/2d)}.
\end{equation}
Now using the fact that there are polynomially many terms in the sum
\eqref{eq:binsum}, we find that
\begin{equation}
\label{eq:kstar}
	\fr{1}{\poly(d)} \cdot 2^{d(1-2 (\ks/2d)-2H_p(\ks/2d))} \leq
	\fr{(2-p)^{2d}}{2^d} \leq \poly(d) \cdot 2^{d(1-2(\ks/2d)-2H_p(\ks/2d))}
\end{equation}
for some fixed polynomials in the upper and lower bounds, where $\ks \in \{0,\dotsc,2d\}$ maximizes
the exponent $1 - 2(\ks/2d) - 2 H_p(\ks/2d)$.

Now, for any fixed $p \in (0,1)$, the function $g(s) = 1 - 2 s - 2 H_p(s)$ is concave.
Thus it has a unique maximum $\ss \in [0,1]$, and the optimizer $\ks$ is one of the two points
in $\{0,\dotsc,2d\}$ nearest to $\ss$ on either side.
This means that $\left|\fr{\ks}{2d} - \ss\right| \leq \fr{1}{2d}$, and so since $g'(\ss) = 0$ we must have
\begin{equation}
	\left| g(\ss) - g(\ks) \right| \leq \fr{C}{d^2}
\end{equation}
for some constant $C$, by Taylor expansion.
This combined with \eqref{eq:kstar} finishes the proof.
\end{proof}

\bibliographystyle{plain}

\bibliography{ref}

\end{document}